\font \smallrm=cmr10 at 9pt
\newcounter{amoi}
\numberwithin{equation}{section}
\newtheorem{thm}{Theorem}[subsection]
\newtheorem{theorem}[thm]{Theorem}
\newtheorem{lemma}[thm]{Lemma}
\newtheorem{proposition}[thm]{Proposition}
\theoremstyle{definition}
\newtheorem{definition}[thm]{Definition}
\newtheorem{example}[thm]{Example}
\newtheorem{examples}[thm]{Examples}
\newtheorem{free text}[thm]{}
\newtheorem{remark}[thm]{Remark}
\newcommand{\nil}[1]{\accentset{\circ}#1}
\newcommand{\N} {\mathbb{N}}
\newcommand{\Z} {\mathbb{Z}}
\newcommand{\R} {\mathbb{R}}
\newcommand{\C} {\mathbb{C}}
\newcommand{\K} {\mathbb{K}}
\newcommand{\bk} {\Bbbk}
\newcommand{\bG} {\mathbf{G}}
\newcommand{\cA}{\mathcal{A}}
\newcommand{\cB}{\mathcal{B}}
\newcommand{\cC}{\mathcal{C}}
\newcommand{\cF}{\mathcal{F}}
\newcommand{\cH}{\mathcal{C}}
\newcommand{\cI} {\mathcal{I}}
\newcommand{\cL} {\mathcal{L}}
\newcommand{\cO}{\mathcal{O}}
\newcommand{\cP} {\mathcal{P}}
\newcommand{\cR} {\mathcal{R}}
\newcommand{\cS} {\mathcal{S}}
\newcommand{\cW} {\mathcal{W}}
\newcommand{\cX} {\mathcal{X}}
\newcommand{\cY} {\mathcal{Y}}
\newcommand{\fa} {\mathfrak{a}}
\newcommand{\fg} {\mathfrak{g}}
\newcommand{\fb} {\mathfrak{b}}
\newcommand{\fk} {\mathfrak{k}}
\newcommand{\fn} {\mathfrak{n}}
\newcommand{\GP} {G_{{}_{\!\cP}}}
\newcommand{\GPt} {G_{{}_{\!\cP}}^{\hskip0,5pt \circ}}
\newcommand{\GPtm} {G_{{}_{\!\cP}}^{\hskip0,5pt \circ, {\scriptscriptstyle -}}}
\newcommand{\GPe} {G_{{}_{\!\cP}}^{\hskip0,5pt{}e}}
\newcommand{\GPem} {G_{{}_{\!\cP}}^{\hskip0,5pt{}e, {\scriptscriptstyle -}}}
\newcommand{\zero} {{\mathbf{0}}}
\newcommand{\uno} {{\mathbf{1}}}
\newcommand{\Uuno} {\mathbf{1\hskip-3,4pt{}l}}
\newcommand{\cat}[1]{{\boldsymbol{\mathsf{#1}}}}  
\newcommand{\alg} {\text{\rm ($\cat{alg}$)}}
\newcommand{\salg} {\text{\rm ($\cat{salg}$)}}
\newcommand{\Wsalg} {\text{\rm ($\cat{Wsalg}$)}}
\newcommand{\Grass} {\text{\rm ($\cat{Grass}$)}}
\newcommand{\set}{\text{\rm ($\cat{set}$)}}
\newcommand{\grp} {\text{\rm ($\cat{group}$)}}
\newcommand{\Azsmfd} {\text{\rm ($\mathcal{A}_\zero\,\text{--}\,\cat{smfd}$)}}
\newcommand{\Azamfd} {\text{\rm ($\mathcal{A}_\zero\,\text{--}\,\cat{amfd}$)}}
\newcommand{\Azhmfd} {\text{\rm ($\mathcal{A}_\zero\,\text{--}\,\cat{hmfd}$)}}
\newcommand{\ssmfd} {\text{\rm ($\cat{ssmfd}$)}}
\newcommand{\asmfd} {\text{\rm ($\cat{asmfd}$)}}
\newcommand{\hsmfd} {\text{\rm ($\cat{hsmfd}$)}}
\newcommand{\Lsgrp} {\text{\rm ($\cat{Lsgrp}$)}}
\newcommand{\lie} {\text{\rm ($\cat{Lie}$)}}
\newcommand{\sLie} {\text{\rm ($\cat{sLie}$)}}
\newcommand{\sHCp} {\text{\rm ($\cat{sHCp}$)}}
\newcommand{\Ad}{\hbox{\sl Ad}}
\newcommand{\ad}{\hbox{\sl ad}}
\newcommand{\Hom}{\hbox{\sl Hom}}
\newcommand{\End}{\hbox{\sl End}}
\newcommand{\Lie}{\hbox{\sl Lie}}
\newcommand{\Ker}{\hbox{\sl Ker}}
\newcommand{\rGL}{\mathrm{GL}}
\newcommand{\rgl}{\mathfrak{gl}}
\begin{document}

%
%
%

\centerline{\Large \bf LIE SUPERGROUPS vs.}
 \vskip11pt
\centerline{\Large \bf SUPER HARISH-CHANDRA PAIRS:}
 \vskip11pt
\centerline{\Large \bf A NEW EQUIVALENCE}

\vskip25pt

\centerline{ Fabio GAVARINI }

\vskip7pt

\centerline{\it Dipartimento di Matematica, Universit\`a di Roma ``Tor Vergata'' } \centerline{\it via della ricerca scientifica 1  --- I-00133 Roma, Italy}

\centerline{{\footnotesize e-mail: gavarini@mat.uniroma2.it}}

\vskip55pt

\begin{abstract}
 \vskip7pt
 {\smallrm
 \let\thefootnote\relax\footnote{\hskip-7pt 2010 {\it MSC}\;: \, Primary 14M30, 14A22 / 58A50, 58C50; Secondary 17B20 / 81Q60  \\
   {\ } \indent   {\sl Keywords:}  Lie supergroups; Lie superalgebras; super Harish-Chandra pairs.  \\
   {\ } \indent   Partially supported by a MIUR grant PRIN 2012, n.\ 2012KNL88Y\_\!\_\,002, and by the MIUR  {\sl Excellence Department Project\/}  awarded to the Department of Mathematics, University of Rome ``Tor Vergata'', CUP E83C18000100006.}
   It is known that there exists a natural functor  $ \Phi $  from Lie supergroups to super Harish-Chandra pairs.  A functor going backwards, that associates a Lie supergroup with each super Harish-Chandra pair, yielding an equivalence of categories, was found by Koszul  \cite{koszul};  this result was later extended by other authors, to different levels of generality, but always elaborating on Koszul's original idea.
                                                       \par
   In this paper, I provide two new backwards equivalences, i.e.\ two different functors  $ \Psi^\circ $  and  $ \Psi^e $  that construct a Lie supergroup (thought of as a special group-valued functor) out of a given super Harish-Chandra pair, so that any Lie supergroup is recovered from its naturally associated super Harish-Chandra pair; more precisely, both  $ \Psi^\circ $  and  $ \Psi^e $  are quasi-inverse to the functor  $ \Phi \, $.}
\end{abstract}

\vskip45pt

\tableofcontents

\vskip41pt

\section{Introduction}

\smallskip

   {\ } \;\;  The study of supergroups is a chapter of supergeometry, i.e.\ geometry in a  $ \Z_2 $--graded  sense.  In particular, the relevant structure sheaves of (commutative) algebras sitting on top of the topological spaces one works with are replaced with sheaves of (commutative)  {\sl superalgebras}.
                                                    \par
   When dealing with  {\sl differential\/}  supergeometry, our ``superspaces'' are supermanifolds, that are real smooth, real analytic or complex holomorphic (depending on the context): any such supermanifold can be considered as a  {\sl classical\/}  (i.e.\ non-super) manifold   -- of the appropriate type ---   endowed with a suitable sheaf of commutative superalgebras.  The supergroups in this context are then  {\sl Lie\/}  supergroups, of smooth, analytic or holomorphic type according to the chosen setup.

\smallskip

   For every Lie supergroup  $ G $  there exists a special pair of objects, say  $ \, \big(G_\zero,\fg\big) \, $,  that is naturally associated with it:  $ G_\zero $  is the  {\sl classical Lie group underlying}  $ G $   --- roughly given by ``killing the odd part'' of the structure sheaf on  $ G $  ---   while  $ \, \fg = \Lie(G) \, $  is the  {\sl tangent Lie superalgebra of}  $ G \, $,  and these two objects are ``compatible'' in a natural sense.  More in general, any similar pair  $ \, \big(K_+\,,\fk\,\big) \, $  made by a Lie group  $ K $  and a Lie superalgebra  $ \fk $  obeying the same compatibility constraints is called ``super Harish-Chandra pair'' (a terminology first found in  \cite{dm}),  or just ''sHCp'' for short: in fact, this notion is tailored in such a way that mapping  $ \, G \mapsto \big(G_\zero,\fg\big) \, $  yields a functor, call it  $ \Phi \, $,  from the category of Lie supergroups   --- either smooth, analytic or holomorphic ---   to the category of super Harish-Chandra pairs --- of smooth, analytic or holomorphic type respectively.
                                                    \par
%
%
   The key question is: can one recover a Lie supergroup out of its associated sHCp\,?  More precisely: is there any functor  $ \Psi $  from sHCp's to Lie supergroups which be a quasi-inverse for  $ \Phi \, $,  so that the two categories be equivalent?  And how much explicit such a functor (if any) is\,?
                                                     \par
    A first answer to this question was given by Kostant and by Koszul in the real smooth case (see  \cite{kostant}  and  \cite{koszul}),  with equivalent methods, providing an explicit quasi-inverse for  $ \Phi \, $.  Later on, Vishnyakova  (see  \cite{vis})  fixed the complex holomorphic case, and her proof works for the real analytic case as well.
%
%
 More recently, this result was increasingly extended to the setup of  {\sl algebraic supergeometry},  i.e.\ for algebraic supergroups (and corresponding sHCp's), over fields and then over rings (see  \cite{cf},  \cite{mas},  \cite{mas-shi}).  It is worth remarking, though, that all these
%
%
 subsequent results were, in the end, further improvements of the original idea by Koszul
%
%
 (while Kostant's method was a slight variation of that): indeed, Koszul defines a Lie supergroup out of a sHCp  $ \, \big(K_+\,,\fk\,\big) \, $  as a super-ringed space, just defining the ``proper'' sheaf of (commutative) superalgebras onto  $ K_+ $  by means of
%
%
 $ \fk \, $;  the authors of the successive, above mentioned papers just re-worked this same recipe.

\smallskip

   In this paper I present a new method to solve that problem, i.e.\ I provide a different, more concrete functor  $ \Psi $  from
%
%
 sHCp's
 to Lie supergroups that is quasi-inverse to  $ \Phi \, $.  The starting idea is to follow a different approach to supergeometry, \`a la Grothendieck: namely instead of thinking of supermanifolds as being super-ringed manifolds (i.e.\ classical manifolds endowed with a sheaf of commutative superalgebras), one studies (or directly defines) them through their ``functor of points''.  Thus, if  $ M $  is a supermanifold, then for each commutative superalgebra  $ A $  one has the manifold  $ M(A) $  of  $ A $--points  of  $ M \, $;  in fact, in order to recover the full supermanifold  $ M $  one can restrict this functor to a smaller category, namely that of  {\sl Weil superalgebras}   --- roughly, those
 which are direct sum of a copy of our ground field
%
%
 plus a finite-dimensional nilpotent ideal.  Conversely, functors from Weil superalgebras to manifolds enjoying some additional properties do correspond to Lie supergroups (i.e., they are the functor of points of some Lie supergroup): so one can directly call ``Lie supergroup'' any such special functor.  This functorial point of view
%
%
 allows
 to unify several different approaches to super\-geometry (see  \cite{bcf})  and also
%
%
 to treat infinite-dimensional supermanifolds
%
%
 (see  \cite{all-lau}).  For a broader discussion of the interplay between different approaches to supergeometry
%
%
 we
 refer to classical sources as  \cite{be},  \cite{dm},  \cite{leites},  \cite{vsv}  or more recent ones like  \cite{bcf}, \cite{be-so},  \cite{ccf},  \cite{mol}.
%
%
                                                     \par
   Now, if we want a functor  $ \Psi $  from sHCp's to Lie supergroups, we need a Lie supergroup  $ \GP $  for each sHCp  $ \cP \, $;  using the functorial point of view, in order to have  $ \GP $  as a functor we need a Lie group  $ \GP(A) $  for each Weil superalgebra  $ A \, $,  and their definition must be natural in  $ A \, $:  moreover, one still has to show that the resulting functor have those additional properties that make it into a Lie supergroup.  Finally, all this should aim to find a  $ \Psi $  that is quasi-inverse to  $ \Phi $   --- and this fixes ultimate bounds to the construction we aim to.
                                                     \par
   Bearing all this in mind, the construction that I present goes as follows.  Given a super Harish-Chandra pair  $ \, \cP = (G_+,\fg) \, $,  for each Weil superalgebra, say  $ A \, $,  I define a group  $ \GP(A) $  abstractly, by generators and relations: this definition is uniform
%
%
 and natural with respect to  $ A \, $,
 hence it yields a functor from Weil algebras to (abstract) groups, call it  $ \GP \, $.  As key step in the work, one proves that  $ \GP $
%
%
 admits a ``global splitting'', i.e.\
 it is the direct product of  $ G_+ $  times a totally odd affine superspace (isomorphic to  $ \fg_\uno \, $,  the odd part of  $ \fg \, $):  as both these are supermanifolds, it turns out that  $ \GP $  itself is a supermanifold as well, and in fact it is a Lie supergroup because (as a functor) it is group-valued too.
%
%
 One more step proves
 that the construction of  $ \GP $  is natural in  $ \cP \, $,
%
%
 so
 it yields a functor  $ \Psi $  from sHCp's to Lie supergroups: this is our candidate to be a quasi-inverse to $ \Phi \, $.
                                                     \par
   It is immediate to check that  $ \, \Phi \circ \Psi \, $  is isomorphic to the identity functor onto sHCp's;
%
%
 on the other hand, proving that  $ \, \Psi \circ \Phi \, $  is isomorphic to the identity functor onto Lie supergroups
%
%
 is much more demanding.  In fact, to get the latter we need to know that every Lie supergroup  $ G $  has a ``global splitting'' on its own:
%
%
 this implies that  $ G $  and  $ \Psi\big(\Phi(G)\big) $  share the same structure, in that both are the direct product of  $ G_\zero $  and
%
%
  $ \, \fg_\uno = {\big( \Lie(G) \big)}_\uno \, $.  Now, the fact that a ``Global Splitting Theorem'' for Lie supergroups does hold true is (more or less) known among specialists; however, we need it stated in a genuine geometrical form, while it is usually given in sheaf-theoretic terms, so in the end we work it out explicitly.
%
%
 In fact, we find  {\sl two\/}  different formulations of such a
%
%
 result: this is why, building upon them, we can provide  {\sl two\/}  versions,  $ \Psi^\circ $  and  $ \Psi^e \, $,  of a functor  $ \Psi $  as required.

\smallskip

   Two last words are still in order:
  \vskip1pt
%
%
   \quad  {\it (a)}\,  The recipe given here   --- for  $ \Psi^\circ $  ---   was originally presented in  \cite{ga4}   to solve the same problem in the context of algebraic supergeometry; adapting this idea to the  {\sl differential\/}  setup (i.e.\ to Lie supergroups and their sHCp's), however, is definitely  {\sl not\/}  straightforward.
%
%
 The  {\sl second\/}  recipe instead   --- introducing the functor  $ \Psi^e $  ---   is entirely original; with some work, it can be adapted to the setup of  {\sl algebraic\/}  supergroups and sHCp's too.
                                            \par
%
%
   {\it (b)}\,  We deal here with Lie supergroups (and sHCp's) of finite dimension; nevertheless, our construction of the functor  $ \Psi $  is perfectly fit for the  {\sl infinite dimensional\/}  case too   --- still following the functorial approach, as in  \cite{all-lau}.  This requires extra technicalities which go beyond our scopes, so we do not fulfill that task; however, the core strategy to follow is already displayed hereafter.

\medskip

%
%
   Finally, the paper is organized as follows.  In section  \ref{preliminaries},  I fix language and notations.  Section  \ref{sgroups-to-sHCp's}  introduces the notion of super Harish-Chandra pair and the natural functor  $ \Phi $  from Lie supergroups to super Harish-Chandra pairs.  Section  \ref{interlude}  presents structure results for Lie supergroups, in particular about ``global splittings'': more or less, these results are known (or should be known), but I could not find them in literature (in the form I need them), so I wrote them down myself.
                                                         \par
   The core of the paper is in sections  \ref{sHCp's->Lsgroups}  and  \ref{sec-equivalences}\,.  In section  \ref{sHCp's->Lsgroups},  I introduce two definitions of functor  $ \Psi \, $,  namely  $ \Psi^\circ $  and  $ \Psi^e \, $,  and I prove key structure results for the Lie supergroups  $ \, \GP \! := \Psi(\cP) \, $   --- with  $ \, \Psi \in \big\{\, \Psi^\circ , \Psi^e \,\big\} \, $;  in fact, in both cases the very definition of  $ \GP $  and the results about its structure are ``prescribed'' by the structure results of section  \ref{interlude}  for Lie supergroups in general.  In section  \ref{sec-equivalences}  then I prove, using the structure results of sections  \ref{interlude}  and  \ref{sHCp's->Lsgroups}  (mainly the ``Global Splitting Theorems''), that both versions of functor  $ \Psi $  are indeed quasi-inverse to  $ \Phi \, $,  as expected.
                                                         \par
   Finally, section  \ref{sec_lin-cae/repr's}  treats special cases and applications.

\vskip19pt

   \centerline{\bf Acknowledgements}
 \vskip5pt
   \centerline{ \smallrm The author thanks Alexander Alldridge, Claudio Carmeli and Rita Fioresi for many valuable conversations. }

\bigskip
 \bigskip

\section{Preliminaries}  \label{preliminaries}

\smallskip

   {\ } \;\;   In all this work,  $ \K $  will denote the field  $ \R $  or  $ \C $  of real or complex numbers, respectively, following the context.  All modules (i.e., vector spaces), algebras etc.\ will be considered over  $ \K \, $.

\medskip

\subsection{Supermodules and superalgebras}  \label{supermodules/algebras}

\smallskip

\begin{free text}  \label{bas-alg-sobjcts}
 {\bf Basic algebraic superobjects.}  We call  {\it  $ \K $--supermodule},  or  {\it  $ \K $--super vector space},  any  $ \K $--module  $ V $  endowed with a  $ \Z_2 $--grading
%
%
 $ \, V = V_\zero \oplus V_\uno \, $,  where  $ \, \Z_2 = \{\zero\,,\uno\} \, $  is the group with two elements.  Then
%
%
 $ V_\zero $  and its elements are called  {\it even},  while  $ V_\uno $  and its elements  {\it odd}.  By  $ \, |x| $  or  $ p(x) $  $(\in \Z_2) \, $  we denote the  {\sl parity\/}  of any homogeneous element,
%
%
 defined by the condition  $ \, x \in V_{|x|} \, $.

\vskip4pt

%
%
%
%
%
   We call  {\it  $ \K $--superalgebra\/}  any associative, unital  $ \K $--algebra  $ A $  which is  $ \Z_2 $--graded  (as a  $ \K $--algebra):  so  $ A $  has a  $ \Z_2 $--splitting  $ \, A = A_\zero \oplus A_\uno \, $,  and  $ \, A_{\mathbf{a}} \, A_{\mathbf{b}} \subseteq A_{\mathbf{a}+\mathbf{b}} \; $;
  any such  $ A $  is said to be  {\it commutative}   --- in ``super sense'' ---   iff  $ \; x \, y = (-1)^{|x|\,|y|} y \, x \; $  for all homogeneous  $ \, x $,  $ y \in A \, $  and  $ \; z^2 = 0 \; $  for all odd  $ \, z \in A_\uno \, $.
  All  $ \K $--superalgebras  form a category, whose morphisms are those of unital  $ \K $--algebras preserving the  $ \Z_2 $--grading;  inside it, commutative  $ \K $--superalgebras  form a subcategory,
that we denote  by $ \salg \, $,  or also by  $ \salg_\K \, $.  Moreover, we shall denote by  $ \alg $   --- or  $ \alg_\K \, $,  sometimes  ---   the category of (associative, unital) commutative  $ \K $--algebras,  and by  $ \text{\bf (mod)}_\K $  that of  $ \K $--modules.

\smallskip

   For  $ \, A \in \salg \, $,  $ \, n \in \N \, $,  we call  $ A_\uno^{\,[n]} $  the  $ A_\zero \, $--submodule  of  $ A $  spanned by all products  $ \, \vartheta_1 \cdots \vartheta_n \, $  with  $ \, \vartheta_i \in A_\uno \, $  for all  $ i \, $.  We need also the following constructions: if  $ \, J_A := (A_\uno) \, $  is the ideal of  $ A $  generated by  $ A_\uno \, $,  then  $ \, J_A = A_\uno^{[2]} \oplus A_\uno \, $,  and  $ \, \overline{A} := A \big/ J_A \, $  is a commutative superalgebra which is  {\sl totally even},  i.e.~$ \, \overline{A} \in \alg \, $;  also, there is an obvious isomorphism  $ \, \overline{A} := A \big/ (A_\uno) \cong A_\zero \big/ A_\uno^{[2]} \; $.
                                              \par
   Finally, the constructions of  $ A_\zero \, $,  of  $ A_\uno^{(n)} $  and of  $ \, \overline{A} \, $  all are functorial in  $ A \, $.
\end{free text}

\smallskip

\begin{free text}  \label{Weil-salg}
 {\bf Weil superalgebras.}  We now introduce a special class of commutative superalgebras, the  {\it Weil superalgebras},  or  {\it ``super Weil algebras''},  to be used later on  (cf.\  \cite{bcf}  and references therein).

\smallskip

\begin{definition}  \label{def:Weil-salg}
 We call  {\it Weil superalgebra\/}  any finite-dimensional commutative  $ \K  $--superalgebra  $ A $  such that   $ \; A = \K \oplus \nil{A} \; $  where  $ \K $  is even and  $ \, \nil{A} = \nil{A}_\zero \oplus \nil{A}_\uno \, $  is a  $ \Z_2 $--graded  nilpotent ideal (the  {\sl nilradical\/}  of  $ A \, $).  By construction, every Weil superalgebra  $ A $  is automatically endowed with the canonical (super)algebra morphisms  $ \, p_A : A \longrightarrow \K \, $  and  $ \, u_A : \K \longrightarrow A \, $  associated with the direct sum splitting  $ \, A = \K \oplus \nil{A} \, $;  in particular  $ \, p_A \circ u_A = \text{\sl id}_\K \, $,  so that  $ \, p_A \, $  is surjective and  $ \, u_A \, $  is injective.  Weil superalgebras over  $ \K $  form a full subcategory of  $ \salg_\K \, $,  denoted  $ \Wsalg_\K $  or  $ \Wsalg \, $.
 $ \diamondsuit $
\end{definition}

\smallskip

   A special class of Weil superalgebras is given by  {\sl Grassmann algebras\/}:  namely, for any  $ \, n \in \N \, $  by  {\it Grassmann algebra in  $ n $  variables over  $ \, \K $}  we mean the polynomial  $ \K $--algebra
%
%
 $ \, \Lambda_\K(\xi_1,\dots,\xi_n) := \K\big[\xi_1, \dots, \xi_n\big] \, $
 in  $ n $  mutually anticommuting indeterminates  $ \xi_1 $,  $ \dots $,  $ \xi_n \, $.  Giving degree  $ \uno $  to every  $ \xi_i $  all these Grassmann algebras turn into commutative  $ \K $--superalgebras;  we denote by  $ \Grass_\K \, $,  or just  $ \Grass \, $,  the full subcategory of  $ \Wsalg_\K $  whose objects are isomorphic to some  $ \Lambda_n $  ($ \, n \in \N \, $).
\end{free text}

\medskip

\subsection{Lie superalgebras}  \label{Lie-superalgebras}

\smallskip

   {\ } \;\;   The infinitesimal counterpart of Lie supergroups is given by the notion of Lie superalgebras.  We shall see their link later on, while now we fix that notion, as well as its functorial formulation.

\smallskip

\begin{definition}  \label{def:Lie-superalgebras}
 Let  $ \, \fg = \fg_\zero \oplus \fg_\uno \, $  be a  $ \K $--supermodule.  We say that  $ \fg $  is a  {\sl Lie superalgebra\/}  if we have a  {\it (Lie super)bracket\/}  $ \; [\,\cdot\, , \cdot\, ] : \fg \times \fg \longrightarrow \fg \, $,  $ \; (x,y) \mapsto [x,y] \, $,  \, which is  $ \K $--bilinear,  $ \Z_2 $--graded  and satisfies the following properties (for all  $ \, x , y, z \in \fg_\zero \cup \fg_\uno \, $):
 \vskip3pt
 \hskip-9pt
   {\it (a)}  \hskip43pt   $ \qquad  [x,y] \, + \, {(-1)}^{|x| \, |y|}[y,x] \; = \; 0  \;\, $   \hskip43pt  {\sl (anti-symmetry)}\,;
 \vskip5pt
 \hskip-9pt
   {\it (b)}
 $ \qquad  {(-\!1)}^{|x| \, |z|} [x,  [y,z]] + {(-\!1)}^{|y| \, |x|} [y , [z,x]] \, + \, {(-\!1)}^{|z| \, |y|} [z , [x,y]] \, = \, 0 \;\, $   \hskip23pt  {\sl (Jacobi identity).}
 \vskip9pt
%
%
 \vskip1pt
 \hskip-9pt
   {\it (c)}\;   In this situation, we shall write
 $ \;\;\; Y^{\langle 2 \rangle} := \, 2^{-1} \, [Y,Y] \;\; \big(\! \in \fg_\zero \big) \;\;\; $  for all  $ \; Y \in \fg_\uno \; $.

 \vskip9pt

   All Lie  $ \K $--superalgebras  form a category, denoted  $ \sLie_\K $  or just  $ \sLie \, $,  whose morphisms are the  $ \K $--linear,  graded maps that preserve the bracket.
 \hfill   $ \diamondsuit $
\end{definition}

\smallskip

   {\sl Note\/}  that if  $ \fg $  is a Lie  $ \K $--superalgebra,  then its even part  $ \fg_\zero $  is automatically a Lie  $ \K $--algebra.

\smallskip

\begin{example}  \label{def-End(V)}
  Let  $ \, V = V_\zero \oplus V_\uno \, $  be a  $ \K $--supermodule,  and consider  $ \End(V) \, $,  the endomorphisms of  $ V $  as an ordinary  $ \K $--module.  This is in turn a  $ \K $--supermodule,  $ \; \End(V) = {\End(V)}_\zero \oplus {\End(V)}_\uno \, $,  \, where  $ {\End(V)}_\zero $  are the morphisms which preserve the parity, and  $ {\End(V)}_\uno $  those which reverse it.  If  $ V $  has finite dimension and we choose a basis for  $ V $  of homogeneous elements, writing first the even ones, then  $ \End(V)_\zero $  is the set of all diagonal block matrices, while  $ {\End(V)}_\uno $  is the set of all anti-diagonal block matrices.
 Thus  $ \End(V) $  is a Lie  $ \K $--superalgebra  with bracket
 $ \,\; [A,B] \, := \, A B - {(-1)}^{|A||B|} \, B A \;\, $  for homogeneous  $ \, A, B \in \End(V) \, $;
 and then  $ \, Y^{\langle 2 \rangle} = Y^2 \, $  for odd  $ Y \, $.
                                                           \par
   The standard example is $ \, V := \K^p \oplus \K^q \, $,  with  $ \, V_\zero := \K^p \, $  and  $ \, V_\uno := \K^q \, $.  In this case we also write  $ \; \End\big(\K^{p|q}\big) \! := \End(V) \; $  or  $ \; \rgl_{\,p\,|q} := \End(V) \; $.   \hfill  $ \blacklozenge $
\end{example}

\smallskip

\begin{free text}  \label{Lie-salg_funct}
 {\bf Functorial presentation of Lie superalgebras.}  \ Let  $ \Wsalg_\K $  be the category of Weil  $ \K $--superalgebras  (see  \S \ref{supermodules/algebras})  and denote by  $ \lie_\K $  the category of Lie  $ \K $--algebras  and by  $ \mod_\K $  the category of  $ \K $--modules.  Every Lie  $ \K $--superalgebra  $ \, \fg \in \sLie_\K \, $  yields a functor
 \vskip3pt
   \centerline{ $ \; \cL_\fg : \Wsalg_\K \relbar\joinrel\relbar\joinrel\longrightarrow \lie_\K \; $,  \quad  $ A \, \mapsto \, \cL_\fg(A) := \big( A \otimes \fg \,\big)_\zero = (A_\zero \otimes \fg_\zero)
\oplus (A_\uno \otimes \fg_\uno) \; $ }
 \vskip3pt
\noindent
 Indeed,  $ \, A \otimes \fg \, $  is a Lie superalgebra (in a suitable, more general sense, on the Weil  $ \K $--superalgebra  $ A \, $)  on its own, with Lie bracket
  $ \; \big[\, a \otimes  X \, , \, a' \otimes X' \,\big] \, := \, {(-1)}^{|X|\,|a'|} \, a\,a' \otimes \big[X,X'\big] \; $,  given by the so-called ``sign rules'';
 now  $ \cL_\fg(A) $  is the even part of the Lie superalgebra  $ \, A \otimes \fg \,$,  hence it is a Lie algebra on its own (see  \cite{ccf}  for details).  In particular, all this applies to  $ \, \fg := \End(V) \, $.
                                              \par
   More in general, the following holds.  Every  $ \K $--supermodule  $ \, \mathfrak{m} = \mathfrak{m}_\zero \oplus \mathfrak{m}_\uno \, $  defines a functor
 \vskip3pt
   \centerline{ $ \mathcal{L}_{\mathfrak{m}} : \Wsalg_\K \relbar\joinrel\relbar\joinrel\longrightarrow \text{\bf (mod)}_\K \; $,  \quad  $ A \, \mapsto \, \cL_{\mathfrak{m}}(A) := {\big( A \otimes_\K \mathfrak{m} \big)}_\zero = (A_\zero \otimes \mathfrak{m}_\zero) \bigoplus (A_\uno \otimes \mathfrak{m}_\uno) $ }
 \vskip3pt
\noindent
 and then  \quad  $ \mathfrak{m} $  {\it is a Lie  $ \K $-superalgebra  $ \,\; \Longleftrightarrow \;\, $  $ \mathcal{L}_{\mathfrak{m}} $  takes values in  $ \lie_\K \;\; $}.
 \vskip3pt
   In fact, this holds true also if we replace  $ \Wsalg_\K $  with its (smaller) subcategory  $ \Grass_\K \, $.

\vskip5pt

   This ``functorial presentation'' of Lie superalgebras can be adapted to representations too.  Indeed, let  $ V $  be a  $ \fg $--module,  with representation map  $ \; \phi : \fg \longrightarrow \End(V) \; $   --- a Lie superalgebra morphism.  Scalar extension induces a morphism  $ \; \text{\sl id}_A \otimes \phi : A \otimes \fg \longrightarrow A \otimes \End(V) \; $  for each  $ \, A \in \Wsalg_\K \, $,  whose restriction to the even part gives a morphism  $ \; \big( A \otimes \fg \,\big)_\zero \longrightarrow \big( A \otimes \End(V) \big)_\zero \; $,  that is a morphism  $ \; \cL_\fg(A) \longrightarrow \cL_{\text{\sl End}(V)}(A) \; $  in  $ \lie_\K \, $.  The whole construction is natural in  $ A \, $,  hence it induces a natural transformation of functors  $ \, \cL_\fg \longrightarrow \cL_{\text{\sl End}(V)} \, $.
                                          \par
   The same considerations apply as well if  $ \Wsalg_\K $  is replaced with  $ \Grass_\K \, $.

\vskip5pt

   In the sequel, we shall call  {\it quasi-representable\/}  any functor  $ \; \cL : \Wsalg_\K \! \longrightarrow \lie_\K \; $  for which there exists  $ \, \fg \in \sLie_\K \, $  such that  $ \, \cL \cong \cL_\fg \; $;  the same applies with  $ \Grass_\K $  instead of  $ \Wsalg_\K \, $.
%
%
\end{free text}

\medskip

\subsection{Supermanifolds and supergroups}  \label{super-manfds/groups}

\smallskip

   {\ } \;\;   In this subsection we introduce the ``supermanifolds''   --- real smooth, real analytic or complex holomorphic ones ---   as well as the corresponding group objects.  This material is more or less standard; we follow two equivalent approaches: we refer to  \cite {bcf}   --- possibly with different terminology and notation ---   where all needed details can be found, as well as the original references.

\smallskip

\begin{definition}  \label{def-superspaces}
 A  {\it superspace\/}  is a pair  $ \, S = \big( |S|, \cO_S \big) \, $  of a topological space  $ |S| $  and a sheaf of commutative superalgebras  $ \cO_S $  on it such that the stalk of  $ \cO_S $  at each point  $ \, x \in |S| \, $,  denoted by  $ \cO_{S,x} \, $,  is a local superalgebra.
   If  $ S $  and  $ T $  are two superspaces, a  {\it morphism\/}  $ \, \phi : S \relbar\joinrel\longrightarrow T \, $  between them is a pair  $ \big( |\phi| \, , \phi^* \big) $  where  $ \, |\phi| : |S| \relbar\joinrel\longrightarrow |T| \, $  is a continuous map of topological spaces and  $ \, \phi^* : \cO_T \relbar\joinrel\longrightarrow |\phi|_*\big(\cO_S\big) \, $  is a morphism of sheaves on  $ |T| $  such that  $ \, \phi_x^*(\mathfrak{m}_{|\phi|(x)}) \subseteq \mathfrak{m}_x \, $,  where  $ \mathfrak{m}_{|\phi|(x)} $  and  $ \mathfrak{m}_{x} $  denote the maximal ideals in the stalks  $ \cO_{T,|\phi|(x)} $  and  $ \cO_{S,x} $  respectively.
 \hfill   $ \diamondsuit $
\end{definition}

\vskip-9pt

\begin{examples}  \label{superspace-pq}
  Fix any  $ \, p, q \in \N_+ \, $.
 \vskip3pt
%
%
%
   {\it (a)}  {\sl The real smooth local model}  ---  The superspace  $ \R^{p|q} $  is the topological space  $ \R^p $  endowed with the following sheaf of commutative superalgebras:  $ \; \cO_{\R^{p|q}}(U) := \cC^\infty_{\R^p}(U) \otimes_\R \Lambda_\R(\xi_1,\dots,\xi_q) \; $  for any open set $ \, U \subseteq \R^p \, $,  where  $ \cC^\infty_{\R^p} $  is the sheaf of smooth functions on  $ \R^p \, $.
%
%
 \vskip1pt
%
%
   {\it (b)}  {\sl The real analytic local model}  ---  The superspace  $ \R^{p|q}_{\,\omega} $  is the topological space  $ \R^p $  endowed with the following sheaf of commutative superalgebras:  $ \; \cO_{\R^{p|q}_{\,\omega}}(U) := \cC^\omega_{\R^p}(U) \otimes_\R \Lambda_\R(\xi_1,\dots,\xi_q) \; $  for any open set $ \, U \subseteq \R^p \, $,  where  $ \cC^\omega_{\R^p} $  is the sheaf of analytic functions on  $ \R^p $
%
%
 \vskip1pt
%
%
   {\it (c)}  {\sl The holomorphic local model}  ---  The superspace  $ \C^{p|q} $  is the topological space  $ \C^p $  endowed with the following sheaf of commutative superalgebras:  $ \; \cO_{\C^{p|q}}(U) := \cH_{\C^p}(U) \otimes_\C \Lambda_\C(\vartheta_1,\dots,\vartheta_q) \; $  for any open set $ \, U \subseteq \C^p \, $,  where  $ \cH_{\C^p} $  is the sheaf of holomorphic functions on  $ \C^p \, $.
%
%
\end{examples}

%
%
%

\smallskip

   Patching together these local models one makes up ``supermanifolds'', defined as follows:

\vskip11pt

\begin{definition}  \label{def-supermanifold}  {\ }
 \vskip3pt
%
%
   {\it (a)}\,  A  {\it (real) smooth supermanifold\/}  of (super)dimension  $ p|q $  is a superspace  $ \, M = \big( |M| \, , \cO_M \big) \, $ such that  $ |M| $  is Hausdorff and second-countable and  $ M $  is locally isomorphic to  $ \R^{p|q} \, $,  i.~e.\ for each  $ \, x \in |M| \, $  there is an open set  $ \, V_x \subseteq |M| \, $  with  $ \, x \in V_x \, $  and  $ \, U \subseteq \R^p \, $  such that  $ \; \cO_M{\big|}_{V_x} \cong \, \cO_{\R^{p|q}}{\big|}_U \; $.
%
%
 \vskip3pt
   {\it (b)}\,  A  {\it (real) analytic supermanifold\/}  of (super)dimension  $ p|q $  is defined like in  {\it (a)\/}  but with  $ \R^{p|q}_{\,\omega} $  replacing  $ \R^{p|q} $  everywhere: in particular, it is locally isomorphic to  $ \R^{p|q}_{\,\omega} \, $.
%
%
 \vskip3pt
   {\it (c)}\,  A  {\it (complex) holomorphic supermanifold\/}  of (super)dimension  $ p|q $  is defined like in  {\it (a)\/}  but with  $ \C^{p|q} $  replacing  $ \R^{p|q} $  everywhere: in particular, it is locally isomorphic to  $ \C^{p|q} \, $.
%
%
 \vskip3pt
   {\it (d)}\,  A  {\sl morphism\/}  of smooth, analytic or holomorphic supermanifolds is, by definition, a morphism of the underlying superspaces.
 \vskip3pt
   In either case above, the sheaf  $ \cO_M $  is called the {\it structure sheaf\/}  of  $ M \, $:  to simplify notation, we shall write  $ \cO(M) $  instead of  $ \cO_M\big(|M|\big) $  to denote the superalgebra of global sections of  $ \cO_M \, $.
                                                                 \par
   We denote the category of (real) smooth, (real) analytic, or (complex) holomorphic supermanifolds by  $ \ssmfd \, $,  $ \asmfd \, $,  or  $ \hsmfd \, $,  respectively.
 \hfill   $ \diamondsuit $
\end{definition}

\smallskip

   In most cases later on the distinction between the smooth, the analytic or the holomorphic case is immaterial: therefore, in order to minimize repetitions, I shall often refer only to ``supermanifolds''.

\medskip

\begin{free text}  \label{red-submfd}
 {\bf The reduced submanifold of a supermanifold.}  Let  $ M $  be a smooth supermanifold and  $ U $  an open subset in $ |M| \, $.  Let  $ \cI_M(U) $  be the ideal of  $ \cO_M(U) $  generated by the odd part of the latter: then $ \, \cO_M \big/ \cI_M \, $  is a presheaf, whose sheafification  $ \widetilde{ \cO_M \big/ \cI_M} $  is a sheaf of purely even superalgebras over  $ |M| \, $,  locally isomorphic to  $ \cC^\infty\big(\R^p\big) \, $.  Then  $ \, M_\zero := \Big( |M| \, , \widetilde{ \cO_M \big/ \cI_M} \Big) \, $  is a  {\sl classical\/}  smooth manifold, called the  {\it reduced smooth manifold\/}  associated with  $ M \, $;  the standard, built-in projection  $ \, s \mapsto \tilde{s} := s + \cI_M(U) \, $  (for all  $ \, s \in \cO_M(U) \, $)  at the sheaf level corresponds to a natural embedding  $ \, M_\zero \longrightarrow M \, $,  so that  $ M_\zero $  can be thought of as an
  \hbox{embedded sub(super)manifold of  $ M $  itself.}
                                                                        \par
   A similar construction applies when the supermanifold  $ M $  is analytic, resp.\ holomorphic,
%
%
 yielding the notion
of  {\it ``reduced analytic manifold''},  resp.\  {\it ``reduced holomorphic manifold''},  $ M_\zero $  %
%
 of  $ M \, $.
                                                                        \par
   A key feature of this construction is that it is natural, i.e.\ it provides a  {\sl functor\/}  from the category of supermanifolds (of either type: smooth, etc.) to the category of manifolds (of the corresponding type), defined on objects by  $ \, M \mapsto M_\zero \, $  and on morphisms in a natural way   --- cf.\  \cite{bcf}  for details.
 \vskip3pt
   Finally, by the very definition of supermanifolds (smooth or analytic or holomorphic), one sees at once that also ``classical'' manifolds (of either type) can be seen as ``supermanifolds'', simply observing that their structure sheaf is one of superalgebras that are actually {\sl totally even}, i.e.\ with trivial odd part.  Conversely, any supermanifold enjoying the latter property is actually a ``classical'' manifold, nothing more.  In other words, classical manifolds identify with those supermanifolds  $ M $  that actually coincide with their reduced (sub)manifolds  $ M_\zero \, $.
\end{free text}

\smallskip

   We finish this subsection introducing the notion of ``Lie supergroup'':

\smallskip

\begin{definition}  \label{def-supergroups}
 A  {\sl group object\/}  in the category  $ \ssmfd \, $,  or  $ \asmfd \, $,  or  $ \hsmfd \, $,  is called  {\it (real smooth) Lie supergroup},  resp.\  {\it (real) analytic Lie supergroup},  resp.\  {\it (complex) holomorphic Lie supergroup}.
   These objects, together with the obvious morphisms, form a subcategory among supermanifolds, denoted  $ \Lsgrp_\R^\infty \, $,  resp.\  $ \Lsgrp_\R^\omega \, $,  resp.\  $ \Lsgrp_\C^\omega \, $.
 \hfill   $ \diamondsuit $
\end{definition}

\medskip

\subsection{The functorial point of view}  \label{funct-pt-view}

\smallskip

   {\ } \;\;   In this subsection we introduce the language of ``functor(s) of points'' for supermanifolds and Lie supergroups, whose basic idea goes back to Weil's and Grothendieck's approach to algebraic geometry.  It will be just a quick reminder, for more details the interested reader can refer to  \cite{bcf}.

\smallskip

   We begin with some notation.  For any two categories  $ \cat{A} $  and  $ \cat{B} \, $,  with  $ \, [\cat{A},\cat{B}] \, $  we denote the category of all functors between  $ \cat{A} $  and  $ \cat{B} \, $,  the morphisms in  $ [\cat{A},\cat{B}] $  being the natural transformations.  As usual  $ {\cat{A}}^{\text{op}} $  will denote the  {\it opposite category\/}  to  $ \cat{A} \, $,  so that  $ \, \big[ {\cat{A}}^{\text{op}} , \cat{B} \big] \, $  is nothing but the category of contravariant functors from  $ \cat{A} $  to  $ \cat{B} \, $.
%
  \eject
%

\begin{free text}  \label{functor_pts}
 {\bf The functor of points of a supermanifold.}  Our first kind of ``functor of points'' is the following.  Given a (real) smooth supermanifold  $ \, M \in \ssmfd \, $,  its associated  {\it functor of points\/}  $ \; \cF_M : \ssmfd^{\text{op}} \relbar\joinrel\longrightarrow \set \; $  is defined on objects by  $ \;\cF_M(S) := \Hom(S,M) \; $  and on morphisms by  $ \; \cF_M(\phi): \cF_M(S) \relbar\joinrel\relbar\joinrel\longrightarrow \cF_M(T) \, , \, f \mapsto \big(\cF_M(\phi)\big)(f) := f \circ \phi \; $,  \, for all  $ \, S, T \in \ssmfd \, $  and  $ \, \phi \in \Hom(S,T) \, $.  The elements in  $ \cF_M(S) $  are called the  {\it $ S $--points\/}  of  $ M \, $.  A similar definition holds for analytic, resp.\ holomorphic, supermanifolds, with  $ \asmfd \, $,  resp.\  $ \hsmfd \, $,  replacing  $ \ssmfd $  wherever it occurs in the previous definition.

\smallskip

   Now consider two supermanifolds  $ M $  and  $ N \, $:  in order to write precise formulas we assume them to be both smooth, but the discussion hereafter makes sense the same if  $ M $  and  $ N $  are both analytic or both holomorphic.
%
%
   By definition of ``functor of points'' Yoneda's lemma yields a bijection
  $$  \Hom_\ssmfd(M,N) \, \leftarrow\joinrel\relbar\joinrel\relbar\joinrel\relbar\joinrel\rightarrow \, \Hom_{\,[\ssmfd^{\text{op}},\set]}\big(\cF_M , \cF_N\big)  $$
between morphisms  $ \, M \relbar\joinrel\longrightarrow N \, $  and natural transformations  $ \, \cF_M \relbar\joinrel\longrightarrow \cF_N \, $  (cf.\  \cite{maclane},  ch.~3, or  \cite{eh},  ch.~6).  Thus we have an immersion
 $ \; \mathcal{Y} : \ssmfd \relbar\joinrel\relbar\joinrel\longrightarrow \big[ \ssmfd^{\text{op}} , \set \big] \; $
 of  $ \ssmfd $  into  $ \, \big[ \ssmfd^{\text{op}} , \set \big] \, $  that is full and faithful.  The objects in  $ \big[ \ssmfd^{\text{op}} , \set \big] $  that lie in the image of this immersion --- i.e., that arise as functors of points of some supermanifold ---   are exactly those which are representable; indeed, not all objects in  $ \big[ \ssmfd^{\text{op}} , \set \big] $  are representable, but important  {\sl representability criteria\/}  exist   --- e.g., see  \cite{bcf},  Theorem 2.13.
 \vskip3pt
   Finally, in this functorial approach the Lie supergroups are characterized as follows: any supermanifold  $ M $  is actually a Lie supergroup if and only if its functor of points  $ \cF_M $  is actually group-valued, i.e.\ its target category is  $ \grp $  rather than  $ \set \, $.
\end{free text}

\vskip5pt

\begin{free text}  \label{Weil-Berezin_funct_A-pts}
 {\bf The Weil-Berezin functor of  $ \cA $--points.}  We introduce now the notion of  $ \cA $--points  of a supermanifold, following the presentation in  \cite{bcf},  \S 3.2.  This looks similar to the functor of points considered above, but in the end it is quite different indeed: in particular, in order to ``recover'' all the information corresponding to a given supermanifold  $ M $  one is forced to endow each set of  $ A $--points  of  $ M $  with an extra structure, namely that of an  ``{\sl  $ A_\zero $--manifold\/}\,'',  that we shall see later.
 \vskip2pt
   In the constructions we mainly refer to smooth supermanifolds but, with minimal changes, they adapt to analytic and holomorphic supermanifolds too  (cf.\ \cite{bcf}).  We begin with the key definitions:
\end{free text}

\vskip3pt

\begin{definition}  \label{def_A-pts}  {\ }
 Let  $ M $  be a supermanifold (either smooth, etc.).
 \vskip2pt
   {\it (a)}\,    For every point  $ \, x \in |M| \, $  and every Weil superalgebra  $ \, A \in \Wsalg \, $  we define the set of  {\it  $ A $--points  near  $ x \, $}  as given by
 $ \; M_{A,x} \, := \, \Hom_\salg\big( \cO_{M,x} \, , A \big) \; $
and the  {\it set of (all)  $ A $--points\/}  as given by  $ \; M_A \, := \, \bigsqcup_{\,x \in |M|} M_{A,x} \; $. If  $ \, x_{\scriptscriptstyle A} \in M_{A,x} \, $  we call  $ \tilde{x}_A := p_A \circ x_{\scriptscriptstyle A} \, $  the  {\it base point\/}  of  $ x_{\scriptscriptstyle A} \, $,  where  $ \, p_A : A \longrightarrow \K \, $  is the built-in projection of  $ A $  onto  $ \K \, $;  in fact,  $ \tilde{x}_{\scriptscriptstyle A} $  canonically identifies with  $ x \, $.
 \vskip2pt
   {\it (b)}\,  We denote by  $ \; \cW_M : \Wsalg \longrightarrow \set \; $  the functor defined by  $ \, A \mapsto \cW_M(A) := M_A \, $  on objects and on morphisms by  $ \; \rho \mapsto \cW_M(\rho) := \rho^{\scriptscriptstyle (M)} \; $  for every  $ \, A, B \in \Wsalg \, $  and every  $ \, \rho \in \Hom_\salg(A,B) \, $  with  $ \; \rho^{\scriptscriptstyle (M)} : M_A \longrightarrow M_B \, $,  $ \, x_{\scriptscriptstyle A} \mapsto \rho \circ x_{\scriptscriptstyle A} \; $.
 \hfill   $ \diamondsuit $
\end{definition}

\smallskip

\begin{free text}  \label{Shvarts_embedding}
 {\bf Weil-Berezin functors and Shvarts embedding.}  The above mentioned construction of  $ \cW_M $  for a supermanifold  $ M $  is clearly natural in  $ M \, $:  in other words, it gives rise to a functor  $ \; \cB : \ssmfd \longrightarrow [\Wsalg,\set] \; $  given on objects by  $ \, M \mapsto \cB(M) := \cW_M \, $.  As it is explained in  \cite{bcf},  \S 3.3,  this functor is an embedding which is faithful but definitely not full: thus it is unfit to describe  $ \ssmfd \, $,  as it does not identify the latter with a full subcategory of  $ \, [\Wsalg,\set] \, $.  One instead has to understand what is exactly the (non-full) subcategory of  $ \, [\Wsalg,\set] \, $  which is the image of  $ \cB \, $,  and then consequently adapt  $ \cB $  itself to a ``nicer'' functor.  I shall now shortly sketch the construction that is needed to solve this problem, referring to  \cite{bcf},  \S 4, for further details.
 \vskip4pt
   One starts by introducing the notion of  $ A_\zero $--manifold.  Roughly speaking, given a finite dimensional commutative algebra  $ \, A_\zero \in \alg_\K \, $,  an  {\it  $ A_\zero $--manifold\/}  is a (smooth, etc.) manifold  $ M $  endowed with an  $ L $--atlas,  i.e.\ an atlas of local charts each of which is diffeomorphic (or bianalytic, or biholomorphic) with some open subset of a given finite dimensional  $ A_\zero $--module  $ L $  in such a way that the differential of every change of charts is an  $ A_\zero $--module  isomorphism (between local copies of  $ L \, $).
%
%
 Given two  $ A_\zero $--manifolds  $ M $  and  $ N \, $,  possibly modelled on different  $ A_\zero $--modules,
 an  {\it  $ A_\zero $--smooth  (or analytic, or holomorphic) morphism\/}  $ \, \phi : M \longrightarrow N \, $  is any morphism from  $ M $  to  $ N $  in the standard sense (smooth,
%
%
 etc.) such that its differential at each point is  $ A_\zero $--linear.  The resulting category then is denoted  ({\bf  $ A_\zero\,\text{--}\,\cat{smfd}$}),  resp.\  ({\bf  $ A_\zero\,\text{--}\,\cat{amfd}$}),  resp.\  ({\bf  $ A_\zero\,\text{--}\,\cat{hmfd}$}),  possibly with a subscript  $ \K \, $.
 \vskip3pt
   For the second step, we gather together all possible  $ A_\zero $--manifolds  {\sl for all finite dimensional}  $ \, A \in \alg_\K \, $.  Now consider  $ \, A', A'' \in \alg_\K \, $,  a morphism  $ \, \rho : A' \!\longrightarrow\! A'' \, $,
%
%
 an  $ A'_\zero $--manifold  $ M' $  and an  $ A''_\zero $--manifold  $ M'' \, $.
 By scalar restriction (through  $ \rho \, $),  $ M'' $  is also an  $ A'_\zero $--manifold,  thus we define a  {\it morphism\/}  from  $ M' $  to  $ M'' $  as being a morphism of  $ A'_\zero $--manifolds   
   \hbox{--- in particular, its differential}
 is  $ A'_\zero $--linear  (through  $ \rho \, $).  With this notion of ``morphism'', the various  $ A_\zero $--manifolds  (for all  $ A_\zero $'s)  altogether form a new category, denoted by  $ \Azsmfd \, $,  resp.\ $ \Azamfd \, $,  resp.\  $ \Azhmfd \, $,  in the smooth, resp.\ analytic, resp.\ holomorphic case.  We can now introduce our next definition:

\smallskip

\begin{definition}
 We denote  $ \, \big[\big[ \Wsalg \, , \Azsmfd \big]\big] \, $  the subcategory of  $ \big[ \Wsalg \, , \Azsmfd \big] $  whose objects are those in  $ \, \big[ \Wsalg \, , \Azsmfd \big] \, $  and whose morphisms are all natural transformations  $ \; \phi : \mathcal{G} \relbar\joinrel\longrightarrow \mathcal{H} \; $  such that for every  $ \, A \in \Wsalg \, $  the induced  $ \; \phi_A : \mathcal{G}(A) \relbar\joinrel\longrightarrow \mathcal{H}(A) \; $  is  $ A_\zero $--smooth.  Similarly we define the categories  $ \, \big[\big[ \Wsalg , \Azamfd \big]\big] \, $  and  $ \, \big[\big[ \Wsalg , \Azhmfd \big]\big] \, $  respectively in the analytic and in the holomorphic case.
 \hfill   $ \diamondsuit $
\end{definition}

\smallskip

%
%
   The motivation for introducing the notion of  $ A_\zero $--manifold
%
%
 lies in the following three results:
 \vskip3pt
   {\it (1)} \;  if  $ M $  is any supermanifold, for each  $ \, A \in \Wsalg \, $  the set  $ \cW_M(A) $  can be naturally endowed with a canonical structure of  $ A_\zero $--manifold;
 \vskip3pt
   {\it (2)} \;  if  $ M $  is a supermanifold and  $ \, \rho : A \relbar\joinrel\relbar\joinrel\relbar\joinrel\longrightarrow B \, $  a morphism in  $ \Wsalg \, $,  then the map  $ \, \cW_{\!M\!}(A) \,{\buildrel {\rho^{\scriptscriptstyle (M)}} \over {\relbar\joinrel\longrightarrow}} \cW_{\!M\!}(B) \; \big( x_A \! \mapsto \! \rho \circ x_A \big) \, $  is a morphism in  ({\bf  $ A_\zero\text{--}\cat{smfd}$}),  resp.\  ({\bf $ A_\zero\text{--}\cat{amfd}$}),  resp.\  ({\bf  $ A_\zero\text{--}\cat{hmfd}$}).
 \vskip3pt
   {\it (3)} \;  if  $ \, \phi : M \relbar\joinrel\longrightarrow N \, $  is a morphism of supermanifolds, then for all  $ \, A \in \Wsalg \, $  the map  $ \; \phi_A : \cW_M(A) \relbar\joinrel\longrightarrow \cW_N(A) \; \big( x_A \mapsto x_A \circ \phi^* \big) \; $  is
%
%
 a morphism in  $ \, \big[\big[ \Wsalg \, , \Azsmfd \big]\big] \, $,  resp.\ in  $ \, \big[\big[ \Wsalg \, , \Azamfd \big]\big] \, $,  resp.\ in  $ \, \big[\big[ \Wsalg \, , \Azhmfd \big]\big] \, $.

\vskip13pt

   Thanks to the above, we can correctly introduce Weil-Berezin functors and Shvarts embedding:

\smallskip

\begin{definition}  {\ }   \label{def_Weil-Berezin-funct_Shvarts-embed}
 \vskip2pt
 \hskip-13pt   {\it (a)}\;  For every smooth supermanifold  $ \, M \in \ssmfd \, $,  we call  {\it Weil-Berezin (local) ``functor of  $ A $--points''\/}  of  $ M $  the functor  $ \; \cW_M : \Wsalg \relbar\joinrel\longrightarrow \Azsmfd \; $  defined as in  Definition \ref{def_A-pts}{\it (b)}   --- which makes sense thanks to the previous remarks.  The same terminology applies,  {\sl mutatis mutandis},  in the case of any analytic or holomorphic supermanifold.
 \vskip2pt
 \hskip-13pt   {\it (b)}\;  We call  {\it Shvarts embedding\/}  the functor  $ \, \cS \! : \! \ssmfd \!\longrightarrow\! \big[\big[ \Wsalg , \Azsmfd \big]\big] \, $,  \hbox{in the smooth}  case, defined on objects by  $ \, M \mapsto \cW_M \; $;  and similarly in the analytic and the holomorphic case.
   \hfill   $ \diamondsuit $
\end{definition}

\vskip5pt

   The key point here is that  {\it the Shvarts embedding is a full and faithful functor},  so that for any two supermanifolds, say smooth,  $ M $  and  $ N $  one has
 \vskip5pt
    \centerline{ $ \Hom_{\ssmfd}(M,N) \, \cong \, \Hom_{[[\Wsalg\,,\,\Azsmfd]]}\big( \cS(M) , \cS(N) \big) \, = \, \Hom_{[[\Wsalg\,,\,\Azsmfd]]}\big( \cW_M , \cW_N \big) $ }
 \vskip5pt
\noindent
 hence in particular  {\it  $ \, M \cong N \, $  if and only if  $ \; \cS(M) \cong \cS(N) \; $,  \, that is  $ \; \cW_M \cong \cW_N \; $}.
 \vskip5pt
   Therefore one can correctly study supermanifolds via their Weil-Berezin functors.  However, to do that one still has to be able to characterize those objects in  $ \big[\big[ \Wsalg \, , \Azsmfd \big]\big] $   --- in the smooth case, and similarly in the other cases ---   that actually are (isomorphic to) the Weil-Berezin functors of some supermanifolds; in other words, one needs a characterization of the image of Shvarts embedding, which is actually  {\sl not\/}  all of its target category, but a proper subcategory of it.  This is the ``representability problem'', which we do not really care so much for the present work.
                                                         \par
   What is still relevant to us, is that the Shvarts embedding  $ \cS $  preserves products, hence also group objects.  This means, in the end, that the following holds true (cf.\  \cite{bcf},  \S 4):

\vskip9pt

%
%

\begin{proposition}   \label{funct-char_Lie-supergrps}
 A supermanifold  $ M $  is a  {\sl Lie supergroup}  if and only if  $ \, \cS(M) := \cW_M \, $  takes values in the subcategory (among  $ \cA_\zero $--manifolds)  of group objects, that we call  {\sl ``Lie  $ \cA_\zero $--groups''}.
\end{proposition}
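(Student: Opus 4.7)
The plan is to derive the statement directly from the three properties of Shvarts embedding emphasised just before the proposition: $\cS$ is full, faithful, and product-preserving (and in particular preserves the terminal object). Since being a group object in any category with finite products is expressed purely in terms of structure morphisms $m,i,e$ satisfying commutativity of the associativity, unit and inverse diagrams, any functor that is fully faithful and preserves finite products transports group-object structures in both directions between its source and its essential image.

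First, for the ``only if'' direction, I would start with a Lie supergroup $M$, so that $M$ carries morphisms $m:M\times M\to M$, $i:M\to M$, $e:\ast\to M$ in $\ssmfd$ satisfying the group-object diagrams. Applying $\cS$ and using that it preserves products and the terminal object, I get natural transformations $\cS(m):\cW_M\times\cW_M\to\cW_M$, $\cS(i):\cW_M\to\cW_M$, $\cS(e):\cW_\ast\to\cW_M$ in $\big[\big[\Wsalg,\Azsmfd\big]\big]$ whose associativity/unit/inverse diagrams commute, since $\cS$ is a functor. Evaluating at an arbitrary $A\in\Wsalg$, I obtain morphisms $m_A,i_A,e_A$ of $A_\zero$-manifolds satisfying the same diagrams: this is precisely the data making $\cW_M(A)$ into a Lie $A_\zero$-group. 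Naturality of $\cS(m)$, $\cS(i)$, $\cS(e)$ in $A$ ensures that the morphisms $\rho^{(M)}$ induced by every $\rho\in\Hom_\Wsalg(A,B)$ are Lie $A_\zero$-group morphisms, so $\cW_M$ does factor through the subcategory of group objects.

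For the converse, suppose $\cW_M$ takes values in Lie $A_\zero$-groups. Then for each $A$ we are given morphisms $m_A:\cW_M(A)\times\cW_M(A)\to\cW_M(A)$, $i_A$, $e_A$ in $\Azsmfd$ satisfying the group-object diagrams, and these are natural in $A$ by assumption on the target category. Because $\cS$ preserves products, we may regard the collections $\{m_A\},\{i_A\},\{e_A\}$ as morphisms $\cS(M)\times\cS(M)\to\cS(M)$, $\cS(M)\to\cS(M)$, $\cS(\ast)\to\cS(M)$ in $\big[\big[\Wsalg,\Azsmfd\big]\big]$. Fullness of $\cS$ then produces unique morphisms $m:M\times M\to M$, $i:M\to M$, $e:\ast\to M$ in $\ssmfd$ with $\cS(m)=\{m_A\}$ and so on; faithfulness, combined with the fact that $\cS$ sends the compositions appearing in the group-object diagrams on $M$ to the corresponding compositions among the $m_A$'s (which commute by hypothesis), forces the corresponding diagrams on $M$ to commute as well. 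Hence $M$ is a group object in $\ssmfd$, i.e.\ a Lie supergroup. The analytic and holomorphic cases are identical, with $\asmfd$, $\Azamfd$ or $\hsmfd$, $\Azhmfd$ replacing $\ssmfd$, $\Azsmfd$.

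No serious obstacle is expected here: the content is purely formal category theory, and the only thing one must be careful about is that the terminal object is preserved by $\cS$ (needed to interpret the unit $e$), which follows from the product-preservation statement together with the explicit description of $\cW_\ast$.
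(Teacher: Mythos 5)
Your argument is correct and is exactly the route the paper intends: the text gives no proof of this proposition, but justifies it in one line by the fact that the Shvarts embedding is full, faithful and product-preserving (citing \cite{bcf}, \S 4), and your write-up is the standard categorical unwinding of that remark, transporting the structure morphisms $m,i,e$ and their diagrams back and forth via fullness, faithfulness and preservation of finite products (including the terminal object). No gaps.
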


\end{free text}

\vskip11pt

\begin{free text}  \label{WB_red-mfd}
 {\bf The Weil-Berezin approach for reduced submanifolds and the classical case.}  As we saw in  \S \ref{red-submfd},
%
%
 ``classical'' manifolds can obviously be seen as ``supermanifolds'', simply observing that their structure sheaf is one of superalgebras which are actually {\sl totally even},  i.e.\ with trivial odd part; conversely, any supermanifold with this property is actually ``classical''.  In other words, these are all those supermanifolds  $ M $  which coincide with their (classical) reduced subsupermanifold  $ M_\zero \, $.
                                                                   \par
   From the functorial point of view, it is clear that a supermanifold  $ M $  is classical if and only if the associated Weil-Berezin functor  $ \, \cS(M) := \cW_M \in \big[\big[ \Wsalg , \set \big]\big] \, $  coincides with its restriction to the subcategory  $ \, \alg \bigcap \Wsalg \, $;  in short, these are those  $ M $  such that  $ \, \cW_M(A) = \cW_M(A_\zero) \, $  for all  $ \, A \in \Wsalg_\K \, $.  If instead one deals with a (general) supermanifold  $ M \, $,  then the restriction to  $ \, \alg \bigcap \Wsalg \, $  of its Weil-Berezin functor coincides with the Weil-Berezin functor (for classical manifolds) of its associated reduced submanifold  $ M_\zero \, $:  in a nutshell,  $ \, \cS(M){\big|}_{\alg \bigcap\, \Wsalg} = \cS(M_\zero) \; $.
\end{free text}

\vskip-3pt

\begin{free text}  \label{functor_Lambda-pts}
 {\bf The functor of  $ \Lambda $--points.}  As it is explained in  \cite{bcf},  \S 4.3,  one can repeat the construction of the Weil-Berezin functor of any supermanifold and of the consequent Shvarts embedding functor in a somewhat simpler manner, namely replacing systematically the category  $ \Wsalg_\K $  with its full subcategory  $ \Grass_\K \, $.  Thus for each smooth supermanifold  $ \, M \in \ssmfd_\R \, $  the restriction to  $ \Grass_\R $  of its Weil-Berezin functor yields a new functor  $ \, \cW_M^{\,\scriptscriptstyle (\Lambda)} \! : \Grass_\R \longrightarrow \Azsmfd \, $,  which we call ``(Weil-Berezin)  {\sl functors of  $ \Lambda $--points  of  $ M $''}.  As  $ M $  ranges in  $ \ssmfd_\R \, $  all these  $ \cW_M^{\,\scriptscriptstyle (\Lambda)} $'s  give a new functor  $ \; \cS^{\,\scriptscriptstyle (\Lambda)} \! : \ssmfd_\R \relbar\joinrel\relbar\joinrel\longrightarrow \big[\big[ \Grass_\R \, , \, \Azsmfd \big]\big] \; $  whose main feature is that it is again a full and faithful embedding, which we call again ``Shvarts embedding''.
                                                 \par
   Similarly one does with analytic or holomorphic supermanifolds.
 \vskip3pt
   The outcome is that to study supermanifolds it is enough to consider their functors of  $ \Lambda $--points; in particular, two supermanifolds (of either type) are isomorphic if and only if their corresponding functors of points are.
   Moreover, the new Shvarts embedding  $ \cS^{\,\scriptscriptstyle (\Lambda)} $  again preserves products, so it takes group objects to group objects: therefore, the characterization of Lie supergroups stated in  Proposition \ref{funct-char_Lie-supergrps}  still makes sense reading  ``$ \, \cS^{\,\scriptscriptstyle (\Lambda)}(M) := \cW^{\,\scriptscriptstyle (\Lambda)}_M \, $''  instead of  ``$ \, \cS(M) := \cW_M \, $''.
 \vskip3pt
   A direct consequence of all this is the following.  In the rest of the paper, we shall work with Lie supergroups considered as special functors, i.e.\ we study  $ M $  using its Weil-Berezin functor of points  $ \cW_M \, $;  or even, conversely, we consider special functors  $ \, \cW : \Wsalg_\R \longrightarrow \Azsmfd \, $   --- in the smooth case, say ---   and then prove that there exists some (smooth) Lie supergroup  $ M $  such that  $ \, \cW_M = \cW \, $.  Now, due to the above discussion it makes sense to (try to) follow the same strategy using  $ \Grass $  instead of  $ \Wsalg $  and the functor of  $ \Lambda $--points  $ \cW^{\,\scriptscriptstyle (\Lambda)}_M $  instead of  $ \cW \, $.  The good news are, indeed, that this is actually feasible, in that all our discussion in the sequel will perfectly makes sense and will be equally correct in both approaches.  Thus one can choose to work in the larger framework of Weil superalgebras (as we do) or in the simpler setup of Grassmann algebras, and in both cases our procedure and results will apply and hold true exactly the same.
\end{free text}

\bigskip

\section{From Lie supergroups to super Harish-Chandra pairs}  \label{sgroups-to-sHCp's}

\smallskip

   {\ } \;\;   In this section we present the notion of super Harish-Chandra pair, showing how it naturally arises from that of Lie supergroup.  Indeed, here ``naturally'' means that one has a functorial construction that, starting from any Lie supergroup, leads to a special ``pair'', whose properties are then singled out to set down the very definition of ``super Harish-Chandra pairs''.
%
%

\medskip

\subsection{The notion of super Harish-Chandra pair}  \label{sec-sHCp's}

\smallskip

   {\ } \;\;   We present now the notion of  {\it super Harish-Chandra pair},  introduced by Koszul in  \cite{koszul}   --- but this terminology is first found in  \cite{dm};
%
%
 in the next subsection we shall also see its (natural) motivation.

\vskip9pt

\begin{definition}  \label{def-sHCp}
 We call  {\it (smooth, analytic or holomorphic) super Harish-Chandra pair}   --- or just  {\it ``sHCp''},  in short ---   over  $ \K $  any pair  $ \, (G_+ \, , \, \fg) \, $  such that
 \vskip4pt
   {\it (a)}  $ \!\quad G_+ $  is a (smooth, analytic or holomorphic) Lie group over  $ \K \, $,  and  $ \; \fg \in \sLie_\K \; $;
 \vskip3pt
   {\it (b)}  $ \!\quad \Lie(G_+) = \fg_\zero \; $;
 \vskip3pt
   {\it (c)}  \!\quad  there is a (smooth, analytic or holomorphic)  $ G_+ $--action  on  $ \fg $  by Lie  $ \K $--superalgebra  automorphisms, hereafter denoted by  $ \, \Ad : G_+ \relbar\joinrel\relbar\joinrel\longrightarrow \text{\sl Aut}(\fg) \, $,  such that its restriction to  $ \fg_\zero $  is the adjoint action of  $ G_+ $  on  $ \Lie(G_+) = \fg_\zero \, $  and the differential of this action is the restriction to  $ \; \Lie(G_+) \times \fg \, = \, \fg_\zero \times \fg \; $  of the adjoint action of  $ \fg $
 on itself.
 \vskip5pt
   If  $ \, \big( G'_+ \, , \, \fg' \big) \, $  and  $ \, \big( G''_+ \, , \, \fg'' \big) \, $  are two super Harish-Chandra pairs over  $ \K \, $,  a  {\it morphism\/}  among them is any pair  $ \; (\phi_+,\varphi) : \big( G'_+ \, , \, \fg' \big) \longrightarrow \big( G''_+ \, , \, \fg'' \big) \; $  where  $ \, \phi_+ : G'_+ \longrightarrow G''_+ \, $  is a morphism of Lie groups (in the smooth, analytic or holomorphic sense),  $ \, \varphi : \fg' \longrightarrow \fg'' \, $  is a morphism of Lie superalgebras, and the two are compatible with the additional structure, that is to say
 \vskip3pt
  {\it (d)}  $ \quad  \varphi{\big|}_{\fg_\zero} \, = \, d\phi_+  \quad ,  \qquad  \Ad\big(\phi_+(g)\big) \circ \varphi \, = \, \varphi \circ \Ad(g)  \quad \;\forall \;\, g \in G_+ \;\; $.
 \vskip5pt
   All super Harish-Chandra pairs over  $ \K $  along with their morphisms form a category, denoted  $ \sHCp_\K \, $.  When we have to specify its type, we write  $ \sHCp_\R^\infty $  if this type is real smooth,  $ \sHCp_\R^\omega $  if it is real analytic, and  $  \sHCp_\C^\omega $  if it is complex holomorphic.
     \hfill  $ \diamondsuit $
\end{definition}

\medskip

\subsection{Super Harish-Chandra pairs from Lie supergroups}  \label{sgroups->sHCp's}

\smallskip

   {\ } \;\;   In this subsection we show how one can naturally associate a significant super Harish-Chandra pair with any Lie supergroup; indeed, this is the reason why the very notion of super Harish-Chandra pair was introduced.  What follows is well-known, for details and proofs we refer to  \cite{ccf}.

\vskip11pt

\begin{free text}  \label{reduced-subgrp}
 {\bf The reduced subgroup of a Lie supergroup.}  Let  $ G $  be a Lie supergroup (of either type: smooth, etc.).  As it is a supermanifold, from  \S \ref{red-submfd}  we know that there exists also a reduced submanifold  $ G_\zero $  of  $ G \, $.  Taking the functorial point of view, we know that  $ \; \cS_{G_\zero} = \cS_G{\big|}_{\alg \bigcap\, \Wsalg} \; $  (cf.\  \S \ref{WB_red-mfd})  and  $ \cS_G $  takes values in the subcategory of Lie  $ \cA_\zero $--groups,  (by  Proposition \ref{funct-char_Lie-supergrps});  but then the latter is true for  $ \cS_{G_\zero} $  as well, hence   --- by  Proposition \ref{funct-char_Lie-supergrps}  again ---   we argue that  {\it  $ {G_\zero} $  itself is indeed a Lie group\/}  (either smooth, etc., like  $ G $  is).
                                                        \par
   Moreover, when  $ \, \phi : G' \longrightarrow G'' \, $  is a morphism of Lie supergroups, the morphism of manifolds  $ \, \phi_0 : G'_\zero \longrightarrow G''_\zero \, $   --- induced by the functoriality of the construction  $ \, G \mapsto G_\zero \, $  ---   is in addition a Lie group morphism.  Therefore, we conclude that  $ \, G \mapsto G_\zero \, $  and  $ \, \phi \mapsto \phi_\zero \, $  define a functor from Lie supergroups (of either type) to Lie groups (of the same type).
\end{free text}

\smallskip

\begin{free text}  \label{tangent-Lie-superalg}
 {\bf The tangent Lie superalgebra of a Lie supergroup.}  We now quickly recall how to associate with a Lie supergroup its ``tangent Lie superalgebra'', referring to  \cite{ccf}  for further details.

\smallskip

   For any  $ \, A \in \Wsalg_\K \, $,  we let  $ \, A[\varepsilon] := A[x]\big/\big(x^2\big) \, $  be the so-called  {\it superalgebra  of dual numbers\/}  over  $ A \, $,  in which  $ \, \varepsilon := x \! \mod \! \big(x^2\big) \, $  is taken to be  {\it even}.  Then  $ \, A[\varepsilon] = A \oplus A \varepsilon \, $,  and there are two natural morphisms
 $ \; i_{{}_A} : A \longrightarrow A[\varepsilon] \, $,  $ \, a \;{\buildrel {\,i_{{}_{A_{\,}}}} \over \mapsto}\; a \, $,  \, and
 $ \; p_{{}_A} : A[\varepsilon] \longrightarrow A \, $,  $ \, \big( a + a'\varepsilon \big) \;{\buildrel {\,p_{{}_{A_{\,}}}} \over \mapsto}\; a \; $,
 \, such that  $ \; p_{{}_A} \! \circ i_{{}_A} = \, {\mathrm{id}}_A \; $.  Note also that it follows by construction that  $ \, A[\varepsilon] \in \Wsalg_\K \, $  again.
\end{free text}

\vskip3pt

\begin{definition} \label{tangent_Lie_superalgebra}
   Given a functor  $ \, G : \Wsalg_\K \! \longrightarrow \grp \, $,  let  $ \; G(p_A) : G (A(\varepsilon)) \longrightarrow G(A) \; $  be the morphism associated with the morphism  $ \; p_A : A[\varepsilon] \relbar\joinrel\longrightarrow A \; $  in  $ \Wsalg_\K \, $.  Then there exists a unique functor  $ \; \Lie(G) : \Wsalg_\K \longrightarrow \set \; $  given on objects by  $ \; \Lie(G)(A) := \Ker\,\big(G(p_A)\big) \; $.
   \hfill  $ \diamondsuit $
\end{definition}

\vskip5pt

   The key fact is that when the functor  $ G $  as above is a (smooth, analytic or holomorphic) Lie supergroup,  then  $ \Lie(G) $  is Lie algebra valued, i.e.\ it is a functor  $ \; \Lie(G) : \Wsalg_\K \longrightarrow \lie_\K \; $.  This requires a non-trivial proof (like in the classical case), for which we refer to  \cite{ccf},  Ch.\ 11 (with the few adaptations needed for the present setup), and only quickly sketch here the main steps.
 \vskip3pt
   The Lie structure on any object  $ \, \Lie(G)(A) \, $  is introduced as follows.  First, define the  {\it adjoint action\/}  of  $ G $  on  $ \Lie(G) $  as given, for every  $ \, A \in \Wsalg_\K \, $,  by
 \vskip4pt
   \centerline{ $ \Ad : G(A)  \longrightarrow  \rGL\big(\Lie(G)(A)\big) \quad ,  \qquad  \Ad(g)(x) \, := \, G(i_A)(g) \cdot x \cdot {\big(G(i_A)(g)\big)}^{-1} $ }
 \vskip5pt
\noindent
 for all  $ \, g \in G(A) \, $,  $ \, x \in \Lie(G)(A) \, $.  Second, define the  {\it adjoint morphism\/}  $ \ad $  as
 \vskip4pt
   \centerline{ $ \ad \, := \, \Lie(\Ad) : \Lie(G) \longrightarrow \Lie(\rGL(\Lie(G))) := \End(\Lie(G)) $ }
\vskip5pt
\noindent
 and finally define  $ \; [x,y] := \ad(x)(y) \; $  for all  $ \, x,y \in \Lie(G)(A) \, $.  Then we have the following:

\medskip

\begin{proposition}  \label{Lie-funct_Lie(G)}
 Given a (smooth, analytic or holomorphic) Lie supergroup  $ G \, $,  let  $ \, \fg := T_e(G) \, $  be the  {\sl tangent}  $ \K $--supermodule  to  $ G $  at the unit point  $ \, e \in G \, $.
 \vskip5pt
   \hskip-7pt   (a)  $ \; \Lie(G) $  with the bracket  $ \, [\,\cdot\, , \cdot\, ] $  above is Lie algebra valued, i.e.\  $ \, \Lie(G) : \Wsalg_\K \! \longrightarrow \lie_\K \; $;
 \vskip3pt
   \hskip-7pt   (b)  $ \; \Lie(G) $  is quasi-representable (see  \S \ref{Lie-salg_funct}),  namely  $ \, \Lie(G) = \cL_\fg \, $,  where  $ \fg $  is endowed with a canonical structure of Lie  $ \K $--superalgebra,  and it is also representable, namely represented by  $ \fg^* \, $;
 \vskip1pt
   \hskip-7pt   (c)  for every  $ \; A \in \Wsalg_\K \, $  one has  $ \, \Lie(G)(A) = \Lie\big(G(A)\big) \, $,  the latter being the tangent Lie algebra to the Lie group  $ G(A) \, $.
\end{proposition}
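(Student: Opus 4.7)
The plan is to establish part (b) first, since the concrete functorial description $\Lie(G)\cong\cL_\fg$ it yields is what makes parts (a) and (c) tractable. The strategy is: a local-chart computation globalized by a super-cocycle argument for (b); a functorial/diagrammatic extraction of a Lie superalgebra structure for (a); and a direct comparison of two tangent-space constructions for (c).

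For (b), I work in a local supermanifold chart at the identity $e\in G_\zero$, where $G$ is isomorphic to $\K^{p|q}$ with $p|q$ its superdimension. By the Weil-Berezin picture, $A$-points of $\K^{p|q}$ near $0$ are the superalgebra morphisms $\cO_{\K^{p|q},0}\to A$, hence in local coordinates are parametrized by $A_\zero^{\,p}\oplus A_\uno^{\,q}$. Applying this to $A[\varepsilon]$ and using that $\varepsilon$ is even (so $A[\varepsilon]_\zero=A_\zero\oplus A_\zero\varepsilon$ and $A[\varepsilon]_\uno=A_\uno\oplus A_\uno\varepsilon$), the kernel of the map induced by $p_A$ is precisely the $\varepsilon$-part, which is canonically identified with $(A\otimes_\K\fg)_\zero$ for $\fg:=T_e(G)$ carrying its inherent $\Z_2$-grading. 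Chart-independence (globalization) follows because the transition formulas, reduced modulo $\varepsilon^2=0$, collapse to the super differential at $e$ of the change of coordinates; together with naturality in $A$ this yields $\Lie(G)\cong\cL_\fg$. Representability by $\fg^*$ then follows from the standard identification $(A\otimes\fg)_\zero\cong\Hom_\salg(\fg^*,A)$ after interpreting $\fg^*$ as the appropriate commutative superalgebra (its symmetric superalgebra).

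For (a), granted (b), I check that $[x,y]:=\ad(x)(y)$ promotes $\Lie(G)$ to a functor to $\lie_\K$ and that the resulting structure on $\fg$ is that of a Lie $\K$-superalgebra. First, $\Ad$ is well-defined and group-valued: from $p_A\circ i_A=\mathrm{id}_A$ one sees that conjugation by $G(i_A)(g)$ preserves $\Ker\bigl(G(p_A)\bigr)$, so it acts on $\Lie(G)(A)$. Since $\rGL(\Lie(G))$ is itself quasi-representable (by $\End(\fg)$), $\ad:=\Lie(\Ad)$ is a natural transformation $\Lie(G)\to\End(\Lie(G))$. Antisymmetry and the super Jacobi identity are extracted by picking off the second-order $\varepsilon_1\varepsilon_2$-terms in the identity $\Ad(ghg^{-1})=\Ad(g)\,\Ad(h)\,\Ad(g)^{-1}$ evaluated over $A[\varepsilon_1,\varepsilon_2]$, exactly as in the classical Lie group case but tracking the $\Z_2$-signs; the quasi-representability criterion of \S\ref{Lie-salg_funct} then transfers these axioms uniformly from the $\cL_\fg(A)$'s to a Lie $\K$-superalgebra structure on $\fg$. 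For (c), both $\Lie(G)(A)$ and $\Lie(G(A))$ are kernels of a projection coming from dual numbers: $p_A:A[\varepsilon]\to A$ in the super formulation, and the classical $A_\zero[\eta]\to A_\zero$ applied to the Lie $A_\zero$-group $G(A)$. Since $A[\varepsilon]_\zero=A_\zero[\varepsilon]$ and the Weil-Berezin point functor is $A_\zero$-manifold enriched (so $G(A[\varepsilon])$ realises the $A_\zero[\varepsilon]$-points of $G(A)$), the two kernels are canonically identified; bracket matching follows because the super $\Ad$ restricts on $G(A)$ to the classical adjoint of $G(A)$.

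The step I expect to be the main obstacle is the globalization inside (b): producing the chart-independent natural isomorphism $\Lie(G)\cong\cL_\fg$ requires showing that the contributions from transition maps, taken modulo $\varepsilon^2$, reduce precisely to the super differential at $e$ of the change of coordinates — a super analogue of the cocycle condition that realises the tangent bundle in classical differential geometry. Done carefully with the correct parity sign conventions, this is the technical heart of the proof; once it is in place, parts (a) and (c) are essentially formal consequences of naturality in $A$ combined with the dual-numbers manipulations familiar from the classical Lie group case.
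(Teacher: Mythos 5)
The paper does not actually prove this proposition: it defers the proof to \cite{ccf}, Ch.~11, and only records the main steps, namely the construction of the adjoint action $\Ad$, of $\ad := \Lie(\Ad)$, and of the bracket $[x,y] := \ad(x)(y)$. Your outline is consistent with that sketch and with the standard argument it points to (local dual-number computation and its globalization for quasi-representability, differentiation of $\Ad$ for the Lie superalgebra axioms, identification of the two dual-number kernels for part~(c)), so it is essentially the same approach, correctly laid out at the level of detail the paper itself provides.
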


\vskip5pt

   {\sl Note\/}  that the previous proposition also collects all that was shortly explained in  \S \ref{Lie-salg_funct}  about Lie superalgebras and their functorial presentation/characterization; again, see  \cite{ccf}  for details.

\vskip5pt

   {\sl  $ \underline{\text{N.B.}} $:}\,  due to the previous proposition, in the sequel we shall freely identify the functor  $ \, \Lie(G) = \cL_\fg \, $  with the tangent superspace  $ \fg $   --- now thought of as a Lie superalgebra ---   calling this common object ``the  {\sl tangent Lie superalgebra\/}  of (or ``to'') the Lie supergroup  $ G \, $''.
                                                          \par
   There exist several other realizations of the tangent Lie superalgebra to  $ G \, $,  and canonical identifications among all of them.  We will only occasionally need some of them, so we do not go into further details, but refer instead to the literature, in particular  \cite{cf}  (especially \S 5.2 therein).

\vskip5pt

   Finally, the construction  $ \, G \mapsto \Lie(G) \, $  for Lie supergroups is actually natural, in that any morphism  $ \, \phi : G' \relbar\joinrel\longrightarrow G'' \, $  of Lie supergroups induces a morphism  $ \, \Lie(\phi) : \Lie\big(G'\big) \relbar\joinrel\longrightarrow \Lie(G'') \, $  of Lie superalgebras.  Eventually, all this together provides functors  $ \; \Lie : \Lsgrp_\R^\infty \!\relbar\joinrel\relbar\joinrel\longrightarrow \sLie_\R \; $,  $ \; \Lie : \Lsgrp_\R^\omega \!\relbar\joinrel\relbar\joinrel\longrightarrow \sLie_\R \; $  and  $ \; \Lie : \Lsgrp_\C^\omega \!\relbar\joinrel\relbar\joinrel\longrightarrow \sLie_\C \; $;  see  \cite{ccf}  and  \cite{bcf}  for details.

\vskip15pt

\begin{free text}  \label{Liesgrps-->sHCp's}
 {\bf The super Harish-Chandra pair of a Lie supergroup.}  Gathering together the previous results
we get the core of the present section.  Namely, if  $ G $  is any Lie supergroup then  $ \big( G_\zero \, , \Lie(G) \big) $  is a super Harish-Chandra pair, and this construction is functorial, as the following claims:

\vskip9pt

\begin{theorem}   \label{thm_Lsgrps-->sHCp's}
 {\sl (see for instance  \cite{cf})}  There exist functors
 \vskip3pt
   \centerline{ $ \; \Phi : \Lsgrp_\R^\infty \!\relbar\joinrel\longrightarrow \sHCp_\R^\infty \;\;\; $,  \quad
      $ \; \Phi : \Lsgrp_\R^\omega \relbar\joinrel\longrightarrow \sHCp_\R^\omega \;\;\; $,  \quad
      $ \; \Phi : \Lsgrp_\C^\omega \relbar\joinrel\longrightarrow \sHCp_\C^\omega $ }
 \vskip3pt
\noindent
 that are given on objects by  $ \; G \mapsto \big( G_\zero \, , \Lie(G) \big) \; $  and on morphisms by  $ \; \phi \mapsto \big( \phi_\zero \, , \Lie(\phi) \big) \; $.
\end{theorem}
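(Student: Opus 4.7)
The plan is to verify directly that the assignments on objects and morphisms satisfy the axioms of Definition \ref{def-sHCp} and respect identities and composition. The three settings (smooth, analytic, holomorphic) are handled by exactly the same argument, differing only in which category of (super)manifolds one works in, so I would argue once in the smooth case and note that the other two are verbatim parallel.

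First, I would fix a Lie supergroup $G$ and check that $\big(G_\zero,\Lie(G)\big)$ obeys conditions \textit{(a)}--\textit{(c)}. Condition \textit{(a)} is immediate: $G_\zero$ is a classical Lie group by the discussion in \S\ref{reduced-subgrp}, while $\Lie(G)$ is a Lie superalgebra (canonically identified with its tangent supermodule $\fg = T_e(G)$) by Proposition \ref{Lie-funct_Lie(G)}. Condition \textit{(b)} reduces to identifying $\Lie(G_\zero)$ with $\fg_\zero$; using the relation $\cS(G_\zero) = \cS(G){\big|}_{\alg\,\cap\,\Wsalg}$ from \S\ref{WB_red-mfd} together with Definition \ref{tangent_Lie_superalgebra}, evaluating $\Lie(G)$ on a purely even Weil algebra $A$ recovers exactly $\Lie(G_\zero)(A)$, and quasi-representability gives $\Lie(G_\zero) = \cL_{\fg_\zero}$.

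The real work is in condition \textit{(c)}. To construct the $G_\zero$-action on $\fg$, I would take the adjoint morphism $\Ad : G(A) \longrightarrow \rGL\big(\Lie(G)(A)\big)$ defined right before Proposition \ref{Lie-funct_Lie(G)} and restrict along the embedding $G_\zero \hookrightarrow G$. The resulting natural transformation $G_\zero \longrightarrow \text{\sl Aut}(\fg)$ is a morphism in the appropriate manifold category by Shvarts's embedding (the full-faithfulness part underlying Proposition \ref{funct-char_Lie-supergrps}), giving a genuine smooth/analytic/holomorphic action. That it takes values in Lie superalgebra automorphisms follows because the bracket on $\fg$ is itself defined via $\Ad$ in a $G$-equivariant way, and the compatibility requirements --- restriction to $\fg_\zero$ giving the classical adjoint action on $\Lie(G_\zero)$, and differential giving the adjoint action of $\fg_\zero$ on $\fg$ --- are encoded in the identity $\ad = \Lie(\Ad)$ used to build the bracket in the first place.

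Next I would show that any morphism $\phi : G' \longrightarrow G''$ of Lie supergroups produces a morphism of sHCp's. The functor $(\,\cdot\,)_\zero$ from \S\ref{reduced-subgrp} yields the Lie group morphism $\phi_\zero \colon G'_\zero \longrightarrow G''_\zero$, and Proposition \ref{Lie-funct_Lie(G)} (with the closing remark of \S\ref{tangent-Lie-superalg}) yields $\Lie(\phi) \colon \fg' \longrightarrow \fg''$. Condition \textit{(d)} has two parts: the equality $\Lie(\phi){\big|}_{\fg'_\zero} = d\phi_\zero$ is obtained by restricting $\Lie(\phi)$ to purely even Weil algebras and invoking the identification $\Lie(G_\zero) = \fg_\zero$ from step one; the conjugation equivariance $\Ad\big(\phi_\zero(g)\big) \circ \Lie(\phi) = \Lie(\phi) \circ \Ad(g)$ is naturality of $\Ad$, i.e.\ the commutative square obtained on $A$-points from the fact that $\phi$ is a group morphism. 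Functoriality of $\Phi$ (preserving identities and composition) is then inherited componentwise from the functoriality of $(\,\cdot\,)_\zero$ and $\Lie$.

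The step I expect to be the main obstacle is the one buried inside condition \textit{(c)}: upgrading the set-theoretic, pointwise definition of $\Ad$ via Weil-Berezin $A$-points to an actual morphism of (super)manifolds $G_\zero \longrightarrow \text{\sl Aut}(\fg)$ in the smooth, analytic, or holomorphic sense, and doing so uniformly across the three settings. This is precisely where one must leave the level of abstract group-valued functors and exploit the full Shvarts embedding into $\big[\big[\Wsalg, \Azsmfd\big]\big]$ (or its analytic/holomorphic counterpart) so that $\Ad$ is recognized as a morphism in the right category; once this regularity is secured, the remaining verifications are formal consequences of naturality and of the constructions recalled in \S\ref{reduced-subgrp}--\S\ref{tangent-Lie-superalg}.
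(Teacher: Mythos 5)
Your proposal is correct and follows exactly the route the paper intends: the paper does not actually write out a proof of this theorem (it cites Carmeli--Fioresi and declares the material well-known), but the ingredients it assembles in \S\ref{reduced-subgrp}--\S\ref{Liesgrps-->sHCp's} --- the reduced-subgroup functor, the quasi-representable tangent functor $\Lie(G)=\cL_\fg$, the adjoint action defined on $A$-points, and the Shvarts embedding to secure regularity --- are precisely the ones you deploy. Your identification of the regularity of $\Ad$ as the only non-formal step, and your verification of condition \emph{(d)} via naturality, match what the cited sources do.
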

\end{free text}

\bigskip

\section{Interlude: special splittings for Lie supergroups}  \label{interlude}

\smallskip

%
%
   {\ } \;\;   In this section we present some results on the possibility to split Lie supergroups in two special ways: ``Boseck's splittings'' and ``global splittings''.  These are essentially well-known, but usually stated in different ways; so we provide independent proofs for them, so to fill any possible gap in literature and to have a self-contained presentation
  \hbox{(for all cases: smooth, analytic and holomorphic).}

\medskip

\subsection{Boseck's splitting for Lie supergroups}  \label{Boseck-split}

\smallskip

   {\ } \;\;   This subsection is devoted to a first kind of splitting that we refer to as ``Boseck's splitting'', as it was first mentioned in Boseck's work  \cite{bos}. The starting point is the following easy result:

\vskip11pt

\begin{lemma}  \label{pre_Boseck-split}
 Let  $ \, p : A' \relbar\joinrel\longrightarrow A'' \, $  and  $ \, u : A'' \relbar\joinrel\longrightarrow A' \, $  be morphisms in  $ \Wsalg_\K $  such that  $ \, p \circ u = \text{\sl id}_{A''} \, $  (hence  $ p_A $  is surjective and  $ u_A $  injective), and let  $ \, G : \Wsalg_\K \longrightarrow \grp \, $  be any functor.  Then  $ G(A) $  canonically splits into a semi-direct product
  $$  G(A)  \,\; = \;\,  \text{\sl Im}\big(G(u)\big) \ltimes \text{\sl Ker}\big(G(p)\big)  \,\; \cong \;\,  G\big(A''\big) \ltimes \text{\sl Ker}\big(G(p)\big)  $$
\end{lemma}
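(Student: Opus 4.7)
The plan is to reduce the statement to the standard group-theoretic fact that a split short exact sequence of groups gives a semi-direct product decomposition. Applying the functor $G$ to the pair of morphisms $p,u$ yields group homomorphisms $G(p) : G(A') \to G(A'')$ and $G(u) : G(A'') \to G(A')$ satisfying $G(p) \circ G(u) = G(p \circ u) = G(\text{\sl id}_{A''}) = \text{\sl id}_{G(A'')}$, by functoriality. In particular $G(p)$ is surjective, $G(u)$ is injective, and the kernel $N := \text{\sl Ker}\big(G(p)\big)$ is a normal subgroup of $G(A')$.

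Next, I would set $H := \text{\sl Im}\big(G(u)\big)$ and verify the two conditions that characterize an internal semi-direct product. First, $H \cap N = \{e\}$: if $h = G(u)(y) \in H$ lies in $N$, then $y = G(p)(G(u)(y)) = G(p)(h) = e$, so $h = e$. Second, $G(A') = H \cdot N$: for every $g \in G(A')$ set $h := G(u)\big(G(p)(g)\big) \in H$ and $n := h^{-1} g$; then $G(p)(n) = G(p)(h)^{-1} G(p)(g) = G(p)(g)^{-1} G(p)(g) = e$, so $n \in N$ and $g = h \cdot n$. Uniqueness of the decomposition follows from $H \cap N = \{e\}$.

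Combining these two facts with the normality of $N$ gives the internal semi-direct product decomposition $G(A') = H \ltimes N = \text{\sl Im}\big(G(u)\big) \ltimes \text{\sl Ker}\big(G(p)\big)$. Finally, since $G(u)$ is injective it restricts to a group isomorphism $G(A'') \xrightarrow{\;\sim\;} H$, yielding the further identification $G(A') \cong G(A'') \ltimes \text{\sl Ker}\big(G(p)\big)$, as claimed. The canonicity of the splitting is clear from the construction, since the projection $g \mapsto G(u)(G(p)(g))$ onto $H$ depends only on the given pair $(p,u)$ and the functor $G$.

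There is essentially no obstacle here: the content of the lemma is entirely formal, amounting to transporting a split epimorphism from $\Wsalg_\K$ to $\cat{group}$ via the functor $G$ and invoking the standard splitting lemma in group theory. The only point deserving a line of comment is that no hypothesis on $G$ beyond functoriality (e.g.\ preservation of products) is required, because the semi-direct product structure is detected internally inside the single group $G(A')$.
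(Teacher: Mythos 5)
Your proof is correct and follows essentially the same route as the paper: both reduce to the functorial identity $G(p)\circ G(u)=\text{\sl id}_{G(A'')}$ and then invoke the standard internal semi-direct product decomposition for a split surjection of groups. The only difference is that you spell out the verification ($H\cap N=\{e\}$ and $G(A')=H\cdot N$) that the paper dismisses as ``standard group-theoretic arguments.''
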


\begin{proof}
 From  $ \; p \circ u = \text{\sl id}_{A''} \; $  we get  $ \; G(p) \circ G(u) = G(p \circ u) = G\big(\text{\sl id}_{A''}\big) = \text{\sl id}_{G(A'')} \; $;  thus  $ G(u) \, $,  resp.\  $ G(p) \, $,  is a section of  $ G(p) \, $,  resp.\  a retraction of  $ G(u) \, $,  in the category of groups: in particular,  $ G(u) $ is injective and  $ G(p) $  surjective.  The claim then follows by standard group-theoretic arguments.
\end{proof}

\medskip

%
%
   When the functor  $ G $  is in fact a  {\sl Lie supergroup},  we have the following, interesting outcome:

\medskip

\begin{proposition}  \label{prop:Boseck-split}
 {\sl (cf.\ \cite{bos}, \S 2, Proposition 7)}
 \vskip1pt
   Let  $ G $  be any (smooth, analytic or holomorphic) Lie supergroup over\/  $ \K \, $.  Then for every Weil superalgebra  $ \, A \in \Wsalg_\K \, $  there exists a canonical splitting of Lie groups
\begin{equation}  \label{Boseck-split_Lie-sgrp}
  G(A)  \,\; \cong \;\,  G_\zero(\K) \ltimes N_G(A)
\end{equation}
where  $ \, G(\K) = G_\zero(\K) \, $  is nothing but the classical, ordinary Lie group underlying  $ G $  (i.e., the Lie group of\/  $ \K $--points  of\/  $ G_\zero $)  and  $ \; N_G(A) := \text{\sl Ker}\big(G\big(p_{\scriptscriptstyle A\!}\big)\big) \; $
%
%
 with  $ \, p_{\scriptscriptstyle A} : A \relbar\joinrel\longrightarrow \K \, $  as in  Definition \ref{def:Weil-salg}.
\end{proposition}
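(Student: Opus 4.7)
The entire proposition is designed to be a direct corollary of Lemma \ref{pre_Boseck-split}, applied to the canonical data carried by every Weil superalgebra. The plan is as follows.

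First, I would observe that by Definition \ref{def:Weil-salg}, each Weil superalgebra $A$ comes equipped with canonical morphisms $p_A : A \twoheadrightarrow \K$ and $u_A : \K \hookrightarrow A$ (in $\Wsalg_\K$) satisfying $p_A \circ u_A = \text{\sl id}_\K\,$. Since $G$ is a Lie supergroup, in particular $G$ is a functor $\Wsalg_\K \longrightarrow \grp$ (cf.\ Proposition \ref{funct-char_Lie-supergrps}), so Lemma \ref{pre_Boseck-split} applies with $A' = A$ and $A'' = \K$ and yields, at the group-theoretic level, the semidirect product decomposition
\begin{equation*}
   G(A) \;=\; \text{\sl Im}\big(G(u_A)\big) \,\ltimes\, \text{\sl Ker}\big(G(p_A)\big) \;\cong\; G(\K) \ltimes N_G(A)\,,
\end{equation*}
where the right-hand factor is $N_G(A)$ by definition.

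Next, I would identify the left-hand factor $G(\K)$ with $G_\zero(\K)\,$. The field $\K$ is totally even, hence $\K \in \alg \cap \Wsalg\,$; thus the discussion of \S \ref{WB_red-mfd} gives $\cS(G)\big|_{\alg \cap\, \Wsalg} = \cS(G_\zero)\,$, and evaluating on $\K$ yields $G(\K) = G_\zero(\K)$, which is exactly the classical Lie group of $\K$-points of the reduced manifold $G_\zero\,$.

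Finally, I would upgrade the decomposition from groups to Lie groups (of the appropriate type). Because $G$ is a Lie supergroup, by Proposition \ref{funct-char_Lie-supergrps} the functor $\cW_G$ actually factors through the subcategory of Lie $\cA_\zero$-groups; in particular, for each $A \in \Wsalg_\K$ both morphisms $G(u_A)$ and $G(p_A)$ are morphisms of Lie $A_\zero$-groups. Hence $N_G(A) = \text{\sl Ker}\big(G(p_A)\big)$ is a closed Lie subgroup, $\text{\sl Im}\big(G(u_A)\big) \cong G_\zero(\K)$ is a Lie subgroup, and their product map is an isomorphism of Lie groups realizing \eqref{Boseck-split_Lie-sgrp}. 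The naturality and canonicity of the splitting follow from the naturality of $p_A,u_A$ in $A\,$.

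The construction is essentially formal once Lemma \ref{pre_Boseck-split} is in place; the only point that requires any care is ensuring that the abstract group splitting is in fact a splitting of Lie groups (and that $G(\K)$ really is the classical Lie group underlying $G$), and both items are guaranteed by the functorial characterization of Lie supergroups recalled in Proposition \ref{funct-char_Lie-supergrps} together with the reduction discussion of \S \ref{WB_red-mfd}. I do not foresee a substantial obstacle.
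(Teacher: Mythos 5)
Your proposal is correct and follows essentially the same route as the paper: apply Lemma \ref{pre_Boseck-split} to the canonical pair $p_A$, $u_A$ of the Weil superalgebra, note that $G$ being Lie-group valued upgrades the abstract splitting to one of Lie groups, and identify $\text{\sl Im}\big(G(u_A)\big) \cong G(\K) = G_\zero(\K)$. Your extra justification of $G(\K) = G_\zero(\K)$ via \S \ref{WB_red-mfd} is a point the paper treats as immediate, but it is a correct and harmless elaboration.
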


\begin{proof}
 By assumption, for the given  $ \, A \in \Wsalg_\K \, $  we have morphisms  $ \, p_{\scriptscriptstyle A} : A \relbar\joinrel\longrightarrow \K \, $  and  $ \, u_{\scriptscriptstyle A} : \K \relbar\joinrel\longrightarrow A \, $  in  $ \Wsalg_\K $  such that  $ \, p_{\scriptscriptstyle A} \circ u_{\scriptscriptstyle A} = \text{\sl id}_\K \, $  (cf.\ Definition \ref{def:Weil-salg}).  Then  Lemma \ref{pre_Boseck-split}
%
%
 yields
a group-theoretic splitting  $ \; G(A) \, = \, \text{\sl Im}\big(G\big(u_{\scriptscriptstyle A\!}\big)\big) \ltimes \text{\sl Ker}\big(G\big(p_{\scriptscriptstyle A\!}\big)\big) \; $.
%
%
 As
 $ G $  takes values into the category of Lie groups, this is actually a splitting
%
%
 of Lie groups (actually, even one of  $ \cA_\zero $--Lie  groups indeed).  Moreover, we clearly have  $ \; \text{\sl Im}\big(G\big(u_{\scriptscriptstyle A\!}\big)\big) \cong G(\K) = G_\zero(\K) \; $,  whence  \eqref{Boseck-split_Lie-sgrp}  is proved.
\end{proof}

\medskip

%
%
   This result also applies to  {\sl totally even\/}  supergroups (i.e., classical Lie groups) as follows:

\medskip

\begin{proposition}  \label{prop:Boseck-split_class}
 \vskip1pt
   Let  $ G_+ $  be any (smooth, analytic or holomorphic) Lie group over\/  $ \K \, $.  Then for every  $ \, A_+ \in \Wsalg_\K \,\bigcap \alg_\K \, $  there exists a canonical splitting of Lie groups
\begin{equation}  \label{Boseck-split_Lie-grp}
  G_+\big(A_+\big)  \,\; \cong \;\,  G_+(\K) \ltimes N_{G_+}\big(A_+\big)
\end{equation}
where  $ \, G_+(\K) \, $  is the ordinary Lie group underlying  $ G $  (i.e., the Lie group of\/  $ \K $--points  of\/  $ G_+ \, $)  and  $ \; N_{G_+}\big(A_+\big) := \text{\sl Ker}\big(G\big(p_{\scriptscriptstyle {A_+}\!}\big)\big) \; $ with  $ \, p_{\scriptscriptstyle {A_+}} \! : A_+ \!\relbar\joinrel\longrightarrow \K \, $  as in  Definition \ref{def:Weil-salg}.
\end{proposition}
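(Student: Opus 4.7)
The proof I would give is essentially a specialization of the argument for Proposition \ref{prop:Boseck-split}, and there are two equivalent ways to present it. My plan is to carry out the ``direct'' route, mirroring the earlier proof, and then observe that it can also be viewed as an immediate corollary of Proposition \ref{prop:Boseck-split} via the identification of classical manifolds with totally even supermanifolds explained in \S\ref{red-submfd} and \S\ref{WB_red-mfd}.

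First, I would unpack what Definition \ref{def:Weil-salg} gives for $A_+\in\Wsalg_\K\cap\alg_\K\,$: since $A_+$ is a Weil superalgebra it comes equipped with the canonical morphisms $\,p_{A_+}\colon A_+\to\K\,$ and $\,u_{A_+}\colon\K\to A_+\,$ satisfying $\,p_{A_+}\!\circ u_{A_+}=\text{\sl id}_\K\,$; totally-evenness of $A_+$ simply means that these morphisms lie in the subcategory $\alg_\K\,$, so they still live in $\Wsalg_\K$ and the hypotheses of Lemma \ref{pre_Boseck-split} are met. Applying Lemma \ref{pre_Boseck-split} to the functor $G_+$ with $\,A':=A_+\,$, $\,A'':=\K\,$, $\,p:=p_{A_+}\,$, $\,u:=u_{A_+}\,$ therefore yields the abstract group-theoretic splitting
\[
  G_+\big(A_+\big) \;=\; \text{\sl Im}\big(G_+\big(u_{A_+}\big)\big)\,\ltimes\,\text{\sl Ker}\big(G_+\big(p_{A_+}\big)\big)\,.
\]

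Next I would upgrade this from groups to Lie groups: since $G_+$ is a Lie group object, its functor of points takes values in Lie groups (indeed, in Lie $\cA_\zero$-groups), so both semidirect factors inherit Lie group structures and the isomorphism is an isomorphism of Lie groups. The identification $\text{\sl Im}\big(G_+\big(u_{A_+}\big)\big)\cong G_+(\K)$ is immediate from $\,p_{A_+}\!\circ u_{A_+}=\text{\sl id}_\K\,$ (so $G_+(u_{A_+})$ is injective and splits $G_+(p_{A_+})$), and by definition $\,\text{\sl Ker}\big(G_+\big(p_{A_+}\big)\big)=N_{G_+}\big(A_+\big)\,$, producing exactly the splitting \eqref{Boseck-split_Lie-grp}.

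I expect no serious obstacle here: the argument is the classical analogue, word for word, of Proposition \ref{prop:Boseck-split}, the only ``content'' being the observation that a totally even Weil superalgebra still carries the canonical retraction onto $\K$ needed to trigger Lemma \ref{pre_Boseck-split}. The one thing I would take care to remark is that, alternatively, one may simply regard $G_+$ as a (totally even) Lie supergroup in the sense of \S\ref{red-submfd}–\S\ref{WB_red-mfd} and then invoke Proposition \ref{prop:Boseck-split} verbatim, with $A:=A_+\,$; this observation makes the statement of Proposition \ref{prop:Boseck-split_class} essentially a corollary rather than an independent result, and is worth recording to motivate why no new ingredients are needed.
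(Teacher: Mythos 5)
Your proposal is correct and follows exactly the route the paper takes: the paper's proof of this proposition is literally ``the same arguments as for Proposition \ref{prop:Boseck-split} apply again,'' and your write-up just spells out that argument (Lemma \ref{pre_Boseck-split} applied to $p_{A_+}$ and $u_{A_+}$, upgraded to Lie groups because $G_+$ is Lie-group valued, plus the identification $\text{\sl Im}\big(G_+(u_{A_+})\big)\cong G_+(\K)$). Your closing remark that the statement is essentially a corollary of the super case via viewing $G_+$ as a totally even supergroup is a fine additional observation, consistent with \S\ref{WB_red-mfd}.
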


\begin{proof}
 The same arguments as for the proof of  Proposition \ref{prop:Boseck-split}  apply again.
\end{proof}

\medskip

\begin{remark}
 To the best of the author's knowledge, the (canonical) splitting  \eqref{Boseck-split_Lie-sgrp}  of  $ G(A) $  was first mentioned by Boseck (dealing with Lie supergroups defined over  $ \Grass_\K \, $,  but the idea is the same):  cf.\ \cite{bos}, \S 2, Proposition 7; thus we shall refer to  \eqref{Boseck-split_Lie-sgrp}  or  \eqref{Boseck-split_Lie-grp}  as to ``Boseck's splitting(s)''.  The same result was considered by other authors too, e.g.\ Molotkov: see (7.4.1) in \S 7.4 of  \cite{mol}.
\end{remark}

\smallskip

\begin{free text}
 {\bf Boseck's splitting for Lie superalgebras.}  The notion of ``Boseck's splitting''
 for Lie supergroups has a natural counterpart for Lie superalgebras, when thought of as
 functors.

  Indeed, consider a Lie  $ \K $--superalgebra  $ \, \fg = \fg_\zero \oplus \fg_\uno \, $  and the functor
 $ \; \cL_\fg : \salg_\K \longrightarrow \lie_\K \; $
given by
 $ \; \cL_\fg(A) := \big( A \otimes \fg \big)_\zero = (A_\zero \otimes \fg_\zero) \oplus (A_\uno \otimes \fg_\uno) \; $,  for all  $ \, A \in \salg_\K \, $,  as in  \S \ref{Lie-salg_funct}.
 Every  $ \, A \in \Wsalg_\K \, $  has built-in morphisms  $ \, p_{\scriptscriptstyle A} : A \relbar\joinrel\longrightarrow \K \, $  and  $ \, u_{\scriptscriptstyle A} : \K \relbar\joinrel\longrightarrow A \, $  such that  $ \, p_{\scriptscriptstyle A} \circ u_{\scriptscriptstyle A} = \text{\sl id}_\K \, $.  Then applying  $ \cL_\fg $  we get  $ \; \cL_\fg\big(p_{\scriptscriptstyle A}\big) \circ \cL_\fg\big(u_{\scriptscriptstyle A}\big) =  \cL_\fg\big( p_{\scriptscriptstyle A} \circ u_{\scriptscriptstyle A} \big) = \cL_\fg\big( \text{\sl id}_\K \big) = \text{\sl id}_{\cL_\fg(\K)} \; $,  a relation regarding morphisms of Lie algebras.  By standard arguments this yields a Lie algebra splitting
\begin{equation}  \label{pre-Boseck-split_Lie-salg}
  \cL_\fg(A)  \; = \;  \text{\sl Im}\big( \cL_\fg\big(u_{\scriptscriptstyle A\!}\big) \big) \,\oright\, \text{\sl Ker}\big( \cL_\fg\big(p_{\scriptscriptstyle A\!}\big) \big)
\end{equation}
where the symbol  ``$ \, \oright \, $''  denotes the (internal) semi-direct sum of  $ \text{\sl Im}\big( \cL_\fg\big(u_{\scriptscriptstyle A\!}\big) \big) $   --- a Lie subalgebra inside  $ \cL_\fg(A) $  ---   with $ \text{\sl Ker}\big( \cL_\fg\big(p_{\scriptscriptstyle A\!}\big) \big) \, $   --- a Lie ideal in  $ \cL_\fg(A) \, $.  Now, on the one hand definitions give
 $ \; \text{\sl Im}\big( \cL_\fg\big(u_{\scriptscriptstyle A\!}\big) \big)  \, \cong \,
 \cL_\fg(\K) := {\big( \K \otimes_\K \fg \big)}_\zero = \fg_\zero \; $;
 on the other hand, to simplify a bit the notation we write  $ \; \fn_\fg(A) \, := \, \text{\sl Ker}\big( \cL_\fg\big(p_{\scriptscriptstyle A\!}\big) \big) \; $.  Then  \eqref{pre-Boseck-split_Lie-salg}  reads also
\begin{equation}  \label{Boseck-split_Lie-salg}
  \qquad \qquad \qquad   \cL_\fg(A)  \; = \;  \fg_\zero \,\oright\, \fn_\fg(A)   \qquad \qquad \qquad  \forall \;\; A \in \Wsalg_\K
\end{equation}
In the following, we shall refer to  \eqref{Boseck-split_Lie-salg}  as to ``Boseck's splitting for  $ \cL_\fg $''   --- or simply ``for  $ \fg \, $''  itself.
 \vskip5pt
   It is still worth remarking that one has a non-trivial Boseck's splitting also when  $ \, \fg = \fg_\zero \, $,  i.e.\  $ \fg $  is a classical (=totally even) Lie algebra.  Indeed, if  $ \fg_+ $  is just a Lie  $ \K $--algebra then  \eqref{Boseck-split_Lie-salg}  reads
\begin{equation}  \label{Boseck-split_Lie-alg}
  \qquad \qquad \qquad   \cL_{\fg_+\!}\big(A_+\big)  \; = \;  \fg_+ \oright\, \fn_{\fg_+}\!\big(A_+\big)   \qquad \qquad \qquad  \forall \;\; A_+ \in \Wsalg_\K \;{\textstyle \bigcap}\, \alg_\K
\end{equation}
and will again be called ``Boseck's splitting for  $ \cL_{\fg_+} $''   --- or ``for  $ \fg_+ $''.
 \vskip5pt
   We will now give an explicit description of  $ \fn_\fg(A) \, $.  By definition,  $ \; \fn_\fg(A) := \text{\sl Ker}\big( \cL_\fg\big(p_{\scriptscriptstyle A\!}\big) \big) \; $  where  $ \; p_{\scriptscriptstyle A\!} : A = \K \oplus \nil{A} \relbar\joinrel\relbar\joinrel\twoheadrightarrow \K \; $  is the canonical projection of  $ \, A = \K \oplus \nil{A} \, $  onto its left-hand side summand.  Now,  $ \, A = A_\zero \oplus A_\uno \, $  with  $ \, A_\zero = \K \oplus \nil{A}_\zero \, $  and  $ \, A_\uno = \nil{A}_\uno \, $, hence
  $$  \cL_\fg(A)  \, := \,  {\big( A \otimes_\K \fg \big)}_\zero  \, = \,  \big( A_\zero \otimes_\K \fg_\zero \big) \oplus \big( A_\uno \otimes_\K \fg_\uno \big)  \, = \;  \fg_\zero \oplus \big( \nil{A}_\zero \otimes_\K \fg_\zero \big) \oplus \big( A_\uno \otimes_\K \fg_\uno \big)  $$
\noindent
 from which it clearly follows that
\begin{equation}  \label{descr_ng(A)}
 \fn_\fg(A)  \; := \;  \text{\sl Ker}\big( \cL_\fg\big(p_{\scriptscriptstyle A\!}\big) \big)  \; = \;  \big( \nil{A}_\zero \otimes_\K \fg_\zero \big) \oplus \big( \nil{A}_\uno \otimes_\K \fg_\uno \big)   \qquad  \forall \;\; A \in \Wsalg_\K
\end{equation}
%
%
 In the case when  $ \, \fg = \fg_\zero \, $  is just a ``classical'' Lie algebra, say  $ \fg_+ \, $,  this reads slightly simpler, namely
\begin{equation}  \label{descr_ng(A)_class}
 \fn_{\fg_+}\!\big(A_+\big)  \; = \;  \nil{A}_+ \otimes_\K \fg_+   \qquad  \forall \;\; A_+ \in \Wsalg_\K \,{\textstyle \bigcap}\, \alg_\K
\end{equation}

This entails the following:

\vskip13pt

\begin{proposition}  \label{ng(A)_nilpotent}  {\ }
 \vskip3pt
   (a)\,  Let  $ \fg $  be a Lie  $ \K $--superalgebra  and  $ \, A \in \Wsalg_\K \, $.  Then the Lie algebra\/  $ \fn_\fg(A) $  is nilpotent.
 \vskip3pt
   (b)\,  Let  $ \fg_+ $  be a Lie  $ \K $--algebra  and let  $ \, A_+ \in \Wsalg_\K \bigcap \alg_\K \, $.  Then\/  $ \fn_{\fg_+}\!\big(A_+\big) $  is nilpotent.
\end{proposition}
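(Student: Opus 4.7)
The plan is to read off nilpotency directly from the explicit descriptions \eqref{descr_ng(A)} and \eqref{descr_ng(A)_class}, exploiting the fact that the nilradical $\nil{A}$ of a Weil superalgebra $A$ is, by Definition \ref{def:Weil-salg}, a nilpotent ideal: thus there exists $N \in \N$ with $\nil{A}^N = 0$, and hence in particular $\nil{A}_\zero^{\,k} \cdot \nil{A}_\zero^{\,\ell} \subseteq \nil{A}^{k+\ell}$ vanishes as soon as $k+\ell \geq N$, and likewise for products involving the odd component $\nil{A}_\uno$.

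First I would write out the Lie bracket inherited on $\cL_\fg(A)$ from the (super-)bracket on $A \otimes \fg$ recalled in \S\ref{Lie-salg_funct}: for homogeneous elements,
\[
  [\, a \otimes X \, , \, a' \otimes X' \,] \; = \; {(-1)}^{|X|\,|a'|} \, a\,a' \otimes [X,X']  \; .
\]
Reading this for the two summands in \eqref{descr_ng(A)}, every bracket of two elements of $\fn_\fg(A)$ produces a tensor whose $A$-factor is a product of two elements of $\nil{A}$, hence lies in $\nil{A}^2$. More generally, if one defines the lower central series by $C^1\!\big(\fn_\fg(A)\big) := \fn_\fg(A)$ and $C^{k+1}\!\big(\fn_\fg(A)\big) := \big[ \fn_\fg(A) \, , \, C^k\!\big(\fn_\fg(A)\big) \big]$, then an easy induction on $k$ shows
\[
  C^k\!\big( \fn_\fg(A) \big)  \;\subseteq\;  \big( \nil{A}^{\,k} \otimes_\K \fg \big)_\zero  \; .
\]

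Second, taking $k \geq N$ with $\nil{A}^{\,N} = 0$ forces $C^N\!\big( \fn_\fg(A) \big) = 0$, so $\fn_\fg(A)$ is nilpotent as a Lie algebra, which proves part \emph{(a)}. Part \emph{(b)} is then a direct specialization: for a purely even Lie algebra $\fg_+$ and a purely even Weil algebra $A_+$ the description \eqref{descr_ng(A)_class} gives $\fn_{\fg_+}\!(A_+) = \nil{A}_+ \otimes_\K \fg_+$, the bracket reduces to the ordinary one $[a \otimes X, a' \otimes X'] = a a' \otimes [X, X']$, and exactly the same inductive argument yields $C^k\!\big( \fn_{\fg_+}\!(A_+) \big) \subseteq \nil{A}_+^{\,k} \otimes_\K \fg_+$, which vanishes for $k \geq N$.

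No step is genuinely difficult here; the only point requiring a modicum of care is bookkeeping the sign rule and the $\Z_2$-grading when verifying the inductive step, so as to make sure that brackets of odd-odd and even-odd summands of $\fn_\fg(A)$ still contribute a further factor from $\nil{A}$ (rather than, for instance, producing an even piece with coefficient merely in $A_\zero$). This is automatic because $\nil{A} = \nil{A}_\zero \oplus \nil{A}_\uno$ is a $\Z_2$-graded ideal, so any product of $k$ factors from $\nil{A}_\zero \cup \nil{A}_\uno$ lands in $\nil{A}^{\,k}$ regardless of parities; once this is noted, the argument is entirely mechanical.
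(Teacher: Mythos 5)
Your proof is correct and is exactly the argument the paper compresses into one line (``follows at once from \eqref{descr_ng(A)} and the fact that $\nil{A}$ is nilpotent''): you simply make explicit the induction $C^k\big(\fn_\fg(A)\big) \subseteq \big(\nil{A}^{\,k} \otimes_\K \fg\big)_\zero$ that the paper leaves implicit. Same approach, fully detailed.
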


\begin{proof}
 Claim  {\it (a)\/}  follows at once from  \eqref{descr_ng(A)}  and the fact that  $ \nil{A} $  is nilpotent, and likewise for  {\it (b)}.
\end{proof}

\end{free text}

\medskip

\begin{free text}  \label{interplay_Boseck-splitt.'s}
 {\bf The interplay of Boseck's splittings for supergroups and superalgebras.}  Let again  $ G $  be a Lie supergroup over  $ \K \, $,  and  $ \, \fg := \Lie(G) \, $  be its tangent Lie superalgebra.  For any  $ \, A \in \Wsalg_\K \, $,  the Lie group  $ G(A) $  and the Lie algebra  $ \, \fg(A) := \cL_\fg(A) \, $   --- also equal to  $ \, \Lie(G)(A) = \Lie\big(G(A)\big) \, $,  cf.\  Proposition \ref{Lie-funct_Lie(G)}{\it (c)}  ---   are linked by the exponential map
%
 $ \; \exp : \fg(A) \relbar\joinrel\longrightarrow G(A) \; $
 which is a local isomorphism (either in the smooth, analytic or holomorphic sense, as usual).  Similarly for the ``even counterparts'' we have also the local isomorphism
%
%
 $ \; \exp_\zero : \fg_\zero \relbar\joinrel\longrightarrow G_\zero(\K) \; $
 with  $ \, \exp_\zero = \exp{\big|}_{\fg_\zero} \, $  if we think at  $ \fg_\zero $  as embedded into  $ \; \fg(A) := \cL_\fg(A) = \fg_\zero \oright \fn_\fg(A) \; $   --- cf.\  \eqref{Boseck-split_Lie-salg}.
                                                      \par
   Now, since the Lie algebra  $ \fn_\fg(A) $  is nilpotent   --- cf.\  Proposition \ref{ng(A)_nilpotent}  ---   its image  $ \, \exp\!\big(\fn_\fg(A)\big) \, $  for the exponential map is a (closed, connected) nilpotent Lie subgroup of  $ G(A) \, $.  Furthermore, let us use notation  $ \, \fg\big(p_{\scriptscriptstyle A}\big) := \cL_\fg\big(p_{\scriptscriptstyle A}\big) \, $  and  $ \, \fg\big(u_{\scriptscriptstyle A}\big) := \cL_\fg\big(u_{\scriptscriptstyle A}\big) \, $,  and consider the diagram
%
%
%
%
%
  $$
 \xymatrix{
   \fg(A) \;
 \ar[rrr]^\exp \ar[dd]^{\fg(p_{\scriptscriptstyle A\!})}
  &  &  &  \ar[dd]_{G(p_{\scriptscriptstyle A})}  \; G(A)  \\
   &  &  &  \\
   \fg_\zero \,
 \ar[rrr]_{\exp_\zero^{\phantom{\circ}}} \ar@/^/[uu]^{\fg(u_{\scriptscriptstyle A\!})}
  &  &  &  \ar@/_/[uu]_{G(u_{\scriptscriptstyle A})}  \, G_\zero(\K)  }
   $$
%
%
This diagram is commutative, hence in particular  $ \; G\big(p_{\scriptscriptstyle A}\big) \,\circ\, \exp \, = \, \exp_\zero \,\circ\, \fg\big(p_{\scriptscriptstyle A}\big) \, $,  which in turn implies at once
 $ \,\; G\big(p_{\scriptscriptstyle A}\big)\Big( \exp\!\big(\fn_\fg(A)\big) \!\Big) \, = \, \exp_\zero\!\Big( \fg\big(p_{\scriptscriptstyle A}\big)\big(\fn_\fg(A)\big) \!\Big) \, = \, \exp_\zero\!\big( \big\{0_{\fg_\zero}\big\} \big) \, = \, \big\{ 1_{G_\zero(\K)}\big\} \; $
 because  $ \, \fn_\fg(A) := \Ker\big(\fg\big(p_{\scriptscriptstyle A}\big)\big) \, $;  so in the end  $ \; \exp\!\big(\fn_\fg(A)\big) \, \subseteq \, \Ker\big(G\big(p_{\scriptscriptstyle A}\big)\big) \, =: N_G(A) \; $   --- cf.\  Proposition \ref{Boseck-split}.
 \vskip5pt
  The fact that  $ \; \exp : \fg(A) \relbar\joinrel\longrightarrow G(A) \; $  is a local isomorphism, together with Boseck's splittings   --- namely,  $ \, \fg(A) = \fg_\zero \oright \fn_\fg(A) \, $  and  $ \, G(A) = G_\zero(\K) \ltimes N_G(A) \, $  ---   and  $ \, \dim\!\big(\fg_\zero\big) = \dim\big(G_\zero(\K)\big) \, $,  jointly imply  $ \, \dim\!\big(\fn_\fg(A)\big) = \dim\!\big(N_G(A)\big) \, $.  On the other hand, as  $ \fn_\fg(A) $  is nilpotent, its exponential map   --- i.e., just the restriction to  $ \fn_\fg(A) $  of  $ \, \exp : \fg(A) \longrightarrow G(A) \, $  ---   is actually a {\sl global isomorphism\/}  of  $ \K $--manifolds  from  $ \fn_\fg(A) $  to  $ \exp\!\big(\fn_\fg(A)\big) \, $.  It then follows that  $ \, \dim\!\big(\exp\!\big(\fn_\fg(A)\big)\big) = \dim\!\big(N_G(A)\big) \, $.
                                                            \par
   Let now  $ {N_G(A)}^\circ $  be the connected component of  $ N_G(A) \, $,  so  $ \, \dim\!\big({N_G(A)}^\circ\big) = \dim\!\big(N_G(A)\big) \, $;  our previous analysis yields  $ \, \exp\!\big(\fn_\fg(A)\big) \subseteq {N_G(A)}^\circ \, $,  the former being a closed Lie subgroup of
the latter.  But  $ \, \dim\!\big(\exp\!\big(\fn_\fg(A)\big)\big) = \dim\!\big({N_G(A)}^\circ\big) \, $  too, so eventually we get  $ \; \exp\!\big(\fn_\fg(A)\big) = {N_G(A)}^\circ \; $.
                                                            \par
   We shall now analyze  $ \; \exp\!\big(\fn_\fg(A)\big) = {N_G(A)}^\circ \, $,  eventually proving that it coincides with  $ N_G(A) \, $.
\end{free text}

\smallskip

\begin{free text}  \label{1st-descr_N_G(A)}
 {\bf The Lie subgroup  $ N_G(A) \, $.}  Let  $ G $  be a Lie supergroup, as before.  As we saw in  \S \ref{funct-pt-view},  for any  $ \, A \in \Wsalg_\K \, $  the group  $ G(A) $  is defined as
%
 $ \; G(A) := G_A = \bigsqcup_{g \in |G|} G_{A,g} \;$
where  $ \, |G| \, $  is the underlying topological space of  $ G $  and  $ \; G_{A,g} := \Hom_{\salg_\K}\big(\, \cO_{G,g} \, , A \,\big) \; $,  with  $ \, \cO_{G,g} \, $  being the stalk (a local superalgebra) of the structure sheaf of  $ G $  at the point  $ \, g \in |G| \, $.  We adopt the canonical identification  $ \, |G| = G(\K) \, $  via  $ \, g \mapsto \text{\sl ev}_g \, $  with  $ \, \text{\sl ev}_g : \cO_{G,g} \!\relbar\joinrel\longrightarrow \K \, $  given by  $ \, f \mapsto \text{\sl ev}_g(f\,) := f(g) \, $.
                                                            \par
   For every  $ \, g_{\scriptscriptstyle A} \in G_{A,g} \, $  we have  $ \; \widetilde{g}_{\scriptscriptstyle A} := p_A \circ g_{\scriptscriptstyle A} \; $  (cf.\  Definition \ref{def_A-pts})  which coincides with  $ \, \text{\sl ev}_g \; $;  moreover, the very definition gives also  $ \; \widetilde{g}_{\scriptscriptstyle A} := p_A \circ g_{\scriptscriptstyle A} = G\big(p_{\scriptscriptstyle A}\big)(g_{\scriptscriptstyle A}) \; $,  in short  $ \; \widetilde{g}_{\scriptscriptstyle A} = \, G\big(p_{\scriptscriptstyle A}\big)(g_{\scriptscriptstyle A}) \; $.  Finally, due to the splitting  $ \, A = \K \oplus \nil{A} \, $,  for every  $ \, g_{\scriptscriptstyle A} \in G(A) \, $,  say  $ \, g_{\scriptscriptstyle A} \in G_{A,g} \, $,  there exists also a unique map  $ \, \widehat{g}_{\scriptscriptstyle A} : \cO_{G,g} \relbar\joinrel\longrightarrow \nil{A} \, $  such that  $ \; g_{\scriptscriptstyle A} = \widetilde{g}_{\scriptscriptstyle A} + \widehat{g}_{\scriptscriptstyle A} \; $.

\vskip5pt

   Now assume  $ \; g_{\scriptscriptstyle A} \in N_G(A) := \Ker\big( G\big(p_{\scriptscriptstyle A}\big) \big) \; $.  Then  $ \; G\big(p_{\scriptscriptstyle A}\big)(g_{\scriptscriptstyle A}) = 1_{\scriptscriptstyle G_A} \, \in \, G(A) \; $;  therefore   --- by the previous analysis ---   we have  $ \, \widetilde{g}_{\scriptscriptstyle A} = 1 \, $,  whence  $ \; g_{\scriptscriptstyle A} = 1 + \widehat{g}_{\scriptscriptstyle A} \; $   --- which can be read as the sum, in the natural sense, of maps from  $ \cO_{G,1} $ to  $ A \, $.  We can re-write our  $ g_{\scriptscriptstyle A} $  as
\begin{equation}  \label{g_a as exp(X_g)}
 g_{\scriptscriptstyle A}  = \, 1 + \widehat{g}_{\scriptscriptstyle A}  = \, \exp\!\big( X_{g_{\!{}_A}} \big)   \qquad  \text{with}  \qquad   X_{g_{\!{}_A}}  := \, \log\!\big( g_{\scriptscriptstyle A} \big)  = \,  {\textstyle \sum\limits_{n=1}^{+\infty}} {(-1)}^{n+1} \, {{\;\widehat{g}_{\scriptscriptstyle A}^{\;n}\,} \over {\,n\,}}
\end{equation}
%
%
 where  $ \, \exp\!\big( X_{g_{\!{}_A}} \big) := \! \sum\limits_{n=0}^{+\infty} X_{g_{\!{}_A}}^{\,n} \Big/ n! \, $ and all powers in these formulas are given by  $ \, X_{g_{\!{}_A}}^{\,n}(f) := {\big( X_{g_{\!{}_A}}\!(f) \big)}^n $,  $ \; \widehat{g}_{\scriptscriptstyle A}^{\;n}(f) := {\big( \widehat{g}_{\scriptscriptstyle A}(f) \big)}^n \, $,  etc.  All this makes sense because  $ \, \text{\sl Im}\big(\widehat{g}_{\scriptscriptstyle A}\big) \in \nil{A} \, $,  by construction; thus  $ \widehat{g}_{\scriptscriptstyle A} $  is nilpotent, hence  $ X_{g_{\!{}_A}} $  is given by a finite sum and it is nilpotent, so  $ \exp\!\big(X_{g_{\!{}_A}}\big) $  is a finite sum too.
 \vskip3pt
   By formal properties of exponential and logarithm, since  $ \, g_{\scriptscriptstyle A} : \cO_{G,1} \longrightarrow A \, $  is a (superalgebra) morphism it follows from  \eqref{g_a as exp(X_g)}  that  $ \, X_{g_{\!{}_A}} \! : \cO_{G,1} \longrightarrow A \, $  is in turn a (superalgebra) derivation; thus   --- cf.\ \S Proposition \ref{Lie-funct_Lie(G)}{\it (c)}  ---   we have  $ \; X_{g_{\!{}_A}} \! \in \Lie\big(G(A)\big) = \big(\Lie(G)\big)(A) = \cL_\fg(A) =: \fg(A) \; $.  Finally, by construction we have also  $ \, \text{\sl Im}\big( X_{g_{\!{}_A}} \big) \in \nil{A} \, $.  Along with Boseck's splitting  $ \, \fg(A) = \fg_\zero \oright \fn_\fg(A) \, $   --- see  \eqref{Boseck-split_Lie-salg}  for  $ \, \fg(A) := \cL_\fg(A) \, $  ---   and with  $ \; \fn_\fg(A) = \big( \nil{A}_\zero \otimes_\K \fg_\zero \big) \oplus \big( \nil{A}_\uno \otimes_\K \fg_\uno \big) \; $   --- as in  \eqref{descr_ng(A)}  ---   all this together eventually leads to  $ \; X_{g_{\!{}_A}} \!\in \fn_\fg(A) \; $.  Tiding everything up, we come now to the end:
\end{free text}

\vskip7pt

%
%

\begin{proposition}
 For any Lie supergroup  $ G $  and  $ \, A \in \Wsalg_\K \, $  we have  $ \, N_G(A) = \exp\!\big(\fn_\fg(A)\big) \, $.  In particular,  $ N_G(A) $  is connected nilpotent, and (globally) isomorphic, as a manifold, to\/  $ \fn_\fg(A) \, $.
\end{proposition}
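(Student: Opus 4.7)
The plan is to prove the equality $N_G(A) = \exp\bigl(\fn_\fg(A)\bigr)$ by double inclusion, essentially packaging the work already carried out in \S\ref{interplay_Boseck-splitt.'s} and \S\ref{1st-descr_N_G(A)}. For the inclusion $\exp\bigl(\fn_\fg(A)\bigr) \subseteq N_G(A)$, I invoke the commutative diagram \eqref{CD_x_Bos.splitt.'s}: if $X \in \fn_\fg(A) = \Ker\bigl(\fg\bigl(p_{\scriptscriptstyle A}\bigr)\bigr)$, then
$$ G\bigl(p_{\scriptscriptstyle A}\bigr)\bigl(\exp(X)\bigr) \, = \, \exp_\zero\bigl(\fg\bigl(p_{\scriptscriptstyle A}\bigr)(X)\bigr) \, = \, \exp_\zero\bigl(0_{\fg_\zero}\bigr) \, = \, 1_{G_\zero(\K)} \, , $$
so that $\exp(X) \in \Ker\bigl(G\bigl(p_{\scriptscriptstyle A}\bigr)\bigr) =: N_G(A)$.

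For the reverse inclusion $N_G(A) \subseteq \exp\bigl(\fn_\fg(A)\bigr)$, I will use the explicit calculation set up in \S\ref{1st-descr_N_G(A)}. Given $g_{\scriptscriptstyle A} \in N_G(A)$, the decomposition $A = \K \oplus \nil{A}$ together with $G\bigl(p_{\scriptscriptstyle A}\bigr)(g_{\scriptscriptstyle A}) = 1$ forces $g_{\scriptscriptstyle A} = 1 + \widehat{g}_{\scriptscriptstyle A}$ with $\widehat{g}_{\scriptscriptstyle A}$ landing in $\nil{A}$, hence pointwise nilpotent. The formal logarithm series $X_{g_{\!{}_A}} := \log\bigl(g_{\scriptscriptstyle A}\bigr)$ displayed in \eqref{g_a as exp(X_g)} is then a \emph{finite} sum, and standard formal identities between $\exp$ and $\log$ applied to the algebra morphism $g_{\scriptscriptstyle A}$ yield that $X_{g_{\!{}_A}}$ is a (super)derivation of $\cO_{G,1}$ with values in $\nil{A}$. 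By Proposition \ref{Lie-funct_Lie(G)}{\it (c)} this places $X_{g_{\!{}_A}}$ in $\fg(A)$, while the condition $\text{\sl Im}\bigl(X_{g_{\!{}_A}}\bigr) \subseteq \nil{A}$ combined with the explicit description \eqref{descr_ng(A)} of $\fn_\fg(A)$ lets us conclude $X_{g_{\!{}_A}} \in \fn_\fg(A)$, and then $g_{\scriptscriptstyle A} = \exp\bigl(X_{g_{\!{}_A}}\bigr) \in \exp\bigl(\fn_\fg(A)\bigr)$, as required.

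Once equality is established, the ``in particular'' part is essentially automatic. By Proposition \ref{ng(A)_nilpotent}{\it (a)} the Lie algebra $\fn_\fg(A)$ is nilpotent; as already noted in \S\ref{interplay_Boseck-splitt.'s}, the restriction of $\exp$ to a nilpotent Lie subalgebra is a \emph{global} diffeomorphism of $\K$-manifolds onto its image, which is a connected nilpotent Lie (sub)group. Hence $N_G(A) = \exp\bigl(\fn_\fg(A)\bigr)$ is connected, nilpotent, and globally isomorphic (as a $\K$-manifold) to $\fn_\fg(A)$.

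The only delicate point is the derivation property of $X_{g_{\!{}_A}} = \log\bigl(g_{\scriptscriptstyle A}\bigr)$: this is where the proof genuinely uses that $g_{\scriptscriptstyle A}$ is an algebra morphism (not just a linear map) and that $\widehat{g}_{\scriptscriptstyle A}$ is nilpotent (so the formal identities converge trivially). This is the step already flagged in \S\ref{1st-descr_N_G(A)}, and it is the one I would write out most carefully, since everything else is either diagrammatic or a direct appeal to the classical theory of the exponential map for nilpotent Lie algebras.
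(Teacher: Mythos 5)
Your proposal is correct and follows essentially the same route as the paper: the inclusion $\exp\big(\fn_\fg(A)\big) \subseteq N_G(A)$ via the commutative diagram \eqref{CD_x_Bos.splitt.'s}, and the reverse inclusion via the logarithm construction $g_{\scriptscriptstyle A} = 1 + \widehat{g}_{\scriptscriptstyle A} = \exp\big(X_{g_{\!{}_A}}\big)$ with $X_{g_{\!{}_A}} \in \fn_\fg(A)$ from \S\ref{1st-descr_N_G(A)}, together with the nilpotency of $\fn_\fg(A)$ for the ``in particular'' part. The only cosmetic difference is that the paper quotes the stronger identity $\exp\big(\fn_\fg(A)\big) = {N_G(A)}^\circ$ from \S\ref{interplay_Boseck-splitt.'s} for the first inclusion, whereas you use the (simpler, and equally valid) direct diagram computation from that same section.
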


\begin{proof}
%
%
 Our analysis above shows that each  $ \, g_{\scriptscriptstyle A} \in N_G(A) \, $  can be realized as  $ \, g_{\scriptscriptstyle A} = \exp\!\big( X_{g_{\!{}_A}} \big) \, $  with  $ \, X_{g_{\!{}_A}} \!\in \fn_\fg(A) \, $;  hence  $ \; N_G(A) \subseteq \exp\!\big(\fn_\fg(A)\big) \; $;  \, conversely,  \S \ref{interplay_Boseck-splitt.'s}  yields  $ \, \exp\!\big(\fn_\fg(A)\big) = {N_G(A)}^\circ \subseteq N_G(A) \; $.  Thus  $ \; N_G(A) = \exp\!\big(\fn_\fg(A)\big) \; $  as claimed.  The last part of the claim then is clear.
\end{proof}

\medskip

\subsection{Global splittings for Lie supergroups}  \label{global-split's}

\smallskip

%
%
   {\ } \;\;   This subsection is devoted to finding two remarkable splittings for the groups  $ G(A) $  of  $ A $--points  of any Lie supergroup  $ G \, $;  these are natural in  $ A \, $,  hence overall they give noteworthy splittings for  $ G $  as a functor, known as ``global splittings'' of  $ G \, $.  Such a result is often stated in a form which is not as ``geometric'' as we wished (typically, as a splitting of the structure sheaf   ---   cf.\ for instance:  \cite{be}, Ch.\ 2 \S 2; \cite{mol}, \S 7.4; \cite{vis}, \S 2) so we provide an independent proof, with a geometrical statement.
 \vskip5pt
%
%
   The inspiring idea is that we look for a splitting of the form  ``$ \; G(A) \, = \, G_\zero(A) \times G_\uno(A) \; $''  which has to be, somehow, a group-theoretic counterpart of the splitting  $ \; \fg(A) \, = \, \big( A_\zero \otimes_\K \fg_\zero \big) \oplus \big( A_\uno \otimes_\K \fg_\uno \big) \; $.  Indeed, we will achieve such a goal, in two versions, relying upon Boseck's splitting of  \S \ref{Boseck-split}  above.

\medskip

\begin{free text}  \label{twds_glob-split}
 {\bf Structure theorem and global splittings for Lie supergroups.}  As above let  $ G $  be a (smooth, analytic or holomorphic) Lie supergroup over  $ \K \, $,  whose tangent Lie superalgebra is  $ \, \fg := \Lie(G) \, $,  and let  $ \, A \in \Wsalg_\K \, $  be any Weil superalgebra.
%
%
 The powers  $ \nil{A}^d $  of the nilradical  $ \nil{A} $  of  $ A $  form a descending sequence such that $ \, \nil{A}^N \! = 0 \, $  for  $ \, N \gg 0 \, $  (cf.\ Definition \ref{def:Weil-salg}).
   \hbox{Then we consider}
%
  $$  \fn_\fg^{\!(d)}\!(A)  \, := \,  {\Big( \nil{A}^d \otimes_\K \fg \Big)}_\zero  \, = \,  \Big(\! {\big( \nil{A}^d \,\big)}_\zero \otimes_\K \fg_\zero \Big) \oplus \Big(\! {\big( \nil{A}^d \,\big)}_\uno \otimes_\K \fg_\uno \Big)   \eqno  \forall \;\; d \in \N_+  \qquad  $$
this in turn yields a decreasing filtration of Lie subalgebras of  $ \fn_\fg(A) \, $,  with  $ \, \fn_\fg^{\!(N)}\!(A) = 0 \, $  for  $ \, N \gg 0 \, $.
 \vskip5pt
   As a matter of notation, let us consider the case of an element  $ \, \eta \, Y := \eta \otimes Y \in \fn_\fg(A) \, $  with  $ \, \eta \in A_\uno \, $,  $ \, Y \in \fg_\uno \, $.  By definition  $ \, \eta^2 = 0 \, $,  hence if we express  $ \, \exp(\eta\,Y) \, $  as a formal series we actually have  $ \; \exp(\eta\,Y) = 1 + \eta\,Y \; $.  Similarly, for every  $ \, c\,X = c \otimes X \in \fn_\fg(A) \, $  with  $ \, c \in A_\zero \, $,  $ \, X \in \fg_\zero \, $  if  $ \, c^2 = 0 \, $  then also the formal series expression of  $ \, \exp(c\,X) \, $  reads  $ \; \exp(c\,X) = 1 + c\,X \; $.
   \eject
%
 \vskip5pt
   For later use, we fix a  $ \K $--basis  $ B $  of $ \fg $  of the form  $ \, B := B_\zero \bigsqcup B_\uno \, $  with  $ \, B_\zero = {\big\{ X_j \big\}}_{j \in J} \, $,  resp.\  $ \, B_\uno = {\big\{ Y_i \big\}}_{i \in I} \, $,  being a  $ \K $--basis  of  $ \fg_\zero \, $,  resp.\ of  $ \fg_\uno \, $.  Moreover, we fix any total order  $ \, \preceq \, $ on both  $ I $  and  $ J \, $,  so that both  $ B_\zero $  and  $ B_\uno $  are totally ordered and, declaring elements from  $ B_\zero $  to be less than those of  $ B_\uno \, $   --- in a nutshell, setting  $ \, B_\zero \preceq B_\uno \, $  ---   overall the whole basis  $ B $  is totally ordered too.
 \vskip3pt
   Now consider the  $ \K $--algebra  $ \, \K\langle\langle Z_1 , Z_2 \rangle\rangle \, $  of formal power series in the non-commutative variables  $ Z_1 $  and  $ Z_2 \, $.  The well-known Campbell-Baker-Hausdorff formula  (see, e.g.,  \cite{jac})  in  $ \, \K\langle\langle Z_1 , Z_2 \rangle\rangle \, $  is  $ \; \exp(Z_1) \cdot \exp(Z_2) \, = \, \exp(Z_1 * Z_2) \; $  with  $ \; Z_1 * Z_2 := \log\big( \exp(Z_1) \cdot \exp(Z_2) \big) \, \in \, \K\langle\langle Z_1 , Z_2  \rangle\rangle \; $.  More precisely, the formal series expansion of  $ \, Z_1 * Z_2 \, $  can be re-arranged in the shape of a formal series  $ \; Z_1 * Z_2 \, = \, \sum_{n=1}^{+\infty} L_n(Z_1,Z_2) \; $  where each  $ L_n(Z_1,Z_2) $  is a homogeneous Lie monomial of degree  $ n $  in the free Lie  $ \K $--algebra  $ {\big\langle Z_1 \, , Z_2 \big\rangle}_{\text{\it Lie}}^\K $  generated by  $ Z_1 $ and  $ Z_2 \, $.  In particular, if one replaces  $ Z_1 $  and  $ Z_2 $  with elements  $ z_1 $  and  $ z_2 $  sitting in some nilpotent Lie algebra, then all but finitely many of the  $ L_n(z_1,z_2) $'s  do vanish, hence  $ \, z_1 * z_2 \, $  can be written as a finite sum.
 \vskip5pt
   Our next goal is another description of  $ \, N_G(A) = \exp\!\big( \fn_\fg(A) \big) \, $.  We need an auxiliary result:
\end{free text}

\smallskip

\begin{lemma}  \label{expans_exp(sum_S)}
 Let  $ \, S_1 , \dots , S_\ell \in \fn_\fg(A) \, $  with  $ \, S_i \in \fn_\fg^{\!(d_i)}\!(A) \, $  for some  $ \, d_i \in \N_+ \, $  ($ \, i = 1 , \dots , \ell \, $).  Then there exist  $ \, T_1 , \dots , T_k \in \fn_\fg(A) \, $  such that  $ \, T_j \in \fn_\fg^{\!(\partial_j)}\!(A) \, $  with  $ \, \partial_j \!\geq d_{a_j} + \, d_{b_j} \, $  for some  $ \, a_j , b_j \in \{1,\dots,\ell\} \, $  (for all  $ \, j = 1 , \dots , k $),  and
  $$  \exp\big( S_1 + \cdots + S_\ell \big)  \, = \,  \exp(S_1) \cdots \exp(S_\ell) \, \exp(T_1) \cdots \exp(T_k)  $$
\end{lemma}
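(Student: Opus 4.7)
The plan is to derive the identity directly from the Zassenhaus formula, applied inside the nilpotent Lie algebra $\fn_\fg(A)$, combined with the multiplicativity of the filtration $\bigl\{\fn_\fg^{(d)}(A)\bigr\}_{d\geq 1}$. The key preliminary observation is that $\nil{A}^d\cdot\nil{A}^e\subseteq\nil{A}^{d+e}$, together with the sign-rule bracket on $A\otimes\fg$ (cf.\ \S\ref{Lie-salg_funct}), yields the inclusion
\[
\bigl[\fn_\fg^{(d)}(A),\,\fn_\fg^{(e)}(A)\bigr]\;\subseteq\;\fn_\fg^{(d+e)}(A).
\]
Since $\nil{A}$ is nilpotent, the filtration terminates ($\fn_\fg^{(N)}(A)=0$ for $N\gg 0$), so $\fn_\fg(A)$ is a nilpotent Lie algebra and $\exp:\fn_\fg(A)\to N_G(A)$ is a global diffeomorphism onto a nilpotent Lie subgroup of $G(A)$, as already observed in \S\ref{interplay_Boseck-splitt.'s}. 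In such a context all formal identities between exponentials in the free Lie algebra hold, after truncation, as genuine equalities in $N_G(A)$.

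I now invoke the Zassenhaus formula: in the free Lie algebra on indeterminates $X_1,\dots,X_\ell$ there exist homogeneous Lie polynomials $C_n(X_1,\dots,X_\ell)$ of bracket-degree $n\geq 2$ such that
\[
\exp(X_1+\cdots+X_\ell)\;=\;\exp(X_1)\cdots\exp(X_\ell)\,\prod_{n\geq 2}\exp\bigl(C_n(X_1,\dots,X_\ell)\bigr).
\]
Specialising $X_i:=S_i$ in $\fn_\fg(A)$, the product becomes finite by nilpotency, so the identity holds in $N_G(A)$. Each $C_n(S_1,\dots,S_\ell)$ is a $\K$-linear combination of iterated brackets $\bigl[S_{i_1},[S_{i_2},[\cdots,S_{i_n}]\cdots]\bigr]$, and by the filtration property every such bracket lies in $\fn_\fg^{(d_{i_1}+\cdots+d_{i_n})}(A)$. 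For $n\geq 2$ that exponent dominates $d_{i_1}+d_{i_2}$, so the whole summand sits in $\fn_\fg^{(d_a+d_b)}(A)$ with $(a,b):=(i_1,i_2)$. Taking the minimum over summands, $C_n(S_1,\dots,S_\ell)$ belongs to $\fn_\fg^{(\partial_n)}(A)$ for some $\partial_n\geq d_{a_n}+d_{b_n}$ and some pair $(a_n,b_n)\in\{1,\dots,\ell\}^2$. Listing the (finitely many) nonzero $C_n(S_1,\dots,S_\ell)$'s as $T_1,\dots,T_k$ completes the proof.

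The main subtlety lies in tracking the filtration degree of $C_n(S_1,\dots,S_\ell)$, whose various bracket summands a priori sit in \emph{different} subspaces $\fn_\fg^{(d_a+d_b)}(A)$ indexed by different pairs $(a,b)$. This is harmless because the statement allows the pair $(a_j,b_j)$ to depend on $j$, so it suffices to pick the pair realising the minimum depth among the summands of each $C_n$. An equivalent, more elementary route avoiding Zassenhaus is a direct induction on $\ell$: one application of BCH yields $\exp(S_1+\cdots+S_\ell)=\exp(S_1)\exp(S_2+\cdots+S_\ell)\exp(R)$ with $R\in\fn_\fg^{(d_1+\min_{i\geq 2}d_i)}(A)$, whereupon the inductive hypothesis applied to $\exp(S_2+\cdots+S_\ell)$ produces all the remaining $T_j$'s with the required bounds.
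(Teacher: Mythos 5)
Your proof is correct, and it is essentially the paper's argument: the paper's own proof is a one-line appeal to ``induction on $\ell$ via a straightforward application of the Baker--Campbell--Hausdorff formula,'' which is exactly the ``more elementary route'' you sketch in your closing paragraph. Your primary packaging via the multi-variable Zassenhaus formula is just a non-inductive reformulation of the same idea, and your degree bookkeeping (each bracket monomial of weight $n \geq 2$ lands in $\fn_\fg^{(d_{i_1}+\cdots+d_{i_n})}(A) \subseteq \fn_\fg^{(d_{i_1}+d_{i_2})}(A)$, with the pair $(a_j,b_j)$ chosen to realise the minimal depth among summands) correctly supplies the detail the paper leaves implicit.
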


\begin{proof}
 Writing all exponentials as formal series (actually  {\sl finite sum},  because of the nilpotency of all elements in  $ \fn_\fg(A) \, $,  cf.\ \S \ref{1st-descr_N_G(A)}),  the claim follows at once from
definitions by induction on  $ \ell $  via a straightforward application of Baker-Campbell-Hausdorff formula.
\end{proof}

\medskip

   We can now provide our new description of the subgroup  $ \; N_G(A) = \exp\big(\mathfrak{n}_\fg(A)\big) \; $:

\medskip

\begin{proposition}  \label{prop:2nd-descr_N_G(A)}
 The subgroup  $ \; N_G(A) = \exp\big(\mathfrak{n}_\fg(A)\big) \; $  of  $ G(A) $  is generated by the set
  $$  \varGamma_{\!{}_B}  \;\; := \;\;  \Big\{\, \exp\big(t_j X_j\big) \, , \, \exp\big(\eta_i Y_i\big) \;\Big|\;\, t_j \in \nil{A}_\zero \, , \, \eta_i \in \nil{A}_\uno = A_\uno \, , \; \forall \, j \in J, \, i \in I \,\Big\}  $$
where  $ \, {\big\{ X_j \big\}}_{j \in J} \bigsqcup {\big\{ Y_i \big\}}_{i \in I} = B_\zero \bigsqcup B_\uno = B \, $  is the  $ \K $--basis  of  $ \fg $  chosen in  \S \ref{twds_glob-split}  above.
\end{proposition}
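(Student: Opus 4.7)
The easy inclusion $\varGamma_{\!{}_B}\subseteq N_G(A)$ is immediate, since each listed generator is the exponential of a single ``monomial'' tensor $t_jX_j$ (with $t_j\in\nil{A}_\zero$) or $\eta_iY_i$ (with $\eta_i\in\nil{A}_\uno$), any such monomial lies in $\mathfrak{n}_\fg(A)$ by the explicit description \eqref{descr_ng(A)}, and $N_G(A)=\exp(\mathfrak{n}_\fg(A))$ by the preceding proposition. The real content is the reverse inclusion. My plan is to establish, by downward induction on the filtration index $d$, the stronger statement that \emph{every $\exp(Z)$ with $Z\in\fn_\fg^{(d)}(A)$ lies in the abstract subgroup $\langle\varGamma_{\!{}_B}\rangle$ of $G(A)$ generated by $\varGamma_{\!{}_B}$}; the case $d=1$ then gives exactly what is claimed, since $\fn_\fg^{(1)}(A)=\mathfrak{n}_\fg(A)$.

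For the base case I pick $N$ with $\nil{A}^N=0$: this forces $\fn_\fg^{(N)}(A)=0$, and the assertion reduces to $\exp(0)=1\in\langle\varGamma_{\!{}_B}\rangle$. For the inductive step, given $Z\in\fn_\fg^{(d)}(A)$ I decompose it along the chosen basis $B$ as $Z=\sum_{j}t_jX_j+\sum_{i}\eta_iY_i$, with coefficients lying respectively in $(\nil{A}^d)_\zero\subseteq\nil{A}_\zero$ and $(\nil{A}^d)_\uno\subseteq\nil{A}_\uno$; each individual summand then sits in $\fn_\fg^{(d)}(A)$. Applying Lemma~\ref{expans_exp(sum_S)} to this finite list of summands yields
\[
\exp(Z)\;=\;\prod_{j}\exp(t_jX_j)\cdot\prod_{i}\exp(\eta_iY_i)\cdot\exp(T_1)\cdots\exp(T_k),
\]
in which the first two products are, by construction, words in $\varGamma_{\!{}_B}$, while each correction $T_r$ lies in $\fn_\fg^{(\partial_r)}(A)$ with $\partial_r\ge 2d\ge d+1$. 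By inductive hypothesis each $\exp(T_r)$ is then itself a word in $\varGamma_{\!{}_B}$, so the whole right-hand side is, completing the inductive step.

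The only delicate point, and the step I expect to require the most care, is verifying that the filtration $\fn_\fg^{(\bullet)}(A)$ behaves correctly under the Baker--Campbell--Hausdorff manipulations hidden inside Lemma~\ref{expans_exp(sum_S)}: concretely, that $[\fn_\fg^{(d_1)}(A),\fn_\fg^{(d_2)}(A)]\subseteq\fn_\fg^{(d_1+d_2)}(A)$, so that the correction indices $\partial_r$ produced by the lemma really are $\ge 2d$. This is forced by $\nil{A}^{d_1}\cdot\nil{A}^{d_2}\subseteq\nil{A}^{d_1+d_2}$ combined with the sign-rule bracket $[a\otimes X,a'\otimes X']=(-1)^{|X||a'|}\,aa'\otimes[X,X']$ on $A\otimes\fg$. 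Together with the termination of the filtration at $N$, this closes the downward induction and yields the proposition.
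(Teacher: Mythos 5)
Your proof is correct and follows essentially the same route as the paper's: both expand $Z$ along the basis $B$, invoke Lemma \ref{expans_exp(sum_S)} to peel off the generators at the cost of correction terms lying strictly deeper in the filtration $\fn_\fg^{(\bullet)}(A)$, and terminate using $\fn_\fg^{(N)}(A)=\{0\}$. Your packaging of the iteration as a downward induction on the filtration index is just a slightly cleaner bookkeeping of what the paper does by tracking the strictly increasing minimum depth of the successive corrections $Z^{(c)}_q$.
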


\begin{proof}
 Let  $ \, n \in N_G(A) = \exp\!\big(\fn_\fg(A)\big) \, $,  say  $ \, n = \exp(Z) \, $  with  $ \, Z \in \fn_\fg(A) \, $;  clearly we can assume  $ \, Z \not= 0 \, $.  Using our fixed, ordered,  $ \K $--basis  $ B $  of  $ \fg $  our  $ Z $  expands into  $ \; Z = \sum_{j \in J} t'_j \, X_j \, + \sum_{i \in I} \eta'_i \, Y_i \; $  for some  $ \, t'_j \in \nil{A}_\zero \, $  and  $ \, \eta'_i \in \nil{A}_\uno \, $,  by the very definition of  $ \fn_\fg(A) \, $.  By  Lemma \ref{expans_exp(sum_S)},  this implies that
  $$  \exp(Z) \, = \, \exp\!\Big( {\textstyle \sum_{j \in J}} \, t'_j X_j \, + {\textstyle \sum_{i \in I}} \, \eta'_i Y_i \Big) \, = \, {\textstyle \overrightarrow{\textstyle \prod\limits_{j \in J}}} \exp\!\big(t'_j\,X_j\big) \, {\textstyle \overrightarrow{\textstyle \prod\limits_{i \in I}}} \exp\!\big(\eta'_i\,Y_i\big) \cdot \exp\!\Big(Z^{(1)}_1\Big) \cdots \exp\!\Big(Z^{(1)}_{k_1}\Big)  $$
 \vskip-5pt
\noindent
 for some  $ \, Z^{(1)}_1 , \dots , Z^{(1)}_{k_1} \in \fn_\fg(A) \, $,  where the symbols  $ \overrightarrow{\textstyle \prod}_{j \in J} $  and  $ \overrightarrow{\textstyle \prod}_{i \in I} $  denotes ordered products.  Even more, the Lemma ensures that the new terms  $ Z^{(1)}_h $'s  actually ``lie deeper'', in the decreasing filtration of  $ \fn_\fg(A) $  given by the  $ \fn_\fg^{\!(d)}\!(A) $'s,  than the initial  $ Z $  we started with, and this allows us to get to the end iterating this argument, in finitely many steps.
                                              \par
   Indeed, formally speaking we define  $ \, d(T) \in \N_+ \, $  by the condition  $ \, T \in \fn_\fg^{\!(d(T))}\!(A) \,\setminus\, \fn_\fg^{\!(d(T)+1)}\!(A) \, $.  Then in our construction  Lemma \ref{expans_exp(sum_S)}  ensures that for the newly occurring elements  $ Z^{(1)}_1 \, , \dots , Z^{(1)}_{k_1} $  we have  $ \; d\big(Z^{(1)}_h\big) > d(Z) \; $  for all  $ \, h = 1 , \dots , k \, $.  Now we can repeat our argument, with the  $ Z^{(1)}_h $'s  playing the role of  $ Z \, $,  and find similar results, i.e.\ a new expression for  $ \, \exp(Z) \, $  of the form
  $$  \exp(Z)  \,\; = \;\,  {\textstyle \overrightarrow{\textstyle \prod\limits_{j \in J}}} \exp\!\big(t'_j X_j\big) \, {\textstyle \overrightarrow{\textstyle \prod\limits_{i \in I}}} \exp\!\big(\eta'_i Y_i\big)
\cdot {\textstyle \mathop{\overrightarrow{\textstyle \prod}}\limits_{s=1}^{k_1}} \bigg(\;
{\textstyle \overrightarrow{\textstyle \prod\limits_{j \in J}}} \exp\!\big(t''_{s,j} X_j\big) \, {\textstyle \overrightarrow{\textstyle \prod\limits_{i \in I}}} \exp\!\big(\eta''_{s,i} Y_i\big) \!\bigg)
\cdot {\textstyle \mathop{\overrightarrow{\textstyle \prod}}\limits_{r=1}^{k_2}} \exp\big(Z^{(2)}_r\big)  $$
%
  \eject
%
\noindent
 for some  $ \, Z^{(2)}_1 , \dots , Z^{(2)}_{k_2} \in \fn_\fg(A) \, $  such that  $ \, d(Z^{(2)}_r) > \min{\big\{ d\big(Z^{(1)}_h\big) \big\}}_{h=1,\dots,k_1} \, $  for all  $ \, r = 1 , \dots , k_2 \, $.  Iterating this process we find at each step new factors belonging to  $ \varGamma_{\!{}_B} $  and possibly new factors of the form  $ \, \exp\!\big(Z^{(c)}_q\big) \, $,  $ \, q = 1 , \dots , k_c \, $, such that the sequence  $ \, n_c := \min{\big\{ d\big(Z^{(c)}_q\big) \big\}}_{q=1,\dots,k_c} \, $  is strictly increasing.  Then, since we have  $ \, \fn_\fg^{\!(N)}\!(A) = \{0\} \, $  for  $ \, N \gg 0 \, $,  after finitely many steps we find  $ \, Z^{(c)}_q \!\equiv 0 \, $,  i.e.\  no further terms actually occur, and  $ \, n = \exp(Z) \, $  is eventually written as a product of elements in  $ \varGamma_{\!{}_B\,} $;  thus the latter is indeed a generating set for  $ \, N_G(A) = \exp\!\big(\fn_\fg(A)\big) \, $,  \, as claimed.
\end{proof}

\smallskip

\begin{free text}  \label{spec-exp}
 {\bf Special exponentials in  $ G(A) \, $.}
 Before going on, let us consider elements in  $ G(A) $  of the form  $ \, \exp\!\big(t\,X\big) \, $  or  $ \, \exp\!\big(\eta\,Y\big) \, $   --- with  $ \, t \in A_\zero \, $  such that  $ \, t^2 = 0 \, $  (so that  $ \, t \in \nil{A}_\zero \, $  indeed),  $ \, X \in \fg_\zero \, $,  $ \, \eta \in \nil{A}_\uno \, $,  $ \, Y \in \fg_\uno \, $  ---   like those (see above) that generate  $ \, \exp\!\big(\fn_\fg(A)\big) = N_G(A) \, $.  Since both  $ t $  and  $ \eta $  have square zero, the formal power series expansion of both  $ \, \exp\!\big(t\,X\big) \, $  and  $ \, \exp\!\big(\eta\,Y\big) \, $  is actually truncated at first order, i.e.\ it reads  $ \, \exp\!\big(t\,X\big) = \big( 1 + t\,X \big) \, $  and  $ \, \exp\!\big(\eta\,Y\big) = \big( 1 + \eta\,Y \big) \, $
%
%
 respectively.  More in general, we consider elements of the form  $ \, \exp(\cX) , \exp(\cY) \in \exp\!\big(\fn_\fg(A)\big) = N_G(A) \, $  with  $ \, \cX \in \nil{A}_\zero \!\otimes \fg_\zero \, $  and  $ \, \cY \in \nil{A}_\uno \otimes \fg_\uno = A_\uno \otimes \fg_\uno \, $.  As both  $ \nil{A}_\zero $ and  $ A_\uno $  are nilpotent, the formal power series expansion of both  $ \, \exp(\cX) $  and  $ \exp(\cY) $  can be seen again as a (finite) polynomial.
                                                                \par
   In the next Lemma we collect some identities in  $ G(A) $  involving ``special exponentials'' as those mentioned above.  This will be crucial all over the sequel.
\end{free text}

\smallskip

\begin{lemma}  \label{lemma:relations-in-G(A)}
 Let  $ \, A \in \Wsalg_\K \, $,  $ \, \eta, \eta', \eta'' \in A_\uno \, $,  $ \, \eta_i \in A_\uno \, $  (for all  $ \, i \in I \, $),  $ \, Y, Y' \! \in \fg_\uno \, $,  $ \, X \! \in \fg_\zero \, $  and  $ \, g_{{}_0} \in G_\zero(A) \, $.  Then inside  $ G(A) $  we have (notation of  Definition \ref{def:Lie-superalgebras})
 \vskip7pt
   (a) \;  $ \qquad \big( 1 + \, \eta \, \eta' \, [Y,Y'\,] \big) = \exp\big( \eta \, \eta' \, [Y,Y'\,] \big) \, \in \, G_\zero(A) $
 \vskip5pt
   (b)  $ \quad (1 + \eta \, Y) \, g_{{}_0} = \, g_{{}_0} \big( 1 + \eta \, \Ad\big(g_{{}_0}^{-1}\big)(Y) \big) \;\, $,
 $ \;\; \exp\!\big(\, {\textstyle \sum_{i \in I\,}} \eta_i Y_i \big) \, g_{{}_0} = \, g_{{}_0} \exp\!\big(\, {\textstyle \sum_{i \in I\,}} \eta_i \Ad\big(g_{{}_0}^{-1}\big)(Y_i) \big) $
 \vskip4pt
   (c) \;  $ \;\;\quad  \big( 1 + \eta' \, Y' \big) \, \big( 1 + \eta'' \, Y'' \big) \; = \; \big( 1 + \eta'' \, \eta' \, [Y',Y''\,] \big) \, \big( 1 + \eta'' \, Y'' \big) \, \big( 1 + \eta' \, Y' \big) $
 \vskip5pt
   (d) \;  $ \;\;\quad \big( 1 + \eta \, Y' \big) \, \big( 1 + \eta \, Y'' \big)  \; = \;  \big( 1 + \eta \, (Y'+Y'') \big)  \; = \;  \big( 1 + \eta \, Y'' \big) \, \big( 1 + \eta \, Y' \big) $
 \vskip5pt
   (e) \;  $ \;\;\quad \big( 1 + \eta' \, Y \big) \, \big( 1 + \eta'' \, Y \big)  \; = \;  \big( 1 + \eta'' \, \eta' \, Y^{\langle 2 \rangle} \big) \, \big( 1 + (\eta'+\eta'') \, Y \big) $
 \vskip5pt
   (f) \;  $ \;\;\quad (1 \, + \, \eta \, Y) \, (1 \, + \, \eta' \eta'' X)  \; = \;  (1 \, + \, \eta' \eta'' X) \, \big( 1 \, + \, \eta \, \eta' \eta'' \, [Y,X] \big) \, (1 \, + \, \eta \, Y)  \; = $
                                                              \hfill{\ }\break
   \indent \hskip-1pt   \phantom{(f) \;  $ \,\;\; (1 \, + \, \eta \, Y) \, (1 \, + \, \eta' \eta'' X) $}  $ \; = \;  (1 \, + \, \eta' \eta'' X) \, (1 \, + \, \eta \, Y) \, \big( 1 \, + \, \eta \, \eta' \eta'' \, [Y,X] \big) $
 \vskip5pt
   (g) \,  Let  $ \, (h,k) := h\,k\,h^{-1}k^{-1} $  be the commutator of elements  $ h $  and  $ k $  in a group.  Then
 \vskip4pt
   \centerline{ $ \big( \big( 1 + \eta \, Y \big) , \big( 1 + \eta' \, Y' \big) \big)  =  \big( 1 + \eta' \, \eta \, [Y,Y'\,] \big) \; ,  \;\;  \big( \big( 1 + \eta \, Y \big) , \big( 1 + \eta \, Y' \big) \big)  =  \big( 1 + \eta \, (Y+Y') \big) $ }
 \vskip3pt
   \centerline{ $ \big( \big( 1 + \eta' \, Y \big) \, , \big( 1 + \eta'' \, Y \big) \big) \, = \, {\big( 1 + \eta'' \, \eta' \, Y^{\langle 2 \rangle} \big)}^2 \, = \, \big( 1 + \eta'' \, \eta' \, 2 \, Y^{\langle 2 \rangle} \big) \, = \, \big( 1 + \eta'' \, \eta' \, [Y,Y] \big) $ }
 \vskip5pt
%
%
 (N.B.: taking the rightmost term in the last identity, the latter is a special case of the first).
 \vskip5pt
   (h) \,  For any  $ \, n \in \N_+ \, $,  there exist unique
%
%
 $ \, T_\zero^{(n)} , T_\uno^{(n)} \in {\big\langle Z_1 \, , Z_2 \big\rangle}_{\text{\it Lie}}^\K \, $,  independent of  $ A \, $,  such that:
 \vskip2pt
   \quad  ---  $ \; T_\zero^{(n)} $  is a  $ \K $--linear  combination of Lie monomials of  {\sl even}  degree greater than  $ n \, $,
 \vskip2pt
   \quad  ---  $ \; T_\uno^{(n)} $  is a  $ \K $--linear  combination of Lie monomials of  {\sl odd}  degree greater than  $ n \, $,
 \vskip2pt
   \quad  ---  setting  $ \, d_1 := \text{\sl dim}(\fg_\uno) \, $,  for any  $ \, \cY' , \cY'' \in A_\uno \!\otimes_\K \fg_\uno \, $  we have
 \vskip4pt
   \centerline{ $ \exp\!\big( \cY' \big) \, \exp\!\big( \cY'' \big)  \,\; = \;\,  \exp\!\Big( P_\zero^{(d_1)}\big( \cY' , \cY'' \big) \Big) \, \exp\!\Big(\, \cY' + \cY'' + P_\uno^{(d_1)}\big( \cY' , \cY'' \big) \Big) $ }
\noindent
 with  $ \,\; P_\zero^{(d_1)} := \, T_\zero^{(1)} \!* T_\zero^{(2)} \!* \cdots * T_\zero^{(d_1 - 1)} \;\, $  and  $ \,\; P_\uno^{(d_1)} := \, T_\uno^{(d_1 - 1)} \!+ \cdots + T_\uno^{(2)} \!+ T_\uno^{(1)} \;\, $.
\end{lemma}

\begin{proof}
 Writing all exponentials as formal power series
%
%
 (actually  {\sl finite sum},  as noticed above), claims  {\it (a)\/}  through  {\it (g)\/}  follow at once from definitions, via straightforward applications of the Baker-Campbell-Hausdorff formula.  Claim  {\it (a)\/}  is even simpler, since  $ \, \big( 1 + \eta \, \eta' \, [Y,Y'\,] \big) \, $  is just the formal power series expansion of  $ \, \exp\big( \eta \, \eta' \, [Y,Y'\,] \big) \, $,  and the latter belong to  $ \; \exp\big( \nil{A}_\zero \, \fg_\zero \big) \, \subseteq \, G_\zero(A) \; $.
  \eject
%
 \vskip5pt
   Claim  {\it (h)\/}  requires some more work.  An equivalent formulation of it is that the identity
\begin{equation}  \label{eq: expansion_Y'*Y''}
 \cY' \,*\, \cY'' \,\; = \;\,  P_\zero^{(d_1)}\big( \cY' , \cY'' \big) \,*\, \Big(\, \cY' + \cY'' + P_\uno^{(d_1)}\big( \cY' , \cY'' \big) \Big)
\end{equation}
holds true for some uniquely determined Lie polynomials  $ \,\; P_\zero^{(d_1)} := \, T_\zero^{(1)} * T_\zero^{(2)} * \cdots * T_\zero^{(d_1 - 1)} \;\, $  and  $ \,\; P_\uno^{(d_1)} := \, T_\uno^{(d_1 - 1)} + \cdots + T_\uno^{(2)} + T_\uno^{(1)} \;\, $  with the  $ T_{\zero/\uno}^{\,(i)} $'s  having the properties mentioned above.
                                                            \par
   We start working with the product  ``$ \, * \, $''  in  $ \, {\big\langle Z_1 \, , Z_2 \big\rangle}_{\text{\it Lie}}^\K \, $.  As a matter of terminology, we call  {\sl order\/}  of any non-zero Lie polynomial  $ P $  in two variables the least degree of a homogeneous monomial occurring with non-zero coefficient in the standard  $ \K $--linear expansion of  $ P $  (accordingly, the order of the zero polynomial will be  $ -\infty \, $).

   First of all, we need three technical results.  For formal symbols  $ \, F , G \, $  there exist unique  $ \; R \, , S \in {\big\langle F , G \big\rangle}_{\text{\it Lie}}^\K \; $  such that
\begin{equation}  \label{eq: factor_F*G}
  F \,*\, G  \,\; = \;\,  R \,*\, \big(\, F + G \big)
\end{equation}
\begin{equation}  \label{eq: factor_F+G}
  F \,+\, G  \,\; = \;\,  F \,*\, S \,*\, G
\end{equation}
and  $ \, R \, , S \, $  are Lie polynomials (in  $ F $  and  $ G \, $)  of order greater than 1.  In fact,  $ R $  is the unique solution of the equation
  $ \,\; \exp(F) \, \exp(G) \, = \, \exp(R) \, \exp(F+G) \; $,
 \, given by
  $$  R  \,\; = \;\,  \log\!\Big( \exp(F) \, \exp(G) \, {\exp(F+G)}^{-1} \Big)  \,\; = \;\,  F \,*\, G \,*\, (-F-G)  $$
while  $ S $  is the unique solution of the equation
  $ \,\; \exp(F+G) \, = \, \exp(F) \, \exp(S) \, \exp(G) \; $,
 \, given by
  $$  S  \,\; = \;\,  \log\!\Big( {\exp(F)}^{-1} \, \exp(F+G) \, {\exp(G)}^{-1} \Big)  \,\; = \;\,  (-F) \,*\, (F+G) \,*\, (-G)  $$
Then the explicit expression of the product  ``$ \, * \, $''  implies that both  $ R $  and  $ S $  have order greater than 1, as claimed; moreover, both are independent of  $ A $  and  $ \fg $  whatsoever.  Finally, for any Lie polynomial  $ T $  in two variables there exist unique  $ T_\zero $  and  $ T_\uno $  such that
\begin{equation}  \label{eq: split T = Te + To}
  T  \,\; = \;\,  T_\zero + \, T_\uno
\end{equation}
where  $ \, T_\zero \, $,  resp.\  $ \, T_\uno \, $,  is a  $ \K $--linear  combination of Lie monomials of  {\sl even},  resp.\  {\sl odd},  degree.
                                                                \par
   With repeated applications of  \eqref{eq: factor_F*G},  \eqref{eq: factor_F+G}  and  \eqref{eq: split T = Te + To}  we find
  $$  \displaylines{
   Z_1 \,*\, Z_2  \,\; {\buildrel {\eqref{eq: factor_F*G}} \over =} \;\,  R_1 \,*\, \big(\, Z_1 + Z_2 \big)  \,\; {\buildrel {(\,T^{(1)} := R_1)} \over =} \;\,  T^{(1)} \,*\, \big(\, Z_1 + Z_2 \big)  \,\; {\buildrel {\eqref{eq: split T = Te + To}} \over =} \;\,  \Big( T_\zero^{(1)} \! + T_\uno^{(1)} \Big) \,*\, \big(\, Z_1 + Z_2 \big)  \,\; {\buildrel {\eqref{eq: factor_F+G}} \over =}   \hfill  \cr
   {\buildrel {\eqref{eq: factor_F+G}} \over =} \;\,  T_\zero^{(1)} *\, S_1 \,*\, T_\uno^{(1)} *\, \big(\, Z_1 + Z_2 \big)  \,\; {\buildrel {\eqref{eq: factor_F*G}} \over =} \;\,  T_\zero^{(1)} *\, S_1 \,*\, R_2 \,*\, \Big( T_\uno^{(1)} \! + Z_1 + Z_2 \Big)  \,\; {\buildrel {(\,T^{(2)} := S_1 * R_2)} \over =}   \qquad \qquad  \cr
   {\buildrel {(\,T^{(2)} := S_1 * R_2)} \over =} \;\,  T_\zero^{(1)} *\, T^{(2)} \,*\, \Big( T_\uno^{(1)} \! + Z_1 + Z_2 \Big)  \,\; {\buildrel \eqref{eq: split T = Te + To} \over =} \;\,  T_\zero^{(1)} *\, \Big( T_\zero^{(2)} \! + T_\uno^{(2)} \Big) \,*\, \Big( T_\uno^{(1)} \! + Z_1 + Z_2 \Big)  \,\; {\buildrel {\eqref{eq: factor_F+G}} \over =}  \cr
   \hfill   {\buildrel {\eqref{eq: factor_F+G}} \over =} \;\,  T_\zero^{(1)} *\, T_\zero^{(2)} *\, S_2 \,*\, T_\uno^{(2)} *\, \Big( T_\uno^{(1)} \! + Z_1 + Z_2 \Big)  \,\; {\buildrel {\eqref{eq: factor_F*G}} \over =}   \qquad \qquad \qquad \qquad \qquad \qquad \qquad \qquad  \cr
   \hfill   {\buildrel {\eqref{eq: factor_F*G}} \over =} \;\,  T_\zero^{(1)} *\, T_\zero^{(2)} *\, S_2 \,*\, R_3 \,*\, \Big( T_\uno^{(2)} \! + T_\uno^{(1)} \! + Z_1 + Z_2 \Big)  \,\; {\buildrel {(\,T^{(3)} := S_2 * R_3)} \over =}   \qquad \qquad \qquad \qquad  \cr
   \hfill   {\buildrel {(\,T^{(3)} := S_2 * R_3)} \over =} \;\,  T_\zero^{(1)} *\, T_\zero^{(2)} *\, T^{(3)} \,*\, \Big( T_\uno^{(2)} \! + T_\uno^{(1)} \! + Z_1 + Z_2 \Big)  \,\; = \;\,  \cdots   \qquad \qquad \qquad \qquad  \cr
   \hfill   \cdots  \,\; = \;\,  T_\zero^{(1)} *\, T_\zero^{(2)} *\, \cdots \,*\, T_\zero^{(n-1)} *\, T^{(n)} \,*\, \Big( T_\uno^{(n-1)} \! + \cdots + T_\uno^{(2)} \! + T_\uno^{(1)} \! + Z_1 + Z_2 \Big)
  }  $$
so that in the end we get
  $$  Z_1 \,*\, Z_2  \,\; = \;\,  T_\zero^{(1)} *\, T_\zero^{(2)} *\, \cdots \,*\, T_\zero^{(n-1)} *\, T^{(n)} \,*\, \Big( T_\uno^{(n-1)} \! + \cdots + T_\uno^{(2)} \! + T_\uno^{(1)} \! + Z_1 + Z_2 \Big)  $$
for arbitrarily big  $ \, n \in \N \, $,  where each Lie polynomial  $ T_s $   --- by its very construction ---   has order greater than  $ s \, $.  Finally, we can re-write this last formula as
\begin{equation}  \label{eq: long-fact Z'*Z''}
  Z_1 \,*\, Z_2  \,\; = \;\,  P_\zero^{(n)}\!\big( Z_1 \,, Z_2 \big) \,*\, T^{(n)} \,*\, \Big( Z_1 + Z_2 + P_\uno^{(n)}\!\big( Z_1 \,, Z_2 \big) \Big)
\end{equation}
for all  $ \, n \in \N \, $,  with  $ \; P_\zero^{(n)} := T_\zero^{(1)} *\, T_\zero^{(2)} *\, \cdots \,*\, T_\zero^{(n-1)} \; $  and  $ \; P_\uno^{(n)} := T_\uno^{(n-1)} \! + \cdots + T_\uno^{(2)} \! + T_\uno^{(1)} \; $.
 \vskip5pt
   Now observe that every Lie monomial of degree  $ \; m > d_1 := \text{\sl dim}(\fg_\uno) \; $  vanishes when computed on  $ \, A_\uno \!\otimes_\K \fg_\uno \, $;  therefore  $ T^{(m-1)} $  vanishes as well.  It then follows that for  $ \, n = d_1 \, $  replacing  $ \cY' $  for  $ Z_1 $  and  $ \cY'' $  for  $ Z_2 $  in  \eqref{eq: long-fact Z'*Z''}  we eventually get  \eqref{eq: expansion_Y'*Y''},  q.e.d.
\end{proof}

\medskip

   We still need to introduce some auxiliary objects associated with  $ G \, $:

\medskip

\begin{definition}  \label{def:spec_subobj's_G}
 Let  $ G $  be a Lie supergroup, as above.  For any  $ \, A \in \Wsalg_\K \, $,  we define:
 \vskip7pt
   {\it (a)}  \;\;  $ \; G^-(A)  \; := \;  \Big\{\, {\textstyle \prod_{s=1}^n} \big( 1 + \eta_s Y_s \big) \;\Big|\; n \in \N \, , \; (\eta_s , Y_s) \in A_\uno \times \fg_\uno \;\, \forall \; s \in \{1,\dots,n\} \,\Big\}  \;\quad  \big(\, \subseteq G(A) \,\big) \; $
 \vskip7pt
   {\it (b)}  \quad  $ \; \exp\!\big( A_\uno \!\otimes_\K \fg_\uno \big)  \; := \;  \big\{\, \exp\!\big( \cY \big) \;\big|\; \cY \in A_\uno \!\otimes_\K \fg_\uno \,\big\}  \;\qquad  \Big( \subseteq \, G^-(A) \,\Big) \; $
 \vskip7pt
   {\it (c)}  \quad  $ \; N_G^{[2]}(A)  \; := \;  \exp\!\Big( A_\uno^{[2]} \!\otimes_\K \big[\fg_\uno,\fg_\uno\big] \Big)  \;\qquad  \big(\; \subseteq \, N_{G_\zero}\big(A_\zero\big) \,=\, N_G(A) \,\bigcap\, G_\zero(A) \,\big) \; $
 \vskip7pt
   {\it (d)}\;  for any fixed  $ \K $--basis  $ \, {\big\{ Y_i \big\}}_{i \in I} \, $  of  $ \, \fg_\uno \, $  (for some index set  $ I \, $)  and any fixed total order in  $ I $,
 \vskip-7pt
  $$  G_-^{\scriptscriptstyle \,<}(A)  \; := \;  \bigg\{\, {\textstyle \prod\limits_{i \in I}^\rightarrow} \big( 1 + \eta_i Y_i \big) \;\bigg|\,\; \eta_i \in A_\uno \;\, \forall \; i \in I \,\bigg\}  \qquad  \big(\; \subseteq\, G^-(A) \;\big)  $$
 \vskip-8pt
\noindent
 where  $ \, \prod\limits_{i \in I}^\rightarrow \, $  denotes an  {\sl ordered product}   --- with respect to the fixed total order in  $ I \, $.
 \hfill   $ \diamondsuit $
\end{definition}

\vskip3pt

\begin{remark}  \label{expA1g1-in-G-}
 By definition,  $ \, \exp\!\big( A_\uno \!\otimes_\K \fg_\uno \big) \, $  contains the set of generators of  $ \; G^-(A) \; $;  therefore, the former generates a subgroup  $ \, \Big\langle \exp\!\big( A_\uno \!\otimes_\K \fg_\uno \big) \Big\rangle \, $  of  $ G(A) $  that contains  $ G^-(A) \, $.  On the other hand, for any  $ \; \sum_{s=1}^n \eta_s \, Y_s \in A_\uno \!\otimes_\K \fg_\uno \, $,  \, the formal series expansion of  $ \, \exp\big( \sum_{s=1}^n \eta_s \, Y_s \big) \, $  yields
 \vskip5pt
   \centerline{ $ \,\;\; \exp\!\Big( \sum_{s=1}^n \eta_s \, Y_s \Big) \; = \; \prod_{\sigma \in \mathcal{S}_n} \!\! \big( 1 + \eta_{\sigma(1)} Y_{\sigma(1)} \big/ n! \,\big) \cdot \big( 1 + \eta_{\sigma(2)} Y_{\sigma(2)} \big/ n! \,\big) \cdots \big( 1 + \eta_{\sigma(n)} Y_{\sigma(n)} \big/ n! \,\big) $ }
 \vskip5pt
\noindent
  that implies  $ \,\; \Big\langle \exp\!\big( A_\uno \!\otimes_\K \fg_\uno \big) \Big\rangle \, \subseteq \, G^-(A) \;\, $.  The outcome is that  $ \,\; G^-(A) \, = \, \Big\langle \exp\!\big( A_\uno \!\otimes_\K \fg_\uno \big) \Big\rangle \;\, $.
\end{remark}

\vskip7pt

   From now on, we fix a  $ \K $--basis  $ \, {\big\{ Y_i \big\}}_{i \in I} \, $  of  $ \, \fg_\uno \, $  (for some index set  $ I \, $)  and we fix in  $ I $  a total order, as in  Definition \ref{def:spec_subobj's_G}{\it (d)}.  Our first result provides new, interesting factorizations for  $ G(A) \, $:

\medskip

\begin{proposition}  \label{prop:fact-Liesgrp_G}
 Let  $ \, G \, $  be a Lie supergroup as above, let  $ \, {\big\{ Y_i \big\}}_{i \in I} \, $  be a totally ordered  $ \, \K $--basis  of  $ \, \fg_\uno \, $  (for some total order in the set  $ I \, $)  and let  $ \, A \in \Wsalg_\K \, $  be any Weil superalgebra.  Then:
 \vskip5pt
   (a)\,  $ \; G^-(A) $  coincides with the subgroup  $ \, \big\langle G_-^{\scriptscriptstyle \,<}(A) \big\rangle \, $ of  $ \, G(A) $  generated by  $ \, G_-^{\scriptscriptstyle \,<}(A) \, $  and with the subgroup  $ \, \big\langle \exp\!\big( A_\uno \!\otimes_\K \fg_\uno \big) \big\rangle \, $  generated by  $ \, \exp\!\big( A_\uno \!\otimes_\K \fg_\uno \big) \; $;
 \vskip5pt
   (b)\,  there exist set-theoretic factorizations (with respect to the group product  ``$ \; \cdot \, $'')
 \vskip-5pt
\begin{equation}  \label{eq:factor-1_G^-}
  G^-(A)  \; = \;  N_G^{[2]}(A) \cdot G_-^{\scriptscriptstyle \,<}(A)  \;\;\quad ,  \qquad\;\;
  G^-(A) \; = \; G_-^{\scriptscriptstyle \,<}(A) \cdot N_G^{[2]}(A)
\end{equation}
\begin{equation}  \label{eq:factor-1_N_G}
   N_G(A) \; = \; N_{G_\zero}\big(A_\zero\big) \cdot G_-^{\scriptscriptstyle \,<}(A)  \;\quad ,
  \qquad\;  N_G(A) \; = \; G_-^{\scriptscriptstyle \,<}(A) \cdot N_{G_\zero}\big(A_\zero\big)
\end{equation}
\begin{equation}  \label{eq:factor-1_G}
   G(A) \; = \; G_\zero(A) \cdot G_-^{\scriptscriptstyle \,<}(A)  \;\;\;\;\quad ,  \qquad\;\;\;\;  G(A) \; = \; G_-^{\scriptscriptstyle \,<}(A) \cdot G_\zero(A)
\end{equation}
 \vskip5pt
   (c)\,  there exist set-theoretic factorizations (with respect to the group product  ``$ \; \cdot \, $'')
 \vskip-15pt
\begin{equation}  \label{eq:factor-2_G^-}
  G^-(A)  \; = \;  N_G^{[2]}(A) \cdot \exp\!\big( A_\uno \!\otimes_\K \fg_\uno \big)  \;\;\quad ,  \qquad\;\;
  G^-(A) \; = \; \exp\!\big( A_\uno \!\otimes_\K \fg_\uno \big) \cdot N_G^{[2]}(A)
\end{equation}
\begin{equation}  \label{eq:factor-2_N_G}
   N_G(A) \; = \; N_{G_\zero}\big(A_\zero\big) \cdot \exp\!\big( A_\uno \!\otimes_\K \fg_\uno \big)  \;\quad ,
  \qquad\;  N_G(A) \; = \; \exp\!\big( A_\uno \!\otimes_\K \fg_\uno \big) \cdot N_{G_\zero}\big(A_\zero\big)
\end{equation}
\begin{equation}  \label{eq:factor-2_G}
   G(A) \; = \; G_\zero(A) \cdot \exp\!\big( A_\uno \!\otimes_\K \fg_\uno \big)  \;\;\;\;\quad ,  \qquad\;\;\;\;  G(A) \; = \; \exp\!\big( A_\uno \!\otimes_\K \fg_\uno \big) \cdot G_\zero(A)
\end{equation}
\end{proposition}

\begin{proof}
 {\it (a)}\,  In  Remark \ref{expA1g1-in-G-}  above we proved that  $ \; \big\langle \exp\!\big( A_\uno \!\otimes_\K \fg_\uno \big) \big\rangle = \big\langle G_-^{\scriptscriptstyle \,<}(A) \big\rangle \; $,  and we are left to prove that  $ \, \big\langle G_-^{\scriptscriptstyle \,<}(A) \big\rangle = G^-(A) \, $.
 It is clear by definition that  $ G^-(A) $  is the subgroup in  $ G(A) $  generated by  $ \, \big\{\, (1+\eta\,Y) \,\big|\, \eta \in A_\uno \, , \, Y \!\in \fg_\uno \big\} \, $:  thus it is enough to prove that each of its generators  $ \, (1+\eta\,Y) \, $  actually belongs to  $ \, \big\langle G_-^{\scriptscriptstyle \,<}(A) \big\rangle \; $.
                                                      \par
   Now, given  $ \, Y \in \fg_\uno \, $  let  $ \; Y = \sum_{i \in I} c_i Y_i \; $  (with  $ \, c_i \in \K \, $)  be its  $ \K $--linear  expansion with respect to the  $ \K $--basis  $ \, {\big\{ Y_i \big\}}_{i \in I} \, $  of  $ \, \fg_\uno \, $.  Then repeated applications of the identity in  Lemma \ref{lemma:relations-in-G(A)}{\it (d)\/}  yield
 $ \;\; (1+\eta\,Y) \, = \, \big(\, 1 + \eta \, \sum_{i \in I} c_i Y_i \,\big) \, = \, \big(\, 1 + \sum_{i \in I} (c_i\,\eta) \, Y_i \,\big) \, = \, \overrightarrow{\prod}_{i \in I} \big(\, 1 + (c_i\,\eta) \, Y_i \,\big) \, \in \, G_-^{\scriptscriptstyle \,<}(A) \; $,
 \; q.e.d.
 \vskip5pt
   {\it (b)}\,  We begin with the proof of  \eqref{eq:factor-1_G^-}:  by left-right symmetry, it is enough to prove the left-hand side, that is  $ \; G^-(A) \, = \, N_G^{[2]}(A) \cdot G_-^{\scriptscriptstyle \,<}(A) \; $,  \; so we focus on that.  By claim  {\it (a)},  any element of  $ G^-(A) $  can be written as a (unordered) product of the form  $ \, {\textstyle \prod_{k=1}^N} \big( 1 + \eta_k \, Y_{i_k} \big) \, $  with $ \, \eta_k \in A_\uno \, $  and  $ \, i_k \in I \, $  for all  $ k \, $.  Our goal is to prove the following

\vskip5pt

   {\sl  $ \underline{\text{\it Claim}} $:  Any (unordered) product of the form  $ \, {\textstyle \prod_{k=1}^N} \big( 1 + \eta_k \, Y_{i_k} \big) \, $  can be ``re-ordered'', namely it can be re-written as an element of  $ \; N_G^{[2]}(A) \cdot G_-^{\scriptscriptstyle \,<}(A) \; $.}

\vskip5pt

   To prove this  {\it Claim},  let  $ \fa $  be the (two-sided) ideal of  $ A $  generated by the  $ \eta_k $'s,  and denote by  $ \, \fa^n \, $  its  $ n $--th  power  ($ \, n \in \N \, $);  as the  $ \eta_k $'s  are $ N $  elements and are  {\sl odd},  we have  $ \, \fa^n = \{0\} \, $  for  $ \, n > N \, $.
                                                     \par
   Let us denote by  $ \, \preceq \, $  our fixed total order in  $ I $.  Given the product  $ \, {\textstyle \prod_{k=1}^N} \big( 1 + \eta_k \, Y_{i_k} \big) \, $,  we define its  {\sl inversion number\/}  as being the number of occurrences of two consecutive indices  $ k_s $  and  $ k_{s+1} $  for which  $ \, i_{k_s} \npreceq i_{k_{s+1}} \, $:  then the product itself is  {\sl ordered\/}  if and only if its inversion number is zero.
                                                     \par
   Now assume the product  $ \, g := {\textstyle \prod_{k=1}^N} \big( 1 + \eta_k \, Y_{i_k} \big) \, $  is unordered: then there exists at least an inversion, say  $ \, i_{k_s} \npreceq i_{k_{s+1}} \, $,  i.e.~either  $ \, i_{k_s} \succ i_{k_{s+1}} \, $  or  $ \, i_{k_s} = i_{k_{s+1}} \, $.  Once again, using some of the relations considered in  Lemma \ref{lemma:relations-in-G(A)}  we can re-write the product of these two ``unordered factors''  $ \; \big( 1 + \eta_{k_s} Y_{i_{k_s}} \big) \, \big( 1 + \eta_{k_{s+1}} Y_{i_{k_{s+1}}} \big) \; $  in either form (depending on whether  $ \, i_{k_s} \succ i_{k_{s+1}} \, $  or  $ \, i_{k_s} = i_{k_{s+1}} \, $)
   $$  \displaylines{
    \big( 1 + \eta_{k_s} Y_{i_{k_s}} \big) \, \big( 1 + \eta_{k_{s+1}} Y_{i_{k_{s+1}}} \big)  \; = \;  \big( 1 + \eta_{k_{s+1}} \, \eta_{k_s} \, \big[Y_{i_{k_{s+1}}},Y_{i_{k_s}}\big] \big) \, \big( 1 + \eta_{k_{s+1}} Y_{i_{k_{s+1}}} \big) \, \big( 1 + \eta_{k_s} Y_{i_{k_s}} \big)  \cr
    \big( 1 + \eta_{k_s} Y_{i_{k_s}} \big) \, \big( 1 + \eta_{k_{s+1}} Y_{i_{k_s}} \big)  \,\; = \;  \Big( 1 + \eta_{k_{s+1}} \, \eta_{k_s} Y_{i_{k_s}}^{\,\langle 2 \rangle} \Big) \, \big( 1 + (\eta_{k_s} \! + \eta_{k_{s+1}}) \, Y_{i_{k_s}} \big)  }  $$
according to whether  $ \, i_{k_s} \succ i_{k_{s+1}} \, $  or  $ \, i_{k_s} = i_{k_{s+1}} \, $  respectively.  In either case, re-writing in this way the product of the  $ k_s $--th  and the  $ k_{s+1} $--th  factor in the original product  $ \, g := {\textstyle \prod_{k=1}^N} \big( 1 + \eta_k \, Y_{i_k} \big) \, $,  we end up with another product expression were we did eliminate one inversion, but we ``payed the price'' of inserting a  {\sl new factor}.  However, in both cases the newly added factor is of the form  $ \, \big( 1 + a \, X \big) \, $  for some $ \, X \in [\,\fg_\uno , \fg_\uno] \, $  and  $ \, a \in \fa^2 \, $,  so that  $ \, \big( 1 + a \, X \big) \in N_G^{[2]}(A) \, $.
                                                     \par
   By repeated use of relations of the form  $ \; \big( 1 + \eta \, Y \big) \cdot g_{{}_0} \, = \, g_{{}_0} \cdot \big( 1 + \eta \, \text{\sl Ad}\big(g_{{}_0}^{-1}\big)(Y) \big) \; $  we can shift the newly added factor  $ \, \big( 1 + a \, X \big) \, $  to the leftmost position in  $ g $   --- now re-written once more in yet a different product form ---   at the cost of inserting several  {\sl new factors of the form\/}  $ \, \big( 1 + b_t \, Z_t \big) \, $  for some $ \, Z_t \in \fg_\uno \, $  and  $ \, b_t \in \fa^3 \, $.  Moreover, by repeated use of relations of the form
 $ \; \big( 1 + \eta \, Y' \big) \cdot \big( 1 + \eta \, Y'' \big) \, = \, \big( 1 + \eta \, \big( Y' + Y'' \big) \big) \; $
 we can re-write each of these new factors as a product of factors of the form  $ \, \big( 1 + \eta'_h \, Y_{i'_h} \big) \, $  where  $ \, \eta'_h \in A_\uno \, $  is a multiple of some  $ \, b_t \, $,  so that  $ \, \eta'_h \in \fa^3 \, $  too.
                                                     \par
   Eventually, we find a new factorization of the original element  $ \, g := {\textstyle \prod_{k=1}^N} \big( 1 + \eta_k \, Y_{i_k} \big) \, $  in the new form  $ \, g := g'_0 \cdot {\textstyle \prod_{h=1}^{N'}} \big( 1 + \eta'_h \, Y_{i'_h} \big) \, $,  where now  $ \, g'_0 \in N_G^{[2]}(A) \, $  and the factors  $ \, \big( 1 + \eta'_h \, Y_{i'_h} \big) \, $  satisfy the following conditions:
 \vskip5pt
   {\it --- (a)}\,  each factor  $ \, \big( 1 + \eta'_h \, Y_{i'_h} \big) \, $  is either one of the old factors  $ \, \big( 1 + \eta_k \, Y_{i_k} \big) \, $  or a truly new factor;
 \vskip4pt
   {\it --- (b)}\,  in every (truly) new factor  $ \, \big( 1 + \eta'_h \, Y_{i'_h} \big) \, $  one has  $ \; \eta'_h \in \fa^3 \; $;
 \vskip4pt
   {\it --- (c)}\,  the number of inversions among factors  $ \, \big( 1 + \eta'_h \, Y_{i'_h} \big) = \big( 1 + \eta_k \, Y_{i_k} \big) \, $  of the old type is one less than before.
 \vskip7pt
   Iterating this procedure, after finitely many steps we obtain a new factorization of the initial element  $ \, g := {\textstyle \prod_{k=1}^N} \big( 1 + \eta_k \, Y_{i_k} \big) \, $  of the form  $ \, g = g''_{{}_0} \cdot {\textstyle \prod_{h=1}^{N''}} \big( 1 + \eta''_h \, Y_{i''_h} \big) \, $  where  $ \, g''_{{}_0} \in N_G^{[2]}(A) \, $  and the factors  $ \, \big( 1 + \eta''_h \, Y_{i''_h} \big) \, $  enjoy properties  {\it (a)\/}  and  {\it (b)\/}  above plus the ``optimal version'' of  {\it (c)},  namely
 \vskip4pt
   {\it --- (c+)}\,  the number of inversions among factors
 of the old type is zero.
 \vskip7pt
   Now we apply the same ``reordering operation'' to the product  $ \, {\textstyle \prod_{h=1}^{N''}} \big( 1 + \eta''_h \, Y_{i''_h} \big) \, $.  By construction, an inversion now can occur only among two factors of new type or among an old and a new factor; but then the two coefficients  $ \eta''_h $  involved by this inversion belong to  $ \fa $  and at least one of them is in  $ \fa^3 \, $.  It follows that when one performs the ``reordering operation'' onto the pair of factors involved in the inversion the new factor which pops up necessarily involves a coefficient in  $ \fa^4 \, $.  As this applies for any possible inversion, in the end we shall find a new factorization of  $ g $  of the form
 \vskip-9pt
  $$  g  \; = \;  g''_{{}_0} \cdot \widehat{g}_{{}_0} \cdot {\textstyle \prod_{t=1}^{\widehat{N}}} \big( 1 + \widehat{\eta}_t \, Y_{\widehat{i}_t} \big)  $$
 \vskip-3pt
\noindent
 in which  $ \, \widehat{g}_{{}_0} \in G_+(A) \, $  and the factors  $ \, \big( 1 + \widehat{\eta}_t \, Y_{\widehat{i}_t} \big) \, $  are either old factors  $ \, \big( 1 + \eta_k \, Y_{i_k} \big) \, $,  {\sl with no inversions among them},  or new factors such that  $ \, \widehat{\eta}_t \in \fa^5 \, $.
                                             \par
   In order to conclude, we can iterate at will this procedure: then   --- as  $ \, \fa^n = \{0\} \, $  for  $ \, n > N \, $  ---   after finitely many steps we shall no longer find any new factor coming in;
%
%
 thus, we eventually find
 \vskip-9pt
  $$  g  \,\; = \;\,  \widetilde{g}_{{}_0} \cdot {\textstyle \prod_{\ell=1}^{\widetilde{N}}} \big( 1 + \widetilde{\eta}_\ell \, Y_{\widetilde{i}_\ell} \big)  \,\; = \;\,  \widetilde{g}_{{}_0} \cdot {\textstyle \prod\limits^\rightarrow} {\textstyle {}_{\ell=1}^{\widetilde{N}}} \big( 1 + \widetilde{\eta}_\ell \, Y_{\widetilde{i}_\ell} \big)  $$
 \vskip-3pt
\noindent
 in which  $ \, \widetilde{g}_{{}_0} \in N_G^{[2]}(A) \, $  and  $ \,\; {\textstyle \prod_{\ell=1}^{\widetilde{N}}} \big( 1 + \widetilde{\eta}_\ell \, Y_{\widetilde{i}_\ell} \big) \, = \, {\textstyle \prod\limits^\rightarrow} {}_{\ell=1}^{\widetilde{N}} \big( 1 + \widetilde{\eta}_\ell \, Y_{\widetilde{i}_\ell} \big) \; \in \; G_-^{\scriptscriptstyle \,<}(A) \;\, $  {\sl  is an ordered product},  as required: this means exactly  $ \phantom{\Big|} g \, \in \, N_G^{[2]}(A) \cdot G_-^{\scriptscriptstyle \,<}(A) \; $,  \, thus the  {\it Claim\/}  is proved.
 \vskip2pt
   Our  {\it Claim\/}  above ensures that  $ \; G^-(A) \, \subseteq \, N_G^{[2]}(A) \cdot G_-^{\scriptscriptstyle \,<}(A) \; $.  Now we have to prove the converse inclusion.  To this end, recall that  $ N_G^{[2]}(A) $  is generated by elements of the form  $ \, (1\,+\,c\,X) \, $  with  $ \, c \in A_\uno^{[2]} \, $  and  $ \, X \in [\fg_\uno , \fg_\uno] \, $   --- still with notation of type  $ \, \big(\, 1 + c \, X \,\big) := \exp(c\,X) \, $,  as before ---   therefore  $ \, c = \sum_s \alpha'_s\,\alpha''_s \, $  and  $ \, X = \sum_r \big[ Y'_r , Y''_r \big] \, $  for some  $ \, \alpha'_s , \alpha''_s \in A_\uno \, $  and some  $ \, Y'_r , Y''_r \in \fg_\uno \, $.  Now, inside  $ G_\zero\big(A_\zero\big) $  we always have relations of the form
 \vskip-5pt
  $$  \big(\, 1 + a_1 \, Z \,\big) \cdot \big(\, 1 + a_2 \, Z \,\big)  \; = \;  \big(\, 1 + \big( a_1 + a_2 \big) \, Z \,\big)  \; = \;  \big(\, 1 + a_2 \, Z \,\big) \cdot \big(\, 1 + a_1 \, Z \,\big)  $$
\noindent
 for all  $ \, Z \in \fg_\zero \, $  and  $ \, a_1 , a_2 \in A_\zero \, $  such that  $ \, a_1^{\,2} = 0 = a_2^{\,2} \, $.  Applying this repeatedly to  $ \; \big(\, 1 + c \, X \,\big) \, = \, \big(\, 1 + \big( \sum_s \alpha'_s\,\alpha''_s \big) \, X \,\big) \; $  yields
\begin{equation}  \label{eq:expans_1+c.X}
  \big(\, 1 + c \, X \,\big)  \; = \;  \big(\, 1 + \big(\, \textstyle{\sum_s} \alpha'_s\,\alpha''_s \big) \, X \,\big)  \; = \;  \textstyle{\prod_s} \big(\, 1 + \alpha'_s\,\alpha''_s \, X \big)
\end{equation}
where the factors in the final product can be taken  {\sl in any order},  as they mutually commute.
                                                             \par
   Now recall instead that  $ \, X = \sum_r \big[ Y'_r , Y''_r \,\big] \, $,  \, hence each factor  $ \, \big( 1 + \alpha'_s\,\alpha''_s \, X \big) \, $  in  \eqref{eq:expans_1+c.X}  reads
\begin{equation}  \label{eq:1st-expans_1+a'a".X}
  \big( 1 + \alpha'_s\,\alpha''_s \, X \big)  \; = \;  \Big( 1 + \textstyle{\sum_r} \, \alpha'_s\,\alpha''_s \, \big[ Y'_r , Y''_r \big] \Big)
\end{equation}
In addition, inside  $ G_\zero\big(A_\zero\big) $  we also have, for all  $ \, Z_1, Z_2 \in \fg_\zero \, $  and  $ \, a \in A_\zero \, $  such that  $ \, a^2 = 0 \, $,  relations of the form
  $$  \big(\, 1 + a \, Z_1 \,\big) \cdot \big(\, 1 + a \, Z_2 \,\big)  \; = \;  \big(\, 1 + a \, \big( Z_1 + Z_2 \big) \,\big)  \; = \;  \big(\, 1 + a \, Z_2 \,\big) \cdot \big(\, 1 + a \, Z_1 \,\big)  $$
With repeated applications of this to the right-hand side term in  \eqref{eq:1st-expans_1+a'a".X}  above we eventually get
\begin{equation}  \label{eq:2nd-expans_1+a'a".X}
  \big( 1 + \alpha'_s\,\alpha''_s \, X \big)  \; = \;  \Big( 1 + \textstyle{\sum_r} \, \alpha'_s\,\alpha''_s \, \big[ Y'_r , Y''_r \big] \Big)  \; = \;  \textstyle{\prod_r} \Big( 1 + \, \alpha'_s\,\alpha''_s \, \big[ Y'_r , Y''_r \big] \Big)
\end{equation}
   \indent   As next step, recall that in  $ \, G(A) \, $  also hold relations of the form
  $$  \big( 1 + \eta'' \, Y'' \big) \cdot \big( 1 + \eta' \, Y' \big)  \; = \;  \Big( 1 + \, \eta' \, \eta'' \, \big[Y',Y''\big] \Big) \cdot \big( 1 + \eta' \, Y' \big) \cdot \big( 1 + \eta'' \, Y'' \big)  $$
that we can re-shape as
%
%
\begin{equation}  \label{eq:_1+e'e".[Y',Y"]_=_commutator}
  \Big(\, 1 \, + \, \eta' \, \eta'' \, \big[Y',Y''\big] \Big)  \,\; = \;\,  \Big( \big( 1 + \eta'' \, Y'' \big) \, , \big( 1 + \eta' \, Y' \big) \Big)
\end{equation}
where in right-hand side we used standard group-theoretical notation  $ \; \big(\, a \, , b \,\big) := a\,b\,a^{-1}\,b^{-1} \; $  for the commutator of any two elements  $ a $  and  $ b $  in a given group. Now  \eqref{eq:_1+e'e".[Y',Y"]_=_commutator}  together with  \eqref{eq:2nd-expans_1+a'a".X}  gives
\begin{equation}  \label{eq:3rd-expans_1+a'a".X}
  \big( 1 + \alpha'_s\,\alpha''_s \, X \big)  \; = \;  \textstyle{\prod_r} \Big( 1 + \, \alpha'_s\,\alpha''_s \, \big[ Y'_r , Y''_r \big] \Big)  \; = \;  \textstyle{\prod_r}  \Big( \big( 1 + \alpha''_s \, Y''_r \big) \, , \big( 1 + \alpha'_s \, Y'_r \big) \Big)
\end{equation}
Eventually, matching  \eqref{eq:3rd-expans_1+a'a".X}  with  \eqref{eq:expans_1+c.X}  we get
  $$  \big(\, 1 + c \, X \,\big)  \,\; = \;\,  \textstyle{\prod_s} \big( 1 + \alpha'_s\,\alpha''_s \, X \big)  \,\; = \;\,  \textstyle{\prod_{s,r}} \, \Big( \big( 1 + \alpha''_s \, Y''_r \big) \, , \big( 1 + \alpha'_s \, Y'_r \big) \Big)  \;\; \in \;\;  \big\langle G_-^{\scriptscriptstyle \,<}(A) \big\rangle  \, = \, G^-(A)  $$
(where the factors in the last product can be taken in any order).  Thus the element  $ \, \big(\, 1 + \, c \, X \,\big) \, $  belongs to  $ \, G^-(A) \, $;  since  $ N_G^{[2]}(A) $  is generated by such elements, we get  $ \; N_G^{[2]}(A) \subseteq G^-(A) \; $,  whence clearly  $ \; N_G^{[2]}(A) \cdot G_-^{\scriptscriptstyle \,<}(A) \subseteq G^-(A) \; $  and we are done.
%
%
 \vskip7pt
   Now we go and prove  \eqref{eq:factor-1_N_G}:  like before, it is enough to prove the left-hand side, that is  $ \; N_G(A) \, = \, N_{G_\zero}\big(A_\zero\big) \cdot G_-^{\scriptscriptstyle \,<}(A) \; $,  \; by left-right symmetry.
                                            \par
 Thanks to  Proposition \ref{prop:2nd-descr_N_G(A)},  we can take as generators of  $ N_G(A) $  the elements of the set
\begin{equation}  \label{eq:gen-set-N_G(A)}
  \Big\{\, \exp\big(t_j X_j\big) \, , \; \exp\big(\eta_i Y_i\big) = \big( 1 + \eta_i Y_i \big) \;\Big|\;\, t_j \in \nil{A}_\zero \, , \, \eta_i \in \nil{A}_\uno = A_\uno \, , \; \forall \, j \in J, \, i \in I \,\Big\}
\end{equation}
where  $ \, {\big\{ X_j \big\}}_{j \in J} \, $  is any  $ \K $--basis  of  $ \, \fg_\zero \, $  and  $ \, {\big\{ Y_i \big\}}_{i \in I} \, $  is our fixed, totally ordered  $ \K $--basis  of  $ \fg_\uno \, $.  Therefore, our aim is to prove that any  $ \, n \in N_G(A) \, $,  originally expressed as an unordered product of factors taken from  \eqref{eq:gen-set-N_G(A)},  can be ``re-ordered'' so to read as an  {\sl ordered product\/}  of the form
 $ \; n_{{}_\zero} \, {\textstyle \prod\limits_{i \in I}^\rightarrow} \big( 1 + \widehat{\eta}_i \, Y_i \big) \; $,
 \, for some  $ \, n_{{}_\zero} \in N_{G_\zero}\big(A_\zero\big) \, $  and  $ \, \widehat{\eta}_i \in A_\uno \, $,  which does belong to  $ \; N_{G_\zero}\big(A_\zero\big) \cdot G_-^{\scriptscriptstyle \,<}(A) \; $.
 \vskip5pt
   First of all, whenever in our original product  $ n $  we have two consecutive factors  $ \, \big( 1 + \eta_s \, Y_{i_s} \big) \, $  and  $ \, n'_{{}_\zero} := \exp\big(t_j X_j\big) \in N_{G_\zero}\big(A_\zero\big) \, \big( \subseteq G_\zero\big(A_\zero\big) \,) \, $   --- that is, we have a ``sub-product'' of the form  $ \, \big( 1 + \eta_s \, Y_{i_s} \big) \cdot n'_{{}_\zero} \, $  ---   using relations  {\it (b)\/}  and  {\it (d)\/}  in  Lemma \ref{lemma:relations-in-G(A)}  we can re-write this subproduct as
\begin{equation}  \label{eq:(1+Y)n'=n'(1+Ad(Y))}
  \big( 1 + \eta_s \, Y_{i_s} \big) \, n'_{{}_\zero}  = \,  n'_{{}_\zero} \Big( 1 + \eta_s \, \Ad\big({(n'_{{}_\zero})}^{-1}\big)\big(Y_{i_s}\big) \Big)  = \,  n'_{{}_\zero} \Big( 1 + \eta_s {\textstyle \sum\limits_{j \in I}} c_{s,j} Y_j \Big)  = \,  n'_{{}_\zero} \, {\textstyle \prod\limits_{j \in I}^\rightarrow} \big( 1 + \eta_s \, c_{s,j} Y_j \,\big)
\end{equation}
for suitable  $ \, c_{s,j} \in \K \, $  ($ \, j \in I \, $);  note in particular that the rightmost term in the chain of identities  \eqref{eq:(1+Y)n'=n'(1+Ad(Y))}  does belong to  $ \; N_{G_\zero}\big(A_\zero\big) \cdot G_-^{\scriptscriptstyle \,<}(A) \, $,  \, i.e.\ it has the form we are looking form.
 Applying this procedure, whenever in our element  $ n $   --- written as a product as above ---   we have a factor of type  $ \, n'_{{}_\zero} \in N_{G_\zero}\big(A_\zero\big) \, $  that occurs on the right of any factor of type  $ \, \big( 1 + \eta_s \, Y_{i_s} \big) \, $,  we can ``move the former to the left of the latter'' in the sense that we apply  \eqref{eq:(1+Y)n'=n'(1+Ad(Y))}.  Then, after finitely many repetitions of this move we end up with a new factorization of  $ n $  of the form
\begin{equation}  \label{eq: n = n"Y"}
  n  \,\; = \;\,  n''_{{}_\zero} \cdot \, {\textstyle \prod\limits_{s=1}^N} \, {\textstyle \prod\limits_{j \in I}^\rightarrow} \big( 1 + \eta_s \, k_{s,j} \, Y_j \,\big)
\end{equation}
for some  $ \, n''_{{}_\zero} \in N_{G_\zero}\big(A_\zero\big) \, $  and  $ \, k_{s,j} \in \K \, $  (where  $ N $  is the number of factors of type  $ \, \big( 1 + \eta_s \, Y_{i_s} \big) \, $  occurring in the initial factorization of  $ n \, $.  Now, in this last factorization, the right-hand side gives
  $$  {\textstyle \prod\limits_{s=1}^N} \, {\textstyle \prod\limits_{j \in I}^\rightarrow} \big( 1 + \eta_s \, k_{s,j} \, Y_j \,\big) \; \in \;  G^-(A) \; = \;  N_G^{[2]}(A) \cdot G_-^{\scriptscriptstyle \,<}(A)  $$
thanks to  \eqref{eq:factor-1_G^-}.  This together with  \eqref{eq: n = n"Y"}  gives
  $$  n  \; \in \;  N_{G_\zero}\big(A_\zero\big) \cdot G^-(A) \, = \, N_{G_\zero}\big(A_\zero\big) \cdot N_G^{[2]}(A) \cdot G_-^{\scriptscriptstyle \,<}(A) \, = \, N_{G_\zero}\big(A_\zero\big) \cdot G_-^{\scriptscriptstyle \,<}(A)  $$
--- because  $ \, N_G^{[2]}(A) \, $  is a subgroup of  $ \, N_{G_\zero}\big(A_\zero\big) \, $,  by construction ---   and we are done.
 \vskip7pt
%
%
   Finally, as to  \eqref{eq:factor-1_G}  we prove it via the following chain of identities:
  $$  \displaylines{
   \qquad \quad   G(A)  \, = \,  G_\zero(\K) \cdot N_G(A)  \, = \,  G_\zero(\K) \cdot \big( N_{G_\zero}\big(A_\zero\big) \cdot G_-^{\scriptscriptstyle \,<}(A) \big)  \, =   \hfill  \cr
   \hfill   = \,  \big( G_\zero(\K) \cdot N_{G_\zero}\big(A_\zero\big) \big) \cdot G_-^{\scriptscriptstyle \,<}(A)  \, = \,  G_\zero\big(A_\zero\big) \cdot G_-^{\scriptscriptstyle \,<}(A)  \, = \,  G_\zero(A) \cdot G_-^{\scriptscriptstyle \,<}(A)   \quad \qquad  }  $$
where we first used Boseck's splitting for  $ G(A) $   --- cf.\  \eqref{Boseck-split_Lie-sgrp}  ---   and then  \eqref{eq:factor-1_N_G}.
 \vskip7pt
   {\it (c)}\,  We begin with the proof of  \eqref{eq:factor-2_G^-},  for which again it is enough to prove the left-hand side, that is  $ \; G^-(A) \, = \, N_G^{[2]}(A) \cdot \exp\!\big( A_\uno \!\otimes_\K \fg_\uno \big) \; $.
                                                              \par
   First of all, the inclusion  $ \; N_G^{[2]}(A) \cdot \exp\!\big( A_\uno \!\otimes_\K \fg_\uno \big) \subseteq G^-(A) \; $  follows at once from  \eqref{eq:factor-1_G^-}  together with claim  {\it (a)}\,.  Moreover, again by claim  {\it (a)\/}  we have  $ \; G^-(A) = \big\langle \exp\!\big( A_\uno \!\otimes_\K \fg_\uno \big) \big\rangle \; $  so it is enough to prove that the product of any two generators  $ \, \exp\!\big(\cY'\big) \, $  and  $ \, \exp\!\big(\cY''\big) \, $  of  $ \, G^-(A) = \big\langle \exp\!\big( A_\uno \!\otimes_\K \fg_\uno \big) \big\rangle \, $  lies in  $ \, N_G^{[2]}(A) \cdot \exp\!\big( A_\uno \!\otimes_\K \fg_\uno \big) \, $.  In fact, this follows at once from the identity
 \vskip4pt
   \centerline{ $ \exp\!\big( \cY' \big) \, \exp\!\big( \cY'' \big)  \,\; = \;\,  \exp\!\Big( P_\zero^{(d_1)}\big( \cY' , \cY'' \big) \Big) \, \exp\!\Big(\, \cY' + \cY'' + P_\uno^{(d_1)}\big( \cY' , \cY'' \big) \Big) $ }
\noindent
 in  Lemma \ref{lemma:relations-in-G(A)}{\it (h)},  just because it tells us exactly that  $ \; \exp\!\Big( P_\zero^{(d_1)}\big( \cY' , \cY'' \big) \Big) \in N_G^{[2]}(A) \; $  and  $ \; \exp\!\Big(\, \cY' + \cY'' + P_\uno^{(d_1)}\big( \cY' , \cY'' \big) \Big) \in \exp\!\big( A_\uno \!\otimes_\K \fg_\uno \big) \; $.
 \vskip7pt
   The same argument used above also applies to prove  \eqref{eq:factor-2_N_G}.
 \vskip7pt
   Finally, \eqref{eq:factor-2_G}  follows from the chain of identities
  $$  \displaylines{
   G(A)  \, = \,  G_\zero(\K) \cdot N_G(A)  \, = \,  G_\zero(\K) \cdot \big( N_{G_\zero}\big(A_\zero\big) \cdot \exp\!\big( A_\uno \!\otimes_\K \fg_\uno \big) \big)  \, =   \hfill  \cr
   \hfill   = \,  \big( G_\zero(\K) \cdot N_{G_\zero}\big(A_\zero\big) \big) \cdot \exp\!\big( A_\uno \!\otimes_\K \fg_\uno \big)  \, = \,  G_\zero\big(A_\zero\big) \cdot \exp\!\big( A_\uno \!\otimes_\K \fg_\uno \big)  \, = \,  G_\zero(A) \cdot \exp\!\big( A_\uno \!\otimes_\K \fg_\uno \big)  }  $$
using Boseck's splitting for  $ G(A) $   --- cf.\  \eqref{Boseck-split_Lie-sgrp}  ---   and then \eqref{eq:factor-2_N_G}.
\end{proof}

\vskip7pt

   The previous proposition provides remarkable factorizations for the  $ A $--points  of the Lie supergroup  $ G $  and their remarkable subgroups  $ N_G $  and  $ G^- $.  Our ultimate goal is to improve such a result, eventually achieving stronger factorization results: in case of  $ G $  itself, this will be what is (more or less) known as ``global splitting'' for Lie supergroups.  We still need
 a technical lemma:

\medskip

\begin{lemma}  \label{lemma-triv_prod}
  Given a Lie supergroup  $ G $  and  $ \, A \in \Wsalg_\bk \, $,  let  $ \, \zeta_i \in A_\uno \, $  ($ \, i \in I $).  Then:
 \vskip5pt
   {\it (a)}\,  if
 $ \;\; g \, := \, {\textstyle \prod\limits_{i \in I}^\rightarrow} \big( 1 + \zeta_i \, Y_i \,\big) \; \in \; G_\zero(A) \,\bigcap\, G_-^{\scriptscriptstyle \,<}(A) \; $,
 \; then  $ \; \zeta_i = 0 \; $  for all  $ \, i \in I \, $;
 \vskip5pt
   {\it (b)}\,  if
 $ \;\; g \, := \, \exp\big(\, {\textstyle \sum_{i \in I}}\, \zeta_i \, Y_i \,\big) \; \in \; G_\zero(A) \,\bigcap\, \exp\!\big( A_\uno \!\otimes_\K \fg_\uno \big) \; $,
 \; then  $ \; \zeta_i = 0 \; $  for all  $ \, i \in I \, $.
\end{lemma}

\begin{proof}
 {\it (a)}\,  Recall that, by definition, we have  $ \, G(A) := \! \coprod\limits_{x \in |G|} \! \Hom_{\salg_\K} \big( \cO_{|G|,x} \, , A \big) \, $;  \, therefore, it makes sense to formally expand the product defining  $ g $  as
\begin{equation}  \label{eq:expans-prod_g}
 g  \, := \,  {\textstyle \prod\limits_{i \in I}^\rightarrow} \big( 1 + \zeta_i \, Y_i \,\big)  \, = \,  1 + {\textstyle \sum_{i \in I}} \, \zeta_i \, Y_i \, + \, \cO(2)
\end{equation}
where  $ \, \cO(2) \, $  is a short-hand notation for  ``{\sl additional summands of higher order in the  $ \zeta_i $'s\,}''.  Let  $ \, \fa := \big( {\{\zeta_i\}}_{i \in I} \big) \, $  be the ideal of  $ A $  generated by the  $ \zeta_i $'s;  then  \eqref{eq:expans-prod_g}  yields
\begin{equation}  \label{eq:expans-prod_g_mod-2}
 {[g]}_2  \, := \,  1 + {\textstyle \sum_{i \in I}} \, {[\zeta_i]}_2 \, Y_i
\end{equation}
inside  $ \; G\big( A \big/ \fa^2 \big) := \! \coprod\limits_{x \in |G|} \! \Hom_{\salg_\K} \big( \cO_{|G|,x} \, , A \big/ \fa^2 \big) \; $.  On the other hand, the assumption that  $ \; g \, \in \, G_\zero(A) \bigcap G_-^{\scriptscriptstyle \,<}(A) \; $  implies  $ \; {[g]}_2 \, \in \, G_\zero\big( A \big/ \fa^2 \big) \bigcap G_-^{\scriptscriptstyle \,<}\big( A \big/ \fa^2 \big) \; $  as well, hence in particular   --- thinking of  $ {[g]}_2 $  as an  $ A \big/ \fa^2 \, $--valued  map ---  we have  $ \; \text{\sl Im}\big( {[g]}_2 \big) \subseteq {\big( A \big/ \fa^2 \big)}_\zero \; $.
                                                                       \par
   Now, as  $ \, {\big\{ Y_i \big\}}_{i \in I} \, $  is a  $ \K $--basis  of  $ \fg_\uno \, $,  there exists a local system of coordinates around the unit point  $ \, 1_{{}_G} \in |G| \, $,  say  $ \, {\{ y_i \}}_{i \in I} \, $,  such that  $ \, Y_i(y_j) = \delta_{i,j} \, $  for all  $ \, i, j \in I \, $.  Then  \eqref{eq:expans-prod_g_mod-2}  gives
 $ \; {[g]}_2(y_j) \, := \, 1 + {\textstyle \sum_{i \in I}} \, {[\zeta_i]}_2 \, Y_i(y_j) \, = \, {[\zeta_j]}_2 \; $,
 \, in particular  $ \, {[g]}_2(y_j) = {[\zeta_j]}_2 \in {\big( A \big/ \fa^2 \big)}_\uno \, $;
 this together with  $ \; \text{\sl Im}\big( {[g]}_2 \big) \subseteq {\big( A \big/ \fa^2 \big)}_\zero \; $  implies  $ \; {[\zeta_j]}_2 = {[0]}_2 \in A \big/ \fa^2 \; $,  \, i.e.\  $ \; \zeta_j \in \fa^2 = {\big( {\{ \zeta_i \}}_{i \in I} \big)}^2 \; $,  \, for all  $ \, j \in I \, $:  thus  $ \, \zeta_j \in \fa^n \, $  for  $ \, n \in \N \, $,  $ \, j \in I \, $.  But  $ \, \fa^n = 0 \, $  for  $ \, n \gg 0 \, $,  hence  $ \, \zeta_j = 0 \, $  for all  $ \, j \in I \, $,  q.e.d.
%
%
 \vskip5pt
 {\it (b)}\,  Acting like in  {\it (a)},  we can expand  $ \,\; g \, := \, \exp\big(\, {\textstyle \sum_{i \in I}}\, \zeta_i \, Y_i \,\big) \; \in \; G_\zero(A) \,\bigcap\, \exp\!\big( A_\uno \!\otimes_\K \fg_\uno \big) \;\, $  into a formal power series
%
%
 (indeed a finite sum) of the form
  $ \; g \, := \, \exp\big(\, {\textstyle \sum_{i \in I}}\, \zeta_i \, Y_i \,\big)  \, = \,  1 + {\textstyle \sum_{i \in I}} \, \zeta_i \, Y_i \, + \, \cO(2) \, $,  \,
 just like in  \eqref{eq:expans-prod_g}.  Then the same argument applies again, and leads to conclusion.
\end{proof}

\medskip

   Finally, we can now state and prove the main result of the present subsection:

\medskip

\begin{theorem}  \label{thm:dir-prod-fact-G - gen}
 {\sl (existence of Global Splittings for Lie supergroups)}
 \vskip3pt
   Let  $ \, G $  be a Lie supergroup, and  $ \, \fg $  its tangent Lie superalgebra.
 \vskip5pt
   {\it (a)} \,  The restriction of group multiplication in  $ G $  provides isomorphisms of (set-valued) functors
 \vskip-15pt
  $$  \displaylines{
   N_G^{[2]} \times G_-^{\scriptscriptstyle \,<}  \; \cong \;  G^-  \;\;\quad ,  \qquad\;\;
   N_{G_\zero} \times G_-^{\scriptscriptstyle \,<} \; \cong \; N_G  \;\quad ,  \qquad\;
   G_\zero \times G_-^{\scriptscriptstyle \,<} \; \cong \; G  \cr
   G_-^{\scriptscriptstyle \,<} \times N_G^{[2]} \; \cong \;  G^-  \;\;\quad ,  \qquad\;\;
   G_-^{\scriptscriptstyle \,<} \times N_{G_\zero} \; \cong \; N_G  \;\quad ,  \qquad\;
   G_-^{\scriptscriptstyle \,<} \times G_\zero \; \cong \; G  \cr
   N_G^{[2]} \times \exp\!\big( {(-)}_\uno \!\otimes_\K \fg_\uno \big) \; \cong \;  G^-  \,\; ,  \!\!\quad
   N_{G_\zero} \times \exp\!\big( {(-)}_\uno \!\otimes_\K \fg_\uno \big) \; \cong \; N_G  \,\; ,  \!\!\quad
   G_\zero \times \exp\!\big( {(-)}_\uno \!\otimes_\K \fg_\uno \big) \; \cong \; G  \cr
   \exp\!\big( {(-)}_\uno \!\otimes_\K \fg_\uno \big) \times N_G^{[2]} \; \cong \;  G^-  \,\; ,  \!\!\quad
   \exp\!\big( {(-)}_\uno \!\otimes_\K \fg_\uno \big) \times N_{G_\zero} \; \cong \; N_G  \,\; ,  \!\!\quad
   \exp\!\big( {(-)}_\uno \!\otimes_\K \fg_\uno \big) \times G_\zero \; \cong \; G  }  $$
 with  $ \, \exp\!\big( {(-)}_\uno \!\otimes_\K \fg_\uno \big) \, $  the set-valued functor  $ \; \Wsalg_\K \longrightarrow \text{\rm ($\cat{sets}$)} \; $  given by  $ \; A \mapsto \exp\!\big( A_\uno \!\otimes_\K \fg_\uno \big) \; $.
%
%
 \vskip7pt
   {\it (b)} \,  Setting notation  $ \, d_1 := \text{\it dim}_{\,\K}\big(\fg_\uno\big) = |I| \, $,  there exist isomorphisms of (set-valued) functors  $ \; \mathbb{A}_\K^{0|d_1} \! \cong G_-^{\scriptscriptstyle \,<} \; $  and  $ \; \mathbb{A}_\K^{0|d_1} \! \cong \exp\!\big( {( - )}_\uno \!\otimes_\K \fg_\uno \big) \; $,  \, given on  $ A $--points   --- for every  $ \, A \in \Wsalg_\K \, $  --- by
 \vskip-5pt
  $$  \mathbb{A}_\K^{0|d_1}\!(A)  \; = \;  A_\uno^{\,d_1} \relbar\joinrel\longrightarrow\, G_-^{\scriptscriptstyle \,<}(A) \;\;\; ,  \qquad  {\big(\eta_i\big)}_{i \in I} \,\mapsto\, {\textstyle \prod\limits_{i \in I}^\rightarrow} (1 + \eta_i \, Y_i)  $$
 \vskip-12pt
\noindent
 and by
 \vskip-10pt
  $$  \mathbb{A}_\K^{0|d_1}\!(A)  \; = \;  A_\uno^{\,d_1} \relbar\joinrel\longrightarrow\, \exp\!\big( A_\uno \!\otimes_\K \fg_\uno \big) \;\;\; ,  \qquad  {\big(\eta_i\big)}_{i \in I} \,\mapsto\, \exp\big(\, {\textstyle \sum_{i \in I}} \, \eta_i \, Y_i \,\big)  $$
 \vskip-5pt
   {\it (c)} \,  There exist isomorphisms of (set-valued) functors
 \vskip-15pt
  $$  \displaylines{
   N_G^{[2]} \times \mathbb{A}_\K^{0|d_1}  \; \cong \;  G^-  \;\;\quad ,  \qquad\;\;
   N_{G_\zero} \times \mathbb{A}_\K^{0|d_1} \; \cong \; N_G  \;\quad ,  \qquad\;
   G_\zero \times \mathbb{A}_\K^{0|d_1} \; \cong \; G  \cr
   \mathbb{A}_\K^{0|d_1} \times N_G^{[2]} \; \cong \;  G^-  \;\;\quad ,  \qquad\;\;
   \mathbb{A}_\K^{0|d_1} \times N_{G_\zero} \; \cong \; N_G  \;\quad ,  \qquad\;
   \mathbb{A}_\K^{0|d_1} \times G_\zero \; \cong \; G  }  $$
%
%
 induced in the obvious way by those in  {\it (a)\/}  and  {\it (b)}.
\end{theorem}

\begin{proof}
 {\it (a)} \,  The claim yields a (strong) refinement of the factorization results of  Proposition \ref{prop:fact-Liesgrp_G}{\it (b)},  now stated in functorial terms.  Just like for that proposition, it is enough to prove half of the results, say those in first and in third line; moreover, we bound ourselves to prove the claim concerning  $ G \, $,  as the others go along similarly.
 \vskip5pt
   Assume that  $ \; \hat{g}_+ \, \hat{g}_- \, = \, \check{g}_+ \, \check{g}_- \; $  with  $ \; \hat{g}_+ \, , \check{g}_+ \in G_\zero(A) \, $,  $ \, \hat{g}_- \, , \check{g}_- \in G_-^{\scriptscriptstyle \,<}(A) \; $;  then  $ \; g \, := \, \hat{g}_- \, \check{g}_-^{-1} \, = \, \hat{g}_+^{-1} \, \check{g}_+ \, \in \, G_\zero(A) \, $,  as  $ G_\zero(A) $  is a subgroup in  $ G(A) \, $.  Now  $ \, \hat{g}_- \, , \check{g}_- \in \, G_-^{\scriptscriptstyle \,<}(A) \, $  have the form  $ \, \hat{g}_- = {\textstyle \prod\limits^\rightarrow}_{i \in I} \big(\, 1 + \hat{\eta}_i \, Y_i \,\big) \, $  and  $ \, \check{g}_- = {\textstyle \prod\limits^\rightarrow}_{i \in I} \big( 1 + \check{\eta}_i Y_i \big) \, $,  so that  $ \, \check{g}_-^{\,-1} = {\textstyle \prod\limits^\leftarrow}_{i \in I} \big( 1 - \check{\eta}_i Y_i \big) \, $,  where once more  $ {\textstyle \prod\limits^\rightarrow} $  and  $ {\textstyle \prod\limits^\leftarrow} $  respectively denote an ordered and a reversely-ordered product.  Therefore we have
 \vskip-7pt
\begin{equation}  \label{eq: g = prod.dorp}
  G_\zero(A)  \;\; \ni \;\;  g  \; := \;  \hat{g}_- \, \check{g}_-^{-1}  \; = \;  {\textstyle \prod\limits_{i \in I}^\rightarrow} \big( 1 + \hat{\eta}_i Y_i \big) \, {\textstyle \prod\limits_{i \in I}^\leftarrow} \big( 1 - \check{\eta}_i Y_i \big)
\end{equation}
 \vskip-4pt
   Let  $ \, \fb := \big( {\big\{\, \check{\eta}_i \, \alpha_j \,\big\}}_{i, j \in I} \big) \, $  be the ideal of  $ A $  generated by the products of the  $ \check{\eta}_i $'s  and the  $ \alpha_j := \big( \hat{\eta}_j - \check{\eta}_j \big) $'s;  also, write  $ \; \pi_\fb : A \relbar\joinrel\relbar\joinrel\twoheadrightarrow A \,\big/\, \fb \; $  for the quotient map,  $ \, {[a\,]}_\fb := \pi_\fb(a) \, $  for  $ \, a \in A \, $,  and  $ \, G(\pi_\fb) : G(A) \relbar\joinrel\relbar\joinrel\rightarrow G\big( A \,\big/\, \fb \big) \, $  for the associated group morphism, with  $ \, {[y]}_\fb := G(\pi_\fb)(y) \, $  for  $ \, y \in G(A) \, $.  Now applying to  \eqref{eq: g = prod.dorp}  the commutation relations in  Lemma \ref{lemma:relations-in-G(A)}  we get
%
%
 \vskip-7pt
  $$  {[g]}_\fb  \; = \;  {\textstyle \prod\limits_{i \in I}^\rightarrow} \big( 1 + {[\hat{\eta}_i]}_2 \, Y_i \big) \, {\textstyle \prod\limits_{i \in I}^\leftarrow} \big( 1 - {[\check{\eta}_i]}_2 \, Y_i \big)  \; = \;  {\textstyle \prod\limits_{i \in I}^\rightarrow} \big( 1 + {[\alpha_i]}_2 \, Y_i \,\big) \;\; \in \;\;  G_-^{\scriptscriptstyle \,<}\big( A \,\big/\, \fb \big)  $$
 \vskip-4pt
\noindent
 Since it is also  $ \; {[g]}_\fb \, \in \, G_\zero\big( A \,\big/\, \fb \big) \, $,  \, we can apply  Lemma \ref{lemma-triv_prod}{\it (a)},  with  $ \, A \,\big/\, \fb \, $  playing the r{\^o}le of  $ A \, $,  thus finding  $ \, {[\alpha_i]}_\fb = {[0]}_\fb \in A \,\big/\, \fb \, $,  that is  $ \, \alpha_i \in \fb \, $,  for all  $ \, i \in I \, $.  In turn, this implies at once that  $ \, \alpha_i \in \fa^{\,n} \, $  (for  $ \, i \in I \, $)  for all  $ \, n \in \N_+ \, $,  where  $ \, \fa := \big( {\big\{\, \hat{\eta}_i \, \check{\eta}_j \,\big\}}_{i, j \in I} \big) \, $  is the ideal of  $ A $  generated by all the  $ \hat{\eta}_i $'s  and  $ \check{\eta}_j $'s;  but  $ \, \fa^{\,n} = \{0\} \, $  for  $ \, n \gg 0 \, $,  so  $ \, \hat{\eta}_i - \check{\eta}_i =: \alpha_i = 0 \, $,  i.e.\  $ \, \hat{\eta}_i = \check{\eta}_i \, $,  for all  $ \, i \in I \, $.  This yields  $ \, \hat{g}_- = \check{g}_- \, $,  and from this we get also  $ \, \hat{g}_+ = \check{g}_+ \, $,  \, q.e.d.
 \vskip5pt
   Next we prove the result for  $ G $  in third line: again, acting pointwise this amounts to proving that, for any  $ \, A \in \Wsalg_\bk \, $,  the multiplication map  $ \; G_\zero(A) \times \exp\!\big( A_\uno \!\otimes_\K \fg_\uno \big) \relbar\joinrel\relbar\joinrel\longrightarrow G(A) \; $   --- which is surjective by  \eqref{eq:factor-2_G}  in  Proposition  \ref{prop:fact-Liesgrp_G}{\it (b)}  ---   is also  {\sl injective\/}:  that is, we must show that, if  $ \; \hat{g}_+ \, \hat{g}_e = \check{g}_+ \, \check{g}_e \; $  for  $ \, \hat{g}_+ \, , \check{g}_+ \in G_\zero(A) \, $  and  $ \, \hat{g}_- \, , \check{g}_e \in \exp\!\big( A_\uno \!\otimes_\K \fg_\uno \big) \, $,  then  $ \, \hat{g}_+ = \check{g}_+ \, $  and  $ \, \hat{g}_e = \check{g}_e \; $.
                                                                   \par
   From  $ \; \hat{g}_+ \, \hat{g}_e = \check{g}_+ \, \check{g}_e \; $  we get  $ \; \check{g}_e \, \hat{g}_e^{-1} \, = \, \check{g}_+^{-1} \hat{g}_+ \, \in \, G_\zero(A) \; $.  Writing  $ \, \hat{g}_e \, $  and  $ \, \check{g}_e \, $  explicitly as  $ \; \hat{g}_e = \exp\!\big( \sum_{i \in I} \hat{\eta}_i \, Y_i \big) \; $  and  $ \; \check{g}_e = \exp\!\big( \sum_{i \in I} \check{\eta}_i \, Y_i \big) \; $   --- with  $ \, \hat{\eta}_i , \check{\eta}_i \in A_\uno \, $  ---   we find
\begin{equation}  \label{eq: cGe-hGe= - 1}
   \check{g}_e \, \hat{g}_e^{-1}  \; = \;  \exp\!\big(\, {\textstyle \sum_{i \in I}} \, \hat{\eta}_i \, Y_i \,\big) \, \exp\!\big( -{\textstyle \sum_{i \in I}} \, \check{\eta}_i \, Y_i \,\big)  \; = \;  \exp\!\Big( \big(\, {\textstyle \sum_{i \in I}} \, \hat{\eta}_i \, Y_i \,\big) * \big( -{\textstyle \sum_{i \in I}} \, \check{\eta}_i \, Y_i \,\big) \Big)
\end{equation}
where  ``$ \, * \, $''  denotes again the formal product given by the Campbell-Baker-Hausdorff formula.  Now using again  Lemma \ref{lemma:relations-in-G(A)}{\it (h)\/}  we get
  $$  \check{g}_e \, \hat{g}_e^{-1}  \; = \;\,  \exp\!\Big(\, P_\zero^{(d_1\!)} \Big(\, {\textstyle \sum_i} \, \hat{\eta}_i \, Y_i \, , -{\textstyle \sum_i} \, \check{\eta}_i \, Y_i \,\Big) \Big) \cdot\, \exp\!\Big(\, {\textstyle \sum_i} \, \big( \hat{\eta}_i \! - \check{\eta}_i \big) \, Y_i \, + \, P_\uno^{(d_1\!)} \Big(\, {\textstyle \sum_i} \, \hat{\eta}_i \, Y_i \, , -{\textstyle \sum_i} \, \check{\eta}_i \, Y_i \,\Big) \Big)  $$
with  $ \; \exp\!\Big(\, P_\zero^{(d_1\!)} \Big(\, {\textstyle \sum_i} \, \hat{\eta}_i \, Y_i \, , -{\textstyle \sum_i} \, \check{\eta}_i \, Y_i \,\Big) \Big) \in G_\zero(A) \ni \check{g}_e \, \hat{g}_e^{-1} \, $;  \, this in turn implies
  $$  \exp\!\Big(\, {\textstyle \sum_{i \in I}} \, \big( \hat{\eta}_i \! - \check{\eta}_i \big) \, Y_i \, + \, P_\uno^{(d_1\!)} \Big(\, {\textstyle \sum_{i \in I}} \, \hat{\eta}_i \, Y_i \, , -{\textstyle \sum_{i \in I}} \, \check{\eta}_i \, Y_i \,\Big) \Big)  \; \in \;  G_\zero(A)  $$
as well, hence eventually
\begin{equation}  \label{eq: exp_in_G0-exp(A1xg1)}
   \exp\!\Big(\, {\textstyle \sum_{i \in I}} \, \big( \hat{\eta}_i \! - \check{\eta}_i \big) \, Y_i \, + \, P_\uno^{(d_1\!)} \Big(\, {\textstyle \sum_{i \in I}} \, \hat{\eta}_i \, Y_i \, , -{\textstyle \sum_{i \in I}} \, \check{\eta}_i \, Y_i \,\Big) \Big)  \; \in \;  G_\zero(A) \,\bigcap\, \exp\!\big( A_\uno \!\otimes_\K \fg_\uno \big)
\end{equation}
Now consider any Lie monomial of degree greater than 1, say  $ \mathcal{M} \, $:  denoting by  $ \, \mathcal{M}\big(\ell',\ell''\big) \, $  any arbitrary way of filling it with two Lie variables  $ \ell' $  and  $ \ell'' \, $, we always have  $ \, \mathcal{M}\big(\ell',\ell''-\ell'\big) = \mathcal{M}\big(\ell',\ell''\big) \, $.  As an application, for  $ \, \hat{\cY} := {\textstyle \sum_{i \in I}} \, \hat{\eta}_i \, Y_i \, $,  $ \, \check{\cY} := {\textstyle \sum_{i \in I}} \, \check{\eta}_i \, Y_i \, $  and  $ \, \cY_\alpha := {\textstyle \sum_{i \in I}} \, \alpha_i \, Y_i \, = \, \hat{\cY} - \check{\cY} \, $,  this gives
  $ \, \mathcal{M}\big( \hat{\cY} \, , -\check{\cY} \big) = \mathcal{M}\big( \hat{\cY} \, , \cY_\alpha - \hat{\cY} \big) = \mathcal{M}\big( \hat{\cY} \, , \cY_\alpha \big) \, $.
By  Lemma \ref{lemma:relations-in-G(A)}{\it (h)\/}  we know that  $ P_\zero^{(d_1\!)\!}(x,y) $  is a  $ \K $--linear  combination of Lie monomials of degree greater than 1, hence we can conclude that
  $$  P_\zero^{(d_1\!)\!}\big( \hat{\cY} \, , -\check{\cY} \big)  \; = \;  P_\zero^{(d_1\!)\!}\big( \hat{\cY} \, , \cY_\alpha \big)  \; = \;  P_\zero^{(d_1\!)\!}\Big(\, {\textstyle \sum_{i \in I}} \, \hat{\eta}_i \, Y_i \, , {\textstyle \sum_{i \in I}} \, \alpha_i \, Y_i \Big)  \; = \;  {\textstyle \sum_{i \in I}} \, \beta_i \, Y_i  $$
where in the right-hand side expansion of  $ \; P_\zero^{(d_1\!)\!}\Big(\, {\textstyle \sum_{i \in I}} \, \hat{\eta}_i \, Y_i \, , {\textstyle \sum_{i \in I}} \, -\check{\eta}_i \, Y_i \Big) \, = \, P_\zero^{(d_1\!)\!}\big( \hat{\cY} \, , -\check{\cY} \big) \in A_\uno \!\otimes \fg_\uno \; $  we have  $ \, \beta_i \in \fb \, $  for all  $ \, i \in I \, $,  with  $ \, \fb := \big( {\big\{\, \check{\eta}_i \, \alpha_j \big\}}_{i, j \in I} \big) \, $  the ideal of  $ A $  given above.  Then  \eqref{eq: exp_in_G0-exp(A1xg1)}  reads
  $$  \exp\!\Big(\, {\textstyle \sum_{i \in I}} \, \big( \alpha_i + \beta_i \big) \, Y_i \,\Big)  \; \in \;  G_\zero(A) \,\bigcap\, \exp\!\big( A_\uno \!\otimes_\K \fg_\uno \big)  $$
which by Lemma \ref{lemma-triv_prod}{\it (b)\/}  implies  $ \; \big( \alpha_i + \beta_i \big) = 0 \, $,  \, hence  $ \; \alpha_i = -\beta_i \in \fb \subseteq \fa^2 \, $,  \, for all  $ \, i \in I \, $,  where  $ \, \fa := \big( {\big\{\, \hat{\eta}_i \, \check{\eta}_j \,\big\}}_{i, j \in I} \big) \, $  is the ideal of  $ A $  introduced above.  By construction this implies  $ \; \alpha_i \in \fa^n \; $  for all  $ \, i \in I \, $  and  $ \, n \in \N \, $,  hence   --- since  $ \, \fa^{\,n} = \{0\} \, $  for  $ \, n \gg 0 \, $ ---   also  $ \, \hat{\eta}_i - \check{\eta}_i =: \alpha_i = 0 \, $,  i.e.\  $ \, \hat{\eta}_i = \check{\eta}_i \, $,  for all  $ \, i \in I \, $.  This means that  $ \, \hat{g}_e = \check{g}_e \, $,  whence  $ \, \hat{g}_+ = \check{g}_+ \, $  too,  \, q.e.d.
 \vskip7pt
   {\it (b)} \,  To begin with, by definition of  $ G_-^{\scriptscriptstyle \,<} $  there exists a functor epimorphism  $ \, \Theta^{\scriptscriptstyle \,<} : \mathbb{A}_\K^{0|d_1} \!\!\relbar\joinrel\longrightarrow G_-^{\scriptscriptstyle \,<} \, $  which is given on every single  $ \, A \in \Wsalg_\K \, $  by
 \vskip-7pt
  $$  \Theta_{\!A}^{\scriptscriptstyle \,<} \, : \, \mathbb{A}_\K^{0|d_1}(A) := A_\uno^{\,\times d_1} \!\! \relbar\joinrel\longrightarrow\, G_-^{\scriptscriptstyle \,<}(A) \;\; ,  \quad  {\big( \eta_i \big)}_{i \in I} \, \mapsto \, \Theta_{\!A}^{\scriptscriptstyle \,<}\big(\! {\big( \eta_i \big)}_{i \in I} \big) := {\textstyle \prod\limits_{i \in I}^\rightarrow} \big(\, 1 + \eta_i \, Y_i \,\big)  $$
 \vskip-5pt
We prove now that all these  $ \Theta_{\!A}^{\scriptscriptstyle \,<} $'s  are injective, so that  $ \Theta^{\scriptscriptstyle \,<} $  is indeed an isomorphism.
 \vskip5pt
   Let  $ \, {\big( \hat{\eta}_i \big)}_{i \in I} \, , {\big( \check{\eta}_i \big)}_{i \in I} \in A_\uno^{\,\times d_1} \, $  be such that  $ \, \Theta_{\!A}^{\scriptscriptstyle \,<} \big(\! {\big( \hat{\eta}_i \big)}_{i \in I} \big) = \Theta_{\!A}^{\scriptscriptstyle \,<} \big(\! {\big( \check{\eta}_i \big)}_{i \in I} \big) \; $,  in other words we have  $ \; {\textstyle \prod\limits_{i \in I}^\rightarrow} \big(\, 1 + \hat{\eta}_i \, Y_i \,\big) = {\textstyle \prod\limits_{i \in I}^\rightarrow} \big(\, 1 + \check{\eta}_i \, Y_i \,\big)  \; $.  Then  {\sl we can replay the proof of the first part of claim  {\it (a)}},  now with  $ \; \hat{g}_+ := 1 =: \check{g}_+ \; $;  the outcome is again  $ \, \hat{\eta}_i = \check{\eta}_i \, $  for all  $ \, i \in I \, $,  that is  $ \, {\big( \hat{\eta}_i \big)}_{i \in I} = {\big( \check{\eta}_i \big)}_{i \in I} \; $,  \, q.e.d.
 \vskip5pt
   As to the isomorphism  $ \; \mathbb{A}_\K^{0|d_1} \cong \exp\!\big( {(-)}_\uno \!\otimes_\K \fg_\uno \big) \; $,  \, definitions provide a functor epimorphism  $ \, \Theta^e : \mathbb{A}_\K^{0|d_1} \!\!\relbar\joinrel\longrightarrow \exp\!\big( {(-)}_\uno \!\otimes_\K \fg_\uno \big) \, $  given on each  $ \, A \in \Wsalg_\K \, $  by
 \vskip-13pt
  $$  \Theta_{\!A}^e \, : \, \mathbb{A}_\K^{0|d_1}(A) := A_\uno^{\,\times d_1} \!\! \relbar\joinrel\longrightarrow\, G_-^{\scriptscriptstyle \,<}(A) \;\; ,  \quad  {\big( \eta_i \big)}_{i \in I} \, \mapsto \, \Theta_{\!A}^e \big(\! {\big( \eta_i \big)}_{i \in I} \big) := \exp\big(\, {\textstyle \sum_{i \in I}}\, \eta_i \, Y_i \,\big)  $$
 \vskip-5pt
But all these  $ \Theta_{\!A}^e $'s  are indeed injective, so that overall  $ \Theta^e $  is indeed an isomorphism.  In fact, let  $ \, {\big( \hat{\eta}_i \big)}_{i \in I} \, , {\big( \check{\eta}_i \big)}_{i \in I} \in A_\uno^{\,\times d_1} \, $  give  $ \, \Theta_{\!A}^e \big(\! {\big( \hat{\eta}_i \big)}_{i \in I} \big) = \Theta_{\!A}^e \big(\! {\big( \check{\eta}_i \big)}_{i \in I} \big) \; $,  i.e.\  $ \; \exp\big(\, {\textstyle \sum_{i \in I}}\, \hat{\eta}_i \, Y_i \,\big) = \exp\big(\, {\textstyle \sum_{i \in I}}\, \check{\eta}_i \, Y_i \,\big) \; $.  Then  {\sl we can proceed again as in the proof of the second part of claim  {\it (a)}},  now with  $ \; \hat{g}_+ := 1 =: \check{g}_+ \; $;  this gives again  $ \, \hat{\eta}_i = \check{\eta}_i \, $  for all  $ \, i \in I \, $,  thus  $ \, {\big( \hat{\eta}_i \big)}_{i \in I} = {\big( \check{\eta}_i \big)}_{i \in I} \; $,  \, q.e.d.
 \vskip7pt
   {\it (c)} \,  It follows at once from claims  {\it (a)\/}  and  {\it (b)\/}  together.
\end{proof}

\medskip

\begin{remark}
  For every Lie supergroup  $ G \, $,  we shall refer to the isomorphisms in claim  {\it (a)\/}  and/or  {\it (c)\/}  of  Theorem \ref{thm:dir-prod-fact-G - gen}  as to  {\sl ``Global Splittings''\/}  of  $ G $   --- or of  $ N_G \, $,  or of  $ G^- \, $,  respectively.
%
%
\end{remark}

\bigskip

\section{From super Harish-Chandra pairs to Lie supergroups}  \label{sHCp's->Lsgroups}

\smallskip

   {\ } \;\;   In this section we provide two different functors  $ \Psi $  that are quasi-inverse to the functor  $ \Phi $  of  Theorem \ref{thm_Lsgrps-->sHCp's}.  In both cases, for any super Harish-Chandra pair  $ \cP $,  we define as associated  $ \, \Psi(\cP) := \GP \, $  a suitable functor from Weil superalgebras to groups, and then prove that it has the ``right properties''.  Concretely, we follow the pattern provided by the  {\sl Global Splitting Theorem\/}  for Lie supergroups, which tells us two possible ways how our would-be Lie supergroup  $ \GP $  should look like, in terms of  $ \cP $ itself: this leads us, eventually, to provide two different recipes.
                                                          \par
   In both cases, we proceed along the following lines: we define our looked-for  $ \GP $  as a group-valued functor on the category of Weil  $ \K $--superalgebras,  such that each single group  $ \, \GP(A) \, $   --- with  $ \, A \in \Wsalg_\K \, $  ---   is given by generators and relations, in a uniform way (with respect to  $ A \, $).  The idea that dictates the desired presentation by generators and relations is somewhat simple: as part of our ultimate goal, we want  $ \, \Phi\big( \GP \big) = \cP \; \big( = (G_+ \,,\, \fg) \big) \, $,  so we must have  $ \, {\big( \GP \big)}_\zero = G_+ \, $  and  $ \, \Lie\big( \GP \big) = \fg \, $.  The former requirement gives us the reduced Lie subgroup;
%
%
 the latter instead prescribes what the Lie superalgebra of $ \GP $  must be: then we might think of using this to realize the ``missing part'' of  $ \GP $  as  ``$ \, \exp(\fg_\uno) \, $''.  So each  $ \, \GP(A) \, $  should be presented as generated by  $ \, G_+(A) = G_+(A_\zero) \, $  and  $ \, \exp\big( A_\uno \otimes \fg_\uno \big) \, $,  or at least enough of its elements, and suitable relations.
                                                               \par
   To realize all this, we follow the pattern provided by
%
 the ``Global Splitting(s) Theorem''   --- cf.\  Theorem \ref{thm:dir-prod-fact-G - gen}  ---   which essentially prescribes, in  {\sl two\/}  ways, how  $ \GP $  {\sl must\/}  be done.

\medskip

\subsection{Supergroup functors out of super Harish-Chandra pairs: first recipe}  \label{sgroups-out-sHCp's-1}

\smallskip

   {\ } \;\;   In this subsection we construct a first Lie supergroup functor, denoted  $ \GPt \, $,  along the lines mentioned above.  As a matter of notation, hereafter we shall adopt the following: given  $ \, \cP = (G_+ \,,\, \fg) \in \sHCp_\K \, $,  $ \, A \in \Wsalg_\K \, $  and  $ \, c \in A_\zero \, $  such that  $ \, c^2 = 0 \, $,  for every  $ \, X \in \fg_\zero \, $  we set
\begin{equation}  \label{eq:exp_1+cX}
  \big( 1_{{}_{G_+}} \!\! + c \, X \big)  \; := \;  \exp\big( c\,X \big)  \,\; \in \;\,  G_+(A_\zero)
\end{equation}
 which is obviously inspired by the formal series expansion of the  ``$ \, \exp \, $''  function; when no confusion is possible we shall drop the subscript  $ G_+ $  and simply write  $ \, \big( 1 + c\,X \big) \, $  instead.  Similarly, we shall presently introduce new formal elements of type  ``$ \, \big( 1 + \eta \, Y \big) = \exp\big( \eta \, Y \big) \, $''  with  $ \, \eta \in A_\uno \, $,  $ \, Y \in \fg_\uno \, $.

\vskip15pt

\begin{definition}  \label{def G_- / G_Pt - gen}
 Let  $ \, \cP := \big( G_+ \, , \fg \big) \in \sHCp_\K \, $  be a super Harish-Chandra pair over  $ \K \, $.  %
 \vskip7pt
   {\it (a)}\,  We introduce a functor  $ \; \GPt : \Wsalg_\K \!\relbar\joinrel\longrightarrow \grp \; $  as follows.  For any Weil superalgebra  $ \, A \in \Wsalg_\K \, $,  we define  $ \, \GPt(A) \, $  as being the group with generators the elements of the set
 \vskip-13pt
  $$  \Gamma_{\!A}  \,\; := \;\,  \big\{\, g_+ \, , \big( 1 + \eta \, Y \big) \,\big|\, g_+ \in G_+(A) \, , \, (\eta,Y) \, \in \, A_\uno \!\times\! \fg_\uno \,\big\}  \,\; = \;\,  G_+(A) \,{\textstyle \bigcup}\, {\big\{ (1 + \eta \, Y) \big\}}_{(\eta\,,Y) \, \in \, A_\uno \!\times \fg_\uno}  $$
and relations (for  $ \, g'_+ \, , g''_+ \in G_+(A) \, $,  $ \, \eta \, , \eta' \, , \eta'' \in A_\uno \, $,  $ \, Y \, , Y' \, , Y'' \in \fg_\uno \, $,  $ \, c \in \K \, $)
 \vskip-13pt
  $$  \displaylines{
   g'_+ \cdot\, g''_+  \,\; = \;\,  g'_+ \,\cdot_{\!\!\!{}_{G_+}} g''_+  \quad  ,
 \qquad \qquad  \big( 1 + \eta \, Y \big) \cdot g_+  \,\; = \;\,  g_+ \cdot \big( 1 + \eta \, \text{\sl Ad}\big(g_+^{-1}\big)(Y) \big)  \cr
   \big( 1 + \eta'' \, Y \big) \cdot \big( 1 + \eta' \, Y \big)  \; = \;  \Big( 1_{{}_{G_+}} \!\! + \, \eta' \, \eta'' \, Y^{\langle 2 \rangle} \Big)_{\!\!{}_{G_+}} \!\!\cdot \big(\, 1 + \big( \eta' + \eta'' \big) \, Y \big) \phantom{{}_{\big|}}  \cr
   \big( 1 + \eta'' \, Y'' \big) \cdot \big( 1 + \eta' \, Y' \big)  \; = \;  \Big( 1_{{}_{G_+}} \!\! + \, \eta' \, \eta'' \, \big[Y',Y''\big] \Big)_{\!\!{}_{G_+}} \!\!\cdot \big( 1 + \eta' \, Y' \big) \cdot \big( 1 + \eta'' \, Y'' \big)  \cr
   \big( 1 + \eta \, Y' \big) \cdot \big( 1 + \eta \, Y'' \big)  \; = \;  \big( 1 + \eta \, \big( Y' + Y'' \big) \big)  \cr
   \big( 1 + (c\,\eta) \, Y \big)  \; = \;  \big( 1 + \eta \, (c Y) \big)   \quad ,
 \qquad   \big( 1 + \eta \; 0_{\fg_\uno} \big)  \; = \;  1  \quad ,
 \qquad   \big( 1 + 0_{\scriptscriptstyle A} \, Y \big)  \; = \;  1  }  $$
 \vskip-1pt
\noindent
 where the first line just means that for generators chosen in  $ G_+(A) $  their product, denoted with  ``$ \, \cdot \, $'',  inside  $ \GPt(A) $  is the same as in  $ G_+(A) \, $,  where it is denoted with  ``$ \,\; \cdot_{\!\!\!{}_{G_+}} $'';  moreover, notation like  $ \, \Big(\, 1_{{}_{G_+}} \! + \, \eta' \, \eta'' \, Y^{\langle 2 \rangle} \Big)_{\!\!{}_{G_+}} \, $  and  $ \, \Big(\, 1_{{}_{G_+}} \! + \, \eta' \, \eta'' \, \big[Y',Y''\big] \Big)_{\!\!{}_{G_+}} \, $  denotes two elements in  $ G_+(A) $  as in  \eqref{eq:exp_1+cX}.
                                                             \par
   This yields the functor  $ \GPt $  on objects, and one then defines it on morphisms in the obvious way.  Namely, for any morphism  $ \, f : A' \longrightarrow A'' \, $  in  $ \Wsalg_\K $  we let $ \, \GPt(f) : \GPt\big(A'\big) \longrightarrow \GPt\big(A''\big) \, $ be the group morphism uniquely defined on generators
%
%
 --- for all  $ \, g'_+ \in G_+\big(A'\big) \, $,  $ \, \eta \in A'_\uno \, $,  $ \, Y \in \fg_\uno \, $  ---   by
  $$  \GPt(f)\big(\,g'_+\big)  \, := \,  G_+(f)\big(g'_+\big)  \quad ,  \qquad  \GPt(f)\big(\, 1 + \eta' \, Y \,\big)  \, := \,  \big(\, 1 + f\big(\eta'\big) \, Y \,\big)  $$
%
%
 As the defining relations of each  $ \GPt(A) $  are independent of  $ A \, $,  such a  $ \GPt(f) $  is well defined indeed.
 \vskip7pt
   {\it (b)}\,  We define a functor  $ \; \GPtm : \Wsalg_\K \!\relbar\joinrel\longrightarrow \set \; $  on any object  $ \, A \in \Wsalg_\K \, $  by
 \vskip-15pt
  $$  \GPtm(A)  \; := \;  \Big\{\, {\textstyle \prod_{s=1}^n} \big( 1 + \eta_s Y_s \big) \;\Big|\; n \in \N \, , \; (\eta_s , Y_s) \in A_\uno \times \fg_\uno \;\, \forall \; s \in \{1,\dots,n\} \,\Big\}  \qquad  \big(\, \subseteq \GPt(A) \,\big)  $$
 \vskip-5pt
\noindent
 and on morphism in the obvious way   --- just like for  $ G_{{}_{\!\cP}\,} $.
 \vskip7pt
   {\it (c)}\,  Let us fix in  $ \, \fg_\uno \, $  a  $ \K$--basis  $ \, {\big\{ Y_i \big\}}_{i \in I} \, $   --- for some index set  $ I \, $  ---   and a total order in  $ I \, $.  We define a functor  $ \; G_-^{\scriptscriptstyle \,<} : \Wsalg_\K \!\relbar\joinrel\longrightarrow \set \; $  as follows.  For  $ \, A \in \Wsalg_\K \, $  we set
 \vskip-5pt
  $$  G_-^{\scriptscriptstyle \,<}(A)  \; := \;  \bigg\{\, {\textstyle \prod\limits_{i \in I}^\rightarrow} \big( 1 + \eta_i Y_i \big) \;\bigg|\; \eta_i \in A_\uno \;\, \forall \; i \in I \,\bigg\}  \qquad  \big(\, \subseteq \GPtm(A) \subseteq \GPt(A) \,\big)  $$
 \vskip-9pt
\noindent
 where  $ \, \prod\limits_{i \in I}^\rightarrow \, $  denotes an  {\sl ordered product}   --- with respect to the fixed total order in  $ I \, $.  This defines the functor  $ G_-^{\scriptscriptstyle \,<} $  on objects, and its definition on morphism is the obvious one (like for  $ \GPt $)\,.
 \hfill   $ \diamondsuit $
\end{definition}

\smallskip

%
%
%

\begin{remark}  \label{remark_post-def G_- / G_P - gen}  {\ }
 By their very definition, both  $ \GPtm $  and  $ G_-^{\scriptscriptstyle \,<} $  can be thought of as subfunctors of  $ \GPt \, $.  Moreover, every group  $ \GPtm(A) $  is clearly the subgroup of  $ \GPt(A) $  generated by  $ G_-^{\scriptscriptstyle \,<}(A) \, $,  or even by  $ \, {\big\{ (1 + \eta \, Y) \big\}}_{(\eta\,,Y) \in A_\uno \times \fg_\uno} \, $.  In particular, although  $ G_-^{\scriptscriptstyle \,<} $  depends on the choice of  $ \, {\big\{ Y_i \big\}}_{i \in I} \, $,  the supergroup subfunctor that it generates (inside  $ \GPt \, $)  instead is  {\sl independent\/}  of any such choice.
\end{remark}

\vskip5pt

   Next result shows that  $ \GPt $  can also be described using a much smaller set of generators:

\vskip11pt

\begin{proposition}  \label{prop:G_P-gener-wt-basis}
 Let  $ \, \cP := \big( G_+ \, , \fg \big) \in \sHCp_\K \, $  be a super Harish-Chandra pair over  $ \K \, $;  also, we fix in  $ \, \fg_\uno $  a  $ \K $--basis  $ \, {\big\{ Y_i \big\}}_{i \in I} \, $   --- for some index set  $ I \, $  ---   and a total order in  $ I \, $.
                                                            \par
   Then for every Weil\/  $ \K $--superalgebra  $ \, A \in \Wsalg_\K \, $  the group  $ \GPt(A) $  is generated by the set
 \vskip-7pt
  $$  \Gamma_{\!A}^{\,\diamond}  \;\; := \;\,  G_+(A) \,\;{\textstyle \bigcup}\;\, \big\{ \big( 1 + \eta_i \, Y_i \big) \;\big|\; \eta_i \in A_\uno \, , \; \forall \; i \in I \,\big\}  $$
 \vskip1pt
\end{proposition}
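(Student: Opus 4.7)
The plan is to show that every generator of the form $(1+\eta\,Y)$ of $\GP(A)$ appearing in the set $\Gamma_A$ of Definition~\ref{def G_- / G_P - gen}\,{\it (a)\/} can already be written as a product of elements belonging to the smaller set $\Gamma_A^{\,\diamond}$. Since $G_+(A) \subseteq \Gamma_A^{\,\diamond}$ by definition and the elements $(1+\eta\,Y)$ with $Y \in \fg_\uno$, $\eta \in A_\uno$ are, together with $G_+(A)$, all the generators of $\GP(A)$, this will suffice.

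The key tool is the defining relation
\[
   \bigl(1 + \eta\,Y'\bigr) \cdot \bigl(1 + \eta\,Y''\bigr) \;=\; \bigl(1 + \eta\,(Y'+Y'')\bigr)
\]
of $\GP(A)$, valid for any $Y',Y'' \in \fg_\uno$ and $\eta \in A_\uno$. Given an arbitrary generator $(1+\eta\,Y)$ with $Y \in \fg_\uno$, we expand $Y$ in the fixed $\K$-basis $\{Y_i\}_{i \in I}$ of $\fg_\uno$, say $\,Y = \sum_{i \in F} c_i\,Y_i\,$ for a finite subset $F \subseteq I$ and scalars $c_i \in \K$. Since $c_i \in \K$ and $\eta \in A_\uno$, each scalar multiple $c_i\,\eta$ again lies in $A_\uno$, and therefore each $(1 + (c_i\,\eta)\,Y_i)$ belongs to $\Gamma_A^{\,\diamond}$. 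Then by an obvious induction on $|F|$, repeatedly applying the relation above in combination with $\,\eta\,Y = \sum_{i \in F}(c_i\,\eta)\,Y_i\,$ (and using the last normalization relations $(1+\eta\,0_{\fg_\uno}) = 1 = (1 + 0_A\,Y)$ to handle the empty-support case), we obtain the factorization
\[
   \bigl(1 + \eta\,Y\bigr) \;=\; \prod_{i \in F} \bigl(1 + c_i\,\eta\,Y_i\bigr)
\]
in $\GP(A)$, the product being taken in any chosen order (the order being immaterial thanks to the very same relation, since all these factors pairwise combine correctly). In particular, $(1+\eta\,Y)$ is a product of elements of $\Gamma_A^{\,\diamond}$, as desired.

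I do not foresee a genuine obstacle: the argument mimics verbatim the one given for Proposition~\ref{prop:fact-Liesgrp_G}\,{\it (a)\/}, which treated the analogous statement inside a genuine Lie supergroup $G(A)$; the only point requiring a small check is that the relation we exploit is really one of the defining relations of the abstract group $\GP(A)$, and not an identity valid only in the later-proved Lie supergroup $\GP$. Inspection of Definition~\ref{def G_- / G_P - gen}\,{\it (a)\/} confirms this: the relation $(1+\eta\,Y')\cdot(1+\eta\,Y'') = (1+\eta\,(Y'+Y''))$ is listed explicitly. Hence the argument is entirely formal and takes place inside the presentation of $\GP(A)$, with no appeal to any further structural result.
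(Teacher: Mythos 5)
Your proposal is correct and follows essentially the same route as the paper's own proof: expand $Y$ in the fixed basis of $\fg_\uno$ and use the defining relation $(1+\eta\,Y')(1+\eta\,Y'')=(1+\eta\,(Y'+Y''))$ repeatedly to rewrite $(1+\eta\,Y)$ as $\prod_{i}(1+c_i\,\eta\,Y_i)$, a product of elements of $\Gamma_{\!A}^{\,\diamond}$. Your extra remarks (that the relation is genuinely one of the defining relations of the abstract presentation, and the handling of the empty-support case) are sound and only make explicit what the paper leaves implicit.
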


\begin{proof}
 Given  $ \, A \in \Wsalg_\K \, $,  let  $ \, G^{\,\diamond}_{{}_{\!\cP}}(A) \, $  be the subgroup of  $ \GPt(A) $  generated by  $ \Gamma_{\!A}^{\,\diamond} \, $.  We shall prove that every generator of the (larger,  {\it a priori\/})  group  $ \GPt(A) $  of the form  $ \, (1 + \eta \, Y) \, $  with  $ \, (\eta\,,Y) \in A_\uno \times \fg_\uno \, $  also belongs to the subgroup  $ G^{\,\diamond}_{{}_{\!\cP}}(A) \, $:  this then will prove the claim.
 \vskip2pt
   So let  $ \, (\eta\,,Y) \in A_\uno \times \fg_\uno \, $;  then, in terms of the  $ \K $--basis  $ \, {\big\{ Y_i \big\}}_{i \in I} \, $  of  $ \fg_\uno \, $,  our  $ Y $  expands into  $ \, Y = \sum_{s=1}^k c_{j_s} Y_{j_s} \, $.  By repeated applications of relations of the form  $ \; \big( 1 + \eta \, Y' \big) \cdot \big( 1 + \eta \, Y'' \big) \, = \, \big( 1 + \eta \, \big( Y' + Y'' \big) \big) \; $,  we find that the generator  $ \, (1 + \eta \, Y) \, $  in  $ \GPt(A) $  factors as
 \vskip-7pt
\begin{equation}  \label{eq:expans_1+eta.Y}
   \big( 1 + \eta \, Y \big) \, = \, \Big( 1 + \eta \, \textstyle{\sum_{s=1}^k} c_{j_s} Y_{j_s} \Big) \, = \, \textstyle{\prod_{s=1}^k} \big(\, 1 + c_{j_s} \eta \, Y_{j_s} \big)
\end{equation}
 \vskip-1pt
\noindent
 where the product can be done in any order, as the factors in it mutually commute.  Now the product in right-hand side does belong to  $ G^{\,\diamond}_{{}_{\!\cP}}(A) \, $,  hence we are done.
\end{proof}

\smallskip

\begin{free text}  \label{constr-tGamma_P-t}
 {\bf Another realization of  $ \GPt \, $.}  Let  $ \, \cP = \big( G_+ \, , \fg \big) \in \sHCp_\K \, $  be a super Harish-Chandra pair; we present now yet another way of realizing the  $ \K $--supergroup  $  \GPt $  introduced in  Definition \ref{def G_- / G_Pt - gen}{\it (a)}.  In the following, if  $ K $  is any group presented by generators and relations, we write  $ \, K = \big\langle \varGamma \,\big\rangle \Big/ \big( \cR \big) \, $  if  $ \varGamma $  is a set of free generators (of  $ K \, $),  $ \cR $  is a set of relations among generators and  $ \big( \cR \big) $  is the normal subgroup in  $ K $  generated by  $ \cR \, $.  As a matter of notation, given a presentation  $ \, K = \big\langle \varGamma \,\big\rangle \Big/ \big( \cR \big) = \big\langle \varGamma \,\big\rangle \Big/ \big( \cR_1 \cup \cR_2 \big) \, $  with  $ \, \cR = \cR_1 \cup \cR_2 \; $,  the Double Quotient Theorem gives us
 \vskip-7pt
\begin{equation}  \label{eq:Double-Quot-Thm}
  K  \; = \;  \big\langle \varGamma \,\big\rangle \!\Big/\! \big( \cR \big)  \; = \;  \big\langle \varGamma \,\big\rangle \!\Big/\! \big( \cR_1 \cup \cR_2 \big)  \; = \;  \big\langle \varGamma \,\big\rangle \!\Big/\! \big( \cR_1 \big) \Bigg/\! \big( \cR_1 \cup \cR_2 \big) \Big/\! \big( \cR_1 \big)  \; = \;  \big\langle\, \overline{\varGamma} \,\big\rangle \Big/\! \big(\, \overline{\cR_2} \,\big)
\end{equation}
where  $ \overline{\varGamma} $  and  $ \overline{\cR_2} $  respectively denote the images of  $ \varGamma $  and of  $ \cR_2 $  in the quotient group  $ \, \big\langle \varGamma \,\big\rangle \Big/ \big( \cR_1 \big) \; $.
   \eject
%
 \vskip3pt
   For any fixed  $ \, A \in \Wsalg_\K \, $,  we denote by  $ \, G_+^{[2]}(A) \, $  the subgroup of  $ G_+(A) $  generated by the set
 $ \; \big\{\, (1 + c \, X ) \;\big|\; c \in A_\uno^{[2]} \, , \; X \in [\, \fg_\uno , \fg_\uno ] \,\big\} \; $
 ---  cf.\ \S \ref{bas-alg-sobjcts}  for notation  $ A_\uno^{[2]} $.  {\sl Note\/}  then that  $ G_+^{[2]}(A) $  is  {\sl normal\/}  in  $ G_+(A) \, $,  as one easily sees by construction (taking into account that, as  $ \, \cP := \big( G_+ \, , \fg \big) \, $  is a super Harish-Chandra pair, the ``adjoint'' action of  $ G_+ $  onto  $ \fg $  maps  $ [\,\fg_\uno,\fg_\uno] $  into itself).
                                                                 \par
  We consider also the three sets
%
%
  $$  \Gamma_{\!A}^{\,+}  \; := \;  G_+(A)  \;\; ,  \quad
 \Gamma_{\!A}^{\,[2]} \; := \; G_+^{[2]}(A)  \;\; ,  \quad
 \Gamma_{\!A}^{\,-}  \; := \;  \Gamma_{\!A}^{\,[2]} \,{\textstyle \bigcup}\, {\big\{ (1 \! + \eta \, Y) \big\}}_{(\eta\,,Y) \, \in \, A_\uno \!\times \fg_\uno}  $$
and the sets of relations   --- for all  $ \, g_+ \, , g'_+ \, , g''_+ \in \Gamma_{\!A}^{\,+} \, $,  $ \, g_{[2]} \, , g'_{[2]} \, , g''_{[2]} \in \Gamma_{\!A}^{\,[2]} \, $,  $ \, \eta \, , \eta' , \eta'' \in A_\uno \, $,  $ \, X \in [\fg_\uno,\fg_\uno] \, $,  $ \, Y , Y' , Y'' \in \fg_\uno \, $,  with  $ \;\cdot_{\!\!\!{}_{G_+}} $  and  $ \;\cdot_{\!\!\!{}_{G_+^{[2]}}} $  being the product in  $ G_+(A) $  and in  $ G_+^{[2]}(A) \, $  ---   given by
 \vskip-13pt
  $$  \displaylines{
   \mathcal{R}_{\!A}^+ \, : \quad  g'_+ \cdot\, g''_+  \,\; = \;\,  g'_+ \,\cdot_{\!\!\!{}_{G_+}} g''_+  \cr
   \mathcal{R}_{\!A}^- \, : \,
 \begin{cases}
   \qquad \qquad \qquad \qquad \quad  g'_{[2]} \cdot\, g''_{[2]}  \,\; = \;\,  g'_{[2]} \,\cdot_{\!\!\!{}_{G_+^{[2]}}} g''_{[2]}  \cr
   \qquad \qquad \quad  \big( 1 + \eta \, Y \big) \cdot g_{[2]}  \; = \;  g_{[2]} \cdot \big( 1 + \eta \, \text{\sl Ad}\big(g_{[2]}^{-1}\big)(Y) \big)  \cr
    \quad  \big( 1 + \eta'' \, Y \big) \cdot \big( 1 + \eta' \, Y \big)  \; = \;  \Big( 1 \! + \, \eta' \, \eta'' \, Y^{\langle 2 \rangle} \Big) \cdot \big(\, 1 + \big( \eta' + \eta'' \big) \, Y \,\big) \phantom{{}_{\big|}}  \cr
   \,  \big( 1 + \eta'' \, Y'' \big) \cdot \big( 1 + \eta' \, Y' \big)  \, = \,  \Big( 1 \! + \, \eta' \, \eta'' \, \big[Y',Y''\big] \Big) \cdot \big( 1 + \eta' \, Y' \big) \cdot \big( 1 + \eta'' \, Y'' \big)  \cr
   \qquad \qquad \quad  \big( 1 + \eta \, Y' \big) \cdot \big( 1 + \eta \, Y'' \big)  \; = \;  \big( 1 + \eta \, \big( Y' + Y'' \big) \big)  \cr
   \qquad \qquad \qquad \;\;  \big( 1 + \eta \; 0_{\fg_\uno} \big)  \,\; = \;\,  1  \quad ,
 \qquad   \big( 1 + 0_{\scriptscriptstyle A} \, Y \big)  \,\; = \;\,  1
 \end{cases}   \cr
   \mathcal{R}_{\!A}^\ltimes \, : \quad  g_{[2]} \cdot g_+  \, = \;  g_+ \cdot \big(\, g_+^{-1} \,\cdot_{\!\!\!{}_{G_+}} g_{[2]} \,\cdot_{\!\!\!{}_{G_+}} g_+ \big) \;\; ,  \;\;\;
  \big( 1 + \eta \, Y \big) \cdot g_+  \, = \;  g_+ \cdot \big( 1 + \eta \, \text{\sl Ad}\big(g_+^{-1}\big)(Y) \big)   \phantom{\Big|^{\big|}}  \cr
   \mathcal{R}_{\!A}^{[2]} \, : \quad  \big( g_{[2]} \big)_{{\Gamma_A^{\,[2]}}}  = \;\,  \big( g_{[2]} \big)_{{\Gamma_A^{\,+}}}   \phantom{\big|^{|}}  \cr
   \mathcal{R}_{\!A}  \; := \;  \mathcal{R}_{\!A}^+ \,{\textstyle \bigcup}\, \mathcal{R}_{\!A}^- \,{\textstyle \bigcup}\, \mathcal{R}_{\!A}^\ltimes \,{\textstyle \bigcup}\, \mathcal{R}_{\!A}^{[2]}   \phantom{\big|^{\big|}}  }  $$
(in particular, note that the relations of type  $ \mathcal{R}_{\!A}^{[2]} $  in down-to-earth terms just identify each element in  $ \Gamma_A^{\,[2]} $  with its corresponding copy inside  $ \Gamma_A^+ \, $).  Then we  {\sl define\/}  a new group, by generators and relations, namely  $ \; \GPtm(A) := \big\langle\, \Gamma_{\!A}^- \,\big\rangle \Big/ \big(\, \mathcal{R}_{\!A}^- \,\big) \;\, $.
 \vskip5pt
   From the very definition of  $ \GPt(A) $   --- cf.\  Definition \ref{def G_- / G_Pt - gen} ---   it follows that
\begin{equation}   \label{eq:1st-present-G_Pt}
  \hskip3pt   \GPt(A)  \,\; \cong \;\,  \big\langle\, \Gamma_{\!A}^+ \,{\textstyle \bigcup}\; \Gamma_{\!A}^- \,\big\rangle \Big/ \big( \mathcal{R}_{\!A} \big)  \,\; = \;\,  \big\langle\, \Gamma_{\!A}^+ \,{\textstyle \bigcup}\; \Gamma_{\!A}^- \,\big\rangle \bigg/ \Big(\, \mathcal{R}_{\!A}^+ \,{\textstyle \bigcup}\; \mathcal{R}_{\!A}^- \,{\textstyle \bigcup}\; \mathcal{R}_{\!A}^\ltimes \,{\textstyle \bigcup}\; \mathcal{R}_{\!A}^{[2]} \,\Big)
\end{equation}
indeed, here above we are just taking larger sets of generators and of relations (w.r.t.\  Definition \ref{def G_- / G_Pt - gen}),  but with enough redundancies as to find a different presentation of  {\sl the same\/}  group.
 \vskip5pt
   From this we find a neat description of  $ \GPt(A) $  by achieving the presentation  \eqref{eq:1st-present-G_Pt}  in a series of intermediate steps, namely adding only one bunch of relations at a time.  As a first step, we have
\begin{equation}   \label{eq:2nd-present-G_Pt}
  \big\langle\, \Gamma_{\!A}^+ \,{\textstyle \bigcup}\; \Gamma_{\!A}^- \,\big\rangle \Big/ \big(\, \mathcal{R}_{\!A}^+ \,{\textstyle \bigcup}\; \mathcal{R}_{\!A}^- \,\big)  \,\; = \;\,  \big\langle\, \Gamma_{\!A}^+ \,\big\rangle \Big/ \big(\, \mathcal{R}_{\!A}^+ \,\big)  \,\; * \;\, \big\langle\, \Gamma_{\!A}^- \,\big\rangle \Big/ \big(\, \mathcal{R}_{\!A}^- \,\big)  \,\; \cong \;\,  G_+(A) \,*\, \GPtm(A)
\end{equation}
where  $ \; G_+(A) \, \cong \, \big\langle\, \Gamma_{\!A}^+ \,\big\rangle \Big/ \big(\, \mathcal{R}_{\!A}^+ \,\big) \; $  by construction and  $ \, * \, $  denotes the free product (of two groups).
 \vskip5pt
  For the next two steps we can follow two different lines of action.  On the one hand, one has
  $$  \big\langle\, \Gamma_{\!A}^+ \,{\textstyle \bigcup}\; \Gamma_{\!A}^- \,\big\rangle \Big/ \big(\, \mathcal{R}_{\!A}^+ \,{\textstyle \bigcup}\; \mathcal{R}_{\!A}^- \,{\textstyle \bigcup}\; \mathcal{R}_{\!A}^\ltimes \,\big)  \;\; \cong \;\;  \Big(\, G_+(A) * \GPtm(A) \Big) \bigg/ \Big(\, \overline{\mathcal{R}_{\!A}^\ltimes} \,\Big)  \;\; \cong \;\;  G_+(A) \ltimes \GPtm(A)  $$
because of  \eqref{eq:Double-Quot-Thm}  and  \eqref{eq:2nd-present-G_Pt}  together, where  $ \, G_+(A) \ltimes \GPtm(A) \, $  is the semidirect product of  $ \, G_+(A) \, $  with  $ \, \GPtm(A) \, $  with respect to the obvious (``adjoint'') action of the former on the latter.  Then
  $$  \displaylines{
   \qquad   \big\langle\, \Gamma_{\!A}^+ \,{\textstyle \bigcup}\; \Gamma_{\!A}^- \,\big\rangle \Big/ \big(\, \mathcal{R}_{\!A} \big)  \;\; \cong \;\;  \big\langle\, \Gamma_{\!A}^+ \,{\textstyle \bigcup}\; \Gamma_{\!A}^- \,\big\rangle \bigg/ \Big(\, \mathcal{R}_{\!A}^+ \,{\textstyle \bigcup}\; \mathcal{R}_{\!A}^- \,{\textstyle \bigcup}\; \mathcal{R}_{\!A}^\ltimes \,{\textstyle \bigcup}\; \mathcal{R}_{\!A}^{[2]} \,\Big)  \;\; \cong   \hfill  \cr
   \hfill   \cong \;\;  \Big( G_+(A) \ltimes \GPtm(A) \Big) \bigg/ \Big(\; \overline{\mathcal{R}_{\!A}^{[2]}} \;\Big)  \;\; \cong \;\;  \Big( G_+(A) \ltimes \GPtm(A) \Big) \bigg/ N_{[2]}(A)   \qquad  }  $$
where  $ \, N_{[2]}(A) \, $  is the normal subgroup of  $ \; G_+(A) \, \ltimes \, \GPtm(A) \; $  generated by
 $ \, {\Big\{\! \big(\, g_{[2]} \, , g_{[2]}^{-1} \,\big) \!\Big\}}_{g_{[2]} \in \Gamma_A^{[2]}} \; $.
%
%
 This together with  \eqref{eq:1st-present-G_Pt}  eventually yields
  $$  \GPt(A)  \;\; = \;\;  \Big( G_+(A) \ltimes G_-(A) \Big) \bigg/ N_{[2]}(A)  $$
 \vskip3pt
   On the other hand, again from  \eqref{eq:Double-Quot-Thm}  and  \eqref{eq:2nd-present-G_Pt}  together we get
  $$  \big\langle\, \Gamma_{\!A}^+ \,{\textstyle \bigcup}\; \Gamma_{\!A}^- \,\big\rangle \bigg/ \Big(\, \mathcal{R}_{\!A}^+ \,{\textstyle \bigcup}\; \mathcal{R}_{\!A}^- \,{\textstyle \bigcup}\; \mathcal{R}_{\!A}^{[2]} \,\Big)  \,\;\; \cong \;\;  \Big( G_+(A) \,*\, \GPtm(A) \Big) \bigg/ \Big(\; \overline{\mathcal{R}_{\!A}^{[2]}} \,\Big)  \,\;\; \cong \;\;\,  G_+(A) \!\!\mathop{*}_{G_+^{[2]}(A)}\!\!\! \GPtm(A)  $$
where  $ \, G_+(A) \!\mathop{*}\limits_{G_+^{[2]}(A)}\! \GPtm(A) \, $  is the amalgamated product of  $ \, G_+(A) \, $  and  $ \, \GPtm(A) \, $  over  $ \, G_+^{[2]}(A) \, $  w.r.t.\  the natural monomorphisms  $ \; G_+^{[2]}(A) \lhook\joinrel\relbar\joinrel\longrightarrow G_+(A) \; $  and  $ \; G_+^{[2]}(A) \lhook\joinrel\relbar\joinrel\longrightarrow \GPtm(A) \; $.  Then
  $$  \displaylines{
   \quad   \big\langle\, \Gamma_{\!A}^+ \,{\textstyle \bigcup}\; \Gamma_{\!A}^- \,\big\rangle \Big/ \big(\, \mathcal{R}_{\!A} \big)  \,\;\; \cong \;\;\,  \big\langle\, \Gamma_{\!A}^+ \,{\textstyle \bigcup}\; \Gamma_{\!A}^- \,\big\rangle \bigg/ \Big(\, \mathcal{R}_{\!A}^+ \,{\textstyle \bigcup}\; \mathcal{R}_{\!A}^- \,{\textstyle \bigcup}\; \mathcal{R}_{\!A}^{[2]} \,{\textstyle \bigcup}\; \mathcal{R}_{\!A}^\ltimes \,\Big)  \,\;\; \cong   \hfill  \cr
   \hfill   \cong \;\;\,  \Big( G_+(A) \!\mathop{*}\limits_{G_+^{[2]}(A)}\! \GPtm(A) \Big) \bigg/ \Big(\, \overline{\mathcal{R}_{\!A}^\ltimes} \;\Big)  \,\;\; \cong \;\;\,  \Big( G_+(A) \!\mathop{*}\limits_{G_+^{[2]}(A)}\! \GPtm(A) \Big) \bigg/ N_\ltimes(A)   \quad  }  $$
where  $ \, N_\ltimes(A) \, $  is the normal subgroup of  $ \; G_+(A) \!\mathop{*}\limits_{G_+^{[2]}(A)}\! \GPtm(A) \; $  generated by
 \vskip5pt
%
%
   \centerline{ $ {\Big\{\, g_+ \big( 1 + \eta \, Y \big) \, g_+^{-1} {\big( 1 + \eta \, \text{\sl Ad}(g_+)(Y) \big)}^{-1} \Big\}}_{\!(\eta,Y) \in A_\uno \times \fg_\uno}^{g_+ \in G_+(A)}  {\textstyle \bigcup} \;\;  {\Big\{\, g_+ \, g_{[2]} \, g_+ {\big( g_+ \cdot_{\!\!\!{}_{G_+}}\!\! g_{[2]} \cdot_{\!\!\!{}_{G_+}}\!\! g_+ \big)}^{-1} \Big\}}_{g_+ \in G_+(A)}^{g_{[2]} \in \Gamma_A^{[2]}} $ }
 \vskip7pt
All this along with  \eqref{eq:1st-present-G_Pt}  eventually gives
  $$  \GPt(A)  \,\;\; = \;\;\,  \Big( G_+(A) \!\mathop{*}\limits_{G_+^{[2]}(A)}\! \GPtm(A) \Big) \bigg/ N_\ltimes(A)  $$
for all  $ \, A \in \Wsalg_\K \, $.  In functorial terms this yields
%
%
  $$  \GPt \; = \; \Big( G_+ \ltimes \GPtm \Big) \bigg/ N_{[2]}  \!\qquad  \text{and}  \!\qquad \GPt \; = \; \Big( G_+ \!\mathop{*}\limits_{G_+^{[2]}}\! \GPtm \Big) \bigg/ N_\ltimes  \,\quad ,  \qquad
   \text{or}  \!\!\qquad  \GPt  = \;  G_+ \!\mathop{\ltimes}\limits_{G_+^{[2]}}\! \GPtm  $$
 where the last, (hopefully) more suggestive notation  $ \,\; \GPt \, = \; G_+ \!\mathop{\ltimes}\limits_{G_+^{[2]}}\! \GPtm \;\, $  tells us that  $ \, \GPt $  is the ``amalgamate semidirect product'' of  $ G_+ $  and  $ \GPtm $  over their common subgroup  $ G_+^{[2]} \; $.
\end{free text}

\medskip

\subsection{The supergroup functor  $ \GPt $  as a Lie supergroup}  \label{subsec:struct-G_Pt}

\smallskip

%
%
   {\ } \;\;  We aim now to proving that the functor  $ \GPt $  is actually a Lie supergroup.  We keep definitions and notations as before: in particular, recall that for  $ \, A \in \Wsalg_\K \, $  we denote by  $ \, G_+^{[2]}(A) \, $  the subgroup of  $ \, G_+(A) $  generated by  $ \; \big\{ (1 + c \, X ) \,\big|\, c \in A_\uno^{[2]} , \, X \! \in [\, \fg_\uno , \fg_\uno ] \,\big\} \; $   ---  cf.\ \S \ref{bas-alg-sobjcts}  for notation  $ A_\uno^{[2]} $.

\vskip7pt

   Our first step is the following ``factorization result'' for  $ \GPt \, $:

   \eject

\medskip

\begin{proposition}  \label{prop:fact-G_Pt + sbgr-gener-G^[2]&G_-}
   Let  $ \, \cP := \big( G_+ \, , \fg \big) \in \sHCp_\K \, $  be a super Harish-Chandra pair over  $ \K \, $,  let  $ \, {\big\{ Y_i \big\}}_{i \in I} \, $  be a totally ordered  $ \, \K $--basis  of  $ \, \fg_\uno \, $  (for our fixed order in  $ I $)  and  $ \, A \in \Wsalg_\K \, $.  Then:
 \vskip5pt
   (a)\,  letting  $ \, \big\langle G_-^{\scriptscriptstyle \,<}(A) \big\rangle \, $  be the subgroup of  $ \GPt(A) $  generated by  $ \, G_-^{\scriptscriptstyle \,<}(A) \, $,  we have
 \vskip-7pt
  $$  \big\langle\, G_-^{\scriptscriptstyle \,<}(A) \,\big\rangle  \,\; = \;\,  \GPtm(A)  $$
 \vskip-2pt
\noindent
 and there exist set-theoretic factorizations (with respect to the group product  ``$ \; \cdot \, $'')
 \vskip-7pt
  $$  \GPtm(A)  \; = \;  G_+^{[2]}(A) \cdot G_-^{\scriptscriptstyle \,<}(A)  \quad ,  \qquad
  \GPtm(A) \; = \; G_-^{\scriptscriptstyle \,<}(A) \cdot G_+^{[2]}(A)  $$
 \vskip1pt
   (b)\,  there exist set-theoretic factorizations (with respect to the group product  ``$ \; \cdot \, $'')
 \vskip-7pt
  $$  \GPt(A) \; = \; G_+(A) \cdot G_-^{\scriptscriptstyle \,<}(A)  \quad ,  \qquad  \GPt(A) \; = \; G_-^{\scriptscriptstyle \,<}(A) \cdot G_+(A)  $$
\end{proposition}

\begin{proof}
 Claim  {\it(a)\/}  is the exact analogue of  Proposition  \eqref{eq:factor-1_G^-},  and claim  {\it(b)\/}  the analogue of  \eqref{eq:factor-1_G},  in  Proposition \ref{prop:fact-Liesgrp_G}{\it (b)}.   In both cases the proof (up to trivialities) is identical, so we can skip it.
 \end{proof}

\medskip

\begin{free text}  \label{G_Pt-module V}
 {\bf The representation  $ \, \GPt \!\relbar\joinrel\longrightarrow \rGL(V) \, $.}  When discussing the structure of a Lie supergroup  $ G $,  the factorization  $ \, G = G_\zero \cdot G_-^{\scriptscriptstyle \,<} \, $  was just a intermediate step;  Proposition \ref{prop:fact-G_Pt + sbgr-gener-G^[2]&G_-}  above gives us the parallel counterpart for  $ \GPt \, $.  This factorization result for  $ G $  is improved by the ``Global Splitting Theorem''   --- i.e.\  Theorem \ref{thm:dir-prod-fact-G - gen}  ---   that, roughly speaking, states that for any  $ \, g \in G(A) \, $  the factorization pertaining to  $ \, G_\zero(A) \cdot G_-^{\scriptscriptstyle \,<}(A) \, $  has uniquely determined factors, and similarly any element in  $ \, G_-^{\scriptscriptstyle \,<}(A) \, $  has a unique factorization into an ordered product of factors of the form  $ \, \big( 1 + \eta_i\,Y_i \big) \, $.  Both results are proved by showing that two factorizations of the same object necessarily have identical factors; in other words,  {\sl distinct\/}  choices of factors always give rise to  {\sl different\/}  elements in  $ G(A) $  or in  $ G_-^{\scriptscriptstyle \,<}(A) \, $.  This last fact was proved using the concrete realization of  $ G(A) $  as a special set of maps, namely  $ \, G(A) := \bigsqcup_{\,x \in |G|} \Hom_{\salg_\K} \big( \cO_{|G|,x} \, , A \big) \, $;  \, indeed, this algebra is rich enough to ``separate'' different elements of  $ G(A) $  itself just looking at their values as  $ A $--valued  maps.  When dealing with  $ \GPt(A) $  instead, that is defined abstractly,
%
%
 such a built-in realization is not available:
%
%
 our strategy then is to replace it with a suitable ``partial linearization'',
%
%
 namely a representation of  $ \GPt(A) $  that, although not being faithful, is still ``rich enough'' to (almost) separate elements.
%
%
 \vskip5pt
   Let  $ \, \cP = \big( G_+ \, , \fg \big) \in \sHCp_\K \, $  be
%
%
 given;
 as before, we fix a  $ \K $--basis  $ \, {\big\{ Y_i \big\}}_{i \in I} $  of  $ \fg \, $,  where  $ I $  is an index set in which we fix some total order, hence the basis itself is totally ordered as well.
                                                             \par
   Recall that the  {\sl universal enveloping algebra\/}  $ U(\fg) $  is given by  $ \; U(\fg) \, := \, T(\fg) \Big/ J \; $  where  $ T(\fg) $  is the tensor algebra of  $ \fg $  and  $ J $  is the two-sided ideal in  $ T(\fg) $  generated by the set
  $$  \Big\{\, x \, y - {(-1)}^{|x|\,|y|} \, y \, x - [x,y] \; , \; z^2 - z^{\langle 2 \rangle} \,\;\Big|\; x, y \in \fg_\zero \,{\textstyle \bigcup}\, \fg_\uno \, , \, z \in \fg_\uno \,\Big\}  $$
--- where  $ \, z^{\langle 2 \rangle} := 2^{-1} \, [z,z] \, $,  see  Definition \ref{def:Lie-superalgebras}{\it (c)}.  It is known then   --- see for instance  \cite{vsv},  \S 7.2, which clearly adapt to the complex case too ---   that one has splitting(s) of  $ \K $--supercoalgebras
\begin{equation}  \label{eq:splitting_U(g)}
  U(\fg)  \,\; \cong \;\,  U(\fg_\zero) \otimes_\K {\textstyle \bigwedge} \, \fg_\uno  \,\; \cong \;\,  {\textstyle \bigwedge} \, \fg_\uno \otimes_\K U(\fg_\zero)
\end{equation}
   \indent   In addition,  $ {\textstyle \bigwedge} \, \fg_\uno $  has  $ \K $--basis  $ \, \big\{\, Y_{i_1} Y_{i_2} \cdots Y_{i_s} \,\big|\, s \leq |I| \, , \, i_1 \! < \! i_2 \! < \! \cdots \! < \! i_s \,\big\} \, $;  hereafter, we drop the sign  ``$ \wedge $''  to denote the product in  $ \; {\textstyle \bigwedge} \, \fg_\uno \; $.

\vskip5pt

   Now let  $ \Uuno $  be the (one-dimensional)  {\sl trivial representation\/}  of  $ \fg_\zero \, $.  By the standard process of  {\sl induction\/}  from  $ \fg_\zero $  to  $ \fg $   --- the former being thought of as a Lie subsuperalgebra of the latter ---   we can consider the  {\sl induced representation\/}  $ \, V := \text{\sl Ind}_{\fg_\zero}^{\,\fg}(\Uuno\,) \, $,  that is a  $ \fg $--module.  Looking at  $ \Uuno $  and  $ V $  respectively as a module over  $ U(\fg_\zero) $  and over  $ U(\fg) \, $,  taking  \eqref{eq:splitting_U(g)}  into account we get
\begin{equation}  \label{eq:splitting_V}
  V  \; := \;  \text{\sl Ind}_{\fg_\zero}^{\,\fg}(\Uuno\,)  \; = \;  U(\fg) \! \mathop{\otimes}\limits_{U(\fg_\zero)} \! \Uuno  \; \cong \;  {\textstyle \bigwedge}\, \fg_\uno \mathop{\otimes}\limits_\K \Uuno  \,\; \cong \;  {\textstyle \bigwedge}\, \fg_\uno
\end{equation}

\noindent
 The last one above is a natural isomorphism of  $ \K $--superspaces,  uniquely determined once a specific element  $ \, \underline{b} \in \Uuno \, $  is fixed to form a  $ \K $--basis  of  $ \Uuno $  itself: the isomorphism is  $ \, \omega \otimes \underline{b} \mapsto \omega \, $  for all  $ \, \omega \in \bigwedge \fg_\uno \, $.
%
%
   \eject
                                                                       \par
  This representation-theoretical construction and its outcome clearly give rise to similar functorial counterparts, for the Lie algebra valued  $ \K$--superfunctors  $ \cL_{\fg_\zero} $  and  $ \cL_\fg \, $,  as well as for the  $ \K$--superfunctors  associated with  $ U(\fg_\zero) $  and  $ U(\fg) \, $,  in the standard way, namely  $ \; A \, \mapsto \, A_\zero \otimes_\K U(\fg_\zero) \; $  and  $ \; A \, \mapsto \, {\big( A \otimes_\K U(\fg) \big)}_\zero \, = \, A_\zero \otimes_\K {U(\fg)}_\zero \, \bigoplus \, A_\uno \otimes_\K {U(\fg)}_\uno \; $  for all  $ \, A \in \Wsalg_\K \, $.
                                                                       \par
   On the other hand, recall that  $ \, \fg_\zero = \Lie\,(G_+) \, $,  and clearly  $ \Uuno $  is also the trivial representation for  $ G_+ \, $,  as a classical Lie group.
%
%
 Then, by construction,
%
%
 the representation of  $ \fg $  on the space  $ V $  also induces a representation of the super Harish-Chandra pair  $ \, \cP = (G_+,\fg) \, $  on the same  $ V $,  in other words  $ V $  bears also a structure of  $ (G_+,\fg) $--module   --- in the obvious
%
%
 sense: we have a morphism  $ \, (\boldsymbol{r}_{\!+},\rho) : (G_+\,\fg) \longrightarrow \big( \rGL(V),\rgl(V)\big) \, $  of super Harish-Chandra pairs.
%
%
 We shall write again  $ \rho $  for
 the representation map  $ \, \rho : U(\fg) \longrightarrow \End_{\,\K}(V) \, $
   \hbox{giving the  $ U(\fg) $--module  structure on  $ V $.}
 \vskip7pt
   Our key step now is to remark that the above  $ (G_+,\fg) $--module  structure on  $ V $  actually ``integrates'' to a  $ \GPt $--module  structure, in a natural way.
\end{free text}

\smallskip

\begin{proposition}  \label{G_Pt-action on V}
 Retain notation as above for the  $ (G_+,\fg) $--module  $ V $.  There exists a unique structure of (left)  $ \GPt $--module  onto  $ V $  which satisfies the following conditions: for every  $ \, A \in \Wsalg_\K \, $,  the representation map  $ \, \boldsymbol{r}_{{}_{\!\cP,A}}^\circ \! : \GPt(A) \longrightarrow \rGL(V)(A) \, $  is given on generators of\/  $ \GPt(A) $   --- namely, all  $ \, g_+ \in G_+(A) \, $  and  $ \, (1 + \eta_i \, Y_i) \, $  for  $ \, i \in I \, $,  $ \, \eta_i \in A_\uno $  ---   by
  $$  \boldsymbol{r}_{{}_{\!\cP,A}}^\circ(g_+) \, := \, \boldsymbol{r}_{\!+}(g_+) \quad ,  \qquad  \boldsymbol{r}_{{}_{\!\cP,A}}^\circ(1 + \eta_i \, Y_i) \, := \, \rho(1 + \eta_i \, Y_i) \, = \, \text{\sl id}_{{}_V\!} + \eta_i \, \rho(Y_i)  $$
or, in other words,  $ \; g_+.v \, := \, \boldsymbol{r}_{\!+}(g_+)(v) \; $  and  $ \; (1 + \eta_i \, Y_i).v \, := \, \rho(1 + \eta_i \, Y_i)(v)  \, = \, v + \eta_i \, \rho(Y_i)(v) \; $  for all  $ \, v \in V(A) \, $.  Overall, this yields a morphism a  $ \K $--supergroup  functors  $ \; \boldsymbol{r}_{{}_{\!\cP}}^\circ \! : \GPt \!\longrightarrow \rGL(V) \; $.
\end{proposition}

\begin{proof}
 This is, essentially, a straightforward consequence of the whole construction, and of the very definition of  $ \GPt \, $.  Indeed, by definition of representation for the super Harish-Chandra pair  $ \cP $  we see that the operators  $ \boldsymbol{r}_{{}_{\!\cP,A}}^\circ(g_+) $  and  $ \boldsymbol{r}_{{}_{\!\cP,A}}^\circ(1 + \eta_i \, Y_i) $  on  $ V $   --- associated with the generators of  $ \GPt(A) $  ---   do satisfy all relations which, by  Definition \ref{def G_- / G_Pt - gen}{\it (a)},  are satisfied by the generators themselves.  Thus they uniquely provide a well-defined group morphism  $ \, \boldsymbol{r}_{{}_{\!\cP,A}}^\circ \! : \GPt(A) \longrightarrow \rGL(V)(A) \, $  as required.  The construction is clearly functorial in  $ A \, $,  whence the claim.
\end{proof}

\medskip

   The advantage of introducing the representation  $ \boldsymbol{r}_{{}_{\!\cP}} $  of  $ \GPt $  on  $ V $  is that it allows us to ``separate'', in a sense, the ``odd points of  $ \GPt(A) $  from each other and from the even ones'', i.e.\ we can separate the points in  $ G_-^{\scriptscriptstyle \,<}(A) $  from each other (in a ``very fine'' sense) and from those in  $ G_+(A) \, $.

\vskip5pt

   We are now ready to state and prove the main result of the present subsection, that is just the ``global splitting theorem'' for  $ \GPt $  (cf.\  Theorem \ref{thm:dir-prod-fact-G - gen}):

\medskip

\begin{proposition}  \label{dir-prod-fact-G_Pt - gen}  {\ }
 \vskip5pt
   {\it (a)} \,  The restriction of group multiplication in  $ \GPt $  provides isomorphisms of (set-valued) functors
 \vskip-15pt
  $$  G_+ \times G_-^{\scriptscriptstyle \,<}  \; \cong \;  \GPt  \;\; ,  \quad  G_-^{\scriptscriptstyle \,<} \times G_+  \; \cong \;  \GPt  \quad ,  \qquad
  G_+^{[2]} \times G_-^{\scriptscriptstyle \,<}  \; \cong \;  \GPtm  \;\; ,  \quad  G_-^{\scriptscriptstyle \,<} \times G_+^{[2]}  \; \cong \;  \GPtm  $$
 \vskip-3pt
   {\it (b)} \,  There exists an isomorphism of (set-valued) functors  $ \; \mathbb{A}_\K^{0|d_1} \! \cong G_-^{\scriptscriptstyle \,<} \; $,  \, with  $ \, d_1 := |I| = \text{\it dim}_{\,\K}\big(\fg_\uno\big) \, $,  given on  $ A $--points   --- for every  $ \, A \in \Wsalg_\K \, $  --- by
%
 \vskip-7pt
  $$  \mathbb{A}_\K^{0|d_1}\!(A)  \; = \;  A_\uno^{\,d_1} \!\!\longrightarrow G_-^{\scriptscriptstyle \,<}(A) \;\; ,  \quad  {\big(\eta_i\big)}_{i \in I} \mapsto {\textstyle \prod\limits_{i \in I}^\rightarrow} (1 + \eta_i \, Y_i)  $$
 \vskip-3pt
   {\it (c)} \,  There exist isomorphisms of (set-valued) functors
 \vskip-15pt
  $$  G_+ \times \mathbb{A}_\K^{0|d_1}  \cong \;  \GPt  \,\; ,  \quad  G_+^{[2]} \times \mathbb{A}_\K^{0|d_1}  \cong \;  \GPtm  \,\; ,  \!\qquad  \text{and}  \!\qquad
  \mathbb{A}_\K^{0|d_1} \!\times G_+  \; \cong \;  \GPt  \,\; ,  \quad  \mathbb{A}_\K^{0|d_1} \!\times G_+^{[2]}  \; \cong \;  \GPtm  $$
given on  $ A $--points   --- for every  $ \, A \in \Wsalg_\K \, $  ---   respectively by
 \vskip-7pt
  $$  \big(\, g_+ \, , \, {\big(\eta_i\big)}_{i \in I} \big) \, \mapsto \, g_+ \cdot {\textstyle \prod\limits_{i \in I}^\rightarrow} (1 + \eta_i \, Y_i)  \qquad  \text{and}  \qquad  \big( {\big(\eta_i\big)}_{i \in I} \, , \, g_+ \big) \, \mapsto \, {\textstyle \prod\limits_{i \in I}^\rightarrow} (1 + \eta_i \, Y_i) \cdot g_+  $$
\end{proposition}

\begin{proof}
 The proof is quite close to (half of) that of  Theorem \ref{thm:dir-prod-fact-G - gen},  with some technical differences, involving the use of the representation  $ V $  of  \S \ref{G_Pt-module V};  for completeness we present it explicitly.
 \vskip5pt
   {\it (a)}\, It is enough to prove the first identity concerning  $ \GPt \, $,  as all other are similar. Thus our goal amounts to showing the following: for any  $ \, A \in \Wsalg_\K \, $,  if  $ \; \hat{g}_+ \, \hat{g}_- = \check{g}_+ \, \check{g}_- \; $  for  $ \, \hat{g}_+ \, , \check{g}_+ \in G_+(A) \, $  and  $ \, \hat{g}_- \, , \check{g}_- \in G_-^{\scriptscriptstyle \,<}(A) \, $,  then  $ \, \hat{g}_+ = \check{g}_+ \, $  and  $ \, \hat{g}_- = \check{g}_- \; $.
 \vskip3pt
   The assumption  $ \; \hat{g}_+ \, \hat{g}_- \, = \, \check{g}_+ \, \check{g}_- \; $  implies  $ \; g \, := \, \hat{g}_- \, \check{g}_-^{-1} \, = \, \hat{g}_+^{-1} \, \check{g}_+ \, \in \, G_+(A) \, $,  as  $ G_+(A) $  is a subgroup in  $ \GPt(A) \, $.  Now  $ \, \hat{g}_- \in \, G_-^{\scriptscriptstyle \,<}(A) \, $  has the form  $ \, \hat{g}_- = {\textstyle \prod\limits^\rightarrow}_{i \in I} \big(\, 1 + \hat{\eta}_i \, Y_i \,\big) \, $  and similarly  $ \, \check{g}_- = {\textstyle \prod\limits^\rightarrow}_{i \in I} \big( 1 + \check{\eta}_i Y_i \big) \, $  so that  $ \, \check{g}_-^{\,-1} = {\textstyle \prod\limits^\leftarrow}_{i \in I} \big( 1 - \check{\eta}_i Y_i \big) \, $;  therefore we have
\begin{equation}  \label{eq:exp_hatg-checkg}
  g  \,\; := \;\,  \hat{g}_- \, \check{g}_-^{-1}  \,\; = \;\,  {\textstyle \prod\limits_{i \in I}^\rightarrow} \big( 1 + \hat{\eta}_i Y_i \big) \, {\textstyle \prod\limits_{i \in I}^\leftarrow} \big( 1 - \check{\eta}_i Y_i \big)  \;\; \in \;\;  G_+(A) \; \subseteq \; \GPt(A)
\end{equation}
 \vskip-5pt
   Let  $ \, \fa := \big( {\big\{\, \hat{\eta}_{{}_{\,\scriptstyle i}} , \check{\eta}_{{}_{\,\scriptstyle i}} \big\}}_{i \in I} \big) \, $  be the ideal of  $ A $  generated by the  $ \hat{\eta}_{{}_{\,\scriptstyle i}} $'s  and the  $ \check{\eta}_{{}_{\,\scriptstyle i}} $'s,  set  $ \; A \,{\buildrel {\pi_n} \over {\relbar\joinrel\relbar\joinrel\twoheadrightarrow}}\, A \big/ \fa^{\,n} \; $  for the quotient map and  $ \, {[a\,]}_n := \pi_n(a) \, $  for  $ \, a \in A \, $,  then  $ \, \GPt(A) \,{\buildrel {G(\pi_n)} \over {\relbar\joinrel\relbar\joinrel\relbar\joinrel\relbar\joinrel\rightarrow}}\, \GPt\big( A \big/ \fa^{\,n} \big) \, $  for the associated group morphism and  $ \, {[y]}_n := \GPt(\,\pi_n)(y) \, $  for every  $ \, y \in \GPt(A) \, $.
%
%
 Now, the defining relations for  $ G_-^{\scriptscriptstyle \,<}\big( A \big/ \fa^{\,2} \big) $   --- taking into account that  $ \; \hat{\eta}_h \, \check{\eta}_k \, , \check{\eta}_k \, \hat{\eta}_h \, \in \, \fa^2 \; $  (for all  $ \, h, k \in I \, $)  ---   yield
 \vskip-7pt
\begin{equation}  \label{eq:exp_hatg-checkg mod 2}
  {[g]}_2  \; = \;  {\textstyle \prod\limits_{i \in I}^\rightarrow} \big( 1 + {[\hat{\eta}_i]}_2 \, Y_i \big) \cdot {\textstyle \prod\limits_{i \in I}^\leftarrow} \big( 1 - {[\check{\eta}_i]}_2 \, Y_i \big)  \; = \;  {\textstyle \prod\limits_{i \in I}^\rightarrow} \big( 1 + {[\alpha_i]}_2 \, Y_i \big)  \;\; \in \;\;  G_-^{\scriptscriptstyle \,<}\big( A \big/ \fa^{\,2} \,\big)
\end{equation}
 \vskip-3pt
   Next step then is to let  $ \, {[g]}_2 \, $  act onto  $ \, \underline{b} \in V\big( A \big/ \fa^{\,2} \,\big) \, $.
%
%
 To avoid
 confusion, when we describe  $ V $  as  $ \, V = \bigwedge \fg_\uno.\,\underline{b} \, \cong \bigwedge \fg_\uno \, $,  we write the elements of
%
%
 $ {\{Y_i\}}_{i \in I} $  of  $ \fg_\uno $  as  $ \bar{Y}_i $  instead of  $ Y_i \, $:  thus the
%
%
 isomorphism  $ \, \bigwedge \fg_\uno.\,\underline{b} \, \cong \bigwedge \fg_\uno \, $  is given by  $ \, (Y_{i_1} Y_{i_2} \cdots Y_{i_s}).\,\underline{b} \mapsto \bar{Y}_{i_1} \bar{Y}_{i_2} \cdots \bar{Y}_{i_s} \, $   --- for all  $ \, i_1 < i_2 < \cdots < i_s \, $.
                                                                  \par
   Taking into account that  $ \; {[\alpha_h]}_2 \, {[\alpha_k]}_2 \, = \, {[0]}_2 \, \in \, A \big/ \fa^{\,2} \; $  (for all  $ \, h, k \in I \, $)  from  \eqref{eq:exp_hatg-checkg mod 2}  we get that the action of  $ \, {[g]}_2 \, $  onto  $ \, \underline{b} \in V\big( A \big/ \fa^{\,2} \,\big) \, $  is given by %
\begin{equation}  \label{eq:hatg-checkg onto b}
  {[g]}_2\,.\,\underline{b}  \; = \;  {\textstyle \prod\limits_{i \in I}^\rightarrow} \big( 1 + {[\alpha_i]}_2 \, Y_i \big)\,.\,\underline{b}  \; = \;  1\,.\,\underline{b} \, + \sum_{i \in I} {[\alpha_i]}_2 \, Y_i\,.\,\underline{b}  \; = \;  \underline{b} \, + \sum_{i \in I} {[\alpha_i]}_2 \, \bar{Y}_i  \;\; \in \;\;  V\big( A \big/ \fa^{\,2} \,\big)
\end{equation}
On the other hand, we have also  $ \; {[g]}_2\,.\,\underline{b} \, = \, \underline{b} \; $  because  $ \, {[g]}_2 \in G_+\big( A \big/ \fa^{\,2} \,\big) \, $  and  $ G_+ $  acts trivially on  $ V $.  This compared with  \eqref{eq:hatg-checkg onto b},  taking into account that  $ \, \big\{\, \underline{b} \,\big\} \cup {\big\{\, \bar{Y}_i \,\big\}}_{i \in I} \, $  is part of the chosen basis of  $ V $,  implies that  $ \, {[\alpha_i]}_2 = {[0]}_2 \in A \big/ \fa^{\,2} \, $,  i.e.\  $ \, \alpha_i \in \fa^2 \, $,  for all  $ \, i \in I \, $.  But then we can repeat the previous argument, slightly improved: indeed, again, the defining relations of  $ \GPt\big( A \big/ \fa^{\,3} \,\big) $  give
\begin{equation}  \label{eq:exp_hatg-checkg mod 3}
  {[g]}_3  \; = \;  {\textstyle \prod\limits_{i \in I}^\rightarrow} \big( 1 + {[\hat{\eta}_i]}_3 \, Y_i \big) \, {\textstyle \prod\limits_{i \in I}^\leftarrow} \big( 1 - {[\check{\eta}_i]}_3 \, Y_i \big)  \; = \;  {\textstyle \prod\limits_{i \in I}^\rightarrow} \big( 1 + {[\alpha_i]}_3 \, Y_i \big)  \;\; \in \;\;  \GPt\big( A \big/ \fa^{\,3} \,\big)
\end{equation}
--- now because the new occurring factors depend on coefficients of the form  $ \; \alpha_h \, \check{\eta}_k \; $  and  $ \; \alpha_h \, \check{\eta}_k \, $,  that all belong to  $ \fa^{\,3} \, $.  Then we repeat the second step, namely we let  $ \, {[g]}_3 \, $  act onto  $ \, \underline{b} \in V\big( A \big/ \fa^{\,3} \,\big) \, $,  for which  \eqref{eq:exp_hatg-checkg mod 3}  gives the analogue of  \eqref{eq:hatg-checkg onto b},  namely
  $$  {[g]}_3\,.\,\underline{b}  \; = \;
%
%
 \underline{b} \, + \sum_{i \in I} {[\alpha_i]}_3 \, \bar{Y}_i  \;\; \in \;\;  V\big( A \big/ \fa^{\,3} \,\big)  $$
which in turn implies  $ \, \alpha_i \in \fa^3 \, $,  for all  $ \, i \in I \, $.  Clearly, we can iterate this process, and find  $ \, \alpha_i \in \fa^n \, $  for all  $ \, n \in \N \, $,  $ \, i \in I \, $;  as  $ \, \fa^n = \{0\} \, $  for  $ \, n \gg 0 \, $  (since  $ \fa $  is generated by finitely many odd elements)  we end up with  $ \, \alpha_i = 0 \, $,  i.e.\  $ \, \hat{\eta}_i = \check{\eta}_i \, $,  for all  $ \, i \in I \, $.  This means  $ \, \hat{g}_- = \check{g}_- \, $,  whence  $ \, \hat{g}_+ = \check{g}_+ $  as well.
 \vskip5pt
   {\it (b)}\,  By construction there exists a morphism  $ \, \Theta^{\scriptscriptstyle \,<} : \mathbb{A}_\K^{0|d_1} \!\!\relbar\joinrel\longrightarrow G_-^{\scriptscriptstyle \,<} \, $  of set-valued functors that is given on  $ A $--points   --- for every single  $ \, A \in \Wsalg_\K \, $  ---   by the map
 \vskip-7pt
  $$  \Theta_{\!A}^{\scriptscriptstyle \,<} \, : \, \mathbb{A}_\K^{0|d_1}(A) := A_\uno^{\,\times d_1} \!\! \relbar\joinrel\longrightarrow\, G_-^{\scriptscriptstyle \,<}(A) \;\; ,  \quad  {\big( \eta_i \big)}_{i \in I} \, \mapsto \, \Theta_{\!A}^{\scriptscriptstyle \,<} \big( {\big( \eta_i \big)}_{i \in I\,} \big) := {\textstyle \prod\limits_{i \in I}^\rightarrow} \big(\, 1 + \eta_i \, Y_i \,\big)  $$
 \vskip-5pt
\noindent
 that is actually  {\sl surjective}.  We need to prove that all these maps  $ \Theta_{\!A}^{\scriptscriptstyle \,<} $  are also injective, so that overall  $ \Theta^{\scriptscriptstyle \,<} $  is indeed an isomorphism.
                                                                \par
   \eject
   Let  $ \, {\big( \hat{\eta}_i \big)}_{i \in I} \, , {\big( \check{\eta}_i \big)}_{i \in I} \in A_\uno^{\,\times d_1} \, $  be such that  $ \, \Theta_{\!A}^{\scriptscriptstyle \,<} \big(\! {\big( \hat{\eta}_i \big)}_{i \in I} \big) = \Theta_{\!A}^{\scriptscriptstyle \,<} \big(\! {\big( \check{\eta}_i \big)}_{i \in I} \big) \; $,  that is  $ \; {\textstyle \prod\limits_{i \in I}^\rightarrow} \big(\, 1 + \hat{\eta}_i \, Y_i \,\big) = {\textstyle \prod\limits_{i \in I}^\rightarrow} \big(\, 1 + \check{\eta}_i \, Y_i \,\big)  \; $.  Then we can replay the proof of claim  {\it (a)},  now with  $ \; \hat{g}_+ := 1 =: \check{g}_+ \; $:  the outcome will be again  $ \, \hat{\eta}_i = \check{\eta}_i \, $  for all  $ \, i \in I \, $,  i.e.~$ \, {\big( \hat{\eta}_i \big)}_{i \in I} = {\big( \check{\eta}_i \big)}_{i \in I} \; $.  Thus  $ \Theta_{\!A}^{\scriptscriptstyle \,<} $  is injective, as desired.
 \vskip7pt
   {\it (c)}\,  This is a direct consequence of  {\it (a)\/}  and  {\it (b)\/}  together.
\end{proof}

\medskip

\begin{free text}  \label{Lie sgroup on G_Pt}
 {\bf The Lie supergroup structure of  $ \GPt \, $.}  For any given  $ \, \cP \in \sHCp_\K \, $  and  $ \, A \in \Wsalg_\K \, $,  consider the group  $ \GPt(A) \, $.  Thanks to  Proposition \ref{dir-prod-fact-G_Pt - gen}{\it (c)},  we have a particular bijection
\begin{equation}  \label{eq:bij G+_x_A- >-->> G_Pt}
  \phi_A^{\,\circ} : \, G_+(A) \times \mathbb{A}_\K^{0|d_1\!}(A) \,\;{\buildrel \cong \over {\lhook\joinrel\relbar\joinrel\relbar\joinrel\relbar\joinrel\relbar\joinrel\relbar\joinrel\twoheadrightarrow}}\;\, \GPt(A)
\end{equation}
whose restriction to  $ \, G_+(A) \, $,  identified with  $ \; G_+(A) \times \big\{{(0)}_{i \in I}\big\} \, \subseteq G_+(A) \times \mathbb{A}_\K^{0|d_1}(A) \; $,  \,is the identity    --- onto the copy of  $ \, G_+(A) \, $  naturally sitting inside  $ \GPt(A) \, $.
                                                                           \par
   Now,  $ G_+(A) $  is by definition an  $ A_\zero $--manifold  (cf.\ \S \ref{Shvarts_embedding}),  of the same type (real smooth, etc.) as the super Harish-Chandra pair  $ \, \cP := (G_+ , \fg ) \, $  it pertains to; on the other hand,  $ \mathbb{A}_\K^{0|d_1\!}(A) $  carries natural, canonical structures of  $ A_\zero $--manifold  of any possible type (real smooth or real analytic if  $ \, \K = \R \, $,  complex holomorphic if  $ \, \K = \C \, $), in particular then also of the type of  $ G_+(A) \, $.  Then we know that there is also a canonical ``product structure'' of  $ A_\zero $--manifold   --- of the same type of  $ G_+(A) \, $,  i.e.\ of  $ \cP \, $  ---   onto the Cartesian product  $ \, G_+(A) \times \mathbb{A}_\K^{0|d_1}(A) \, $.  Using the bijection  $ \phi_A^{\,\circ} $  in  \eqref{eq:bij G+_x_A- >-->> G_Pt}  we push-forward this canonical  $ A_\zero $--manifold  structure of  $ \, G_+(A) \times \mathbb{A}_\K^{0|d_1}(A) \, $  onto  $ \GPt(A) \, $,  which then is turned into an  $ A_\zero $--manifold  on its own, still of the same type as  $ \cP \, $.
                                                    \par
   Strictly speaking, the structure of  $ A_\zero $--manifold  defined on  $ \GPt(A) $  formally depends on the choice of  $ G_-^{\scriptscriptstyle \,<} \, $,  hence of a totally ordered  $ \K $--basis  of  $ \fg_\uno \, $,  as this choice enters in the construction of  $ \phi_A^{\,\circ} $  in  \eqref{eq:bij G+_x_A- >-->> G_Pt}  above.  However, thanks to the special form of the defining relations of  $ \GPt(A) $  it is straightforward to show that  {\it changing such a basis amounts to changing local charts for the same, unique  $ A_\zero $--manifold  structure\/};  so in the end the structure is actually independent of such a choice.
 \vskip3pt
   Now, using the above mentioned structure of  $ A_\zero $--manifold  on  $ \GPt(A) $  for each  $ \, A \in \Wsalg_\K \, $,  given a morphism  $ \, f : A' \longrightarrow A'' \, $  in  $ \Wsalg_\K $  it is straightforward to check that the corresponding group morphism  $ \, \GPt(f) : \GPt(A') \longrightarrow \GPt(A'') \, $  is a morphism of  $ A'_\zero $--manifolds,  hence it is a morphism of  $ \cA_\zero $--manifolds  (cf.\ \S \ref{Shvarts_embedding}).  Thus  {\it  $ \GPt $  can also be seen as a functor from Weil  $ \K $--superalgebras  to  $ \cA_\zero $--manifolds  (real smooth, real analytic or complex holomorphic as  $ \cP $  is)}.
 \vskip3pt
   At last, again looking at the commutation relations in  $ \GPt(A) \, $,  we see that the group multiplication and the inverse map are ``regular'' (that is to say, ``real smooth'', ``real analytic'' or  ``complex holomorphic'' depending on the type of  $ \cP \, $);  indeed, this is explicitly proved by calculations like those needed in the proof of  Proposition \ref{prop:fact-G_Pt + sbgr-gener-G^[2]&G_-}{\it (b)}   --- that we skipped, so refer instead to the proof of  Proposition \ref{prop:fact-Liesgrp_G}{\it (b)}.  Thus they are morphisms of  $ \cA_\zero $--manifolds,  so  {\it  $ \GPt(A) $  is a group element among  $ \cA_\zero $--manifolds,  i.e.\ it is a Lie $ \cA_\zero $--group\/};  hence (cf.\  Proposition \ref{funct-char_Lie-supergrps}),  overall  {\it the functor  $ \GPt $  {\sl is a Lie supergroup},  of real smooth, real analytic or complex holomorphic type as  $ \cP $  is}.
 \vskip5pt
   Eventually, the outcome of this discussion   --- and core result of the present section ---   is the following statement, which provides a ``backward functor'' from sHCp's to Lie supergroups:
\end{free text}

\vskip15pt

\begin{theorem}  \label{thm_sHCp's-->Lsgrps - 1}
 The recipe in  Definition \ref{def G_- / G_Pt - gen}  provides functors
%
%
 \vskip5pt
   \centerline{ $  \Psi^\circ : \sHCp_\R^\infty \longrightarrow \Lsgrp_\R^\infty  \quad ,  \!\!\!\qquad  \Psi^\circ : \sHCp_\R^\omega \longrightarrow \Lsgrp_\R^\omega  \quad ,  \!\!\!\qquad  \Psi^\circ : \sHCp_\C^\omega \longrightarrow \Lsgrp_\C^\omega  $ }
 \vskip5pt
\noindent
 given on objects by  $ \; \cP \mapsto \Psi^\circ(\cP) := \GPt \; $  and on morphisms by
%
%
 \vskip3pt
    \centerline{ $  \Big(\, (\phi_+ \, , \varphi) : \, \cP' \relbar\joinrel\longrightarrow \cP'' \;\Big)  \quad \mapsto \quad  \Big(\; \Psi^\circ\big( (\phi_+ \, , \varphi) \big) : \, \Psi^\circ\big(\cP'\big) := G_{{}_{\!\cP'}}^{\,\circ} \!\relbar\joinrel\longrightarrow G_{{}_{\cP''}}^{\,\circ} \! =: \Psi^\circ\big(\cP''\big) \,\Big)  $ }
  \eject
%
 \vskip3pt
\noindent
 where the functor morphism  $ \;  \Psi^\circ\big( (\phi_+ \, , \varphi) \big) : \Psi^\circ\big(\cP'\big) := G_{{}_{\!\cP'}}^{\,\circ} \!\relbar\joinrel\longrightarrow G_{{}_{\cP''}}^{\,\circ} \! =: \Psi^\circ\big(\cP''\big) \; $  is defined by
\begin{equation}  \label{eq:def Psi on morphisms - 1}
  {\Psi^\circ\big( (\phi_+ \, , \varphi) \big)}_{\!A}  \quad :  \qquad  g'_+ \, \mapsto \, \phi_+\big(\,g'_+\big) \;\; ,
 \quad  \big( 1 + \eta \, Y' \,\big) \, \mapsto \, \big( 1 + \eta \, \varphi\big(Y'\big) \big)   \qquad \qquad
\end{equation}
for all  $ \, A \in \Wsalg_\K \, $,  $ \, g'_+ \in G'_+(A) \, $,  $ \, \eta \in A_\uno \, $,  $ \, Y' \in \fg'\, $,  with  $ \, \cP' = \big( G'_+ \, , \fg' \,\big) \, $  and  $ \, \cP'' = \big( G''_+ \, , \fg'' \,\big) \; $.
\end{theorem}

\begin{proof}
 What is still left to prove is only that the given definition for  $ \Psi^\circ\big( (\phi_+ \, , \varphi) \big) $  actually makes sense, as all the rest is already proved by our previous analysis   --- in particular, by \S \ref{Lie sgroup on G_Pt}  above ---   or is elementary.  Now,  \eqref{eq:def Psi on morphisms - 1}  above fixes the values of our would-be morphism  $ {\Psi^\circ\big( (\phi_+ \, , \varphi) \big)}_{\!A} $  {\sl on generators\/}  of  $ \, \Psi^\circ\big(\cP'\big)(A) := G_{{}_{\!\cP'}}^{\,\circ}(A) \, $:  then a straightforward check shows that all defining  {\sl relations\/}  among such generators   --- inside  $ G_{{}_{\!\cP'}}^{\,\circ}(A) $   ---   are mapped to corresponding (defining) relations in  $ G_{{}_{\!\cP''}}^{\,\circ}(A) \, $,  thus providing a unique, well-defined group morphism as required.  However, we must show that  {\sl this is a morphism of  $ \cA_\zero $--manifolds  too},  which needs some extra work.
                                                              \par
   Let  $ \, {\big\{ Y'_i \big\}}_{\! i \in I} \, $  and  $ \, {\big\{ Y''_j \big\}}_{\! j \in J} \, $  be  $ \K $--bases  of  $ \fg'_\uno $  and  $ \fg''_\uno $  respectively, both endowed with some fixed total order. Accordingly, both  $ G_{{}_{\!\cP'}}^{\,\circ}(A) $  and  $ G_{{}_{\!\cP''}}^{\,\circ}(A) $  admit factorizations as in  Proposition \ref{dir-prod-fact-G_Pt - gen}{\it (a)}   --- say of type  $ \, G_+ \times G_-^< \, $.  In particular, any given  $ \, g' \in G_{{}_{\!\cP'}}^{\,\circ}(A) \, $  uniquely factors into  $ \; g' \, = \, g'_+ \cdot {\overrightarrow{\prod}}_{i \in I} \big( 1 + \eta_i \, Y'_i \big) \; $;  then  $ {\Psi^\circ\big( (\phi_+ \, , \varphi) \big)}_{\!A} \, $,  being a group morphism, maps  $ g' $  onto
  $$  {\Psi^\circ\big( (\phi_+ \, , \varphi) \big)}_{\!A} \big(\,g'\big)  \; = \;  \phi_+\big(\,g'_+\big) \cdot {\textstyle {\overrightarrow{\prod}}_{i \in I}} \big( 1 + \eta_i \, \varphi\big(Y'_i\big) \big)  $$
and from this, letting  $ \; \varphi\big(Y'_i\big) \, = \, \sum_{j \in J} c_{i,j}\,Z_j \; $   --- with  $ \, c_{i,j} \in \K \, $  ---   we get
\begin{equation}  \label{eq:fact Psit_phi(g')}
  {\Psi^\circ\big( (\phi_+ \, , \varphi) \big)}_{\!A} \big(\,g'\big)  \, = \,  \phi_+\big(\,g'_+\big) \cdot {\textstyle \mathop{\overrightarrow{\prod}}\limits_{i \in I}} \Big( 1 \, + \, \eta_i \, \big(\, {\textstyle \sum_{j \in J\,}} c_{i,j}\,Z_j \big) \!\Big)  \, = \,  \phi_+\big(\,g'_+\big) \cdot {\textstyle \mathop{\overrightarrow{\prod}}\limits_{i \in I}} {\textstyle \prod\limits_{j \in J}} \Big( 1 \, + \, \eta_i \, c_{i,j}\,Z_j \Big)   \hskip5pt
\end{equation}
where in the second product in the rightmost term the order of factors is irrelevant, as they do commute with each other.  Now we must re-order the result according to the factorization of  $ \, G_{{}_{\!\cP''\!}}^{\,\circ}(A) $  of the form  $ \, G_+ \times G_-^< \; $;  in doing this, when we reorder the second factor  $ \; {\textstyle \mathop{\overrightarrow{\prod}}\limits_{i \in I}} {\textstyle \prod\limits_{j \in J}} \big( 1 \, + \, \eta_i \, c_{i,j}\,Z_j \big) \; $  in  \eqref{eq:fact Psit_phi(g')}  above we find   --- via calculations as in the proof of  Proposition \ref{prop:fact-G_Pt + sbgr-gener-G^[2]&G_-}  (which means like those for  Proposition \ref{prop:fact-Liesgrp_G}{\it (b)\/})  ---   an outcome of the form  $ \; \prod\limits_{r=1}^n \!\! {\big( 1 + a_r\,X_r \big)}_{\!{}_{G''_+}} \cdot \mathop{\overrightarrow{\prod}}\limits_{j \in J} \!\! \big( 1 + \alpha_j\,Z_j \big) \; $  where
 \vskip-5pt
   \quad   --- \;{\it (a)} \;  the  $ X_r $'s  belong to  $ \fg''_\zero \, $,
 \vskip3pt
   \quad   --- \;{\it (b)} \;  the  $ a_r $'s  are (even) polynomial expressions in the  $ \eta_i $'s,
 \vskip3pt
   \quad   --- \;{\it (c)} \;  the  $ \alpha_j $'s  are (odd) polynomial expressions in the  $ \eta_i $'s,
 \vskip5pt
\noindent
 Overall, this implies that the map
 \vskip-9pt
  $$  {\textstyle \mathop{\overrightarrow{\prod}}_{i \in I}} \big( 1 + \eta_i \, Y'_i \,\big)  \;\; \mapsto \;  {\textstyle \prod\limits_{r=1}^n} {\big( 1 + a_r\,X_r \big)}_{\!{}_{G''_+}} \cdot {\textstyle \mathop{\overrightarrow{\prod}}_{j \in J}} \big( 1 + \alpha_j\,Z_j \big)  $$
 \vskip-7pt
\noindent
 is a  {\sl map of $ \cA_\zero $--manifolds\/}  from  $ \, {\big( G_{{}_{\!\cP'}}^{\,\circ} \!\big)}_-^<(A) \, $  to  $ \, G_{{}_{\!\cP''\!}}^{\,\circ}(A) \, $.  But  $ \; \phi_+ : G'_+(A) \relbar\joinrel\longrightarrow G''_+(A) \; $  is a map of  $ \cA_\zero $--manifolds  too, by assumptions; this along with the previous remark  {\sl and\/}  \eqref{eq:fact Psit_phi(g')}  above eventually implies that  $ \, {\Psi^\circ\big( (\phi_+ \, , \varphi) \big)}_{\!A} \, $  is a map of  $ \cA_\zero $--manifolds  as claimed.
\end{proof}

\medskip

\subsection{Supergroup functors out of super Harish-Chandra pairs: second recipe}  \label{sgroups-out-sHCp's-2}

\smallskip

   {\ } \;\;   In this subsection we construct a second Lie supergroup functor, denoted  $ \GPe \, $,  which we later prove is a Lie supergroup: this is in fact a ``sibling alternative'' to the functor  $ \GPt $  considered in  \S \ref{sgroups-out-sHCp's-1}  above.  As a matter of notation, recall that for any  $ \, \cP = (G_+ \,,\, \fg) \in \sHCp_\K \, $,  $ \, A \in \Wsalg_\K \, $  and  $ \, \cX \in A_\zero \otimes_\K \fg_\zero \, $  there exists a well-defined  $ \, \exp_{G_+}\!(\cX) \in G_+(A_\zero) \, $;  furthermore, if in particular  $ \, \cX \in A_\uno^{\,2} \otimes_\K \fg_\zero \, $,  then the formal series expansion of  $ \exp(\cX) $  can be actually realized as a finite sum.
  \eject

\noindent
 When no confusion is possible we shall drop the subscript  $ G_+ $  and simply write  $ \, \exp(\cX) \, $  instead.  Similarly, we shall presently introduce new formal elements of type  ``$ \, \exp(\cY) \, $''  with  $ \, \mathcal{Y} \in A_\uno \otimes_K \fg_\uno \, $.  Finally, we extend the built-in  $ G_+ $--action  onto  $ \fg_\uno $  to a (same-name)  $ G_+ $--action  onto  $ \, A_\uno \otimes_K \fg_\uno \, $ by  $ \; \text{\sl Ad}(g)\big( \sum_{s=1}^n \eta_s Y_s \big) \, := \, \sum_{s=1}^n \eta_s \, \text{\sl Ad}(g)(Y_s) \; $  for all  $ \,\; \sum_{s=1}^n \eta_s Y_s := \sum_{s=1}^n \eta_s \otimes Y_s \, \in \, A_\uno \otimes_K \fg_\uno \; $.

\vskip15pt

\begin{definition}  \label{def G_- / G_Pe - gen}
 Let  $ \, \cP := \big( G_+ \, , \fg \big) \in \sHCp_\K \, $  be a super Harish-Chandra pair over  $ \K \, $.  %
 \vskip7pt
   {\it (a)}\,  We introduce a functor  $ \; \GPe : \Wsalg_\K \!\relbar\joinrel\longrightarrow \grp \; $  as follows.  For any Weil superalgebra  $ \, A \in \Wsalg_\K \, $,  we define  $ \, \GPe(A) \, $  as being the group with generators the elements of the set
 \vskip-13pt
  $$  \varGamma_{\!A}  \,\; := \;\,  \big\{\, g_+ \, , \exp(\cY) \,\big|\, g_+ \in G_+(A) \, , \, \mathcal{Y} \in A_\uno \otimes_\K \fg_\uno \,\big\}  \,\; = \;\,  G_+(A) \;{\textstyle \bigcup}\; {\big\{ \exp(\cY) \big\}}_{\cY \in A_\uno \otimes_\K \fg_\uno}  $$
and relations (for  $ \, g_+ \, , g'_+ \, , g''_+ \in G_+(A) \, $,  $ \, \cY \, , \cY' \, , \cY'' \in A_\uno \!\otimes_\K \fg_\uno \, $)
 \vskip-11pt
  $$  \displaylines{
   g'_+ \cdot\, g''_+  \,\; = \;\,  g'_+ \,\cdot_{\!\!\!{}_{G_+}} g''_+  \quad  ,   \qquad  \exp(0)  \,\; = \;\,  1  \quad ,   \qquad  \exp\big(\cY) \cdot g_+  \,\; = \;\,  g_+ \cdot\, \exp\big( \text{\sl Ad}\big(g_+^{-1}\big)(\cY) \big)  \cr
   \exp\!\big( \cY' \big) \cdot \exp\!\big( \cY'' \big)  \,\; = \;\,  \exp\!\Big( P_\zero^{(d_1)}\big( \cY' , \cY'' \big) \Big) \cdot \exp\!\Big(\, \cY' + \cY'' + P_\uno^{(d_1)}\big( \cY' , \cY'' \big) \Big)  }  $$
with  $ \, P_\zero^{(d_1)} \, $  and  $ \, P_\uno^{(d_1)} \, $  as given in  Lemma \ref{lemma:relations-in-G(A)}{\it (h)}.  This yields the functor  $ \GPe $  on objects.
 \vskip3pt
   To define  $ \GPe $  on morphisms, for any morphism  $ \, f : A' \longrightarrow A'' \, $  in  $ \Wsalg_\K $  we define the group morphism  $ \, \GPe(f) : \GPe\big(A'\big) \longrightarrow \GPe\big(A''\big) \, $  as being the unique one given on generators   --- for all  $ \, g'_+ \in G_+\big(A'\big) \, $,  $ \, \eta \in A'_\uno \, $,  $ \, \cY' \in A'_\uno \!\otimes_\K \fg_\uno \, $  ---   by
  $$  \GPe(f)\big(\,g'_+\big)  \, := \,  G_+(f)\big(g'_+\big)  \quad ,  \qquad  \GPe(f)\big( \exp\big(\cY) \big)  \, := \,  \exp\big( f(\cY) \big)  $$
where  $ \; f\big(\cY'\big) \,:= \, \sum_{s=1}^n f\big(\eta'_s\big) Y_s \; $  for all  $ \; \cY' := \sum_{s=1}^n \eta'_s \, Y_s \, \in \, A'_\uno \!\otimes_K \fg_\uno \; $.
 \vskip7pt
   {\it (b)}\,  We define a functor  $ \; \GPem : \Wsalg_\K \!\relbar\joinrel\longrightarrow \set \; $  on any object  $ \, A \in \Wsalg_\K \, $  by
%
%
  $$  \GPem(A)  \; := \;  \Big\langle \exp\big( A_\uno \!\otimes_K \fg_\uno \big) \Big\rangle  \qquad  \big(\, \subseteq \GPe(A) \big)   \qquad  \big(\, \subseteq \GPe(A) \big)  $$
--- the subgroup of  $ \GPe(A) $  generated by  $ \; \exp\big( A_\uno \!\otimes_K \fg_\uno \big) \, := \, {\big\{ \exp(\cY) \big\}}_{\cY \in A_\uno \otimes_K \fg_\uno} $  ---   and on morphisms in the obvious way. By definition,  $ \GPem $  can be thought of as subfunctor of  $ \GPe \, $.
 \hfill   $ \diamondsuit $
\end{definition}

\smallskip

\begin{free text}  \label{constr-tGamma_P-e}
 {\bf Another realization of  $ \GPe \, $.}  Given a super Harish-Chandra pair  $ \, \cP = \big( G_+ \, , \fg \big) \in \sHCp_\K \, $,  we present now another way of realizing the  $ \K $--supergroup  $  \GPe $  introduced in  Definition \ref{def G_- / G_Pe - gen}{\it (a)\/}:  this mimics what we did in  \S \ref{constr-tGamma_P-t},  so we keep the same kind of notation and are a bit shorter.
 \vskip3pt
   For any fixed  $ \, A \in \Wsalg_\K \, $,  we denote by  $ \, G_+^{\langle 2 \rangle}(A) \, $  the subgroup of  $ G_+(A) $  generated by the set
 $ \; \big\{\, \exp(\cX) \;\big|\; \cX \in A_\uno^{[2]} \!\otimes_\K [\, \fg_\uno , \fg_\uno ] \,\big\} \; $.
 Then one easily sees that  $ G_+^{\langle 2 \rangle}(A) $  is  {\sl normal\/}  in  $ G_+(A) \, $
%
%
                                                                 \par
  We consider also the three sets
%
%
  $$  \varGamma_{\!A}^{\,+}  \; := \;  G_+(A)  \;\; ,  \quad
 \varGamma_{\!A}^{\,\langle 2 \rangle} \; := \; G_+^{\langle 2 \rangle}(A)  \;\; ,  \quad
 \varGamma_{\!A}^{\,-}  \; := \;  \varGamma_{\!A}^{\,\langle 2 \rangle} \;{\textstyle \bigcup}\; \exp\!\big( A_\uno \!\otimes_K \fg_\uno \big)  $$
and the five sets of relations   --- for all  $ \, g_+ \, , g'_+ \, , g''_+ \in \varGamma_{\!A}^{\,+} $,  $ \, g_{\langle 2 \rangle} \, , g'_{\langle 2 \rangle} \, , g''_{\langle 2 \rangle} \in \varGamma_{\!A}^{\,\langle 2 \rangle} $,  $ \, \cY , \cY' , \cY'' \in A_\uno \otimes_\K \fg_\uno \, $
%
%
 ---   given by
 \vskip-13pt
  $$  \displaylines{
   \mathcal{R}_{\!A}^+ \, : \quad  g'_+ \cdot\, g''_+  \,\; = \;\,  g'_+ \,\cdot_{\!\!\!{}_{G_+}} g''_+  \cr
   \mathcal{R}_{\!A}^- \, : \,
 \begin{cases}
   \;\;  g'_{\langle 2 \rangle} \cdot\, g''_{\langle 2 \rangle}  \, = \;  g'_{\langle 2 \rangle} \,\cdot_{\!\!\!{}_{G_+^{\langle 2 \rangle}}} g''_{\langle 2 \rangle}  \;\;  ,  \;\quad  \exp(\cY) \cdot g_{\langle 2 \rangle}  \, = \;  g_{\langle 2 \rangle} \cdot \exp\!\big( \text{\sl Ad}\big(g_{\langle 2 \rangle}^{-1}\big)(\cY) \big)  \;\;  ,  \;\quad  \exp(0)  \, = \,  1  \cr
   \qquad \qquad   \exp\!\big( \cY' \big) \, \exp\!\big( \cY'' \big)  \,\; = \;\,  \exp\!\Big( P_\zero^{(d_1)}\big( \cY' , \cY'' \big) \Big) \, \exp\!\Big(\, \cY' + \cY'' + P_\uno^{(d_1)}\big( \cY' , \cY'' \big) \Big)  \cr
   \qquad \qquad \qquad \qquad  \text{with  $ \, P_\zero^{(d_1)} \, $  and  $ \, P_\uno^{(d_1)} \, $  as given in  Lemma \ref{lemma:relations-in-G(A)}{\it (h)}}
 \end{cases}   \cr
   \mathcal{R}_{\!A}^\ltimes \, : \quad  g_{\langle 2 \rangle} \cdot g_+  \, = \;  g_+ \cdot \big(\, g_+^{-1} \,\cdot_{\!\!\!{}_{G_+}} g_{\langle 2 \rangle} \,\cdot_{\!\!\!{}_{G_+}} g_+ \big) \;\; ,  \;\;\;
  \exp(\cY) \cdot g_+  \, = \;  g_+ \cdot \exp\!\big( \text{\sl Ad}\big(g_+^{-1}\big)(\cY) \big)   \phantom{\Big|^{\big|}}  \cr
   \mathcal{R}_{\!A}^{\langle 2 \rangle} \, : \quad  \big( g_{\langle 2 \rangle} \big)_{{\varGamma_A^{\,\langle 2 \rangle}}}  = \;\,  \big( g_{\langle 2 \rangle} \big)_{{\varGamma_A^{\,+}}}   \phantom{\big|^{|}}  \cr
   \mathcal{R}_{\!A}  \; := \;  \mathcal{R}_{\!A}^+ \,{\textstyle \bigcup}\, \mathcal{R}_{\!A}^- \,{\textstyle \bigcup}\, \mathcal{R}_{\!A}^\ltimes \,{\textstyle \bigcup}\, \mathcal{R}_{\!A}^{\langle 2 \rangle}   \phantom{\big|^{\big|}}  }  $$
%
%
 Then we  {\sl define\/}  a new group, by generators and relations, namely  $ \; \GPem(A) := \big\langle\, \varGamma_{\!A}^- \,\big\rangle \Big/ \big(\, \mathcal{R}_{\!A}^- \,\big) \;\, $.
 \vskip5pt
   Directly from  Definition \ref{def G_- / G_Pe - gen}  it follows that
\begin{equation}   \label{eq:1st-present-G_Pe}
  \hskip3pt   \GPe(A)  \,\; \cong \;\,  \big\langle\, \varGamma_{\!A}^+ \,{\textstyle \bigcup}\; \varGamma_{\!A}^- \,\big\rangle \Big/ \big( \mathcal{R}_{\!A} \big)  \,\; = \;\,  \big\langle\, \varGamma_{\!A}^+ \,{\textstyle \bigcup}\; \varGamma_{\!A}^- \,\big\rangle \bigg/ \Big(\, \mathcal{R}_{\!A}^+ \,{\textstyle \bigcup}\; \mathcal{R}_{\!A}^- \,{\textstyle \bigcup}\; \mathcal{R}_{\!A}^\ltimes \,{\textstyle \bigcup}\; \mathcal{R}_{\!A}^{\langle 2 \rangle} \,\Big)
\end{equation}
%
%
 but we can also achieve
 the presentation  \eqref{eq:1st-present-G_Pe}  in a series of intermediate steps, just adding one set of relations at a time.  As a first step, we have
\begin{equation}   \label{eq:2nd-present-G_Pe}
  \big\langle\, \varGamma_{\!A}^+ \,{\textstyle \bigcup}\; \varGamma_{\!A}^- \,\big\rangle \Big/ \big(\, \mathcal{R}_{\!A}^+ \,{\textstyle \bigcup}\; \mathcal{R}_{\!A}^- \,\big)  \,\; = \;\,  \big\langle\, \varGamma_{\!A}^+ \,\big\rangle \Big/ \big(\, \mathcal{R}_{\!A}^+ \,\big)  \,\; * \;\, \big\langle\, \varGamma_{\!A}^- \,\big\rangle \Big/ \big(\, \mathcal{R}_{\!A}^- \,\big)  \,\; \cong \;\,  G_+(A) \,*\, \GPem(A)
\end{equation}
where  $ \; G_+(A) \, \cong \, \big\langle\, \varGamma_{\!A}^+ \,\big\rangle \Big/ \big(\, \mathcal{R}_{\!A}^+ \,\big) \; $  by construction and  $ \, * \, $  denotes the free product (of two groups).
 \vskip5pt
  For the next two steps we can follow two different lines of action.  The first one gives
  $$  \big\langle\, \varGamma_{\!A}^+ \,{\textstyle \bigcup}\; \varGamma_{\!A}^- \,\big\rangle \Big/ \big(\, \mathcal{R}_{\!A}^+ \,{\textstyle \bigcup}\; \mathcal{R}_{\!A}^- \,{\textstyle \bigcup}\; \mathcal{R}_{\!A}^\ltimes \,\big)  \;\; \cong \;\;  \Big(\, G_+(A) * \GPem(A) \Big) \bigg/ \Big(\, \overline{\mathcal{R}_{\!A}^\ltimes} \,\Big)  \;\; \cong \;\;  G_+(A) \ltimes \GPem(A)  $$
because of  \eqref{eq:Double-Quot-Thm}  and  \eqref{eq:2nd-present-G_Pe}  together, where  $ \, G_+(A) \ltimes \GPem(A) \, $  is the semidirect product of  $ \, G_+(A) \, $  with  $ \, \GPem(A) \, $.
%
%
 Then
  $$  \displaylines{
   \qquad   \big\langle\, \varGamma_{\!A}^+ \,{\textstyle \bigcup}\; \varGamma_{\!A}^- \,\big\rangle \Big/ \big(\, \mathcal{R}_{\!A} \big)  \;\; \cong \;\;  \big\langle\, \varGamma_{\!A}^+ \,{\textstyle \bigcup}\; \Gamma_{\!A}^- \,\big\rangle \bigg/ \Big(\, \mathcal{R}_{\!A}^+ \,{\textstyle \bigcup}\; \mathcal{R}_{\!A}^- \,{\textstyle \bigcup}\; \mathcal{R}_{\!A}^\ltimes \,{\textstyle \bigcup}\; \mathcal{R}_{\!A}^{\langle 2 \rangle} \,\Big)  \;\; \cong   \hfill  \cr
   \hfill   \cong \;\;  \Big( G_+(A) \ltimes \GPtm(A) \Big) \bigg/ \Big(\; \overline{\mathcal{R}_{\!A}^{\langle 2 \rangle}} \;\Big)  \;\; \cong \;\;  \Big( G_+(A) \ltimes \GPtm(A) \Big) \bigg/ N_{\langle 2 \rangle}(A)   \qquad  }  $$
where  $ \, N_{\langle 2 \rangle}(A) \, $  is the normal subgroup of  $ \; G_+(A) \, \ltimes \, \GPem(A) \; $  generated by
 $ \, {\Big\{\! \big(\, g_{\langle 2 \rangle} \, , g_{\langle 2 \rangle}^{-1} \,\big) \!\Big\}}_{g_{\langle 2 \rangle} \in \varGamma_A^{\langle 2 \rangle}} \; $.
 This together with  \eqref{eq:1st-present-G_Pe}  eventually yields
  $$  \GPe(A)  \;\; = \;\;  \Big( G_+(A) \ltimes G_-(A) \Big) \bigg/ N_{\langle 2 \rangle}(A)  $$
 \vskip3pt
   On the other hand,  \eqref{eq:Double-Quot-Thm}  and  \eqref{eq:2nd-present-G_Pe}  jointly give
  $$  \big\langle\, \varGamma_{\!A}^+ \,{\textstyle \bigcup}\; \varGamma_{\!A}^- \,\big\rangle \bigg/ \Big(\, \mathcal{R}_{\!A}^+ \,{\textstyle \bigcup}\; \mathcal{R}_{\!A}^- \,{\textstyle \bigcup}\; \mathcal{R}_{\!A}^{\langle 2 \rangle} \,\Big)  \;\; \cong \;\;  \Big( G_+(A) \,*\, \GPem(A) \Big) \bigg/ \Big(\; \overline{\mathcal{R}_{\!A}^{\langle 2 \rangle}} \,\Big)  \;\; \cong \;\;  G_+(A) \!\!\!\mathop{*}_{G_+^{\langle 2 \rangle}(A)}\!\!\! \GPem(A)  $$
with  $ \, G_+(A) \!\!\mathop{*}\limits_{G_+^{\langle 2 \rangle}(A)}\!\! \GPem(A) \, $  the amalgamated product of  $ \, G_+(A) \, $  and  $ \, \GPem(A) \, $  over  $ \, G_+^{\langle 2 \rangle}(A) \, $.
%
%
 Then
  $$  \displaylines{
   \quad   \big\langle\, \varGamma_{\!A}^+ \,{\textstyle \bigcup}\; \varGamma_{\!A}^- \,\big\rangle \Big/ \big(\, \mathcal{R}_{\!A} \big)  \,\;\; \cong \;\;\,  \big\langle\, \varGamma_{\!A}^+ \,{\textstyle \bigcup}\; \varGamma_{\!A}^- \,\big\rangle \bigg/ \Big(\, \mathcal{R}_{\!A}^+ \,{\textstyle \bigcup}\; \mathcal{R}_{\!A}^- \,{\textstyle \bigcup}\; \mathcal{R}_{\!A}^{\langle 2 \rangle} \,{\textstyle \bigcup}\; \mathcal{R}_{\!A}^\ltimes \,\Big)  \,\;\; \cong   \hfill  \cr
   \hfill   \cong \;\;\,  \Big( G_+(A) \!\mathop{*}\limits_{G_+^{\langle 2 \rangle}(A)}\! \GPem(A) \Big) \bigg/ \Big(\, \overline{\mathcal{R}_{\!A}^\ltimes} \;\Big)  \,\;\; \cong \;\;\,  \Big( G_+(A) \!\mathop{*}\limits_{G_+^{\langle 2 \rangle}(A)}\! \GPem(A) \Big) \bigg/ N_\ltimes(A)   \quad  }  $$
where  $ \, N_\ltimes(A) \, $  is the normal subgroup of  $ \; G_+(A) \!\mathop{*}\limits_{G_+^{\langle 2 \rangle}(A)}\! \GPem(A) \; $  generated by
 \vskip5pt
   \centerline{ $ {\Big\{\, g_+ \exp(\cY) \, g_+^{-1} {\exp\!\big( \text{\sl Ad}(g_+)(\cY) \big)}^{-1} \Big\}}_{\cY \in A_\uno \otimes_\K \fg_\uno}^{g_+ \in G_+(A)}  \;\;{\textstyle \bigcup} \;\;  {\Big\{\, g_+ \, g_{\langle 2 \rangle} \, g_+ {\big( g_+ \cdot_{\!\!\!{}_{G_+}}\!\! g_{\langle 2 \rangle} \cdot_{\!\!\!{}_{G_+}}\!\! g_+ \big)}^{-1} \Big\}}_{g_+ \in G_+(A)}^{g_{\langle 2 \rangle} \in \varGamma_A^{\langle 2 \rangle}} $ }
   \eject
%
 \vskip7pt

   All this along with  \eqref{eq:1st-present-G_Pe}  eventually gives
  $$  \GPe(A)  \,\;\; = \;\;\,  \Big( G_+(A) \!\mathop{*}\limits_{G_+^{\langle 2 \rangle}(A)}\! \GPem(A) \Big) \bigg/ N_\ltimes(A)  $$
for all  $ \, A \in \Wsalg_\K \, $.  In functorial terms this yields
  $$  \GPe \; = \; \Big( G_+ \ltimes \GPem \Big) \bigg/ N_{\langle 2 \rangle}  \!\qquad  \text{and}  \!\qquad \GPe \; = \; \Big( G_+ \!\mathop{*}\limits_{G_+^{\langle 2 \rangle}}\! \GPem \Big) \bigg/ N_\ltimes  \!\quad ,  \,\qquad
   \text{or}  \quad  \GPe  = \;  G_+ \!\mathop{\ltimes}\limits_{G_+^{\langle 2 \rangle}}\! \GPem  $$
 so that
%
%
 $ \, \GPe $  is the ``amalgamate semidirect product'' of  $ G_+ $  and  $ \GPem $  over
%
%
 $ G_+^{\langle 2 \rangle} \; $.
\end{free text}

\medskip

\subsection{The supergroup functor  $ \GPe $  as a Lie supergroup}  \label{subsec:struct-G_Pe}

\smallskip

   {\ } \;\;  We aim now to proving that the functor  $ \GPe $  is actually a Lie supergroup.  Keeping notations as before, we follow closely the same line of arguing as in  \S \ref{subsec:struct-G_Pt},  so we can be somewhat shorter.

\vskip3pt

   We begin with the following ``factorization result'' for  $ \GPe \, $:

\medskip

\begin{proposition}  \label{prop:fact-G_Pe + sbgr-gener-G^<2>&G_-}
   Let  $ \, \cP := \big( G_+ \, , \fg \big) \in \sHCp_\K \, $  be a super Harish-Chandra pair over  $ \K \, $.  Then:
 \vskip5pt
   (a)\,  there exist group-theoretic factorizations
 \vskip-7pt
  $$  \GPem(A)  \; = \;  G_+^{\langle 2 \rangle}(A) \cdot \exp\!\big( A_\uno \!\otimes_K \fg_\uno \big)  \quad ,  \qquad
  \GPem(A) \; = \; \exp\!\big( A_\uno \!\otimes_K \fg_\uno \big) \cdot G_+^{\langle 2 \rangle}(A)  $$
 \vskip1pt
   (b)\,  there exist group-theoretic factorizations
 \vskip-7pt
  $$  \GPe(A) \; = \; G_+(A) \cdot \exp\!\big( A_\uno \!\otimes_K \fg_\uno \big)  \quad ,  \qquad  \GPe(A) \; = \; \exp\!\big( A_\uno \!\otimes_K \fg_\uno \big) \cdot G_+(A)  $$
\end{proposition}

\begin{proof}
 Claim  {\it(a)\/}  is the exact analogue of  \eqref{eq:factor-2_G^-},  and claim  {\it(b)\/}  the analogue of  \eqref{eq:factor-2_G},  in  Proposition \ref{prop:fact-Liesgrp_G}{\it (c)}.   In both cases the proof (up to details) is the same, so we can skip it.
\end{proof}

\medskip

\begin{free text}  \label{G_Pe-module V}
 {\bf The representation  $ \, \GPe \!\relbar\joinrel\longrightarrow \rGL(V) \, $.}  Let  $ \, \cP = \big( G_+ \, , \fg \big) \in \sHCp_\K \, $  be any given super Harish-Candra pair over  $ \K \, $.  Just like we did for $ \GPt $  in  \S \ref{G_Pt-module V},  we need for  $ \GPe $  as well a suitable linear representation  $ V $,  which we now define along the same lines, keeping the same notation.
 \vskip5pt
   Let  $ U(\fg) $  be the universal enveloping algebra of  $ \fg \, $,  and let
  $$  V  \; := \;  \text{\sl Ind}_{\fg_\zero}^{\,\fg}(\Uuno\,)  \; = \;  U(\fg) \! \mathop{\otimes}\limits_{U(\fg_\zero)} \! \Uuno  \; \cong \;  {\textstyle \bigwedge}\, \fg_\uno \mathop{\otimes}\limits_\K \Uuno  \,\; \cong \;  {\textstyle \bigwedge}\, \fg_\uno  $$
be the  $ \fg $--representation  induced from the trivial representation  $ \Uuno $  of  $ \fg_\zero $  (as in  \S \ref{G_Pt-module V}).  As we saw in  \S \ref{G_Pt-module V},  there exists a morphism  $ \, (\boldsymbol{r}_{\!+},\rho) : (G_+\,\fg) \longrightarrow \big( \rGL(V),\rgl(V)\big) \, $  of super Harish-Chandra pairs  that gives to  $ V $  a structure  $ (G_+,\fg) $--module;  by  $ \rho $  again we denote also the representation map  $ \, \rho : U(\fg) \longrightarrow \End_{\,\K}(V) \, $  giving the  $ U(\fg) $--module  structure on  $ V \, $,  and similarly   --- in functorial way ---   for the representation maps of  $ {\big( A \otimes_\K \fg \big)}_\zero $  and  $ {\big( A \otimes_\K U(\fg) \big)}_\zero $  onto  $ {\big( A \otimes_\K V \big)}_\zero \, $.
 \vskip3pt
   We will now show that the  $ (G_+,\fg) $--module  structure on  $ V $  can be naturally ``integrated'' to a  $ \GPe $--module  structure too.
\end{free text}

\smallskip

\begin{proposition}  \label{G_Pe-action on V}
 Retain notation as above for the  $ (G_+,\fg) $--module  $ V $.  There exists a unique structure of (left)  $ \GPe $--module  onto  $ V $  which satisfies the following conditions: for every  $ \, A \in \Wsalg_\K \, $,  the representation map  $ \, \boldsymbol{r}_{{}_{\!\cP,A}}^{\,e} \! : \GPe(A) \longrightarrow \rGL(V)(A) \, $  is given on generators of\/  $ \GPe(A) $   --- namely, all  $ \, g_+ \in G_+(A) \, $  and  $ \, \exp(\cY) \, $  for  $ \, i \in I \, $,  $ \, \cY \in A_\uno \!\otimes_\K \fg_\uno $  ---   by
  $$  \boldsymbol{r}_{{}_{\!\cP,A}}^{\,e}(g_+) \, := \, \boldsymbol{r}_{\!+}(g_+) \quad ,  \qquad  \boldsymbol{r}_{{}_{\!\cP,A}}^{\,e}\big(\exp(\cY)\big) \, := \, \rho\big(\exp(\cY)\big) \, = \, \exp\!\big(\rho(\cY)\big)  $$
that is,  $ \; g_+.v \, := \, \boldsymbol{r}_{\!+}(g_+)(v) \; $  and  $ \; \exp(\cY).v \, := \, \exp\!\big(\rho(\cY)\big)(v) \; $   --- with  $ \exp\!\big(\rho(\cY)\big) $  being a finite sum ---   for all  $ \, v \in V(A) \, $.  Overall, this yields a morphism of  $ \K $--supergroup  functors  $ \; \boldsymbol{r}_{{}_{\!\cP}}^{\,e} \! : \GPe \!\longrightarrow \rGL(V) \; $.
\end{proposition}

\begin{proof}
 This follows from the whole construction: indeed, by definition of representation for the super Harish-Chandra pair  $ \cP \, $,  the operators  $ \boldsymbol{r}_{{}_{\!\cP,A}}^{\,e}(g_+) $  and  $ \boldsymbol{r}_{{}_{\!\cP,A}}^{\,e}\!\big(\exp(\cY)\big) $  on  $ V $
%
%
 satisfy all relations which, by  Definition \ref{def G_- / G_Pe - gen}{\it (a)},  are satisfied by the generators of  $ \GPe(A) \, $.
%
%
 So they define a group morphism  $ \, \boldsymbol{r}_{{}_{\!\cP,A}}^{\,e} \! : \GPe(A) \longrightarrow \rGL(V)(A) \, $,  functorial in  $ A $  by construction, whence the claim.
\end{proof}

\medskip

   We are now ready to state the ``global splitting theorem'' for  $ \GPe $  (cf.\  Theorem \ref{thm:dir-prod-fact-G - gen}):

\medskip

\begin{proposition}  \label{dir-prod-fact-G_Pe - gen}  {\ }
 \vskip5pt
   {\it (a)} \,  The restriction of group multiplication in  $ \GPe $  provides isomorphisms of (set-valued) functors
 \vskip-15pt
  $$  \displaylines{
   G_+ \times \exp\!\big( {(-)}_\uno \!\otimes_\K \fg_\uno \big)  \; \cong \;  \GPe  \quad ,   \qquad  \exp\!\big( {(-)}_\uno \!\otimes_\K \fg_\uno \big) \times G_+  \; \cong \;  \GPe  \cr
   G_+^{\langle 2 \rangle} \times \exp\!\big( {(-)}_\uno \!\otimes_\K \fg_\uno \big)  \; \cong \;  \GPem  \quad ,   \qquad  \exp\!\big( {(-)}_\uno \!\otimes_\K \fg_\uno \big) \times G_+^{\langle 2 \rangle}  \; \cong \;  \GPem  }  $$
 \vskip3pt
   {\it (b)} \,  For any fixed  $ \K $--basis  $ {\big\{ Y_i \big\}}_{i \in I} $  of  $ \fg_\uno \, $,  there exists an isomorphism of (set-valued) functors  $ \; \mathbb{A}_\K^{0|d_1} \! \cong \exp\!\big( {(-)}_\uno \!\otimes_\K \fg_\uno \big) \; $,  with  $ \, d_1 := \text{\it dim}_{\,\K}\big(\fg_\uno\big) = |I| \, $,  given on  $ A $--points,  for  $ \, A \in \Wsalg_\K \, $,  by
 \vskip-3pt
  $$  \mathbb{A}_\K^{0|d_1}\!(A)  \; = \;  A_\uno^{\,d_1} \relbar\joinrel\relbar\joinrel\longrightarrow \exp\!\big( A_\uno \!\otimes_\K \fg_\uno \big) \;\; ,  \quad  {\big(\eta_i\big)}_{i \in I} \mapsto \, \exp\!\big(\, {\textstyle \sum_{i \in I}} \, \eta_i \, Y_i \big)  $$
 \vskip5pt
   {\it (c)} \,  There exist isomorphisms of (set-valued) functors
 \vskip-15pt
  $$  G_+ \times \mathbb{A}_\K^{0|d_1}  \cong \;  \GPe  \,\; ,  \quad  G_+^{\langle 2 \rangle} \times \mathbb{A}_\K^{0|d_1}  \cong \;  \GPem  \,\; ,  \!\qquad  \text{and}  \!\qquad
  \mathbb{A}_\K^{0|d_1} \!\times G_+  \; \cong \;  \GPe  \,\; ,  \quad  \mathbb{A}_\K^{0|d_1} \!\times G_+^{\langle 2 \rangle}  \; \cong \;  \GPem  $$
given on  $ A $--points   --- for every  $ \, A \in \Wsalg_\K \, $  ---   respectively by
 \vskip-11pt
  $$  \big(\, g_+ \, , \, {\big(\eta_i\big)}_{i \in I} \big) \, \mapsto \, g_+ \cdot \exp\!\big(\, {\textstyle \sum_{i \in I}} \, \eta_i \, Y_i \big)  \qquad  \text{and}  \qquad  \big( {\big(\eta_i\big)}_{i \in I} \, , \, g_+ \big) \, \mapsto \, \exp\!\big(\, {\textstyle \sum_{i \in I}} \, \eta_i \, Y_i \big) \cdot g_+  $$
\end{proposition}

\begin{proof}
 Like for  Theorem \ref{dir-prod-fact-G_Pt - gen},  the proof is very close to (half of) that of  Theorem \ref{thm:dir-prod-fact-G - gen},  with a few, technical differences that involve the representation  $ V $  of  \S \ref{G_Pe-module V};  the necessary changes can easily be dealt with much like in the proof of  Theorem \ref{dir-prod-fact-G_Pt - gen}.  Details are left to the reader.
\end{proof}

\medskip

\begin{free text}  \label{Lie sgroup on G_Pe}
 {\bf The Lie supergroup structure of  $ \GPe \, $.}  For any given  $ \, \cP \in \sHCp_\K \, $  and  $ \, A \in \Wsalg_\K \, $,  by  Proposition \ref{dir-prod-fact-G_Pe - gen}{\it (c)},  we have a particular bijection for the corresponding group  $ \GPe(A) \, $,  that is
\begin{equation}  \label{eq:bij G+_x_A- >-->> G_Pe}
  \phi_A^{\,e} : \, G_+(A) \times \mathbb{A}_\K^{0|d_1\!}(A) \,\;{\buildrel \cong \over {\lhook\joinrel\relbar\joinrel\relbar\joinrel\relbar\joinrel\relbar\joinrel\relbar\joinrel\twoheadrightarrow}}\;\, \GPe(A)
\end{equation}
whose restriction to  $ \, G_+(A) \, $,  identified with  $ \; G_+(A) \times \big\{{(0)}_{i \in I}\big\} \, \subseteq G_+(A) \times \mathbb{A}_\K^{0|d_1}(A) \; $,  \,is the identity    --- onto the natural copy of  $ \, G_+(A) \, $  inside  $ \GPe(A) \, $.
                                                                           \par
   Now,  $ G_+(A) $  is by definition an  $ A_\zero $--manifold  (cf.\ \S \ref{Shvarts_embedding}),  of the same type (real smooth, etc.) as the super Harish-Chandra pair  $ \, \cP := (G_+ , \fg ) \, $;  on the other hand,  $ \mathbb{A}_\K^{0|d_1\!}(A) $  carries canonical structures of  $ A_\zero $--manifold  of any type
%
%
 (real smooth, etc.),
 then also of the type of  $ G_+(A) \, $.  So there exists also a canonical ``product structure'' of  $ A_\zero $--manifold   --- of the same type of  $ G_+(A) \, $,  i.e.\ of  $ \cP \, $  ---   onto
%
%
 $ \, G_+(A) \times \mathbb{A}_\K^{0|d_1}(A) \, $.  Then we push-forward this canonical  $ A_\zero $--manifold  structure of  $ \, G_+(A) \times \mathbb{A}_\K^{0|d_1}(A) \, $   --- through the bijection  $ \phi_A^{\,e} $  in  \eqref{eq:bij G+_x_A- >-->> G_Pe}  ---   onto  $ \GPe(A) \, $,  which then is turned into an  $ A_\zero $--manifold  on its own, still of the same type as  $ \cP \, $.
 \vskip1pt
   Using the above mentioned structure of  $ A_\zero $--manifold  on  $ \GPe(A) $  for each  $ \, A \in \Wsalg_\K \, $,  given a morphism  $ \, f : A' \longrightarrow A'' \, $  in  $ \Wsalg_\K $  one can easily check that the corresponding group morphism  $ \, \GPe(f) : \GPe(A') \longrightarrow \GPe(A'') \, $  is a morphism of  $ A'_\zero $--manifolds,  hence it is a morphism of  $ \cA_\zero $--manifolds  (cf.\ \S \ref{Shvarts_embedding}).  Thus  {\it  $ \GPe $  can also be seen as a functor from Weil  $ \K $--superalgebras  to  $ \cA_\zero $--manifolds  (real smooth, real analytic or complex holomorphic as  $ \cP $  is)}.
 \vskip1pt
   Finally, looking again at the commutation relations in  $ \GPe(A) $  one finds that the group multiplication and the inverse map are ``regular'' (i.e., ``real smooth'', ``real analytic'' or  ``complex holomorphic'', according to the type of  $ \cP \, $);  namely, this is proved via calculations like those used to prove  Proposition \ref{prop:fact-G_Pe + sbgr-gener-G^<2>&G_-}{\it (b)}   --- that we skipped ---   or to prove  Proposition \ref{prop:fact-Liesgrp_G}{\it (c)}.  Thus they are morphisms of  $ \cA_\zero $--manifolds,  so  {\it  $ \GPe(A) $  is a group element among  $ \cA_\zero $--manifolds,  i.e.\ it is a Lie $ \cA_\zero $--group\/};  therefore (cf.\  Proposition \ref{funct-char_Lie-supergrps}),  overall  {\it the functor  $ \GPe $  {\sl is a Lie supergroup}   --- of real smooth, real analytic or complex holomorphic type as  $ \cP $  is}.
 \vskip1pt
   Eventually, the outcome of this discussion   --- the key result of the present section ---   is the following statement, which provides a second ``backward functor'' from sHCp's to Lie supergroups:
\end{free text}

\vskip11pt

\begin{theorem}  \label{thm_sHCp's-->Lsgrps - 2}
 The recipe in  Definition \ref{def G_- / G_Pe - gen}  provides functors
%
%
 \vskip5pt
   \centerline{ $  \Psi^e : \sHCp_\R^\infty \longrightarrow \Lsgrp_\R^\infty  \quad ,  \!\!\!\qquad  \Psi^e : \sHCp_\R^\omega \longrightarrow \Lsgrp_\R^\omega  \quad ,  \!\!\!\qquad  \Psi^e : \sHCp_\C^\omega \longrightarrow \Lsgrp_\C^\omega  $ }
 \vskip5pt
\noindent
 given on objects by  $ \; \cP \mapsto \Psi^e(\cP) := \GPe \; $  and on morphisms by
 \vskip5pt
   \centerline{ $  \Big(\, (\phi_+ \, , \varphi) : \, \cP' \relbar\joinrel\longrightarrow \cP'' \;\Big)  \quad \mapsto \quad  \Big(\; \Psi^e\big( (\phi_+ \, , \varphi) \big) : \, \Psi^e\big(\cP'\big) := G_{{}_{\!\cP'}}^{\,e} \!\relbar\joinrel\longrightarrow G_{{}_{\cP''}}^{\,e} \! =: \Psi^\circ\big(\cP''\big) \,\Big)  $ }
 \vskip5pt
\noindent
 where the functor morphism  $ \;  \Psi^\circ\big( (\phi_+ \, , \varphi) \big) : \Psi^\circ\big(\cP'\big) := G_{{}_{\!\cP'}}^{\,e} \!\relbar\joinrel\longrightarrow G_{{}_{\cP''}}^{\,e} \! =: \Psi^e\big(\cP''\big) \; $  is defined by
\begin{equation}  \label{eq:def Psi on morphisms - 2}
  {\Psi^e\big( (\phi_+ \, , \varphi) \big)}_{\!A}  \quad :  \qquad  g'_+ \, \mapsto \, \phi_+\big(\,g'_+\big) \;\; ,
 \quad  \exp\big(\cY'\,\big) \, \mapsto \, \exp\big(\varphi\big(\cY'\,\big)\big)   \qquad \qquad
\end{equation}
for all  $ \, A \in \Wsalg_\K \, $,  $ \, g'_+ \in G'_+(A) \, $,  $ \, \cY' \in A_\uno \!\otimes_\K \fg'_\uno \, $,  with  $ \, \cP' = \big( G'_+ \, , \fg' \,\big) \, $  and  $ \, \cP'' = \big( G''_+ \, , \fg'' \,\big) \; $.
\end{theorem}

\begin{proof}
 We are only left to prove that the given definition for  $ \Psi^e\big( (\phi_+ \, , \varphi) \big) $  really makes sense: indeed, all the rest is proved by our previous analysis (in particular, by \S \ref{Lie sgroup on G_Pe}  above) or is trivial.  Now,  \eqref{eq:def Psi on morphisms - 2}  above fixes the values of our would-be morphism  $ {\Psi^e\big( (\phi_+ \, , \varphi) \big)}_{\!A} $  {\sl on generators\/}  of  $ \, \Psi^e\big(\cP'\big)(A) := G_{{}_{\!\cP'}}^{\,e}(A) \, $:  a direct check shows that all defining  {\sl relations\/}  among such generators   --- in  $ G_{{}_{\!\cP'}}^{\,e}(A) $   ---   are mapped to corresponding (defining) relations in  $ G_{{}_{\!\cP''}}^{\,e}(A) \, $,  thus yielding a well-defined group morphism as required.
 In particular, this follows from the special properties of the Lie polynomials  $ P_\zero^{(d_1)} $  and  $ P_\uno^{(d_1)} $   --- for  $ \, d_1 \in \big\{\, d'_1 := \text{\sl dim}\big(\fg'_\uno\big) \, , \, d''_1 := \text{\sl dim}\big(\fg''_\uno\big) \,\big\} \, $  with possibly  $ \, d'_1 \not= d''_2 \, $  ---   and of their factors/summands  $ T_\zero^{(s)} $  and  $ T_\uno^{(s)} $  mentioned in  Lemma \ref{lemma:relations-in-G(A)}{\it (h)}.

  However, we must still show that  {\sl each such  $ {\Psi^e\big( (\phi_+ \, , \varphi) \big)}_{\!A} $  is a morphism of  $ \cA_\zero $--manifolds  too}.
%
%
                                                              \par
   Both groups  $ G_{{}_{\!\cP'}}^{\,e}(A) $  and  $ G_{{}_{\!\cP''}}^{\,e}(A) $  admit factorizations of type  $ \; G_+ \! \times \exp\!\big( A_\uno \!\otimes_\K \fg_\uno \big) \, $,  \, as in  Proposition \ref{dir-prod-fact-G_Pe - gen}{\it (a)\/}:  in particular, any  $ \, g' \in G_{{}_{\!\cP'}}^{\,e}(A) \, $  uniquely factors into  $ \; g' \, = \, g'_+ \cdot\, \exp\!\big( \cY' \,\big) \; $;  \, then  $ {\Psi^e\big( (\phi_+ \, , \varphi) \big)}_{\!A} \, $,  being a group morphism, maps  $ g' $  onto
\begin{equation}  \label{eq:fact Psie_phi(g')}
  {\Psi^e\big( (\phi_+ \, , \varphi) \big)}_{\!A} \big(\,g'\,\big)  \,\; = \;\,  {\Psi^e\big( (\phi_+ \, , \varphi) \big)}_{\!A} \big(\, g'_+ \cdot\, \exp\!\big(\cY'\,\big) \big)  \,\; = \;\,  \phi_+\big(\,g'_+\big) \cdot \exp\!\big( \varphi_{{}_A}\!\big(\cY'\,\big) \big)
\end{equation}
where  $ \, \varphi_{{}_A\!}\big(\cY'\,\big) \, $  stands for the image of  $ \cY' $  for the map  $ \; \varphi_{{}_A} \! : A_\uno \!\otimes_\K \fg'_\uno \!\relbar\joinrel\longrightarrow A_\uno \!\otimes_\K \fg''_\uno \; $  obtained by scalar extension from  $ \; \varphi{\big|}_{\fg'_\uno} : \fg'_\uno \!\relbar\joinrel\longrightarrow \fg''_\uno \; $.  As both  $ \, \phi_+ \, $  and  $ \; \exp \circ \, \varphi_{{}_A\!} \circ {\big( \exp{\big|}_{A_\uno \otimes\, \fg_\uno} \big)}^{-1} \, $  are {\sl maps of $ \cA_\zero $--manifolds},  from  \eqref{eq:fact Psie_phi(g')}  we deduce that  $ \, {\Psi^e\big( (\phi_+ \, , \varphi) \big)}_{\!A} \, $  is a map of  $ \cA_\zero $--manifolds  too, q.e.d.
\end{proof}

\bigskip

\section{The new equivalences  $ \, \sHCp \cong \Lsgrp \, $.}  \label{sec-equivalences}

\smallskip

   {\ } \;\;   In  Section \ref{sHCp's->Lsgroups}  we introduced two functors, denoted  $ \Psi^\circ $  and  $ \Psi^e \, $,  from sHCp's (of any type: real smooth, real analytic or complex holomorphic) to Lie supergroups (of the same type); in particular, they go the other way round with respect to the ``natural'' functor  $ \Phi $  considered in  Section \ref{sgroups-to-sHCp's}.  We shall now show that both these two functors are quasi-inverse to  $ \Phi $,  so that (together with  $ \Phi \, $)  they provide equivalences between the categories of super Harish-Chandra pairs and of Lie supergroups.

\medskip

\subsection{The functor  $ \Psi $  as a quasi-inverse to  $ \Phi \, $:  proof of  $ \,\; \Phi \circ \Psi \, \cong \, \text{\sl id}_{\,\sHCp} $}  \label{Psi-inv-Phi - 1}

\smallskip

   {\ } \;\;   In this subsection we cope with the first half of our task, namely proving that  $ \; \Phi \circ \Psi \cong \text{\sl id}_{\sHCp} \; $  for  $ \, \Psi \in \big\{\, \Psi^\circ , \Psi^e \,\big\} \, $;  indeed, this is the easy part of our task.

\vskip11pt

\begin{proposition}  \label{Phi_inverse_Psi}
 Let  $ \, \Phi $  and  $ \, \Psi \in \big\{\, \Psi^\circ , \Psi^e \,\big\} \, $  be as in  Theorems \ref{thm_Lsgrps-->sHCp's}, \ref{thm_sHCp's-->Lsgrps - 1}  and  \ref{thm_sHCp's-->Lsgrps - 2}.  Then  $ \,\; \Phi \circ \Psi^\circ \cong \, \text{\sl id}_{\,\sHCp} \; $,  \; where  ``$ \, \sHCp \, $''  must be read as either  $ \, \sHCp_\R^\infty \, $,  or  $ \, \sHCp_\R^\omega \, $,  or  $ \, \sHCp_\C^\omega \, $,  and  $ \, \Phi $  and  $ \, \Psi $  must be taken as working onto the corresponding types of Lie supergroups or sHCp's.
\end{proposition}

\begin{proof}
 This follows almost directly from definitions.  We begin with the case of  $ \Psi^\circ \, $.  Let us consider a super Harish-Chandra pair (of either smooth, or analytic, or holomorphic type)  $ \, \cP := \big( G_+ \, , \, \fg \,\big) \, $,  and let  $ \, \GPt = \Psi^\circ(\cP) \, $,  so that  $ \, \big( \Phi \circ \Psi^\circ \big)(\cP) = \Phi\big(\GPt\big) = \big( {\big( \GPt \big)}_\zero \, , \, \Lie\big( \GPt \big) \big) \; $.  Then, by the very construction of  $ \GPt $  we have  $ \, {\big( \GPt \big)}_\zero = G_+ \;\, $.  In addition, from the definition of  $ \Lie\,(G) $  in  Definition \ref{tangent_Lie_superalgebra}  and exploiting the factorization  $ \; \GPt \, \cong \, G_+ \times G_-^< \; $  from Proposition \ref{dir-prod-fact-G_Pt - gen},  one finds
%
%
 that
  $$  \Lie\,\big( \GPt \big)  \;\; = \;\;  \Lie\,\big(\, G_+ \times G_-^{\scriptscriptstyle \,<} \,\big)  \;\; = \;\;  \Lie\,\big( G_+ \big) \oplus T_e\big( G_-^{\scriptscriptstyle \,<} \,\big)  \;\; = \;\;  \cL_{\fg_\zero} \oplus \cL_{\fg_\uno}  \;\; = \;\;  \cL_\fg  $$
(cf.\ \S \ref{Lie-salg_funct}  and  Proposition \ref{Lie-funct_Lie(G)}  for notation), which means   --- identifying  $ \cL_\fg $  with  $ \fg $  as usual ---   simply  $ \, \Lie\,\big( \GPt \big) = \fg \, $,  this being an identification as Lie  $ \K $--superalgebras.  Therefore, in the end
  $$  \big( \Phi \circ \Psi^\circ \big)(\cP)  \,\; = \;\,  \Phi\big(\GPt\big)  \,\; = \;\,  \big( {\big( \GPt \big)}_\zero \, , \Lie\big( \GPt \big) \big)  \,\; \cong \;\,  \big( G_+ \, , \, \fg \,\big)  \,\; = \;\,  \cP  $$
which means that  $ \, \Phi \circ \Psi^\circ \, $  acts on objects   --- up to natural isomorphisms ---   as the identity, q.e.d.
 \vskip5pt
   As to morphisms, let  $ \; (\phi_+ , \varphi) : \cP' \!= \big( G'_+ \, , \, \fg' \,\big) \relbar\joinrel\longrightarrow \! \big( G''_+ \, , \, \fg'' \,\big) = \cP'' \; $  be a morphism of sHCp's and  $ \; \phi := \Psi^\circ\big( (\phi_+ , \varphi) \big) : \Psi^\circ\big(\cP'\big) = G_{{}_{\!\cP'}} \!\relbar\joinrel\longrightarrow G_{{}_{\!\cP''}} \! = \Psi^\circ\big(\cP''\big) \; $  the corresponding (via  $ \Psi^\circ $)  morphism of supergroups; we aim to prove that  $ \, \big( \Phi \circ \Psi^\circ \big)\big((\phi_+ , \varphi)\big) = \Phi(\phi) \, $  does coincide (up to the natural isomorphisms  $ \, \big( \Phi \circ \Psi^\circ \big)\big(\cP'\big) \cong \cP' \, $  and  $ \, \big( \Phi \circ \Psi^\circ \big)\big(\cP''\big) \cong \cP'' \, $  mentioned above) with  $ \, (\phi_+ , \varphi) \, $  itself.
 \vskip5pt
   By definition  $ \, \Phi(\phi) := \big(\, \phi{\big|}_{{(G_{{}_{\!\cP'}})}_\zero} , \, d\phi \big) \, $.  Now, on the one hand by the very construction of  $ \phi $  we have  $ \; \phi{\big|}_{{(G_{{}_{\!\cP'}})}_\zero} \! = \, \Psi^\circ\big( (\phi_+ \, , \varphi) \big){\big|}_{G'_+} = \, \phi_+ \; $.  On the other hand, like in the proof of  Theorem \ref{thm_sHCp's-->Lsgrps - 1}  we consider factorizations  $ \, G_{{}_{\!\cP'}} \! = G'_+ \times G_-^{\prime,{\scriptscriptstyle <}} \, $  and  $ \, G_{{}_{\!\cP''}} \! = G''_+ \times G_-^{\prime\prime, {\scriptscriptstyle <}} \; $;  using these and the very constructions we find that the action of $ \, d\phi \, $  onto  $ \; T_e\big( G_{{}_{\!\cP'}} \big) \, = \, T_e\big( G'_+ \big) \,\oplus\, T_e\big( G_-^{\prime,{\scriptscriptstyle <}} \big) \, = \, \fg_\zero \,\oplus\, \fg_\uno \, = \, \fg \; $  is given by  $ \; d\phi{\big|}_{\fg_\zero} \! = \, d\phi_+ \, = \varphi{\big|}_{\fg_\zero} \; $  onto  $ \, \fg_\zero \, $  and by  $ \; d\phi{\big|}_{\fg_\uno} \! = \, \varphi{\big|}_{\fg_\uno} \; $  onto  $ \, \fg_\uno \, $;  eventually, this gives  $ \; d\phi = d\phi{\big|}_{\fg_\zero} \!\oplus d\phi{\big|}_{\fg_\uno} \! = \varphi_\zero \oplus \varphi_\uno = \varphi \; $  as expected, so that  $ \;  \Phi(\phi) := \big(\, \phi{\big|}_{{(G_{{}_{\!\cP'}})}_\zero} , \, d\phi \big) = \big( \phi_+ \, , \, \varphi \,\big) \; $.
 \vskip5pt
   Finally, the case of  $ \Psi^e $  is dealt with in an entirely similar way.  One simply has to exploit the parallel results for  $ \GPe $  to those for  $ \GPt \, $,  like  Proposition \ref{dir-prod-fact-G_Pe - gen}  (instead of  Proposition \ref{dir-prod-fact-G_Pt - gen}),  Theorem \ref{thm_sHCp's-->Lsgrps - 2}  (instead of  Theorem \ref{thm_sHCp's-->Lsgrps - 1}),  the factorizations  $ \, G_{{}_{\!\cP'}} \! = G'_+ \times \exp\!\big( {(-)}_\uno \!\otimes_\K \fg'_\uno \big) \, $  and  $ \, G_{{}_{\!\cP''}} \! = G''_+ \times \exp\!\big( {(-)}_\uno \!\otimes_\K \fg''_\uno \big) \; $,  and so on and so forth.  Details are left to the reader.
\end{proof}

\medskip

\subsection{The functor  $ \Psi $  as a quasi-inverse to  $ \Phi \, $:  proof of  $ \,\; \Psi \circ \Phi \, \cong \, \text{\sl id}_{\,\Lsgrp} $}  \label{Psi-inv-Phi - 2}

\smallskip

   {\ } \;\;   We can now add up the last missing piece, proving that  $ \, \Psi \circ \Phi \, $  is isomorphic to the identity functor on Lie supergroups, for all  $ \, \Psi \in \big\{\, \Psi^\circ , \Psi^e \,\big\} \, $.  This last step, together with  Proposition \ref{Phi_inverse_Psi},  will prove that  $ \Phi $  and  $ \Psi $  are quasi-inverse to each other, in particular they are equivalences between Lie supergroups an super Harish-Chandra pairs (and viceversa).  The result reads as follows:

\smallskip

\begin{proposition}  \label{Psi_inverse_Phi}
 Let  $ \, \Phi $  and  $ \, \Psi \in \big\{\, \Psi^\circ , \Psi^e \,\big\} \, $  be as in  Theorems \ref{thm_Lsgrps-->sHCp's}, \ref{thm_sHCp's-->Lsgrps - 1}  and  \ref{thm_sHCp's-->Lsgrps - 2}.  Then  $ \,\; \Psi^\circ \circ \Phi \, \cong \, \text{\sl id}_{\,\Lsgrp} \; $,  \, where  ``$ \, \Lsgrp \, $''  must be read as either  $ \, \Lsgrp_\R^\infty \, $,  or  $ \, \Lsgrp_\R^\omega \, $,  or  $ \, \Lsgrp_\C^\omega \, $,  and  $ \, \Phi $  and  $ \, \Psi $  must be taken as working onto the corresponding types of Lie supergroups or sHCp's.
\end{proposition}

\begin{proof}
 We begin again by looking at the case of  $ \Psi^\circ \, $.
                                                                          \par
   Given a Lie supergroup  $ G \, $,  set  $ \, \fg := \Lie\,(G) \, $  and  $ \, \cP := \Phi_g(G) = (G_\zero,\fg) \, $.  We look at the supergroup  $ \; \Psi^\circ\big(\Phi(G)\big) = \Psi^\circ(\cP) := \GPt \; $,  \, and prove that the latter is naturally isomorphic to  $ G \, $.
                                                                          \par
   For any  $ \, A \in \salg_\bk \, $,  by abuse of notation we denote with the same symbol any element  $ \, g_{{}_0} \in G_\zero(A) \, $  as belonging to  $ G(A) $   --- via the embedding of  $ G_\zero(A) $  into  $ G(A) $  ---   and as an element of  $ \GPt(A) $   --- actually, one of the distinguished generators of  $ \GPt(A) $  given from scratch.
                                                                          \par
   With this convention, it is immediate to see that  Lemma \ref{lemma:relations-in-G(A)}  yields the following:  {\sl there exists a unique group morphism  $ \; \phi_{{}_A} \! : \GPt(A) \!\relbar\joinrel\relbar\joinrel\longrightarrow \! G(A) \; $  such that  $ \; \phi_{{}_A}(\,g_{{}_0}) = g_{{}_0} \; $  for all  $ \, g_{{}_0} \in G_\zero(A) \, $  and  $ \; \phi_{{}_A}\big( (1 + \eta \, Y) \big) = (1 + \eta \, Y) \; $  for all  $ \, \eta \in A_\uno \, $,  $ \, Y \in \fg_\uno \, $}.  Indeed, thanks to that lemma we know that the defining relations among generators of  $ \GPt(A) $  are also satisfied by their images in  $ G(A) $ through  $ \phi_{{}_A} $  under the above prescription.
                                                                          \par
   Due to the factorization  \eqref{eq:factor-1_G}  in  Proposition \ref{prop:fact-Liesgrp_G},  we have also that  {\sl the morphism  $ \phi_{{}_A} $  is actually surjective}.  Even more, the  {\sl Global Splitting Theorem\/}  for  $ G $  (namely,  Theorem \ref{thm:dir-prod-fact-G - gen})  and for  $ \GPt $  (that is,  Proposition \ref{dir-prod-fact-G_Pt - gen})  together easily imply that  {\sl the morphism  $ \phi_{{}_A} $  is also injective},  hence  {\sl it is a group isomorphism}.  Finally, it is clear that all these  $ \phi_{{}_A} $'s  are natural in  $ A \, $,  thus altogether they provide an isomorphism between  $ \, \GPt = \Psi^\circ\big(\Phi(G)\big) \, $  and  $ G \, $,  which ends the proof.
 \vskip3pt
   The case of  $ \Psi^e $  is dealt with again similarly, just using the parallel results for  $ \GPe $  to those applied for  $ \GPt $  in the above arguments: namely,  Lemma \ref{lemma:relations-in-G(A)}  (the last part), formula  \eqref{eq:factor-2_G}  instead of  \eqref{eq:factor-1_G},  Proposition \ref{dir-prod-fact-G_Pe - gen}  instead of  Proposition \ref{dir-prod-fact-G_Pt - gen},  etc.  Details are left to the reader.
\end{proof}

\bigskip

\section{Linear case and representations.}  \label{sec_lin-cae/repr's}

\smallskip

%
%
   {\ } \;\;   We shall now briefly discuss the fallout, in representation theory, of the existence of a an equivalence between Lie supergroups and sHCp's, in particular when realized
   \hbox{via the functors  $ \Psi $  of  \S \ref{sHCp's->Lsgroups}.}

\medskip

\subsection{The linear case}  \label{subsec:linear-case}

\smallskip

   {\ } \;\;   The very construction of our functors  $ \Psi^\circ $  and  $ \Psi^e $  gains a much more concrete meaning when the super Harish-Chandra pairs (and Lie supergroups) we deal with are  {\sl linear}.
 \vskip5pt
   Indeed, assume first that the Lie supergroup  $ G $  is linear, i.e.\ it embeds into some  $ \rGL(V) \, $, where  $ V $  is a suitable superspace (in other words, there exists a faithful  $ G $--module  $ V \, $).  Then both  $ G_\zero $  and  $ \, \fg := \Lie(G) \, $  embed into  $ \End(V) \, $,  and the relations linking them (formalized by saying that  ``$ \, (G_\zero , \fg) $  is a super Harish-Chandra pair'')
%
%
 are relations among elements of the unital, associative superalgebra  $ \End(V) \, $.  Conversely,
%
%
 one can formally
%
%
 define a
 ``{\sl linear\/}  super Harish-Chandra pair'' as being any sHCp $ (G_\zero , \fg) $  such that both  $ G_\zero $  and  $ \fg $  embed into some  $ \End(V) \, $,  and the compatibility relations linking  $ G_\zero $  and  $ \fg $
%
%
 hold true as relations inside
%
%
 $ \End(V) $  itself   ---  cf.\ \cite{ga4},  Definition 4.2.1{\it (b)\/}  for such a formalization in the
%
%
 algebraic setup
 (which easily adapts to the present Lie setup).  The above then tells us, in short, that if  $ G $  is linear then its associated super Harish-Chandra pair  $ \, \Phi(G) =: \cP \, $  is linear too   --- both being linearized through their faithful representation onto  $ V $.
 \vskip5pt
   On the other hand, let us start with a linear sHCp, say  $ \, \cP = (G_+,\fg) \, $:  so the latter is embedded (in the obvious sense) into the sHCp  $ \, \big( \rGL_\zero(V) , \rgl(V) \big) \, $  for some representation superspace  $ V $.  Thus for  $ \, A \in \Wsalg_\K \, $  both  $ G_+(A) $  and  $ A \otimes_\K \fg $  are embedded into  $ \, \big(\End(V)\big)(A) \, $,  with relations among them   --- inside the algebra  $ \, \big(\End(V)\big)(A) \, $  ---   induced by the very notion of linear sHCp.  Now, one can consider in  $ \, \big(\End(V)\big)(A) \, $  all elements of the form  $ \, \exp(\eta \, Y) = (1 + \eta \, Y) \, $   --- with  $ \, \eta \in A_\uno \, $,  $ \, Y \in \fg_\uno \, $  ---   that actually belong to  $ \, \big(\rGL(V)\big)(A) \, $:  and clearly  $ \, G_+(A) \subseteq \big(\rGL(V)\big)(A) \, $  too.  Therefore,  {\sl one can take inside  $ \big(\rGL(V)\big)(A) $  the subgroup  $ \GP^{\circ, \scriptscriptstyle V}(A) $  generated by  $ G_+(A) $  and by all the  $ (1 + \eta \, Y) $'s}.
 \vskip4pt
   A trivial check shows that the elements from  $ G_+(A) $  and the  $ (1 + \eta_i Y_i) $'s  enjoy all relations that enter in the very definition  $ \GPt(A) $  for their parallel counterparts: thus,
  {\it there exists a (unique) group morphism  $ \; \phi^\circ_{{}_A} \! : \GPt(A) \!\relbar\joinrel\relbar\joinrel\longrightarrow \! \GP^{\circ , \scriptscriptstyle V}(A) \; $  such that  $ \; \phi^\circ_{{}_A}(\,g_{{}_+}) = g_{{}_+} \; $  and  $ \; \phi^\circ_{{}_A}\big( (1 + \eta \, Y) \big) = (1 + \eta \, Y) \; $  for all  $ \, g_{{}_+} \in G_+(A) \, $,  $ \, \eta \in A_\uno \, $,  $ \, Y \in \fg_\uno \, $;  in addition, by construction this  $ \phi^\circ_{{}_A} $  is clearly onto}.
                                                                      \par
   On the other hand,  $ \GP^{\circ, \scriptscriptstyle V}(A) $  acts faithfully on  $ V $,  by definition: indeed, it is linear, namely ``linearized by  $ V \, $''.  A key fact then is that this linearization allows one to show that  {\it the like of the  {\sl Global Splitting Theorem}   ---  {\rm cf.\ Theorem \ref{thm:dir-prod-fact-G - gen}  and  Proposition \ref{dir-prod-fact-G_Pe - gen}}  ---   does hold true for  $ \GP^{\circ, \scriptscriptstyle V}(A) \, $};  indeed, one can apply the same analysis and arguments used in the proofs of either  Theorem \ref{thm:dir-prod-fact-G - gen}  or  Proposition \ref{dir-prod-fact-G_Pe - gen},  {\sl but\/}  for one single change: the linearization of  $  \GP^{\circ, \scriptscriptstyle V}(A) $   (induced by the initial linearization of the sHCp  $ \cP $  we started with) has to replace the following key ingredients:
 \vskip4pt
%
 \vbox{
   --- in the proof of  Theorem \ref{thm:dir-prod-fact-G - gen},  one has that  $ \, G(A) := \coprod_{x \in |G|} \Hom_{\salg_\K} \big( \cO_{|G|,x} \, , A \big) \, $,
 \vskip3pt
   --- in the proof of  Proposition \ref{dir-prod-fact-G_Pe - gen}  (and of the lemmas before it, mostly), one has that
                                                                     \par
   \ \phantom{--}   $ \GPt(A) $  is acting onto $ \; V := \text{\sl Ind}_{\fg_\zero}^{\,\fg}(\Uuno) \; $   --- see  \eqref{eq:splitting_V}.
 }
%
 \vskip4pt
\noindent
 In fact, in both cases   --- of either  $ G(A) $  or  $ \GPt(A) $  ---   the group under exam is realized as a group of maps (linear operators, in the second case), and these are rich enough to ``separate (enough) points'' so to guarantee the uniqueness of factorization(s) that is the core part of the Global Splitting Theorem.  In the case of  $ \GP^{\circ, \scriptscriptstyle V}(A) $  instead, its built-in linearization provides a similar realization as ``group of maps'', and this again allows to separate enough points to get global splitting(s) for  $ \GP^{\circ, \scriptscriptstyle V}(A) $  too.  Finally, thanks to the Global Splitting Theorem for  $ \GPt(A) $  {\sl and\/}  for  $ \GP^{\circ, \scriptscriptstyle V}(A) $  one can apply again the arguments used in the proof of  Proposition \ref{Psi_inverse_Phi}  and successfully prove that  {\it the above group (epi)morphism  $ \; \phi^\circ_{{}_A} \! : \GPt(A) \!\relbar\joinrel\relbar\joinrel\longrightarrow \! \GP^{\circ, \scriptscriptstyle V}(A) \; $  is also injective, hence it is an}  {\sl isomorphism}.
                                                                     \par
   By construction all these isomorphisms  $ \phi^\circ_{{}_A} $  are natural in  $ A \, $,  hence they give altogether a functor isomorphism  $ \, \phi^\circ : \GPt {\buildrel \cong \over {\relbar\joinrel\longrightarrow}}\, \GP^{\circ, \scriptscriptstyle V} \, $.  Therefore  $ \, \GPt \cong \GP^{\circ, \scriptscriptstyle V} \, $,  which means that we found a
 different,\allowbreak{\ }
 concrete realization of  $ \GPt \, $,  that is now constructed explicitly as the  {\sl linear\/}  Lie supergroup  $ \GP^{\circ, \scriptscriptstyle V} \, $.
 \vskip5pt
   In a parallel way, still starting with a linear sHCp  $ \, \cP = (G_+,\fg) \, $  embedded into  $ \, \big( \rGL_\zero(V) , \rgl(V) \big) \, $,  one can consider in  $ \, \big(\End(V)\big)(A) \, $  all elements of the form  $ \, \exp(\cY) \, $  with  $ \, \cY \in A_\uno \!\otimes_\K \fg_\uno \, $,
%
%
 and then  {\sl take the subgroup  $ \GP^{e, \scriptscriptstyle V}(A) $  of  $ \big(\rGL(V)\big)(A) $  generated by  $ G_+(A) $  and by all the  $ \exp(\cY) $'s}.  Acting like above we find (by parallel arguments) that
  {\it there exists a (unique) group epimorphism  $ \; \phi^e_{{}_A} \! : \GPe(A) \!\relbar\joinrel\relbar\joinrel\longrightarrow \! \GP^{e , \scriptscriptstyle V}(A) \; $  such that  $ \; \phi^e_{{}_A}(\,g_{{}_+}) = g_{{}_+} \, $,  $ \; \phi^e_{{}_A}\big( \exp(\cY) \big) = \exp(\cY) \, $,  for all  $ \, g_{{}_+} \! \in G_+(A) \, $  and  $ \, \cY \in A_\uno \!\otimes_\K \fg_\uno \, $}.
 Even more, still by the same method as above (up to minimal changes) we eventually find that all these  $ \phi^e_{{}_A} $'s  are in fact  {\sl isomorphisms},  natural in  $ A \, $,  hence they define a functor isomorphism  $ \, \phi^e : \GPe {\buildrel \cong \over {\relbar\joinrel\longrightarrow}}\, \GP^{e, \scriptscriptstyle V} \, $.  This gives yet another concrete realization of  $ \GPe \, $,  now explicitly realized as the  {\sl linear\/}  Lie supergroup  $ \GP^{e, \scriptscriptstyle V} \, $.

\medskip

  \vfill
    \eject

\subsection{Representations 1: supergroup modules vs.\ sHCp modules}  \label{subsec:reps-1_sgrp-mod's/sHCp-mod's}

\smallskip

%
%
   {\ } \;\;   An important application of the equivalence between Lie supergroups and super Harish-Chandra pairs occurs in representation theory.  Indeed, let  $ G $  and  $ \cP $  be a Lie supergroup and a super Harish-Chandra pair that correspond to each other through one of the equivalences  $ \, \Psi \in \big\{\, \Psi^\circ , \Psi^e \,\big\} \, $  presented in  \S \ref{sec-equivalences},  i.e.  $ \, G = \Psi(\cP) \, $  and  $ \, \cP = \Phi(G) \, $.  Let also  $ G $--Mod and  $ \cP $--Mod  be the category of  $ G $--modules  and of  $ \cP $--modules  (of the correct type: smooth, etc.); in short, we mean that a  $ G $--module  is the datum of a finite dimensional supermodule  $ M' $  with a morphism  $ \, \phi : G \longrightarrow \rGL(M') \, $  of Lie supergroups (in the proper category), whereas a  $ \cP $--module  is the datum of a finite dimensional super\-module  $ M'' $  with a morphism
   \hbox{$ \, (\phi_+,\varphi) : \cP \longrightarrow \!\big( {\rGL\big(M''\big)}_\zero \, , \rgl\big(M''\big) \!\big) \, $  of super Harish-Chandra pairs.}
%
 \vskip4pt
   Just to fix notation, we assume to be in the real smooth case, the other cases being entirely similar.  Let's assume  $ M' $  is a  $ G $--module;  applying  $ \, \Phi : \Lsgrp_\R^\infty \!\longrightarrow \sHCp_\R \, $  to the morphism  $ \, \phi : G \longrightarrow \rGL(M') \, $  we find a morphism  $ \, \Phi(\phi) : \Phi(G) \longrightarrow \Phi\big(\rGL(M')\big) \, $  between the corresponding objects in  $ \sHCp_\R \, $.  But  $ \, \Phi(G) = \cP \, $  by assumption and  $ \, \Phi\big(\rGL(M')\big) = \big( {\rGL(M')}_\zero \, , \rgl(M') \big) \, $,  so what we have is a morphism  $ \, \Phi(\phi) \! : \cP \! \longrightarrow \!\big( {\rGL(M')}_\zero \, , \rgl(M') \big) \, $  making  $ M' $ into a  $ \cP $--module.
                                                                               \par
   Conversely, let  $ M'' $  be a  $ \cP $--module.  Applying the functor  $ \, \Psi \! : \! \sHCp \!\longrightarrow \! \Lsgrp_\R^\infty \, $  to the corresponding morphism  $ \, (\phi_+,\varphi) : \cP \longrightarrow \big( {\rGL(M'')}_\zero \, , \rgl(M'') \big) \, $  we get a morphism between the corresponding supergroups, namely  $ \, \Psi\big((\phi_+,\varphi)\big) : \Psi(\cP) \longrightarrow \Psi\big(\big( {\rGL(M'')}_\zero \, , \rgl(M'') \big)\big) \, $.  As  $ \, \Psi(\cP) = G \, $  by and  $ \, \Psi\big(\big( {\rGL(M'')}_\zero \, , \rgl(M'') \big)\big) = \rGL(M'') \, $,  we find a morphism  $ \, \Psi\big((\phi_+,\varphi)\big) \! : G \! \longrightarrow \! \rGL(M'') \, $  in  $ \Lsgrp_\R^\infty $  which makes  $ M'' $ into a  $ G $--module.
 In fact, in this way the  $ G $--action  that one gets on  $ M'' $  is just what one obtains by direct application of the recipe in  \S \ref{subsec:linear-case}  to the  {\sl linear\/}  super Harish-Chandra pair  $ \, (\phi_+,\varphi)(\cP) \, $ inside  $ \, \big( {\rGL(M'')}_\zero \, , \rgl(M'') \big) \, $  that is the image of  $ \, \cP $  through  $ (\phi_+,\varphi) \, $.
                                                                               \par
   The reader can easily check that the previous discussion   --- extended to the real analytic case and to the complex holomorphic case as well ---   has the following outcome:

\vskip7pt

\begin{theorem}  \label{main-thm_sHCp-equiv}
 Let  $ \, G \, $  and  $ \, \cP\, $  be a Lie supergroup and a super Harish-Chandra pair (of smooth, analytic or holomorphic type) over\/  $ \K $  corresponding to each other as above.  Then:
 \vskip2pt
 (a)\,  for any fixed finite dimensional  $ \K $--supermodule  $ M $,  the above constructions provide two bijections, inverse to each other, between  $ G $--module  structures and  $ \cP $--module  structures on  $ M \, $;
 \vskip2pt
 (b)\,  the whole construction above is natural in  $ M $,  in that the above bijections between  $ G $--module  structures and  $ \cP $--module  structures over two finite dimensional  $ \K $--supermodules  $ \widehat{M} $  and  $ \widetilde{M} $  are compatible with  $ \K$--supermodule  morphisms from  $ \widehat{M} $  to  $ \widetilde{M} \, $.  Thus, all the bijections mentioned in (a), for all different  $ M $'s,  do provide equivalences, quasi-inverse to each other, between the category of all (finite dimensional)  $ G $--modules  and the category of all (finite dimensional)  $ \cP $--modules.
\end{theorem}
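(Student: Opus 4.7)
The plan is to derive the theorem as a direct consequence of the categorical equivalence between Lie supergroups and super Harish-Chandra pairs established in Propositions~\ref{Phi_inverse_Psi} and~\ref{Psi_inverse_Phi}, together with the immediate identification $\Phi\big(\rGL(M)\big) = \big( {\rGL(M)}_\zero \, , \rgl(M) \big)$, which holds for every finite-dimensional $\K$-supermodule $M$ by the very definition of $\Phi$ in Theorem~\ref{thm_Lsgrps-->sHCp's}.

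First I would reformulate the data. A $G$-module structure on $M$ is, by definition, a morphism $\varOmega : G \longrightarrow \rGL(M)$ in the appropriate category of Lie supergroups, and a $\cP$-module structure on $M$ is a morphism $(\Omega_+,\omega) : \cP \longrightarrow \big( {\rGL(M)}_\zero \, , \rgl(M) \big)$ of super Harish-Chandra pairs. Under the identifications $\Phi(G) \cong \cP$, $\Psi(\cP) \cong G$, $\Phi\big(\rGL(M)\big) = \big( {\rGL(M)}_\zero , \rgl(M) \big)$ and $\Psi\big( {\rGL(M)}_\zero , \rgl(M) \big) \cong \rGL(M)$, part (a) of the theorem becomes the statement that $\Phi$ and $\Psi$ induce inverse bijections
\[
  \Hom_{\cat{Lsgrp}}\!\big( G \, , \rGL(M) \big) \;\overset{\Phi}{\underset{\Psi}{\rightleftarrows}}\; \Hom_{\cat{sHCp}}\!\big( \cP \, , \big( {\rGL(M)}_\zero , \rgl(M) \big) \big)
\]
and this is automatic from the fact that $\Phi$ and $\Psi$ are quasi-inverse equivalences of categories, hence fully faithful on Hom-sets. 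Unpacking this bijection will reproduce exactly the two explicit constructions informally described before the theorem statement: applying $\Phi$ to $\varOmega$ in one direction, and applying $\Psi$ to $(\Omega_+,\omega)$ in the other.

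For part (b) I would show that these Hom-bijections are natural in $M$. Given a $\K$-supermodule morphism $\phi : \widehat{M} \longrightarrow \widetilde{M}$, its $G$-equivariance (resp.\ $\cP$-equivariance) with respect to prescribed actions on source and target is encoded as the commutativity, inside the category of Lie supergroups (resp.\ of super Harish-Chandra pairs), of a square involving the two actions and the morphism on $\rGL$'s induced by $\phi$ in the functor-of-points sense (i.e.\ by pre- and post-composition with $\phi$). Functoriality of $\Phi$ and $\Psi$ transports commutative squares to commutative squares, so $G$-equivariance of $\phi$ is equivalent to $\cP$-equivariance of $\phi$; this will deliver the asserted equivalence of module categories.

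I do not foresee any serious obstacle, since the heavy lifting has already been done in Section~\ref{sec-equivalence}. The only mildly delicate point is the explicit check in the last step that the intertwining relation for a supermodule morphism $\phi$, which is \emph{not} itself a supergroup morphism, is transported faithfully by $\Phi$ and $\Psi$: this requires interpreting $\phi$ at the level of functors of points, translating its equivariance into the commutativity of a genuine square inside $\Lsgrp$ or $\sHCp$, and then invoking functoriality. Once this setup is carefully arranged, the remainder of the argument is purely formal.
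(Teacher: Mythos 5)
Your proposal is correct and takes essentially the same route as the paper: the paper obtains the bijections in (a) exactly by applying $\Phi$ to $\varOmega : G \to \rGL(M)$ and $\Psi$ to $(\Omega_+,\omega) : \cP \to \big({\rGL(M)}_\zero\,,\rgl(M)\big)$, using $\Phi\big(\rGL(M)\big) = \big({\rGL(M)}_\zero\,,\rgl(M)\big)$ together with Propositions~\ref{Phi_inverse_Psi} and~\ref{Psi_inverse_Phi}, and it leaves the naturality statement (b) to the reader much as you do. The one point to tighten in your sketch of (b) is that a non-invertible $\phi$ induces no morphism of $\rGL$'s, so equivariance is most directly transported not via a commutative square in $\Lsgrp$ but by checking the intertwining relation on the generators $G_+(A)$ and $(1+\eta_i\,Y_i)$ of $\GP(A)$, where it reduces verbatim to $\cP$-equivariance.
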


\medskip

\subsection{Representations 2: induction from  $ \bG_\zero $  to  $ \bG $}  \label{subsec:reps-2_induction-G_0/G}

\smallskip

   {\ } \;\;   Let  $ \, G \, $  be a supergroup (of any type), with associated classical subsupergroup  $ \, G_\zero \, $.  Let  $ V $  be any  $ G_\zero $--module:  we shall now present an explicit construction of the  {\it induced  $ G $--module}  $ \text{\sl Ind}_{G_\zero}^{\,G}\!(V) \, $.

\vskip3pt

   Being a  $ G_\zero $--module,  $ V $  is also, automatically, a  $ \fg_\zero $--module.  Then one does have the induced  $ \fg $--module  $ \text{\sl Ind}_{\fg_\zero}^{\,\fg}\!(V) \, $,  which can be realized as
 \vskip3pt
 \centerline{ $ \text{\sl Ind}_{\fg_\zero}^{\,\fg}\!(V)  \,\; = \;\,  \text{\sl Ind}_{U(\fg_\zero)}^{\,U(\fg)}(V)  \,\; = \;\,  U(\fg) \otimes_{U(\fg_\zero)} \! V $ }

\vskip5pt

\noindent
 By construction, it is clear that this bears also a unique structure of  $ G_\zero $--module  which is compatible with the  $ \fg $--action  and coincides with the original  $ G_\zero $--action  on  $ \, \K \otimes_{U(\fg_\zero)} \! V \cong V \, $  given from scratch.  Indeed, we can describe explicitly this  $ G_\zero $--action,  as follows.  First, by construction we have
 \vskip4pt
 \centerline{ $ \text{\sl Ind}_{\fg_\zero}^{\,\fg}\!(V)  \,\; = \;\, U(\fg) \otimes_{U(\fg_\zero)} \! V  \,\; = \;\,  {\textstyle \bigwedge} \, \fg_\uno \! \otimes_\K \! V $ }
 \vskip6pt
\noindent
--- because  $ \, U(\fg) \cong {\textstyle \bigwedge} \, \fg_\uno \! \otimes_\K \! U(\fg_\zero) \, $  as a  $ \K$-module,  by the PBW theorem for Lie superalgebras, see  \eqref{eq:splitting_U(g)}  ---   with the  $ \fg_\zero $--action  given by  $ \; x.(y \otimes v) = \text{\sl ad}(x)(y) \otimes v + y \otimes (x.v) \; $  for  $ \, x \in \fg_\zero \, $,  $ \, y \in {\textstyle \bigwedge} \, \fg_\uno \, $,  $ \, v \in V \, $,  where by  $ \, \text{\sl ad} \, $  we denote the unique  $ \fg_\zero $--action  on  $ \, {\textstyle \bigwedge} \, \fg_\uno \, $  by algebra derivations induced by the adjoint  $ \fg_\zero $--action  on  $ \fg_\uno \, $.  Second, this action clearly integrates to a (unique)  $ G_\zero $--action  given by  $ \; g_0.(y \otimes v) \, := \, \text{\sl Ad}(g_0)(y) \otimes (g_0.v) \; $  for  $ \, g_0 \in G_\zero \, $,  $ \, y \in {\textstyle \bigwedge} \, \fg_\uno \, $,  $ \, v \in V \, $,  where we write  $ \, \text{\sl Ad} \, $  for the unique  $ G_\zero $--action  on  $ \, {\textstyle \bigwedge} \, \fg_\uno \, $  by algebra automorphisms induced by the adjoint  $ G_\zero $--action  on  $ \fg_\uno \, $.
                                                                      \par
   The key point is that the above  $ G_\zero $--action  and the built-in  $ \fg $--action on  $ \text{\sl Ind}_{\fg_\zero}^{\,\fg}\!(V) $  are compatible, in that they make  $ \text{\sl Ind}_{\fg_\zero}^{\,\fg}\!(V) $  into a  $ (G_\zero\,,\fg) $--module,  i.e.~a module for the sHCp  $ \, \cP := (G_\zero\,,\fg) = \Phi(G) \, $.  Then for  $ \, \Psi \in \big\{\, \Psi^\circ , \Psi^e \,\big\} \, $,  since  $ \, \Psi\big((G_\zero\,,\fg)\big) \cong G \, $,  by  \S \ref{subsec:reps-1_sgrp-mod's/sHCp-mod's}  we have that  $ \text{\sl Ind}_{\fg_\zero}^{\,\fg}\!(V) $  bears a unique structure of  $ G $--module  which correspond to the previous  $ \cP $--action   --- i.e., it yields (by restriction and ``differentiation'') the previously found  $ G_\zero $--action  and  $ \fg $--action.  In down-to-earth terms what happens is the following.  The action of  $ \, \cP := (G_\zero\,,\fg) = \Phi(G) \, $  onto  $ \, W := \text{\sl Ind}_{\fg_\zero}^{\,\fg}\!(V) \, $  is given by the  $ G_\zero $--action  (induced by the original action on $ V \, $)  and a compatible  $ \fg $--action.  Then for any  $ \, A \in \Wsalg_\K \, $  we have also that all of  $ A_\uno \!\otimes_\K \fg_\uno $  ``acts'' onto  $ \, A \otimes_\K W \, $:  thus well-defined operators  $ \, (1 + \, \eta \, Y) \, $  and  $ \, \exp(\cY) \, $   --- with  $ \, (\eta\,,Y) \in A_\uno \times \fg_\uno \, $,  $ \, \cY \in A_\uno \!\otimes_\K \fg_\uno \, $ ---   exist in
%
%
 $ \big(\rGL(W)\big)(A) \, $.  One easily checks that these  $ (1 + \eta \, Y) $'s,  or the  $ \exp(\cY) $'s, altogether enjoy among themselves and with the operators given by the  $ G_\zero $--action  the very relations that enter in the definition of either  $ \, \Psi^\circ(\cP) := \GPt \, $  or  $ \, \Psi^e(\cP) = \GPe \, $,  respectively   --- therefore in both cases we get a well-defined action of  $ \Psi(\cP) $  on  $ W \, $,  extending the initial one by  $ G_\zero \, $:  but  $ \, \Psi(\cP) = G \, $,  so we are done.
                                                                      \par
%
%
   Thus  {\sl we define as  $ \text{\sl Ind}_{G_\zero}^{\,G}\!(V) $  the space  $ \, W := \text{\sl Ind}_{\fg_\zero}^{\,\fg}\!(V) \, $  endowed with this  $ G $--action\/}:  one checks that this construction is functorial in  $ V  $  and has the universal property making it adjoint of ``restriction'' (from  $ G $--modules  to  $ G_\zero $--modules),  so it can be correctly called ``induction'' functor.
                                                                      \par
   In addition, if the original  $ G_\zero $--module  $ V $  is faithful then the induced  $ G $--module  $ \text{\sl Ind}_{G_\zero}^{\,G}\!(V) $  is faithful too: in particular, this means that if  $ G_\zero $  is linearizable, then  $ G $  is linearizable too; more precisely, from a linearization of  $ G_\zero $  one can construct (via induction) a linearization of  $ G $  as well.

\vskip45pt


\begin{thebibliography}{vsv}

\vskip3pt

\bibitem{all-lau}  A.\ Alldridge, M.\ Laubinger,  {\it Infinite-dimensional supermanifolds over arbitrary base fields},  Forum Math.\  {\bf 24}  (2012), no.\ 3, 565--608.

\bibitem{bcf}  L.\ Balduzzi, C.\ Carmeli, R.\ Fioresi, {\it A comparison of the functors of points of supermanifolds},  J.\ Algebra Appl.\  {\bf 12}  (2013),  no.\ 3, 1250152, pp.\ 41.

\bibitem{be}  F.~A.~Berezin,  {\it Introduction to superanalysis},  Edited by A.~A.~Kirillov. D.~Reidel Publishing Company, Dordrecht (Holland), 1987.
%
%

\bibitem{be-so}  W.\ Bertram, A.\ Souvay,  {\it A general construction of Weil functors},  Cah.\ Topol.\ G{\'e}om.\ Diff{\'e}r.\ Cat{\'e}g.\  {\bf 55} (2014), no.\ 4, 267--313.

\bibitem{bos}  H.~Boseck,  {\it Affine Lie supergroups},  Math.~Nachr.~{\bf 143}  (1989), 303--327.

\bibitem{ccf}  C.~Carmeli, L.~Caston, R.~Fioresi, {\it  Mathematical Foundations of Supersymmetry\/}  (with an appendix by I.~Dimitrov),  EMS Series of Lectures in Mathematics  {\bf 15},  European Mathematical Society, Z{\"u}rich, 2011.

\bibitem{cf}  C.~Carmeli, R.~Fioresi, {\it  Super Distributions, Analytic and Algebraic Super Harish-Chandra pairs},  Pacific J.\ Math.\  {\bf 263}  (2013),  no.\ 1, 29--51.

\bibitem{dm}  P.~Deligne, J.~Morgan,  {\it Notes on supersymmetry (following J.~Bernstein)},  in:  {\sl ``Quantum fields and strings: a course for mathematicians''},  Vol.~1, 2 (Princeton, NJ, 1996/1997), 41--97, Amer.~Math.~Soc., Providence, RI, 1999.

\bibitem{eh}  D.\ Eisenbud, J.\ Harris,  {\it The geometry of schemes},  Graduate Texts in Mathematics  {\bf 197},  Springer-Verlag, New York, 2000.

\bibitem{fg1}  R.~Fioresi, F.~Gavarini,  {\it Chevalley Supergroups},  Memoirs AMS  {\bf 215},  no.~1014 (2012).

\bibitem{fg2}  R.~Fioresi, F.~Gavarini,  {\it On the construction of Chevalley supergroups},  in: S.~Ferrara, R.~Fio\-resi, V.~S.~Varadarajan (eds.),  {\sl ``Supersymmetry in Mathematics \& Physics''},  UCLA Los Angeles, U.S.A.~2010, Lecture Notes in Math.~{\bf 2027}, Springer-Verlag, Berlin-Heidelberg, 2011, pp.~101--123.

\bibitem{ga1}  F.~Gavarini,  {\it Chevalley Supergroups of type  $ D(2,1;a) $},  Proc.\ Edinb.\ Math.\ Soc.\ (2)  {\bf 57}  (2014), no.\ 2, 465--491.

\bibitem{ga2}  F.~Gavarini,  {\it Algebraic supergroups of Cartan type},  Forum Math.\  {\bf 26}  (2014), no.\ 5, 1473--1564.

\bibitem{ga3}  F.~Gavarini,  {\it Corrigendum to ``Algebraic supergroups of Cartan type''},  Forum Math.\ {\bf 28}  (2016), no.\ 5, 1005--1009.

\bibitem{ga4}  F.~Gavarini,  {\it Global splittings and super Harish-Chandra pairs for affine supergroups},  Trans.\ Amer.\ Math.\ Soc.\  {\bf 368}  (2016), no.\ 6, 3973--4026.

\bibitem{jac}  N.\ Jacobson,  {\it Lie algebras}, Republication of the 1962 original, Dover Publications, Inc., New York, 1979.

\bibitem{kostant}  B.~Kostant,  {\it Graded manifolds, graded Lie theory, and prequantization},
in: {\sl Differential geometrical methods in mathematical physics\/}
(Proc.~Sympos., Univ.~Bonn, Bonn, 1975), 177--306, Lecture Notes in Math.  {\bf 570},  Springer, Berlin, 1977.

\bibitem{koszul}  J.-L.~Koszul,  {\it Graded manifolds and graded Lie algebras},  Proceedings of the international meeting on geometry and physics (Florence, 1982), 71--84, Pitagora, Bologna, 1982.

\bibitem{leites}  D.\ A.\ Leites,  {\it Introduction to the theory of supermanifolds},  Russian Math.\ Surveys  {\bf 35}  (1980), no.\ 1, 1--64.

\bibitem{maclane}  S.\ MacLane,  {\it Categories for the working mathematician},  Graduate Texts in Mathematics  {\bf 5},  Springer-Verlag, New York, 1971.

\bibitem{mas}  A.~Masuoka,  {\it Harish-Chandra pairs for algebraic affine supergroup schemes over an arbitrary field},  Transform.~Groups  {\bf 17}  (2012), no.~4, 1085--1121.

\bibitem{mas-shi}  A.\ Masuoka, T.\ Shibata,  {\it Algebraic supergroups and Harish-Chandra pairs over a commutative ring},  Trans.\ Amer.\ Math.\ Soc.\  {\bf 369}  (2017), no.~5, 3443--3481.

\bibitem{mol}  V.\ Molotkov,  {\it Infinite-dimensional and colored supermanifolds},  J.\ Nonlinear Math.\ Phys.\  {\bf 17}  (2010), suppl.\ 1, 375--446.

%
%

\bibitem{vis}  E.~G.~Vishnyakova,  {\it On complex Lie supergroups and homogeneous split
supermanifolds},  Transform.~Groups  {\bf 16}  (2011), no.~1, 265--285.

\bibitem{vsv}  V.~S.~Varadarajan,  {\it Supersymmetry for mathematicians: an introduction},  Courant Lecture Notes  {\bf 1},  AMS, 2004.

\end{thebibliography}
\end{document}